\tikzset{>=stealth',
        cvertex/.style={circle,draw=black,inner sep=1pt,outer sep=3pt},
        vertex/.style={circle,fill=black,inner sep=1pt,outer sep=3pt},
        star/.style={circle,fill=yellow,inner sep=0.75pt,outer sep=0.75pt},
        tvertex/.style={inner sep=1pt,font=\scriptsize},
        gap/.style={inner sep=0.5pt,fill=white}}
\def\qed{\hfill {\hbox{${\vcenter{\vbox{               
   \hrule height 0.4pt\hbox{\vrule width 0.4pt height 6pt
   \kern5pt\vrule width 0.4pt}\hrule height 0.4pt}}}$}}}
\numberwithin{equation}{section}
\theoremstyle{plain}
\newtheorem{theorem}[equation]{Theorem}
\newtheorem{proposition}[equation]{Proposition}
\newtheorem{corollary}[equation]{Corollary}
\newtheorem{lemma}[equation]{Lemma}
\theoremstyle{definition}
\newtheorem{defn}[equation]{Definition}
\newtheorem{example}[equation]{Example}
\newtheorem{remark}[equation]{Remark}
\newcommand{\rst}[1]{\ensuremath{{\mathbin\vert}%
\raise-.5ex\hbox{$#1$}}}
\newcommand\restr[2]{{
  \left.\kern-\nulldelimiterspace 
  #1 
  \vphantom{\big|} 
  \right|_{#2} 
  }}
\newcommand{\DMO}{\DeclareMathOperator}
\newcommand{\beq}{\begin{equation}}
\newcommand{\eeq}{\end{equation}}
\newcommand{\ang}[1]{\langle #1 \rangle}
\newcommand{\blank}{\mbox{$\underline{\makebox[10pt]{}}$}}
\newcommand{\st}{\, : \, }
\providecommand{\abs}[1]{\lvert#1\rvert}
\newcommand{\ssm}{\smallsetminus}
\newcommand{\hra}{\hookrightarrow}
\DMO{\id}{{Id}}
\DMO{\Hom}{{Hom}}
\DMO{\End}{{End}}
\DMO{\Ext}{{Ext}}
\DMO{\ext}{{ext}}
\DMO{\Tor}{Tor}
\DMO{\Aut}{{Aut}}
\DMO{\Proj}{Proj}
\DMO{\Spec}{Spec}
\DMO{\codim}{codim}
\DMO{\reg}{reg}
\DMO{\len}{len}
\DMO{\red}{red}
\DMO{\pd}{pd}
\DMO{\hd}{hd}
\DMO{\Ker}{Ker}
\DMO{\coker}{Coker}
\DMO{\im}{Im}
\DMO{\cohdim}{cd}
\DMO{\chrr}{char}
\DMO{\Skl}{Skl}
\DMO{\Inn}{Inn}
\DMO{\GK}{GKdim}
\DMO{\gr}{gr}
\DMO{\Div}{Div}
\DMO{\Bs}{Bs}
\DMO{\sing}{sing}
\DMO{\op}{op}
\DMO{\Ann}{Ann}
\DMO{\sat}{sat}
\DMO{\grade}{grade}
\DMO{\GKdim}{GKdim}
\DMO{\hilb}{hilb}
\DMO{\gldim}{gldim}
\DMO{\Tot}{Tot}
\DMO{\Pic}{Pic}
\newcommand{\mc}{\mathcal}
\newcommand{\mf}{\mathfrak}
\newcommand{\mb}{\mathbb}
\newcommand{\FF}{\mb F}
\newcommand{\NN}{{\mb N}}
\newcommand{\PP}{{\mb P}}
\newcommand{\RR}{\mb R}
\newcommand{\ZZ}{{\mb Z}}
\DMO{\shHom}{\mathcal{H}\!\mathit{om}}
\DMO{\shExt}{\mathcal{E}\!\mathit{xt}}
\DMO{\shTor}{\mathcal{T}\!\mathit{or}}
\newcommand{\sO}{\mc{O}}
\DMO{\rmod}{mod-\!}
\DMO{\lmod}{\!-mod}
\DMO{\lMod}{\!-Mod}
\DMO{\rMod}{Mod-\!}
\DMO{\lQgr}{\!-Qgr}
\DMO{\QCoh}{Qcoh}
\DMO{\Xyz}{Xyz}
\DMO{\xyz}{xyz}
\newcommand{\V}[2]{{
\mathbf V_{\boldsymbol{#1}}^{#2}}}
\newcommand{\Qgr}[1]{{
\operatorname{Qgr}({#1})}}
\newcommand{\Gr}[1]{{
\operatorname{Gr}({#1})}}
\newcommand{\Fdim}[1]{{
\operatorname{Fdim}({#1})}}
\DMO{\Irr}{Irr}
\newcommand{\X}{{\mathsf{X}}}
\date{\today}
\title{Path algebras of quivers and representations of locally
finite Lie algebras} 
\author{J. M. Hennig and S. J. Sierra} 
\address{(Hennig) Department of Mathematical and Statistical Sciences, 
University of Alberta,
Edmonton, Alberta 
Canada T6G 2G1}
\email{jhennig1@ualberta.ca}
 \address{(Sierra) School of Mathematics,
University of Edinburgh, 
James Clerk Maxwell Building, 
The King's Buildings, 
Peter Guthrie Tait Road, 
Edinburgh EH9 3FD, UK.}
\email{s.sierra@ed.ac.uk}
\thanks{The second author is partially supported by EPSRC grant EP/M008460/1.}
 \keywords{diagonal Lie algebra, locally finite representation, path algebra, quiver, graded module, equivalence of categories, Littlewood-Richardson coefficients}
  \subjclass[2010]{17B65, 16W50, 16E50, 16G20, 16D90}
\begin{document}


\maketitle

\begin{abstract}
We explore the (noncommutative) geometry of locally simple representations of the diagonal locally finite Lie algebras $\mf{sl}(n^\infty)$, $\mf o(n^\infty)$, and $\mf{sp}(n^\infty)$. 
 Let $\mf g_\infty$ be one of these Lie algebras, and let    $I \subseteq U(\mf g_\infty)$ be the nonzero annihilator of a locally simple $\mf g_\infty$-module.  
We show that for each such  $I$, there is a quiver $Q$ so that locally simple $\mf g_\infty$-modules with annihilator $I$ are parameterised by ``points'' in the ``noncommutative space'' corresponding to the path algebra of  $ Q$.  
Methods of noncommutative algebraic geometry are key to this correspondence.
We classify the quivers that arise and relate them to characters of symmetric groups.
\end{abstract}

\tableofcontents

\smallskip

\section{Introduction}\label{INTRO}

Throughout, we work over a fixed  algebraically closed field $\FF$ of characteristic 0. 

We begin with an  example. 
The Lie algebra $\mf{sl}(n^\infty)$ is the direct limit
\[ \mf{sl}(n) \to \mf{sl}(n^2) \to \mf{sl}(n^3) \to \cdots, \]
where the maps at each stage send $M$ to 
$\operatorname{diag}(\underbrace{M, \dots, M}_{n})$.
A {\em natural representation} of $\mf{sl}(n^\infty)$ is a direct limit of the natural representations of each $\mf{sl}(n^k)$.
It is a direct limit of finite dimensional simple representations and so is {\em locally simple}.

For the Lie algebra $\mf{sl}(\infty)$, defined as the limit of the ``top left corner'' maps
\[ \mf{sl}(k) \to \mf{sl}(k+1) \to \mf{sl}(k+2) \to \cdots ,\]
there is a unique natural representation, which is $\FF^\infty$, regarded as an infinite column \cite{PSe1}.
However, for $\mf{sl}(n^\infty)$ the situation is more interesting.  In fact, we have:
\begin{proposition}\label{iprop:pointseq}
 Let $n \in \ZZ_{n \geq 2}$ and let $W = \FF^n$.  Natural representations of $\mf{sl}(n^\infty)$ are parameterised by tails of sequences of points $p_1, p_2, \dots \in \PP(W)$ --- that is, by equivalence classes of sequences of points under the relation $(p_\bullet) \sim (q_\bullet)$ if $p_k = q_k$ for $k \gg 0$.
\end{proposition}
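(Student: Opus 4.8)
The idea is to unwind the definition of the direct limit and track how a natural representation $V$ of $\mf{sl}(n^\infty)$ decomposes at each finite stage. Write $\mf g_k = \mf{sl}(n^k)$, so $V = \varinjlim V_k$ where $V_k$ is the natural representation of $\mf g_k$, i.e.\ $V_k \cong \FF^{n^k} = W^{\otimes k}$ in a way compatible with the embedding $\mf g_k \hra \mf g_{k+1}$, $M \mapsto \operatorname{diag}(M,\dots,M)$. The key observation is that under this embedding $\FF^{n^{k+1}} = \FF^{n^k} \otimes \FF^n$ as a $\mf g_k$-module: the action of $\mf g_k$ on $\FF^{n^{k+1}}$ is via $\operatorname{diag}(M,\dots,M) = M \otimes \id_{\FF^n}$, so $V_{k+1} \cong V_k \otimes W^{(k)}$ for an $n$-dimensional trivial-at-stage-$k$ multiplicity space $W^{(k)} \cong W$. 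Thus specifying how $V_k$ sits inside $V_{k+1}$ as a $\mf g_k$-submodule amounts to specifying a line in $W^{(k)} \cong W$, i.e.\ a point $p_{k} \in \PP(W)$: concretely $V_k \hookrightarrow V_k \otimes W^{(k)}$ is (up to the $\mf g_k$-module automorphisms of $V_k$, which act by scalars) the map $v \mapsto v \otimes w_k$ for some $0 \neq w_k \in W^{(k)}$, and the ambiguity in $w_k$ is exactly rescaling, so the datum is the point $[w_k] = p_k \in \PP(W)$.

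From this, the construction in both directions is essentially formal. Given a sequence $(p_k)_{k\ge 1}$ of points of $\PP(W)$, choose representatives $w_k$ and build $V = \varinjlim\big(W \xrightarrow{\,v \mapsto v\otimes w_1\,} W\otimes W \xrightarrow{\,v\mapsto v\otimes w_2\,} W^{\otimes 3} \to \cdots\big)$; this is a natural representation by construction. Conversely, given a natural representation $V = \varinjlim V_k$, the inclusion maps $V_k \hra V_{k+1}$ are $\mf g_k$-linear and injective, hence by the tensor decomposition above and Schur's lemma (each $V_k$ is a simple $\mf g_k$-module, so $\Hom_{\mf g_k}(V_k, V_k\otimes W^{(k)}) \cong W^{(k)}$) they determine a well-defined point $p_k \in \PP(W^{(k)}) = \PP(W)$, and $V$ is recovered from $(p_k)$ up to isomorphism.

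The remaining point, which is where the ``tails of sequences'' enters, is to pin down exactly when two sequences give isomorphic representations. Here I expect the main subtlety: an isomorphism $V \cong V'$ of $\mf{sl}(n^\infty)$-modules need not restrict to isomorphisms $V_k \cong V'_k$ of the chosen finite-dimensional pieces for all $k$ — it need only do so eventually, after passing far enough along the direct system (since any element of $V$ lies in some $V_k$, and an isomorphism matches up the filtrations only cofinally). So an isomorphism forces $p_k = p'_k$ (as points of $\PP(W)$, using that $\mf g_k$-isomorphisms of natural representations are unique up to scalar) only for $k \gg 0$, and conversely if $p_k = p'_k$ for $k \ge N$ then the two direct systems agree from stage $N$ on and hence have isomorphic limits. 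This gives exactly the equivalence relation $(p_\bullet) \sim (q_\bullet)$ iff $p_k = q_k$ for $k \gg 0$. The one thing to be careful about is the direction of the maps $v \mapsto v \otimes w_k$ versus $v \mapsto w_k \otimes v$ and the identification $W^{\otimes(k+1)} = W^{\otimes k}\otimes W$; fixing this convention consistently is routine but must be done to make ``$p_k \in \PP(W)$'' unambiguous.
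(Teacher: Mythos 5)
Your overall strategy matches the paper's: identify $\Hom_{\mathfrak g_k}(V_k,V_{k+1})$ with the $n$-dimensional multiplicity space $W$, record the inclusion $V_k\hra V_{k+1}$ as a point $p_k\in\PP(W)$, and then show the assignment descends to a bijection between isomorphism classes and tails of point sequences. The concrete observation that $V_{k+1}\cong V_k\otimes W$ as a $\mathfrak g_k$-module (so that $\Hom_{\mathfrak g_k}(V_k,V_{k+1})\cong W$ by Schur) is exactly the specialisation of the paper's general $H_k = \Hom_{\mathfrak g_k}(X_k, X_{k+1})$ to natural representations, and your ``forward'' direction (sequences agreeing eventually $\Rightarrow$ isomorphic limits) is correct: one builds the explicit commuting diagram by rescaling by $1, a_1, a_1a_2,\dots$ as in the paper's Proposition~\ref{sequences}.

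The gap is in the converse: you assert that an isomorphism $V\cong V'$ forces $p_k = p'_k$ for $k\gg 0$, justifying this only with the remark that ``$\mathfrak g_k$-isomorphisms of natural representations are unique up to scalar.'' That observation would suffice if the isomorphism $\Theta\colon V\to V'$ restricted to $\mathfrak g_k$-isomorphisms $V_k\to V'_k$ for large $k$, but it does not: $\Theta(V_k)$ is an $n^k$-dimensional simple $\mathfrak g_k$-submodule of $V'$ that sits inside some $V'_m$ with $m\geq k$, and $V'_m$ contains the natural $\mathfrak g_k$-representation with multiplicity $n^{m-k}>1$, so $\Theta(V_k)$ need not be $V'_k$. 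All the isomorphism gives you is a commuting ladder of maps $\phi_\ell\colon V_{i_\ell}\to V'_{j_\ell}$ and $\phi'_\ell\colon V'_{j_\ell}\to V_{i_{\ell+1}}$ along cofinal subsequences (as in the paper's Lemma~\ref{isomorphic}), and you must still recover the individual points $p_k$ for the intermediate $k$. The missing ingredient is the injectivity of the composition map
\[
\PP(H_{i})\times\PP(H_{i+1})\times\cdots\times\PP(H_{j-1})\;\longrightarrow\;\PP\bigl(\Hom_{\mathfrak g_i}(V_i, V_j)\bigr),
\]
which in your tensor picture is precisely the Segr\'e embedding $\PP(W)^{\times(j-i)}\to\PP(W^{\otimes(j-i)})$: the fact that $\phi'_\ell\circ\phi_\ell$ is a pure tensor then pins down each $[w_k]$ for $i_\ell\le k < i_{\ell+1}$, and induction along the ladder gives $[w_k]=[w'_k]$ for all $k\ge j_1$. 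The paper's proof of Proposition~\ref{sequences} does exactly this; without the Segr\'e injectivity step, your converse direction remains an assertion rather than an argument.
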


In noncommutative algebraic geometry, it is well-known that tails of point sequences in $\PP(W)$ parameterise point modules over the tensor (free) algebra $T(W^*)$.  
Recall that if $R$ is an $\NN$-graded $\FF$-algebra, a {\em point module} for $R$ is a graded cyclic $R$-module $M = \bigoplus_{k \geq 0} M_k$  with $\dim M_k = 1$ for all $k$.
Let $\Qgr{R}$ be the quotient category 
\[ \{ \mbox{ graded left $R$-modules }\} / \{ \mbox{ finite dimensional modules }\}.\]
The precise statement is that tails of point sequences in $\PP(W)$ parameterise isomorphism classes of point modules in $\Qgr{T(W^*)}$.

Proposition~\ref{iprop:pointseq} and the discussion above suggest a functorial relationship between  $\Qgr{T(W^*)}$ and a subcategory of $\operatorname{Rep} (\mf{sl}(n^\infty)) = U(\mf{sl}(n^\infty)) \lMod$.  
We show that there is such a relationship, and generalize it to other locally simple representations.
Our results apply to representations of the Lie algebras $\mf{sp}(n^\infty)$ and $\mf{so}(n^\infty)$ as well (these Lie algebras are defined similarly to $\mf{sl}(n^\infty)$).  

Our main theorem is:
\begin{theorem}\label{ithm:catequiv}
 Let $\mf g_\infty$ be one of the Lie algebras $\mf{sl}(n^\infty)$, $\mf{sp}(n^\infty)$, or $\mf{so}(n^\infty)$, and let $I$ be a nonzero annihilator of a locally simple $\mf g_\infty$-module.
Then there is a finite quiver $Q$ so that the categories $U(\mf g_\infty)/I \lMod$ and $\Qgr{\FF Q}$ are equivalent, where $\FF Q$ is the path algebra of $Q$ with the path length grading.
This category is also equivalent to the graded module category of the Leavitt path algebra of $Q$.  Moreover, under this equivalence locally simple representations of $\mf g_\infty$ with annihilator $I$ are in bijection with point modules in $\Qgr{\FF Q}$.
\end{theorem}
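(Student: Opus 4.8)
The plan is to build the equivalence $U(\mf g_\infty)/I\lMod \simeq \Qgr{\FF Q}$ in stages, with the quiver $Q$ encoding the combinatorics of how the locally simple modules with annihilator $I$ are built up as direct limits. The first step is to understand the structure of $I$: by the classification of primitive (or, more precisely, annihilator) ideals of diagonal locally finite Lie algebras, $U(\mf g_\infty)/I$ should be describable as a direct limit of matrix algebras, i.e.\ a (graded) union of finite-dimensional semisimple blocks indexed by a Bratteli-type diagram. The vertices of $Q$ at ``level $k$'' will be the simple $\mf g_{n^k}$-summands appearing, and the arrows of $Q$ will record the branching multiplicities under the diagonal embedding $\mf g_{n^k}\hra\mf g_{n^{k+1}}$ --- these multiplicities are exactly Littlewood--Richardson (or the orthogonal/symplectic analogues, Newell--Littlewood) coefficients, which is where the ``characters of symmetric groups'' of the abstract enter. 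So the first task is: show $Q$ is finite (only finitely many simple summands can occur at each level compatible with a fixed $I$, and the pattern stabilizes), and set up the dictionary between finite-length $U(\mf g_\infty)/I$-modules and the appropriate category of graded $\FF Q$-modules.

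Next I would produce the functor and its quasi-inverse. The natural candidate goes $M \mapsto \bigoplus_k \Hom_{\mf g_{n^k}}(V_{\lambda_k}, M)$ or some such ``layer-by-layer decomposition'' functor, landing in representations of the path algebra because an arrow $v\to w$ of $Q$ (a branching $V_w\hra \operatorname{Res} V_v$, or its dual) induces the structure maps of $M$ as a $\mf g_\infty$-module. The key computation is that the path algebra $\FF Q$ with its path-length grading acts on this graded object and that, after passing to the quotient category (killing finite-dimensional modules, which corresponds to forgetting finitely many initial layers --- the ``tail'' phenomenon already visible in Proposition~\ref{iprop:pointseq}), this is an equivalence. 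Here I expect to lean on standard noncommutative projective geometry: $\Qgr$ of a path algebra is well understood, and $\FF Q$ is a (coherent, if not noetherian) graded algebra whose $\Qgr$ category I can identify with graded modules over the Leavitt path algebra $L_\FF(Q)$ via the standard localization/universal-localization result relating path algebras and Leavitt path algebras in $\Qgr$. The equivalence with $L_\FF(Q)\lGr / (\text{f.d.})$ is then a citation or a short argument, not the heart of the matter.

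The final point --- the bijection between locally simple $\mf g_\infty$-modules with annihilator $I$ and point modules in $\Qgr{\FF Q}$ --- is then essentially forced: under the layer decomposition functor, a module $M$ is locally simple iff each layer $\Hom_{\mf g_{n^k}}(V_{\lambda_k}, M)$ is at most one-dimensional and the nonzero layers are connected by a single infinite path in $Q$ (choosing at each level which simple summand survives), which is precisely the data of a point module $\bigoplus M_k$ with $\dim M_k=1$; conversely a point module picks out an infinite path, hence a compatible system of simple $\mf g_{n^k}$-modules, hence a locally simple direct limit. One must check that isomorphism of locally simple modules matches isomorphism of point modules in $\Qgr$ --- again the ``agree for $k\gg 0$'' equivalence --- which recovers and generalizes Proposition~\ref{iprop:pointseq} (the case $\FF Q = T(W^*)$, $Q$ a single vertex with $\dim W$ loops).

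The main obstacle, I expect, is not the formal category theory but establishing that the combinatorial data genuinely assembles into a \emph{finite} quiver and that the functor is well-defined and exact --- in particular, controlling the annihilator: one must show that fixing $I$ pins down the branching diagram up to finitely many levels (so that in $\Qgr$ one gets a finite quiver), and that the $U(\mf g_\infty)$-module structure is faithfully captured by (and can be rebuilt from) the $\FF Q$-action on layers. This requires a careful analysis of which pairs (simple $\mf g_\infty$-module, its annihilator) are possible, presumably using the description of annihilator ideals of $\mf g_\infty$ and a stabilization argument for Littlewood--Richardson-type branching; getting the duality right (covariant vs.\ contravariant, and whether arrows should point from level $k$ to level $k+1$ or backwards) so that composition of paths matches composition of structure maps in the direct limit is the fiddly part that must be done with care.
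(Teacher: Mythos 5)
You have the right architecture: identify $U(\mf g_\infty)/I$ as a stationary ultramatricial algebra whose Bratteli diagram ``unrolls'' a finite quiver $Q$ encoding branching multiplicities, invoke Smith's theorem that $\Qgr{\FF Q}\simeq S(Q)\lMod$ for the ultramatricial algebra $S(Q)=\End_{\Qgr{\FF Q}}(\sO)^{\op}$, and obtain the Leavitt path algebra statement as a corollary. But your candidate functor $M\mapsto\bigoplus_k\Hom_{\mf g_{n^k}}(V_{\lambda_k},M)$ does not do what you claim. For a locally simple $M=\varinjlim M_j$, each $\Hom_{\mf g_{n^k}}(V_{\lambda_k},M)$ is typically \emph{infinite}-dimensional, because the multiplicity of any constituent of $\restr{M}{\mf g_{n^k}}$ grows without bound as you go up the limit --- so this is emphatically not a point module. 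The one-dimensional layers live on the $\Qgr{\FF Q}$ side (where $\dim M_k = 1$), not as $\Hom$-spaces against simple $\mf g_{n^k}$-modules. The paper handles this with a two-step composite $\Theta = \Psi\circ\Phi$: a Morita equivalence $\Phi\colon U(\mf g_\infty)/I\lMod\to S(Q)\lMod$ followed by Smith's canonical $\Psi\colon S(Q)\lMod\to\Qgr{\FF Q}$. Crucially, $\Phi$ is not constructed by writing down a functor; it is produced abstractly from Elliot's $K$-theoretic classification of ultramatricial algebras --- two stationary ultramatricial algebras with the same associated quiver have isomorphic ordered $K_0$ and hence are Morita equivalent (Proposition~\ref{prop:Morita}). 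Your direct ``layer-by-layer'' route would require choosing a coherent system of Morita bimodules at every level, and it is precisely this compatibility that the $K$-theoretic argument packages; moreover the resulting $\Phi$ is genuinely non-canonical (Remark~\ref{rem:two}), so there is no naively ``natural'' functor to write down. This is a real missing idea, not a formality.

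A second gap: you correctly flag that one must check that isomorphism of locally simple modules matches tail-equivalence of point sequences, but give no argument, and this step is not automatic. An abstract isomorphism $X\cong Y$ of direct limits yields a commuting ladder in which the rungs are compositions of structure maps over several levels; to deduce that the individual classes $[x_k]$ and $[y_k]$ in $\PP(H_k)$ agree for $k\gg 0$ one needs injectivity of the composition maps
$\PP(\Hom_{\mf g_i}(X_i,X_j))\times\PP(\Hom_{\mf g_j}(X_j,X_k))\to\PP(\Hom_{\mf g_i}(X_i,X_k))$,
which the paper establishes via a Segre-embedding computation in coordinates (Proposition~\ref{sequences}). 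Without that, the bijection with point modules is not proved.
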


If $\mf g_\infty = \mf{sl}(n^\infty)$ and $I$ is the annihilator of a natural representation, then $Q$ is the 1-vertex $n$-loop quiver 
\[ 
\begin{array}{c}
\begin{tikzpicture}[bend angle=20, looseness=1]
\node (a) at (-1,0) {$\bullet$};
 \draw[->]  (a) edge [in=55,out=120,loop,looseness=4] node[gap] {$\scriptstyle n$}  (a);
\end{tikzpicture}.
\end{array}
\]
Thus $\FF Q \cong \FF\ang{x_1, \dots, x_n} \cong T(W^*)$.  Since all natural representations have the same annihilator, we recover Proposition~\ref{iprop:pointseq}.
In the body of the paper, we also give a version of Proposition~\ref{iprop:pointseq} for more general locally simple representations.

For each of the Lie algebras above, we give explicit formul\ae \ for the quivers $Q$ which occur; they come in finitely many infinite families.
There is a beautiful combinatorics associated to one of these infinite families, which we call the ``Type I" quivers. These quivers arise from branching laws involving Littlewood-Richardson coefficients, and can be interpreted entirely in terms of the symmetric group $S_n$. Using induction and restriction functors in the representation theory of $S_n$, we are able to provide  
a closed-form description of the Type I quivers in terms of the character table of the symmetric group $S_n$.

For example, one of the Type I quivers for $\mathfrak{sl}(n^{\infty})$ is the following:
\beq\label{iquiver}
\begin{tikzpicture}[bend angle=20, looseness=1]
\node (a) at (-1,0) {$(2)$};
\node (b) at (1,0)  {$(1,1)$};
\draw[->,bend left] (b) to node[gap] {$\scriptstyle a$} (a);
\draw[->,bend left] (a) to node[gap] {$\scriptstyle a$} (b);
\ \draw[->]  (a) edge [in=55,out=120,loop,looseness=4] node[gap] {$\scriptstyle b$}  (a);
 \draw[->]  (b) edge [in=55,out=120,loop,looseness=4] node[gap] {$\scriptstyle b$}  (b);
\end{tikzpicture} 
, \quad \mbox{where $a=\binom{n-1}{2}$, $b = \binom{n+1}{2}$.}
\eeq
The adjacency matrix of this quiver is 
\[ 
\begin{pmatrix} \frac{n^2+n}{2} & \frac{n^2-n}{2} \\ \frac{n^2-n}{2} & \frac{n^2+n}{2} \end{pmatrix}
=
\begin{pmatrix} 1& 1 \\ 1 & -1 \end{pmatrix}
\begin{pmatrix} n^2 &  0 \\ 0 & n \end{pmatrix}
\begin{pmatrix} 1& 1 \\ 1 & -1 \end{pmatrix}^{-1}.
\]
Note that $ \left( \begin{array}{cc} 1 & 1 \\ 1 & -1 \end{array} \right)$  is the character table for $S_2$.

We use the Type I quivers to give a new proof of simplicity of many factors of $U(\mf{sl}(n^\infty))$. We also show that the remaining quivers for $\mathfrak{sl}(n^{\infty})$, as well as the quivers associated to $\mathfrak{sp}(n^{\infty})$ and $\mathfrak{so}(n^{\infty})$, can be described in terms of the Type I quivers. 

We were initially motivated to study diagonal locally finite Lie algebras, as opposed to the finitary Lie algebras such as $\mf{sl}(\infty)$, because diagonal Lie algebras have infinitely many non-isomorphic natural representations which are parameterized by a noncommutative space, while $\mf{sl}(\infty)$ has only one natural representation, as explained above. Our work relies on results of Penkov and Petukhov in \cite{PP}, which shows (counter to intuition) that the ideal structure of the universal enveloping algebras of diagonal Lie algebras is simpler to understand than in the finitary case. We explain the relevant results in Section \ref{BACKGROUND}. Therefore, while the ideal lattice of the universal enveloping algebra $U(\mathfrak{g}_{\infty})$ is simpler when $\mf{g}_{\infty}$ is diagonal and not finitary, the set of modules which have the same annihilator $I$, given by the category $U(\mf g_\infty)/I \lMod$, is more interesting in terms of geometry.

The paper is organised as follows.  In Part~\ref{PART1} (Sections~2--5) we give background on locally finite Lie algebras and their representation theory.
We relate this representation theory to quivers in Section~\ref{QUIVER}.
In Part~\ref{PART2} (Sections~6--8) we construct category equivalences between representations of a locally finite Lie algebra and quiver representations.  We prove most of Theorem~\ref{ithm:catequiv} in Section~\ref{QGR} and generalise Proposition~\ref{iprop:pointseq} to complete the proof of Theorem~\ref{ithm:catequiv} in Section~\ref{POINTS}.
Finally, in Part~\ref{PART3} (Sections~9--11) we analyze the particular quivers that occur.

\vspace{5pt}

\noindent {\bf Acknowledgements:}  This project began with a visit by the first author to the University of Edinburgh in February 2014, which was funded by an AMS-Simons travel grant of the second author.  We thank them for their support.  
In addition, we thank Ken Goodearl, Ivan Penkov, Alexey Petukhov, and  Paul Smith for several useful conversations. We particularly thank David Speyer for recognising the relationship between eigenvectors of Type I matrices and characters of $S_n$.  

When this paper was in preparation, we learnt that Cody Holdaway had independently obtained some of our results on point spaces in his Ph.D. thesis.  We thank him for sharing his unpublished manuscript \cite{Ho} with us.

We thank the referees of our manuscript for their careful reading and helpful comments. 

In addition, we would like to thank William Stein for his tireless work over many years to develop the open-source Sage mathematical software.  The matrices in Section~\ref{APPENDIX} were computed in SageMathCloud, which greatly aided in the discovery of the combinatorial results in Part~\ref{PART3}.

\part{Irreducible representations of diagonal locally finite Lie algebras}\label{PART1}

\section{Background}\label{BACKGROUND}

A Lie algebra is {\em locally finite dimensional}, or {\em locally finite}, if every finitely generated subalgebra is finite dimensional. A locally finite Lie algebra $L$ is called {\em locally simple} if $L$ is the direct limit of finite dimensional simple subalgebras. The Lie algebras that we will consider are all locally simple. In this section, $\mathfrak{g}_{\infty} = \varinjlim \mathfrak{g}_k$ will denote a locally simple Lie algebra. 
Note that a locally simple Lie algebra is automatically simple.

A key tool in the representation theory of locally finite Lie algebras is Zhilinskii's technique of coherent local systems.  
A \textit{coherent local system of modules} (shortened as \textit{c.l.s.}) for $\mathfrak{g}_{\infty} = \varinjlim \mathfrak{g}_k$ is a collection of sets $\{ C_k \}_{k \in \mathbb{N}}$ where each $C_k$ consists of isomorphism classes of finite dimensional irreducible modules of $\mathfrak{g}_k$.  We require that  a c.l.s. is stable under restriction:  that is, 
the modules in $C_k$ are the irreducible summands of the restrictions of the  modules in $C_l$ to $\mathfrak{g}_k$ for each $l \ge k$. 

If $C$ is a c.l.s., then $\cap_{z \in C_k} \Ann_{U(\mathfrak{g}_k)}(z) \subseteq \cap_{z \in C_l} \Ann_{U(\mathfrak{g}_l)}(z)$ for any $l \ge k$. Therefore setting $I_k = \cap_{z \in C_k} \Ann_{U(\mathfrak{g}_k)}(z)$, we obtain an ideal associated to C by setting $I(C) = \cup_k I_k$. This ideal, if non-zero, will be \textit{integrable}, meaning for any $k$, $I \cap U(\mathfrak{g}_k) = I_k$ is the intersection of ideals of finite codimension in $U(\mathfrak{g}_k)$. 

For any ideal $I \subseteq U(\mathfrak{g}_{\infty})$, we can associate to $I$ a coherent local system $C(I)$ by setting
\[ C(I)_k = \{ z \in \Irr(\mathfrak{g}_k) \st I \cap U(\mathfrak{g}_k) \subseteq \Ann_{U(\mathfrak{g}_k)}(z)\}. \]

Note that if $I$ is integrable, then $I$ is the annihilator of $C(I)$, that is, $I(C(I)) = I$ (\cite{PP}). A c.l.s. $C$ is of \textit{finite type} if $|C_k| < \infty$ for all $k$. If $C$ has finite type, then $I(C)$ is of locally finite codimension, meaning $I \cap U(\mathfrak{g}_k) = I_k$ is of finite codimension in $U(\mathfrak{g}_k)$ for all $k$. Any ideal of locally finite codimension is automatically integrable. 

\begin{proposition}\label{1to1}
There is a 1-1 correspondence between non-zero ideals $I \subseteq U(\mathfrak{g}_{\infty})$ of locally finite codimension and coherent local systems $C$ of finite type.
\end{proposition}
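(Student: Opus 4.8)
The plan is to show that the two assignments $I \mapsto C(I)$ and $C \mapsto I(C)$ recalled above restrict to mutually inverse bijections between the non-zero ideals of $U(\mathfrak{g}_\infty)$ of locally finite codimension and the coherent local systems of finite type.

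First I would check that each map sends its source into the claimed target. If $I$ is a non-zero ideal of locally finite codimension, then (since $C(I)$ is a coherent local system, as noted above) for each $k$ the algebra $U(\mathfrak{g}_k)/I_k$ with $I_k := I \cap U(\mathfrak{g}_k)$ is finite-dimensional, hence has only finitely many simple modules; as the modules in $C(I)_k$ are precisely the simple $U(\mathfrak{g}_k)/I_k$-modules, $C(I)$ has finite type. Conversely, if $C$ is a coherent local system of finite type, then $I_k := \bigcap_{z \in C_k} \Ann_{U(\mathfrak{g}_k)}(z)$ is a \emph{finite} intersection of cofinite ideals, hence cofinite, and in particular non-zero because $U(\mathfrak{g}_k)$ is infinite-dimensional; stability of $C$ under restriction then gives $I(C) \cap U(\mathfrak{g}_k) = I_k$ for all $k$, so $I(C)$ is a non-zero ideal of locally finite codimension.

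Next I would verify that the two composites are identities. That $I(C(I)) = I$ whenever $I$ is integrable --- in particular whenever $I$ has locally finite codimension --- is exactly the fact from \cite{PP} recalled above, so nothing further is required there. For $C(I(C)) = C$ I would argue at each level that $C(I(C))_k = C_k$, the inclusion $\supseteq$ being immediate from the definitions. For $\subseteq$, note that for $z \in C_k$ the quotient $U(\mathfrak{g}_k)/\Ann_{U(\mathfrak{g}_k)}(z)$ is a matrix algebra (Jacobson density), so $\Ann_{U(\mathfrak{g}_k)}(z)$ is a maximal two-sided ideal, and distinct $z$ give distinct such ideals since a finite-dimensional irreducible module is determined by its annihilator; thus the finitely many ideals $\Ann_{U(\mathfrak{g}_k)}(z)$, $z \in C_k$, are pairwise comaximal, and the Chinese Remainder Theorem identifies $U(\mathfrak{g}_k)/I_k$ with the finite product $\prod_{z \in C_k} U(\mathfrak{g}_k)/\Ann_{U(\mathfrak{g}_k)}(z)$. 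This product is a semisimple algebra whose simple modules are exactly the modules in $C_k$, so any $\mathfrak{g}_k$-irreducible killed by $I_k$ must lie in $C_k$.

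The genuinely hard input is the identity $I(C(I)) = I$, which is black-boxed from \cite{PP}; on the side of the present argument the only step with real content is the inclusion $C(I(C))_k \subseteq C_k$, and I would take care to point out that it uses the finite-type hypothesis essentially: the Chinese Remainder decomposition of $U(\mathfrak{g}_k)/I_k$, and hence the identification of its simple modules with $C_k$, breaks down when $C_k$ is infinite. This is precisely why the correspondence is between ideals of \emph{locally finite} codimension and \emph{finite-type} coherent local systems, rather than between all integrable ideals and all coherent local systems.
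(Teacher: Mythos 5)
Your argument is correct, and it diverges from the paper's at the one step with real content, namely the inclusion $C(I(C))_k \subseteq C_k$. The paper works directly at the ideal level: from $I_k = \bigcap_{z\in C_k}\Ann_{U(\mathfrak g_k)}(z) \subseteq \Ann_{U(\mathfrak g_k)}(y)$, it uses that the right side is a prime ideal to extract a single $z$ with $\Ann_{U(\mathfrak g_k)}(z) \subseteq \Ann_{U(\mathfrak g_k)}(y)$ --- a step that silently needs $|C_k|<\infty$ so that a finite product of the annihilators sits inside $I_k$ --- and then concludes by maximality and uniqueness of the irreducible with a given annihilator. You instead note that the finitely many $\Ann_{U(\mathfrak g_k)}(z)$ are distinct maximal two-sided ideals, hence pairwise comaximal, and invoke the Chinese Remainder Theorem to identify $U(\mathfrak g_k)/I_k$ with a finite product of matrix algebras whose simple modules are exactly $C_k$. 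Both routes hinge on the finite-type hypothesis in the same place, but you are explicit about where and why it is used, which is a genuine improvement in exposition. The CRT route also records the semisimple decomposition of $U(\mathfrak g_k)/I_k$, which the paper itself re-derives later (see equation \eqref{star} in the proof of Theorem~\ref{thm:quivercat}), so your proof doubles as preparation for that argument; the paper's route is a line or two shorter. You are also more careful than the paper in verifying that each assignment lands in its claimed target class, which the paper delegates to the surrounding prose, and you correctly black-box $I(C(I)) = I$ via \cite{PP} exactly as the paper does.
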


\begin{proof}
Suppose we begin with a c.l.s. $C$. We wish to show that $C(I(C)) = C$. Set $I = I(C)$ and recall that $I_k = \cap_{z \in C_k} \Ann_{U(\mathfrak{g}_k)}(z)$. For each $z \in C_k$, we have that $I_k \subseteq \Ann_{U(\mathfrak{g}_k)}(z)$, thus $C_k \subseteq C(I)_k$. On the other hand, suppose $y \in C(I)_k$. Then $I_k = \cap_{z \in C_k} \Ann_{U(\mathfrak{g}_k)}(z) \subseteq \Ann_{U(\mathfrak{g}_k)}(y)$. Since $\Ann_{U(\mathfrak{g}_k)}(y)$ is a primitive, hence prime, ideal of $U(\mathfrak{g}_k)$, we have that $\Ann_{U(\mathfrak{g}_k)}(z) \subseteq \Ann_{U(\mathfrak{g}_k)}(y)$ for some $z \in C_k$. But this implies $\Ann_{U(\mathfrak{g}_k)}(z) = \Ann_{U(\mathfrak{g}_k)}(y)$, since $\Ann_{U(\mathfrak{g}_k)}(z)$ and $\Ann_{U(\mathfrak{g}_k)}(y)$ are both maximal ideals.
Since each primitive ideal is the annihilator of a unique finite dimensional irreducible $\mathfrak{g}_k-$module, we have that $y = z \in C_k$.
\end{proof}

This correspondence is reminiscent of the  correspondence in algebraic geometry between algebraic sets and radical ideals. Note that if $C$ is the empty c.l.s., that is, $C_k = \emptyset$ for all $k$, then $I(C) = U(\mathfrak{g}_{\infty})$. 

The representation theory of locally finite Lie algebras is quite complicated, and it is natural to restrict one's attention to integrable modules. A module $M$ over $\mathfrak{g}_{\infty}$ is \textit{integrable} if for any $m \in M$ and $x \in \mathfrak{g}_{\infty}$, dim Span$_{\mathbb{F}}\{ m, xm, x^2m, \dots \} < \infty$. Any integrable module determines a c.l.s. $C$ by setting 
\[C_k = \{ z \in \Irr(\mathfrak{g}_k) \st \Hom_{\mathfrak{g}_k}(z, \restr{M}{\mf g_k}) \ne 0\}. \]
An integrable module $M$ over $\mathfrak{g}_{\infty} = \varinjlim \mathfrak{g}_k$ is \textit{locally simple} if $M = \varinjlim M_k$, where each $M_k$ is a simple finite dimensional module over $\mathfrak{g}_k$; such a module is also simple.
These modules are related to coherent local systems which are irreducible: A coherent local system $C$ is \textit{irreducible} if $C$ cannot be written as $C' \cup C''$ where $C' \not \subseteq  C''$ and $C'' \not \subseteq C'$. The following result of Zhilinskii  \cite[Lemma~1.1.2 ]{Zh1} (see \cite[Proposition~7.3]{PP} for translation) relates irreducible coherent local systems to prime ideals in $U(\mathfrak{g}_{\infty})$.

\begin{lemma}{\rm (Zhilinskii)}\label{Zh1prime}
If $C$ is an irreducible coherent local system, then $I(C)$ is the annihilator of some locally simple integrable $\mathfrak{g}_{\infty}$-module. In particular, $I(C)$ is primitive and hence prime.
\end{lemma}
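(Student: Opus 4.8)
The plan is to deduce the statement directly from Zhilinskii's lemma together with the machinery set up in this section. First I would recall that an irreducible c.l.s.\ $C$ is, in particular, of finite type: being irreducible, it cannot contain ``too many'' modules at any level, and more carefully, one should check that each $C_k$ is finite --- either this is part of the cited result, or it follows from the fact that an irreducible c.l.s.\ is the c.l.s.\ attached to a single locally simple integrable module (which I address below), and such a module has finitely many $\mf g_k$-constituents at each level because its direct-limit subfactors $M_k$ are finite dimensional. Granting finite type, Proposition~\ref{1to1} applies and gives that $I(C)$ is a nonzero ideal of locally finite codimension with $C(I(C)) = C$.

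Next I would build the locally simple integrable module. The natural candidate is obtained by choosing, for each $k$, a module $z_k \in C_k$ and a compatible system of inclusions $z_k \hookrightarrow z_{k+1}$ along the restriction maps $\mf g_k \hookrightarrow \mf g_{k+1}$; such a compatible chain exists precisely because $C$ is a c.l.s.\ (stability under restriction means every $z_k \in C_k$ appears in the restriction of some $z_{k+1} \in C_{k+1}$, and irreducibility of $C$ is what lets us do this coherently, tracking a single ``branch'' of the restriction tree rather than being forced to split $C$). Setting $M = \varinjlim z_k$ then yields a locally simple integrable $\mf g_\infty$-module, and by construction its associated c.l.s.\ is contained in $C$; because $C$ is irreducible, this containment must in fact be an equality, so the c.l.s.\ of $M$ is exactly $C$.

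It then remains to identify $\Ann_{U(\mf g_\infty)}(M)$ with $I(C)$. On one hand, $\Ann(M) \cap U(\mf g_k)$ annihilates every $\mf g_k$-constituent of $M$, i.e.\ every $z \in C_k$, so $\Ann(M) \cap U(\mf g_k) \subseteq I_k$ and hence $\Ann(M) \subseteq I(C)$. For the reverse inclusion, $I_k$ annihilates each chosen $z_k$, so $I(C) = \cup_k I_k$ annihilates $M = \varinjlim z_k$, giving $I(C) \subseteq \Ann(M)$. Thus $I(C) = \Ann(M)$ is the annihilator of a locally simple integrable module. Finally, to conclude that $I(C)$ is primitive it suffices to note that $M$, being locally simple, is a simple $\mf g_\infty$-module (as already remarked in the text), so its annihilator is by definition primitive, and primitive ideals are prime.

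The main obstacle I anticipate is the coherent choice of the chain $z_k \hookrightarrow z_{k+1}$ from irreducibility --- making precise why irreducibility of $C$ (rather than mere finite type) is exactly the hypothesis that prevents the branching tree of restrictions from forcing a nontrivial decomposition $C = C' \cup C''$, and hence guarantees a single compatible branch whose limit recovers all of $C$. This is really the heart of Zhilinskii's argument; everything else is bookkeeping with the $I_k$'s and Proposition~\ref{1to1}. Since the lemma is attributed to Zhilinskii and its translation to \cite{PP}, I would lean on that reference for this delicate point and present the rest as the routine verification sketched above.
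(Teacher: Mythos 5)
The paper does not prove Lemma~\ref{Zh1prime}: it is quoted directly from Zhilinskii \cite[Lemma~1.1.2]{Zh1} with \cite[Proposition~7.3]{PP} as a translation reference, so there is no in-paper argument to compare against. Assessing your proposal on its own merits, the opening claim that an irreducible c.l.s.\ is automatically of finite type is both false and circular: you justify it by appealing to the conclusion of the lemma itself, and even granting that, the deduction fails because the c.l.s.\ of a locally simple $M$ collects constituents of $\restr{M_l}{\mf g_k}$ over all $l \geq k$, which need not be finite --- indeed Remark~\ref{rem:three} notes that in the finitary case prime ideals can fail to have finite-type c.l.s. Fortunately this claim is never used later (you invoke Proposition~\ref{1to1} only to conclude $C(I(C)) = C$, which plays no further role), so it can simply be dropped.

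The serious gap is in your treatment of the chain. You observe correctly that stability under restriction furnishes a compatible chain $z_k \hookrightarrow z_{k+1}$ inside $C$, but the claim that the c.l.s.\ of $M = \varinjlim z_k$ ``must equal $C$ by irreducibility'' is a non-sequitur: irreducibility forbids $C$ from splitting as a union of incomparable sub-c.l.s.'s, not from containing a proper sub-c.l.s. Concretely, take $C = \mathcal{C}^2_{1,1}$, so $C_k = \{V^{n^k}_{(1),(1)},\, V^{n^k}_{(0),(0)}\}$; the chain $z_k = V^{n^k}_{(0),(0)}$ of trivial modules is perfectly compatible and lies in $C$, yet $M$ is then the trivial $\mf g_\infty$-module, whose c.l.s.\ is the proper sub-c.l.s.\ $\{V^{n^k}_{(0),(0)}\}$ and whose annihilator (the augmentation ideal) strictly contains $I(C)$. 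Without the equality $\text{c.l.s.}(M) = C$ your inclusion $\Ann(M) \subseteq I(C)$ also collapses, since $\Ann(M) \cap U(\mf g_k)$ only annihilates the constituents actually appearing in $\restr{M}{\mf g_k}$, not all of $C_k$. What Zhilinskii actually shows is that a carefully chosen chain exists whose upward restrictions sweep out all of $C$, and establishing this uses irreducibility in a genuinely combinatorial way over the branching tree; you correctly flag this as the crux, but the heuristic you substitute for it does not hold.
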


In \cite{Zh1}, Zhilinskii also showed that any c.l.s. $C$ can be written as a finite union $C = \bigcup_{i=1}^r C_i$, where each $C_i$ is maximal among the irreducible c.l.s's contained in $C$.  We say that  $C_1, \dots, C_r$  are the {\em irreducible components} of $C$. From this we have the following:

\begin{lemma} \label{lem:primeprim}
{\rm(\cite[Proposition 7.7]{PP})}
Let $\mathfrak{g}_{\infty}$ be a locally simple Lie algebra. An integrable ideal of $U(\mathfrak{g}_{\infty})$ is prime if and only if it is primitive.
\end{lemma}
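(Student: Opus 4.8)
The plan is to prove both implications. For "primitive implies prime," this is standard: the annihilator of a simple module is always a (left-)primitive, hence prime, ideal, and since $U(\mf g_\infty)$ is a direct limit of the Noetherian enveloping algebras $U(\mf g_k)$, no complications arise. So the content is in the converse: an integrable prime ideal is primitive.

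For the forward direction, let $I \subseteq U(\mf g_\infty)$ be an integrable prime ideal. First I would form the associated coherent local system $C = C(I)$, and use integrability together with the remark just before Proposition~\ref{1to1} that $I = I(C)$ (this is the statement $I(C(I)) = I$ for integrable $I$, attributed to \cite{PP}). Next, apply Zhilinskii's decomposition: write $C = \bigcup_{i=1}^r C_i$ as the union of its irreducible components $C_1, \dots, C_r$. The key step is to show that $I = I(C)$ forces $C$ itself to be irreducible, i.e. $r = 1$. Once we know that, Lemma~\ref{Zh1prime} (Zhilinskii) immediately gives that $I(C)$ is the annihilator of a locally simple integrable module, hence primitive, and we are done.

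To show $r = 1$, I would translate the union $C = \bigcup_{i=1}^r C_i$ into an algebraic statement about the ideal $I = I(C)$. The natural candidate is the relation
\[
I(C) = I\Bigl(\bigcup_{i=1}^r C_i\Bigr) = \bigcap_{i=1}^r I(C_i),
\]
which should follow levelwise: at each $k$, the modules appearing in $C_k = \bigcup_i (C_i)_k$ are exactly those annihilated by $\bigcap_i I(C_i)_k$, since an intersection of (finitely many) finite-codimension ideals of $U(\mf g_k)$ cuts out precisely the union of the corresponding finite sets of simple modules — the same maximality/primeness argument used in the proof of Proposition~\ref{1to1}. Now each $I(C_i)$ is prime by Lemma~\ref{Zh1prime}. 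So $I$ is a prime ideal equal to a finite intersection $\bigcap_{i=1}^r I(C_i)$ of prime ideals; a standard argument (if $\prod_i I(C_i) \subseteq \bigcap_i I(C_i) = I$ and $I$ is prime, then $I(C_j) \subseteq I$ for some $j$, whence $I = I(C_j)$ by the reverse inclusion) shows $I = I(C_j)$ for a single $j$. But then $C = C(I) = C(I(C_j)) = C_j$ is irreducible, since $C_j$ is irreducible and $C(I(C_j)) = C_j$ by the c.l.s.-to-ideal correspondence applied to the locally finite (or at least integrable) setting. This gives $r=1$, as required.

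The main obstacle I anticipate is the bookkeeping around the identity $I(C' \cup C'') = I(C') \cap I(C'')$ and, relatedly, ensuring that $C(I(C_j)) = C_j$ for an irreducible component: the subtlety is that $I$ is only assumed \emph{integrable}, not of locally finite codimension, so Proposition~\ref{1to1} does not apply verbatim and one must instead invoke the more general fact $I(C(I)) = I$ for integrable $I$ from \cite{PP}, combined with Zhilinskii's structural results, rather than the clean finite-type correspondence. Everything else — the direct-limit argument for "primitive implies prime," and the prime-avoidance step extracting a single component — is routine.
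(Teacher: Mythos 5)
Your argument is correct and is essentially the paper's: the paper gives no proof of its own for this lemma (it cites \cite[Proposition~7.7]{PP} and notes the proof extends to any locally simple Lie algebra), and the intended argument --- write $C(I)=\bigcup_{i=1}^r C_i$ via Zhilinskii, use $I(C_1)\cdots I(C_r)\subseteq I(C)=I\subseteq I(C_i)$ together with primality of $I$ to get $I=I(C_j)$, then apply Lemma~\ref{Zh1prime} --- is exactly what you do, and is the same argument the paper itself spells out in the proof of Proposition~\ref{prop:correspondence}. The only remark is that your final detour through $C(I(C_j))=C_j$ (to conclude that $C$ itself is irreducible) is unnecessary: once $I=I(C_j)$ with $C_j$ irreducible, Lemma~\ref{Zh1prime} already gives that $I$ is primitive, so the finite-type subtlety you flag as the main obstacle never actually arises.
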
 
(Note that while this proposition is stated in \cite{PP} as holding only for the finitary case, the same proof works for an arbitrary locally simple Lie algebra.)

We now restrict our attention to {\em diagonal} locally simple Lie algebras. A locally simple Lie algebra $\mathfrak{g_{\infty}} = \varinjlim \mathfrak{g}_k$ is {\em diagonal} if each embedding $\mathfrak{g}_k \hookrightarrow \mathfrak{g}_{k+1}$ is a diagonal embedding, meaning when one considers $\mathfrak{g}_{k+1}$ as a module over $\mathfrak{g}_k$, the only possible irreducible submodules which appear are the standard $\mathfrak{g}_k$-module, its dual, and the trivial 1-dimensional $\mathfrak{g}_k$-module. For example, any diagonal embedding $\phi: \mathfrak{sl}_n \rightarrow \mathfrak{sl}_m$ is of the form:
\[ \phi(A) = diag(\underbrace{A, A, \dots, A}_{l}, \underbrace{-A^T, \dots, -A^T}_{r}, \underbrace{0, \dots, 0}_{z})  \]
where $(l+r)n + z = m$. The triple $(l,r,z)$ is called the {\em signature} of the embedding. Locally simple, diagonal Lie algebras have been classified by Baranov and Zhilinskii in \cite{BZh}. 

Much of the literature on representation theory of locally finite Lie algebras have focused on the three {\em finitary} Lie algebras $\mathfrak{sl}(\infty)$, $\mathfrak{sp}(\infty)$, and $\mathfrak{so}(\infty)$, which result from embeddings of signature $(1,0,1)$. In particular, Penkov, Serganova, and collaborators have explored integrable and tensor representations of finitary Lie algebras in \cite{DPS}, \cite{PSe1}, \cite{PSe2}, and \cite{PSt}.

In contrast, the Lie algebras we study will be $\mathfrak{g}_{\infty} = \mathfrak{sl}(n^{\infty}), \mathfrak{sp}(n^{\infty})$ or $\mathfrak{so}(n^{\infty})$, which all result from embeddings of signature $(n, 0 ,0)$, where $n \ge 2$ is a positive integer. 
(Note that $n$ must be even in the symplectic case.)
They are examples of diagonal, non-finitary Lie algebras. 
When $\mathfrak{g}_{\infty} = \mathfrak{sl}(n^{\infty}), \mathfrak{sp}(n^{\infty})$ or $\mathfrak{so}(n^{\infty})$,  it is shown in \cite[Corollary~3.2]{PP}  that every ideal of $U(\mathfrak{g}_{\infty})$ is of locally finite codimension and therefore integrable.  
It follows from Lemma~\ref{lem:primeprim} that every prime ideal of $U(\mathfrak{g}_{\infty})$ is primitive.

\begin{lemma} \label{lem:infinitetype}
Suppose $\mathfrak{g}_{\infty}$ is a diagonal, non-finitary Lie algebra. Suppose $C$ is a c.l.s. of infinite type. Then $I(C) = (0)$.
\end{lemma}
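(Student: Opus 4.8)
\emph{Proof strategy.} The plan is to argue by contradiction, reducing the statement to a triviality about finite-dimensional algebras via the Penkov--Petukhov description of ideals of $U(\mf g_\infty)$.

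First I would record two bookkeeping facts. (i) \emph{Infinite type propagates upward:} if $|C_l|=\infty$ for some $l$, then $|C_k|=\infty$ for all $k\ge l$, since each module in $C_l$ restricts to a finite direct sum of modules in $C_k$ and, by coherence, every module in $C_k$ arises this way --- so $C_k$ finite would force $C_l$ finite. (ii) Writing $I:=I(C)$ and $I_k:=I\cap U(\mf g_k)=\bigcap_{z\in C_k}\Ann_{U(\mf g_k)}(z)$, one has $I_k\subseteq I_{k+1}$ and $I=\bigcup_k I_k$, so if $I\ne 0$ then $I_k\ne 0$ for all $k\gg 0$. Hence, assuming for contradiction that $I(C)\ne 0$, I may fix a single index $k$ with $|C_k|=\infty$ and $I_k\ne 0$.

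Next I would invoke \cite[Corollary~3.2]{PP}: since $\mf g_\infty$ is diagonal and non-finitary, the nonzero ideal $I$ has locally finite codimension, so $A_k:=U(\mf g_k)/I_k$ is finite dimensional. For each $z\in C_k$ the inclusion $I_k\subseteq\Ann_{U(\mf g_k)}(z)$ makes $z$ a simple $A_k$-module, and pairwise non-isomorphic modules in $C_k$ stay pairwise non-isomorphic over $A_k$. Since a finite-dimensional algebra has only finitely many isomorphism classes of simple modules, $C_k$ is finite --- contradicting the choice of $k$. Therefore $I(C)=(0)$.

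I expect no serious obstacle: the content is entirely in the reduction, and once \cite{PP} supplies finite dimensionality of $A_k$ the conclusion is immediate. The one point needing attention is that \cite[Cor.~3.2]{PP} is quoted above only for $\mf{sl}(n^\infty)$, $\mf{sp}(n^\infty)$, $\mf{so}(n^\infty)$; for a general diagonal non-finitary $\mf g_\infty$ one should cite the corresponding statement in \cite{PP}. A proof avoiding \cite{PP} would require showing directly that the annihilator in $U(\mf g_k)$ of an infinite family of pairwise non-isomorphic finite-dimensional simple $\mf g_k$-modules vanishes, which is delicate --- such a family need not be the set of irreducibles factoring through a quotient group --- so I would not pursue that route.
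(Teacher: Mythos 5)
Your proof is correct and follows essentially the same route as the paper: both arguments reduce to the key fact from \cite[Corollary~3.2]{PP} that a nonzero ideal of $U(\mf g_\infty)$ (for $\mf g_\infty$ diagonal, non-finitary) has locally finite codimension, and then derive a contradiction with $C$ being of infinite type. Where the paper simply cites Proposition~\ref{1to1} to conclude that $C=C(I(C))$ is of finite type, you instead make the underlying mechanism explicit --- that $A_k:=U(\mf g_k)/I_k$ is finite-dimensional and hence has only finitely many isomorphism classes of simple modules --- which is arguably cleaner, since the proof of Proposition~\ref{1to1} only verifies $C(I(C))=C$ and leaves the finite-type direction implicit. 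One very small wording slip in your item (i): for $k\ge l$ it is the modules in $C_k$ that restrict down to finite sums of modules in $C_l$, not the other way around; the conclusion ($C_k$ finite would force $C_l$ finite) is of course still what you want.
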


\begin{proof}
Suppose $I(C) \ne (0)$. Then by \cite[Corollary~3.2]{PP}, $I(C)$ is an ideal of locally finite codimension in $U(\mathfrak{g}_{\infty})$. By Proposition \ref{1to1}, $C(I(C)) = C$ is a c.l.s. of finite type, which is a contradiction. Therefore, $I(C) = (0)$.  
\end{proof}

We therefore have the following:

\begin{proposition}\label{prop:correspondence}
Suppose $\mathfrak{g}_{\infty}$ is a diagonal, non-finitary Lie algebra. There is a 1-1 correspondence between non-zero prime ideals of $U(\mathfrak{g}_{\infty})$ and irreducible coherent local systems of finite type.
\end{proposition}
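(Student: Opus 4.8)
The plan is to bootstrap from Proposition~\ref{1to1} using the special feature of the diagonal non-finitary case. By \cite[Corollary~3.2]{PP} every nonzero ideal of $U(\mathfrak{g}_\infty)$ has locally finite codimension, so the mutually inverse maps $I \mapsto C(I)$ and $C \mapsto I(C)$ of Proposition~\ref{1to1} in fact set up a bijection between \emph{all} nonzero ideals of $U(\mathfrak{g}_\infty)$ and \emph{all} coherent local systems of finite type. It then remains only to check that this bijection carries the (proper) prime ideals onto the irreducible c.l.s., and conversely.

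For the direction ``irreducible $\Rightarrow$ prime'': if $C$ is a nonempty irreducible c.l.s. of finite type, then Lemma~\ref{Zh1prime} gives directly that $I(C)$ is primitive, hence prime. I would then observe $I(C) \neq 0$ because a finite type c.l.s. has $I(C)$ of locally finite codimension, so $I(C) \cap U(\mathfrak{g}_k) \neq 0$ for $k \gg 0$ (as $U(\mathfrak{g}_k)$ is infinite dimensional), and $I(C) \neq U(\mathfrak{g}_\infty)$ since $C(U(\mathfrak{g}_\infty)) = \emptyset \neq C$ while $C(I(C)) = C$ by integrability.

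For the direction ``prime $\Rightarrow$ irreducible'': let $I$ be a nonzero proper prime ideal. It is integrable by \cite[Corollary~3.2]{PP}, so $C(I)$ is of finite type and $I(C(I)) = I$. Write $C(I) = C_1 \cup \cdots \cup C_r$ for its irreducible components (Zhilinskii); each $C_i \subseteq C(I)$ is again of finite type. A direct computation from the definition of $I(-)$, using that there are finitely many components, gives the identity $I(C(I)) = \bigcap_{i=1}^r I(C_i)$, hence $I(C_1)\cdots I(C_r) \subseteq \bigcap_i I(C_i) = I$. Since $I$ is prime, $I(C_j) \subseteq I$ for some $j$; together with $I \subseteq I(C_j)$ this forces $I = I(C_j)$, and applying $C(-)$ (and integrability of $I(C_j)$, valid since $C_j$ has finite type) gives $C(I) = C_j$, which is irreducible. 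Finally, the two maps are mutually inverse on these restricted sets because they already are on the larger sets above.

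I do not expect a serious obstacle: the proof is essentially an assembly of Proposition~\ref{1to1}, \cite[Corollary~3.2]{PP}, Lemma~\ref{Zh1prime}, and Zhilinskii's decomposition into irreducible components. The one step worth stating with care is the last direction, which combines the identity $I\!\left(\bigcup_i C_i\right) = \bigcap_i I(C_i)$ with the noncommutative primeness implication $J_1\cdots J_r \subseteq I \Rightarrow J_j \subseteq I$ for some $j$; one should also note that the degenerate pair (empty c.l.s., unit ideal) sits outside the statement by the conventions ``nonzero prime'' (meaning proper) and ``irreducible c.l.s.'' being nonempty.
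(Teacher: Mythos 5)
Your proof is correct and follows essentially the same route as the paper: both directions rest on Proposition~\ref{1to1}, Lemma~\ref{Zh1prime}, and Zhilinskii's decomposition into irreducible components, with primeness used to select one component. The only cosmetic difference is your argument that $I(C)\neq 0$ (via finite codimension of $I_k$ in the infinite-dimensional $U(\mathfrak{g}_k)$), where the paper instead derives a contradiction by noting that $C(0)$ is the full c.l.s. and hence of infinite type; both are valid.
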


\begin{proof}
If $C$ is an irreducible c.l.s. of finite type, then $I(C)$ is prime by Lemma \ref{Zh1prime}. If $I(C) = (0)$, then $C(I(C)) = C$ is the coherent local system with $C_k = \Irr(\mathfrak{g}_k)$, the full set of finite dimensional irreducible representations of $\mathfrak{g}_k$. This c.l.s. has infinite type, therefore $I(C) \ne (0)$.

On the other hand, suppose $I$ is a prime ideal. Then $I$  has locally finite codimension by \cite{PP} and  thus $C = C(I)$ is a c.l.s. of finite type. By \cite{Zh1}, we can write $C = C_1 \cup \dots \cup C_r$ for maximal, irreducible c.l.s. $C_i$. It follows that $I = I(C) \subseteq \cap_i I(C_i)$ and $I(C_1) \cdots I(C_r) \subseteq I(C) = I$. Since $I$ is prime, $I(C_i) \subseteq I$ for some $i$, in which case $I = I(C_i)$. Therefore, $C(I) = C(I(C_i)) = C_i$ is irreducible. This shows that the 1-1 correspondence in Proposition \ref{1to1} restricts to a 1-1 correspondence between non-zero prime ideals and irreducible c.l.s. of finite type.
\end{proof}

Thus, in order to classify the prime ideals of $U(\mathfrak{g}_{\infty})$, one must classify the irreducible coherent local systems of finite type. This was completed by Zhilinskii in \cite{Zh2}, and we describe the relevant results in Section \ref{CLS}.

To end this section, we recall some basic background from representation theory.
Recall (see \cite{FH}) that every finite dimensional irreducible $\mathfrak{gl}(k)$-module is determined by a nonincreasing sequence of $k$ complex numbers $\alpha_1 \ge \alpha_2 \ge \dots \ge \alpha_k$ where $\alpha_i - \alpha_{i+1} \in \mathbb{Z}$ for $i = 1, 2, \dots, k-1$. Because we are ultimately concerned with representations of $\mf{sl}(k)$, we will only consider highest weight modules where each $\alpha_i$ is an integer. We use $V^{k}_{\alpha}$ to denote this module, which is the unique finite dimensional irreducible module of $\mathfrak{gl}(k)$ with highest weight $\alpha = (\alpha_1, \dots, \alpha_k)$. Choose $p, q \ge 0$ with $p + q \le k$ such that $\alpha_i \ge 0$ for $1 \le i \le p$ and $\alpha_j \le 0$ for $k - q + 1 \le j \le k$. Set $\lambda_i = \alpha_i$ for $1 \le i \le p$ and $\mu_{k-j+1} = -\alpha_j$ for $k - q + 1 \le j \le k$ so $-\mu_1 = \alpha_k$ and $-\mu_q = \alpha_{k - q + 1}$. Then we can rewrite $\alpha = (\alpha_1, \dots, \alpha_k)=(\lambda_1, \lambda_2, \dots, \lambda_p, 0, \dots, 0, -\mu_q, \dots, -\mu_1)$. Therefore, each $\alpha$ is determined by a pair of partitions $\lambda$ and $\mu$ of length $p$ and $q$, respectively, where $\lambda: \lambda_1 \ge \lambda_2 \ge \dots \ge \lambda_p \ge 0$ and $\mu: \mu_1 \ge \mu_2 \ge \dots \ge \mu_q \ge 0$. We use $V^{k}_{\lambda, \mu}$ to denote the finite dimensional irreducible module of $\mathfrak{gl}(k)$ determined by the partitions $\lambda$ and $\mu$.  
By restriction, $V^{k}_{\alpha}$ and $V^{k}_{\lambda, \mu}$ will also be an irreducible highest weight representation of $\mf{sl}(k)$, although two different representations of $\mf{gl}(k)$ may restrict to isomorphic representations of $\mf{sl}(k)$: two $\mf{gl}(k)$-modules of highest weights $\alpha$ and $\beta$ will restrict to isomorphic $\mf{sl}(k)$-modules if and only if $\alpha_i - \beta_i$ is some constant independent of $i$.  

\begin{remark}\label{rem:natrep}
The Lie algebra $\mathfrak{sl}(k)$ has two non-isomorphic $k$-dimensional irreducible representations, each of which can be realized by considering $\mf{sl}(k)$ as a subset of $k \times k$ matrices. We fix the ``natural representation" to be the module given by the action of $\mf{sl}(k)$ via left multiplication on the set of column vectors of dimension $k$, which has highest weight corresponding to the partition $(1)$. \end{remark}

For 
$\mathfrak{sp}(2k)$, every finite dimensional irreducible module is determined by a nonincreasing sequence of nonnegative integers. 
We denote by $U^{2k}_{\lambda}$ the irreducible finite dimensional module of $\mathfrak{sp}(2k)$ of highest weight
 $\lambda = (\lambda_1, \lambda_2, \dots, \lambda_p, 0, \dots, 0)$, or equivalently, determined by the partition $\lambda: \lambda_1 \ge \lambda_2 \ge \dots \ge \lambda_p \ge 0$. 
 
 For $\mf{so}(k)$, the finite dimensional irreducible modules are determined by highest weight vectors which may be half-integers. However, there is a parameterization of finite dimensional irreducible modules over $\mf{so}(k)$ by partitions $\lambda$, where the sum of the first two columns of the Young diagram for $\lambda$ is at most $k$. We denote by $W^{k}_{\lambda}$ the irreducible finite dimensional module of $\mf{so}(k)$ determined by the partition $\lambda$ in this way, consistent with the notation of branching laws in \cite{HTW}.

We will sometimes use uniform notation for the three types of Lie algebras.  
Let $\mf g$ be one of $\mf{gl}, \mf{sl}, \mf{sp}, \mf{so}$ and let $\boldsymbol{\lambda}$ be an integral dominant weight for $\mf{g}(k)$; so if $\mf g = \mf{gl}$ then $\boldsymbol{\lambda} = (\lambda, \mu)$ for partitions $\lambda, \mu$.
By $\V{\lambda}{k}$, we denote the corresponding representation of $\mf{g}(k)$, which may be $U^{k}_\lambda$ or $W^{k}_\lambda$ if $\mf g= \mf{sp}$ or $\mf{so}$.

\section{Branching laws}\label{BRANCH}

To understand representations of our diagonal Lie algebras, it is important to understand the relationships between representations of $\mf g(n^k)$ and $\mf g(n^{k+1})$.
These are given by {\em  branching laws}, and in this section we give  laws which we will use for the rest of the paper. The material in this section is drawn from \cite{Hr, HTW}. Appendix A of \cite{Hr} provides a condensed summary of branching laws. 

Note that all of the branching laws for $\mathfrak{gl}(k)$, $\mf{sl}(k)$,  $\mathfrak{sp}(2k)$, and $\mathfrak{so}(k)$ only hold for $k$ sufficiently large; that is, we give {\em stable} branching laws. Precise conditions on $k$ are provided in \cite{HTW}, but for our purposes it will not matter, since we can always choose $k$ arbitrarily large.

We first give branching laws for $\mf{gl}(k)$.
Let $c^{\gamma}_{\alpha, \beta}$ denote the Littlewood-Richardson coefficient determined by partitions $\alpha, \beta$, and $\gamma$. 

\begin{proposition}\label{prop:branchgl2}
Let $\mathfrak{gl}(k) \hookrightarrow \mathfrak{gl}(2k)$ be an embedding of signature $(2, 0, 0)$ and fix partitions $\lambda, \mu$. Then for $k \gg 0$: 
\begin{eqnarray*}
\restr{V^{2k}_{\lambda, \mu}}{\mathfrak{gl}(k)}\cong \bigoplus\limits_{\substack{\alpha^+, \alpha^-, \beta^+, \beta^- \\ \lambda', \mu'}} c^{(\lambda, \mu)}_{(\alpha^+, \alpha^-), (\beta^+, \beta^-)}d^{(\lambda', \mu')}_{(\alpha^+, \alpha^-), (\beta^+, \beta^-)} V^k_{\lambda', \mu'}, 
\end{eqnarray*}
where
\begin{eqnarray*}
c^{(\lambda, \mu)}_{(\alpha^+, \alpha^-), (\beta^+, \beta^-)} = \sum_{\gamma^+, \gamma^-, \delta} c^{\lambda}_{\gamma^+, \delta}c^{\mu}_{\gamma^-, \delta}c^{\gamma^+}_{\alpha^+, \beta^+}c^{\gamma^-}_{\alpha^-, \beta^-} 
\end{eqnarray*}
and
\begin{eqnarray*}
d^{(\lambda', \mu')}_{(\alpha^+, \alpha^-), (\beta^+, \beta^-)} = \sum\limits_{\substack{\alpha_1, \alpha_2, \beta_1, \beta_2 \\ \gamma_1, \gamma_2}} c^{\alpha^+}_{\alpha_1, \gamma_1}c^{\beta^-}_{\gamma_1, \beta_2}c^{\alpha^-}_{\beta_1, \gamma_2}c^{\beta^+}_{\gamma_2, \alpha_2}c^{\lambda'}_{\alpha_1, \alpha_2}c^{\mu'}_{\beta_1, \beta_2}.
\end{eqnarray*}
\end{proposition}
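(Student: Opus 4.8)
The plan is to factor the signature-$(2,0,0)$ embedding $\iota\colon\mathfrak{gl}(k)\hookrightarrow\mathfrak{gl}(2k)$, $A\mapsto\operatorname{diag}(A,A)$, as the composite
\[
\mathfrak{gl}(k)\ \xrightarrow{\ \Delta\ }\ \mathfrak{gl}(k)\oplus\mathfrak{gl}(k)\ \hookrightarrow\ \mathfrak{gl}(2k),
\]
where the second map is the block-diagonal inclusion and $\Delta$ is the diagonal map. Accordingly $\restr{V^{2k}_{\lambda,\mu}}{\mathfrak{gl}(k)}$ is computed in two steps: first restrict $V^{2k}_{\lambda,\mu}$ to $\mathfrak{gl}(k)\oplus\mathfrak{gl}(k)$, then restrict each constituent $V^{k}_{\alpha^+,\alpha^-}\boxtimes V^{k}_{\beta^+,\beta^-}$ along $\Delta$, which is nothing but the tensor product $V^{k}_{\alpha^+,\alpha^-}\otimes V^{k}_{\beta^+,\beta^-}$ of $\mathfrak{gl}(k)$-modules. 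I will argue that the first step contributes the coefficient $c^{(\lambda,\mu)}_{(\alpha^+,\alpha^-),(\beta^+,\beta^-)}$ and the second the coefficient $d^{(\lambda',\mu')}_{(\alpha^+,\alpha^-),(\beta^+,\beta^-)}$, after which multiplying the two and summing over the intermediate partitions yields the stated decomposition.

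For the first step I would invoke the stable branching rule for mixed tensor (rational) representations along $\mathfrak{gl}(p+q)\downarrow\mathfrak{gl}(p)\times\mathfrak{gl}(q)$. Concretely, realize $V^{2k}_{\lambda,\mu}$ by Weyl's construction inside $(\FF^{2k})^{\otimes|\lambda|}\otimes((\FF^{2k})^{*})^{\otimes|\mu|}$, write $\FF^{2k}=\FF^{k}\oplus\FF^{k}$, and expand: the classical Littlewood--Richardson rule governs how the polynomial data distributes across the two blocks, while the new feature in the rational setting is the contraction of covariant against contravariant slots inside a block, which is exactly what the auxiliary partition $\delta$ (split off from both $\lambda$ and $\mu$) records; the surviving covariant residue $\gamma^+$ and contravariant residue $\gamma^-$ then distribute over the two blocks as $(\alpha^+,\beta^+)$ and $(\alpha^-,\beta^-)$, reproducing $c^{(\lambda,\mu)}_{(\alpha^+,\alpha^-),(\beta^+,\beta^-)}=\sum_{\gamma^+,\gamma^-,\delta}c^{\lambda}_{\gamma^+,\delta}c^{\mu}_{\gamma^-,\delta}c^{\gamma^+}_{\alpha^+,\beta^+}c^{\gamma^-}_{\alpha^-,\beta^-}$. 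For the second step, the multiplicity of $V^{k}_{\lambda',\mu'}$ in $V^{k}_{\alpha^+,\alpha^-}\otimes V^{k}_{\beta^+,\beta^-}$ is, in the stable range, given by Koike's rule for products of rational $\mathfrak{gl}(k)$-modules: the covariant part of one factor may pair off against the contravariant part of the other, the two possible pairings being bookkept by the auxiliary partitions $\gamma_1$ (a common piece of $\alpha^+$ and $\beta^-$) and $\gamma_2$ (a common piece of $\alpha^-$ and $\beta^+$), with the covariant residues $\alpha_1,\alpha_2$ assembling into $\lambda'$ and the contravariant residues $\beta_1,\beta_2$ into $\mu'$; this is precisely $d^{(\lambda',\mu')}_{(\alpha^+,\alpha^-),(\beta^+,\beta^-)}$. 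Both identities are recorded in the references on which this section is based, so one may simply cite \cite{HTW} and the summary in Appendix A of \cite{Hr}; alternatively both follow from the Weyl realization plus the Littlewood--Richardson rule and the combinatorics of contractions.

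Composing the two steps gives
\[
\restr{V^{2k}_{\lambda,\mu}}{\mathfrak{gl}(k)}\;\cong\;\bigoplus_{\lambda',\mu'}\Bigl(\,\sum_{\alpha^+,\alpha^-,\beta^+,\beta^-} c^{(\lambda,\mu)}_{(\alpha^+,\alpha^-),(\beta^+,\beta^-)}\,d^{(\lambda',\mu')}_{(\alpha^+,\alpha^-),(\beta^+,\beta^-)}\Bigr)\,V^{k}_{\lambda',\mu'},
\]
which is the claimed formula. Finiteness of all the sums is automatic: if $c^{(\lambda,\mu)}_{(\alpha^+,\alpha^-),(\beta^+,\beta^-)}\ne 0$ then $|\alpha^+|+|\alpha^-|+|\beta^+|+|\beta^-|=|\lambda|+|\mu|-2|\delta|\le|\lambda|+|\mu|$, so only finitely many intermediate labels contribute, and similarly for the $\gamma_i$ in $d$.

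The main obstacle is bookkeeping rather than conceptual. One must check that the two off-the-shelf rules --- the rational branching rule for $\mathfrak{gl}(2k)\downarrow\mathfrak{gl}(k)\times\mathfrak{gl}(k)$ and Koike's tensor-product rule, taken from \cite{HTW,Hr} --- are stated in a convention for $V^{k}_{\lambda,\mu}$ consistent with the one fixed in Section~\ref{BACKGROUND}, and that the two invocations of the stable range ``$k\gg 0$'' are compatible, so that every Littlewood--Richardson coefficient appearing in $c$ and $d$ is the genuine stable one (no truncation in few variables and no accidental vanishing of representations). Matching the auxiliary partitions --- $\delta$ in the first step and $\gamma_1,\gamma_2$ in the second --- with the dual-pairing combinatorics is where essentially all the content of the precise formula resides.
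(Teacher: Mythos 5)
Your proposal is correct and is essentially the paper's own proof, which simply says to combine Theorems 2.1.1 and 2.2.1 of \cite{HTW}: the first gives the rational branching rule $\mathfrak{gl}(2k)\downarrow\mathfrak{gl}(k)\times\mathfrak{gl}(k)$ with coefficient $c^{(\lambda,\mu)}_{(\alpha^+,\alpha^-),(\beta^+,\beta^-)}$, and the second is Koike's tensor-product rule supplying $d^{(\lambda',\mu')}_{(\alpha^+,\alpha^-),(\beta^+,\beta^-)}$. You have merely spelled out the factorization through $\Delta$ and the bookkeeping that the paper leaves implicit.
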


\begin{proof}
Combine Theorems 2.1.1 and 2.2.1 from \cite{HTW}.
\end{proof}







It follows from properties of Littlewood-Richardson coefficients that if $V^k_{\lambda', \mu'}$ appears in this decomposition with nonzero multiplicity, then $|\lambda'| \le |\lambda|$, $|\mu'| \le |\mu|$, and $|\lambda| - |\mu| = |\lambda'| - |\mu'|$. 

Note that when $\mu = 0$, Proposition~\ref{prop:branchgl2} reduces to:
\begin{eqnarray}\label{branchgl20}
\restr{V^{2k}_{\lambda, 0}}{\mathfrak{gl}(k)} \cong \bigoplus_{\beta_1, \beta_2, \lambda'}  c^{\lambda}_{\beta_1, \beta_2}c^{\lambda'}_{\beta_1, \beta_2} V^k_{\lambda', 0}.
\end{eqnarray}
In this case, if $V^k_{\lambda', 0}$ appears with nonzero multiplicity, then  $|\lambda| = |\lambda'|$.

We now provide the general branching law for $\mathfrak{gl}(k) \hookrightarrow \mathfrak{gl}(nk)$.

\begin{proposition}\label{branchgln} 
{\rm (\cite[Proposition~2.4]{Hr})} 
Let $\mathfrak{gl}(k) \hookrightarrow \mathfrak{gl}(nk)$ be an embedding of signature $(n, 0, 0)$ where $n > 2$, and fix partitions $\lambda, \mu$.  For $k \gg 0$ we have:
\begin{eqnarray*}
\restr{V^{nk}_{\lambda, \mu}}{\mathfrak{gl}(k)}\cong \bigoplus\limits_{\substack{\beta_1^+, \beta_2^+, \dots, \beta_n^+ \\ \beta_1^-, \beta_2^-, \dots, \beta_n^- \\ \lambda', \mu'}} C^{(\lambda, \mu)}_{(\beta_1^+, \beta_2^+, \dots, \beta_n^+), (\beta_1^-, \beta_2^-, \dots, \beta_n^-)}D^{(\lambda', \mu')}_{(\beta_1^+, \beta_2^+, \dots, \beta_n^+), (\beta_1^-, \beta_2^-, \dots, \beta_n^-)} V^k_{\lambda', \mu'} 
\end{eqnarray*}
where
\begin{multline*}
C^{(\lambda, \mu)}_{(\beta_1^+, \beta_2^+, \dots, \beta_n^+), (\beta_1^-, \beta_2^-, \dots, \beta_n^-)} = \\
\sum\limits_{\substack{\alpha_1^+, \alpha_2^+, \dots, \alpha_{n-2}^+ \\ \alpha_1^-, \alpha_2^-, \dots, \alpha_{n-2}^-}} c^{(\lambda, \mu)}_{(\alpha_1^+, \alpha_1^-), (\beta_1^+, \beta_1^-)}c^{(\alpha_1^+, \alpha_1^-)}_{(\alpha_2^+, \alpha_2^-), (\beta_2^+, \beta_2^-)}\cdots c^{(\alpha_{n-3}^+, \alpha_{n-3}^-)}_{(\alpha_{n-2}^+, \alpha_{n-2}^-), (\beta_{n-2}^+, \beta_{n-2}^-)}c^{(\alpha_{n-2}^+, \alpha_{n-2}^-)}_{(\beta_{n-1}^+, \beta_{n-1}^-), (\beta_{n}^+, \beta_{n}^-)}
\end{multline*}
and
\begin{multline*}
D^{(\lambda', \mu')}_{(\beta_1^+, \beta_2^+, \dots, \beta_n^+), (\beta_1^-, \beta_2^-, \dots, \beta_n^-)} = \\
\sum\limits_{\substack{\alpha_1^+, \alpha_2^+, \dots, \alpha_{n-2}^+ \\ \alpha_1^-, \alpha_2^-, \dots, \alpha_{n-2}^-}} d^{(\alpha_1^+, \alpha_1^-)}_{(\beta_1^+, \beta_1^-),(\beta_2^+, \beta_2^-)}
d^{(\alpha_2^+, \alpha_2^-)}_{(\alpha_1^+, \alpha_1^-),(\beta_3^+, \beta_3^-)} \cdots 
d^{(\alpha_{n-2}^+, \alpha_{n-2}^-)}_{(\alpha_{n-3}^+, \alpha_{n-3}^-),(\beta_{n-1}^+, \beta_{n-1}^-)}
d^{\lambda', \mu'}_{(\alpha_{n-2}^+, \alpha_{n-2}^-), (\beta_{n}^+, \beta_{n}^-)}.
\end{multline*}
(Both $c^{(\lambda, \mu)}_{(\alpha^+, \alpha^-), (\beta^+, \beta^-)}$ and $d^{(\lambda', \mu')}_{(\alpha^+, \alpha^-), (\beta^+, \beta^-)} $ are defined in the statement of Proposition~\ref{prop:branchgl2}.)
\end{proposition}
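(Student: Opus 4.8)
The plan is to deduce Proposition~\ref{branchgln} from the signature $(2,0,0)$ case (Proposition~\ref{prop:branchgl2}) by iterating two elementary building blocks and invoking transitivity of restriction. The key structural observation is that the diagonal embedding $\mathfrak{gl}(k)\hookrightarrow\mathfrak{gl}(nk)$, $A\mapsto\operatorname{diag}(A,\dots,A)$, factors as
\[ \mathfrak{gl}(k)\;\xrightarrow{\ \Delta\ }\;\underbrace{\mathfrak{gl}(k)\oplus\cdots\oplus\mathfrak{gl}(k)}_{n}\;\hookrightarrow\;\mathfrak{gl}(nk), \]
where $\Delta$ is the diagonal and the second map is the block-diagonal inclusion; moreover the block-diagonal inclusion itself factors through the chain
\[ \mathfrak{gl}(nk)\ \supseteq\ \mathfrak{gl}(k)\oplus\mathfrak{gl}((n-1)k)\ \supseteq\ \mathfrak{gl}(k)\oplus\mathfrak{gl}(k)\oplus\mathfrak{gl}((n-2)k)\ \supseteq\ \cdots. \]
Hence $\restr{V^{nk}_{\lambda,\mu}}{\mathfrak{gl}(k)}$ is computed in two steps: (a) restrict $V^{nk}_{\lambda,\mu}$ along the block-diagonal to $\mathfrak{gl}(k)^{\oplus n}$, obtaining a direct sum of outer tensor products $V^k_{\beta_1^+,\beta_1^-}\boxtimes\cdots\boxtimes V^k_{\beta_n^+,\beta_n^-}$ with certain multiplicities; and (b) restrict each such outer tensor product along $\Delta$, which yields the $n$-fold tensor product $V^k_{\beta_1^+,\beta_1^-}\otimes\cdots\otimes V^k_{\beta_n^+,\beta_n^-}$ of $\mathfrak{gl}(k)$-modules.

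For step (a) I would walk down the chain above, applying at each stage the $\mathfrak{gl}$ block-restriction rule from \cite{HTW} (one of the two results combined in the proof of Proposition~\ref{prop:branchgl2}); since the relevant stable multiplicity depends only on the partitions involved, it is exactly the $c$-coefficient $c^{(\lambda,\mu)}_{(\alpha^+,\alpha^-),(\beta^+,\beta^-)}$ regardless of the block sizes. Restricting to $\mathfrak{gl}(k)\oplus\mathfrak{gl}((n-1)k)$ splits off the factor $V^k_{\beta_1^+,\beta_1^-}$ and leaves a ``large'' factor with weight $(\alpha_1^+,\alpha_1^-)$; restricting that to $\mathfrak{gl}(k)\oplus\mathfrak{gl}((n-2)k)$ splits off $V^k_{\beta_2^+,\beta_2^-}$ and leaves weight $(\alpha_2^+,\alpha_2^-)$; and so on, with a final two-block split giving $V^k_{\beta_{n-1}^+,\beta_{n-1}^-}\boxtimes V^k_{\beta_n^+,\beta_n^-}$. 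Multiplying the multiplicities down the chain telescopes into precisely the nested sum defining $C^{(\lambda,\mu)}_{(\beta_1^+,\dots,\beta_n^+),(\beta_1^-,\dots,\beta_n^-)}$; the associativity of Littlewood--Richardson coefficients, which is encoded in the $c$-coefficients, is what makes the telescoping legitimate. For step (b) I would likewise iterate the $\mathfrak{gl}$ tensor-product rule from \cite{HTW} (the other ingredient of Proposition~\ref{prop:branchgl2}), decomposing $(\cdots((V^k_{\beta_1^+,\beta_1^-}\otimes V^k_{\beta_2^+,\beta_2^-})\otimes V^k_{\beta_3^+,\beta_3^-})\otimes\cdots)\otimes V^k_{\beta_n^+,\beta_n^-}$ one factor at a time, introducing intermediate weights $(\alpha_1^+,\alpha_1^-),\dots,(\alpha_{n-2}^+,\alpha_{n-2}^-)$; associativity of the tensor product shows the answer is bracketing-independent and equals the nested sum defining $D^{(\lambda',\mu')}_{(\beta_1^+,\dots,\beta_n^+),(\beta_1^-,\dots,\beta_n^-)}$. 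Composing (a) and (b) via transitivity of restriction gives the multiplicity of $V^k_{\lambda',\mu'}$ in $\restr{V^{nk}_{\lambda,\mu}}{\mathfrak{gl}(k)}$ as $\sum_{\beta_i^\pm} C^{(\lambda,\mu)}_{\dots}\,D^{(\lambda',\mu')}_{\dots}$, which is the claim.

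The main obstacle is not conceptual but combinatorial bookkeeping: one must check that the two families of auxiliary weights $\alpha_1^\pm,\dots,\alpha_{n-2}^\pm$ occurring in the stated formulas for $C$ and for $D$ are correctly matched, index for index, with the intermediate weights produced by the iterated block restriction and the iterated tensor product respectively, and that these two families are genuinely decoupled so that the multiplicity factors as a product. A second point to be careful about is the range of validity: the rules of \cite{HTW} are only stable for large rank, and here the block-restriction rule is invoked at sizes $k,(n-1)k,(n-2)k,\dots$ and the tensor-product rule at size $k$; since Proposition~\ref{branchgln} is only asserted for $k\gg 0$ and the stability bounds in \cite{HTW} are explicit, one can choose $k$ large enough (depending on $n$, $|\lambda|$, $|\mu|$) that all of these finitely many applications simultaneously lie in their stable ranges. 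This is, in essence, the argument behind \cite[Proposition~2.4]{Hr}, which one may alternatively quote directly.
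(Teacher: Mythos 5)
Your proposal is correct and matches the strategy the paper uses throughout this section. The paper itself gives no proof of Proposition~\ref{branchgln}, simply citing \cite[Proposition~2.4]{Hr}, but for the exactly analogous Propositions~\ref{branchspn} and \ref{branchson} its stated proof is one line: ``Iterate the branching law from Proposition~\ref{branchsp2} using Theorems~2.1.x and 2.2.x from \cite{HTW}.'' Your argument spells out precisely that iteration for the $\mathfrak{gl}$ case — factoring the diagonal map through the block-diagonal chain $\mathfrak{gl}(nk)\supseteq\mathfrak{gl}(k)\oplus\mathfrak{gl}((n-1)k)\supseteq\cdots$, applying the stable block-restriction rule repeatedly to produce the telescoped $C$-coefficient, then applying the tensor-product rule repeatedly along $\Delta$ to produce the telescoped $D$-coefficient, and gluing by transitivity of restriction — and you correctly flag the two points that require care (the decoupling of the two families of auxiliary weights $\alpha_i^\pm$, and choosing $k$ large enough that the finitely many invocations of the stable rules of \cite{HTW} are all valid). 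This is essentially the content of the cited result in \cite{Hr}.
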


If both $C^{(\lambda, \mu)}_{(\beta_1^+, \beta_2^+, \dots, \beta_n^+), (\beta_1^-, \beta_2^-, \dots, \beta_n^-)} \ne 0$ and $D^{(\lambda', \mu')}_{(\beta_1^+, \beta_2^+, \dots, \beta_n^+), (\beta_1^-, \beta_2^-, \dots, \beta_n^-)} \ne 0$, then:

\begin{eqnarray*}
|\lambda'| \le |\beta_1^+| + |\beta_2^+| + \dots + |\beta_n^+| \le |\lambda|, \\
|\mu'| \le |\beta_1^-| + |\beta_2^-| + \dots + |\beta_n^-| \le |\mu|.
\end{eqnarray*}
We can calculate that $|\lambda| - |\mu| = (|\beta_1^+| + |\beta_2^+| + \dots + |\beta_n^+|) - (|\beta_1^-| + |\beta_2^-| + \dots + |\beta_n^-|) = |\lambda'| - |\mu'|$. Therefore, if $V^k_{\lambda', \mu'}$ appears in the decomposition of $V^{nk}_{\lambda, \mu}$ with nonzero multiplicity, we again have that $|\lambda'| \le |\lambda|$, $|\mu'| \le |\mu|$, and $|\lambda| - |\mu| = |\lambda'| - |\mu'|$.

Note that when $\mu = 0$, Proposition~\ref{branchgln} reduces to the following well-known branching rule:
\begin{eqnarray}\label{branchgln0}
\restr{V^{nk}_{\lambda, 0}}{\mathfrak{gl}(k)} \cong \bigoplus_{\beta_1, \beta_2, \dots, \beta_n, \lambda'}  c^{\lambda}_{\beta_1, \beta_2, \dots, \beta_n}c^{\lambda'}_{\beta_1, \beta_2, \dots, \beta_n} V^k_{\lambda', 0} .
\end{eqnarray}
Here the coefficients $c^{\lambda}_{\beta_1, \beta_2, \dots, \beta_n}$ are the \textit{generalized} Littlewood Richardson coefficients, defined as:
\begin{eqnarray*}
c^{\lambda}_{\beta_1, \beta_2, \dots, \beta_n} = \sum_{\alpha_1, \dots, \alpha_{n-2}} c^{\lambda}_{\alpha_1, \beta_1}c^{\alpha_1}_{\alpha_2, \beta_2} \dots c^{\alpha_{n-3}}_{\alpha_{n-2},\beta_{n-2}}c^{\alpha_{n-2}}_{\beta_{n-1}, \beta_n}.
\end{eqnarray*}
In this case, if $V^k_{\lambda', 0}$ appears in the decomposition of $V^{nk}_{\lambda, 0}$ with nonzero multiplicity, then $|\lambda'| = |\lambda|$.

We have given branching laws for embeddings $\mf{gl}(n^k) \hra \mf{gl}(n^{k+1})$. 
We are more interested, however, in the embeddings $\mf{sl}(n^k) \hra \mf{sl}(n^{k+1})$. The following lemma states that embeddings from $\mf{sl}(n^k) \hra \mf{sl}(n^{k+1})$ will follow the same branching laws as those for $\mf{gl}$.

\begin{lemma}\label{lem:foobar}
 Let $\mf{sl}(k) \hra \mf{sl}(nk)$ be an embedding of signature $(n,0,0)$, and let $ \lambda, \mu$ be partitions.  For $k\gg 0$ the branching law given in Proposition~\ref{branchgln} (for $n>2$) or Proposition~\ref{prop:branchgl2} (for $n=2$) holds for $\restr{V^{nk}_{\lambda, \mu}}{\mf{sl}(k)}$.
\end{lemma}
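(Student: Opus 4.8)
The key point is that the branching laws in Propositions~\ref{prop:branchgl2} and~\ref{branchgln} were derived for $\mf{gl}$, and we must pass to $\mf{sl}$. Recall from the discussion preceding Remark~\ref{rem:natrep} that restricting a $\mf{gl}(k)$-module $V^k_\alpha$ to $\mf{sl}(k)$ loses exactly the information of a global shift: $V^k_\alpha$ and $V^k_\beta$ become isomorphic $\mf{sl}(k)$-modules precisely when $\alpha_i - \beta_i$ is independent of $i$. In partition notation, $V^k_{\lambda,\mu}\rst{\mf{sl}(k)} \cong V^k_{\lambda',\mu'}\rst{\mf{sl}(k)}$ iff the highest weights differ by a constant vector; for $k$ large this forces $(\lambda',\mu')$ to be obtained from $(\lambda,\mu)$ by a ``column move'' across the length-$k$ weight (adding the same integer $c$ to every coordinate of $\alpha$). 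So the plan is to show that the $\mf{gl}$-branching multiplicities are constant along these shift-orbits, hence descend unambiguously to $\mf{sl}$.

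\textbf{Step 1.} Fix $k \gg 0$ and the embedding $\mf{sl}(k)\hra\mf{sl}(nk)$ of signature $(n,0,0)$; it is the restriction of the corresponding $\mf{gl}(k)\hra\mf{gl}(nk)$ embedding of the same signature. Since $\mf{gl}(k) = \mf{sl}(k)\oplus\FF\cdot\mathrm{id}$ and the diagonal embedding of signature $(n,0,0)$ sends the identity to $n$ times the identity, the restriction functor factors: $V^{nk}_{\lambda,\mu}\rst{\mf{sl}(k)} = \big(V^{nk}_{\lambda,\mu}\rst{\mf{gl}(k)}\big)\rst{\mf{sl}(k)}$. Applying Proposition~\ref{branchgln} (or~\ref{prop:branchgl2} when $n=2$) and then restricting each summand $V^k_{\lambda',\mu'}$ to $\mf{sl}(k)$ gives a decomposition of $V^{nk}_{\lambda,\mu}\rst{\mf{sl}(k)}$ as a sum of simple $\mf{sl}(k)$-modules, but possibly with some of these $\mf{sl}(k)$-modules coinciding.

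\textbf{Step 2.} The task is now to check that no collision occurs for $k \gg 0$ among the $(\lambda',\mu')$ appearing with nonzero multiplicity. From the paragraph after Proposition~\ref{branchgln} we know every such $(\lambda',\mu')$ satisfies $|\lambda'|\le|\lambda|$, $|\mu'|\le|\mu|$ and $|\lambda'|-|\mu'| = |\lambda|-|\mu|$; in particular all the highest weights $\alpha'$ occurring have first coordinate $\alpha'_1 \le |\lambda|$ and last coordinate $\alpha'_k \ge -|\mu|$, and all have the same ``total degree'' $|\lambda'|-|\mu'|$. If $V^k_{\lambda',\mu'}$ and $V^k_{\lambda'',\mu''}$ restricted to $\mf{sl}(k)$ agreed, their $\mf{gl}(k)$ highest weights would differ by a constant vector $c(1,\dots,1)$; comparing total degrees (which are equal) forces $ck = 0$, hence $c=0$ and the weights are equal. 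Therefore the $\mf{gl}(k)$-decomposition already has distinct simple summands as $\mf{sl}(k)$-modules, each with the same multiplicity, and the branching law for $\mf{sl}$ is literally the same as for $\mf{gl}$.

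\textbf{Main obstacle.} The only subtlety is Step~2: one must be careful that two different $\mf{gl}(k)$-weights cannot restrict to the same $\mf{sl}(k)$-weight. This is where the stability condition $k\gg 0$ and the bound $|\lambda'|-|\mu'| = |\lambda|-|\mu|$ do the work — the ``degree'' invariant $\sum_i \alpha'_i$ is constant across all summands, so a nonzero shift $c(1,\dots,1)$ would change it by $ck\ne0$, a contradiction. Once this is in place, there is nothing further to prove: the multiplicities and the combinatorial labels transfer verbatim from $\mf{gl}$ to $\mf{sl}$.
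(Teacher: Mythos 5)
Your proof is correct and uses the same key argument as the paper: the invariant $|\lambda'|-|\mu'|$ (equivalently, $\sum_i \alpha'_i$) is constant across all $\mf{gl}(k)$-summands, so a nonzero shift $c(1,\dots,1)$ would change it by $ck\neq 0$, forcing $c=0$ and ruling out collisions. The preliminary factoring of restriction through $\mf{gl}(k)$ in Step~1 is a slightly more explicit setup than the paper gives, but the substance is identical.
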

\begin{proof}
We only need to prove that the distinct modules which appear with nonzero multiplicity in the decomposition of Proposition~\ref{branchgln} do not become isomorphic as $\mf{sl}(k)$-modules. Suppose $V^k_{\lambda', \mu'}$ and $V^k_{\lambda'', \mu''}$ appear with nonzero multiplicity, where $V^k_{\lambda', \mu'}$ is the $\mf{gl}(k)$-module of highest weight 
\[(\lambda'_1, \lambda'_2, \dots, \lambda'_{p'}, 0, \dots, 0, -\mu'_{q'}, \dots, -\mu'_1)\]
 and $V^k_{\lambda'', \mu''}$ is the $\mf{gl}(k)$-module of highest weight $(\lambda''_1, \lambda''_2, \dots, \lambda''_{p''}, 0, \dots, 0, -\mu''_{q''}, \dots, -\mu''_1)$. If $V^k_{\lambda', \mu'}$ and $V^k_{\lambda'', \mu''}$ are isomorphic as $\mf{sl}(k)$-modules, then $(\lambda''_1, \lambda''_2, \dots, \lambda''_{p''}, 0, \dots, 0, -\mu''_{q''}, \dots, -\mu''_1) = (\lambda'_1 + c, \lambda'_2 + c, \dots, \lambda'_{p'} + c, c, \dots, c, -\mu'_{q'} + c, \dots, -\mu'_1 + c)$ for some constant $c$. Then $|\lambda''| - |\mu''| = |\lambda'| - |\mu'| + kc$. However, since $V^k_{\lambda', \mu'}$ and $V^k_{\lambda'', \mu''}$ both appear in with nonzero multiplicity, we have $|\lambda''| - |\mu''| = |\lambda'| - |\mu'|$, and therefore $c = 0$. 
\end{proof}

We next derive analogous branching rules for $\mathfrak{sp}(n^\infty)$. 
Recall that if $\lambda$ is a partition, then  $U_{\lambda}^{2k}$ denotes the irreducible representation of $\mathfrak{sp}(2k)$ with highest weight $\lambda$. 
If $\delta$ is a partition, we denote its conjugate partition by $\delta^T$.

\begin{proposition}\label{branchsp2} 
Let $\mathfrak{sp}(2k) \hookrightarrow \mathfrak{sp}(4k)$ be an embedding of signature $(2,0,0)$, and fix a partition $\lambda$. For $k \gg 0$ we have:
\begin{eqnarray*}
\restr{U^{4k}_{\lambda}}{\mathfrak{sp}(2k)} \cong \bigoplus_{\mu, \nu, \lambda'} e^{\lambda}_{\mu, \nu} f^{\lambda'}_{\mu, \nu} U^{2k}_{\lambda'},
\end{eqnarray*}
where
\[ e^{\lambda}_{\mu, \nu} = \sum_{\delta, \gamma} c^{\gamma}_{\mu, \nu} c^{\lambda}_{\gamma, (2\delta)^{T}} \quad \mbox{and} \quad
 f^{\lambda'}_{\mu, \nu} = \sum_{\alpha, \beta, \gamma'} c^{\lambda'}_{\alpha, \beta} c^{\mu}_{\alpha, \gamma'} c^{\nu}_{\beta, \gamma'}. \]
\end{proposition}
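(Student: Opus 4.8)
The plan is to derive this branching law for $\mathfrak{sp}(2k) \hookrightarrow \mathfrak{sp}(4k)$ by composing two known pieces: first restrict $U^{4k}_\lambda$ from $\mathfrak{sp}(4k)$ to $\mathfrak{gl}(2k)$, and then restrict further from $\mathfrak{gl}(2k)$ to $\mathfrak{sp}(2k)$, where the $\mathfrak{gl}(2k)$ sits inside $\mathfrak{sp}(4k)$ as a Levi and the diagonal $\mathfrak{sp}(2k)$ factors through it. This mirrors exactly the structure of Proposition~\ref{prop:branchgl2}, where a $\mathfrak{gl}$-to-$\mathfrak{gl}$ diagonal branching is broken into a ``$c$'' factor and a ``$d$'' factor. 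The key input theorems are the stable branching rules in \cite{HTW}: the rule for $\mathfrak{sp}(2m) \downarrow \mathfrak{gl}(m)$, which for $m = 2k$ introduces partitions $(2\delta)^T$ (the even-column condition characteristic of the symplectic group), and the rule for $\mathfrak{gl}(2k) \downarrow \mathfrak{sp}(2k)$ along a signature-$(2,0,0)$ embedding.

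First I would set up the intermediate subalgebra. The embedding $\mathfrak{sp}(2k) \hookrightarrow \mathfrak{sp}(4k)$ of signature $(2,0,0)$ sends $A \mapsto \operatorname{diag}(A,A)$, and this factors through the block-diagonal Levi $\mathfrak{gl}(2k) \hookrightarrow \mathfrak{sp}(4k)$ composed with the diagonal $\mathfrak{sp}(2k) \hookrightarrow \mathfrak{gl}(2k)$. So $\restr{U^{4k}_\lambda}{\mathfrak{sp}(2k)}$ is computed by $\restr{\left(\restr{U^{4k}_\lambda}{\mathfrak{gl}(2k)}\right)}{\mathfrak{sp}(2k)}$. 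By the $\mathfrak{sp} \downarrow \mathfrak{gl}$ stable branching rule (Theorem 2.4.1 or its analogue in \cite{HTW}), for $k \gg 0$ we have $\restr{U^{4k}_\lambda}{\mathfrak{gl}(2k)} \cong \bigoplus_{\sigma} \left(\sum_{\delta} c^\lambda_{\sigma, (2\delta)^T}\right) V^{2k}_{\sigma}$, where $V^{2k}_\sigma$ denotes the polynomial $\mathfrak{gl}(2k)$-module indexed by the partition $\sigma$. Next I would apply the $\mathfrak{gl}(2k) \downarrow \mathfrak{sp}(2k)$ branching for a signature-$(2,0,0)$ embedding; since this is the $\mu = 0$ case one gets $\restr{V^{2k}_\sigma}{\mathfrak{sp}(2k)} \cong \bigoplus_{\lambda'} \left(\sum_{\mu,\nu} \text{(something)}\right) U^{2k}_{\lambda'}$, where the coefficient should be expressible as $\sum_{\mu,\nu,\gamma'} c^\sigma_{\mu,\nu}\, c^{\mu}_{?, \gamma'}\, c^{\nu}_{?, \gamma'} \cdots$ — here I need to track carefully how the ``contraction'' partition $\gamma'$ and the pieces $\alpha, \beta$ of $\lambda'$ enter.

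The main step is then to combine these two sums and reorganize the summation so that the result factors as $\sum_{\mu,\nu,\lambda'} e^\lambda_{\mu,\nu} f^{\lambda'}_{\mu,\nu} U^{2k}_{\lambda'}$ with $e$ and $f$ as stated. Concretely: summing over the intermediate $\mathfrak{gl}$-partition $\sigma$ and using the Littlewood--Richardson identity $\sum_\sigma c^\sigma_{\mu,\nu}\, c^\lambda_{\sigma, (2\delta)^T} = \sum_\gamma c^\gamma_{\mu,\nu}\, c^\lambda_{\gamma, (2\delta)^T}$ on the appropriate grouping (by associativity/coassociativity of the LR coefficients), the $\delta$- and $\sigma$-sums collapse into $e^\lambda_{\mu,\nu} = \sum_{\delta,\gamma} c^\gamma_{\mu,\nu} c^\lambda_{\gamma,(2\delta)^T}$, while the part of the sum involving $\lambda'$ and the contraction index $\gamma'$ collapses into $f^{\lambda'}_{\mu,\nu} = \sum_{\alpha,\beta,\gamma'} c^{\lambda'}_{\alpha,\beta} c^\mu_{\alpha,\gamma'} c^\nu_{\beta,\gamma'}$. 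I expect the bookkeeping in this reorganization — matching exactly which partitions play the role of $\mu$ and $\nu$ in the two different branching rules, and confirming the associativity manipulations are valid — to be the main obstacle; the representation-theoretic content is entirely contained in the cited stable rules of \cite{HTW}, and everything else is a manipulation of Littlewood--Richardson coefficients. A clean alternative, which I would use as a consistency check, is to cite the $\mathfrak{sp}(2m) \downarrow \mathfrak{sp}(2a) \times \mathfrak{sp}(2b)$ stable rule directly from \cite{HTW} (it has this same $e$/$f$ shape) and then set $a = b = k$ and further restrict to the diagonal $\mathfrak{sp}(2k)$ using the Littlewood--Richardson rule for tensor products.
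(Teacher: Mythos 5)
Your ``consistency check'' at the end is in fact the paper's proof, and it is also the cleaner route. The embedding $\mathfrak{sp}(2k)\hookrightarrow\mathfrak{sp}(4k)$ of signature $(2,0,0)$ factors through the block-diagonal $\mathfrak{sp}(2k)\times\mathfrak{sp}(2k)\hookrightarrow\mathfrak{sp}(4k)$ followed by the diagonal $\mathfrak{sp}(2k)\hookrightarrow\mathfrak{sp}(2k)\times\mathfrak{sp}(2k)$. HTW Theorem~2.1.3 gives $\restr{U^{4k}_\lambda}{\mathfrak{sp}(2k)\times\mathfrak{sp}(2k)}\cong\bigoplus_{\mu,\nu}e^\lambda_{\mu,\nu}\,U^{2k}_\mu\otimes U^{2k}_\nu$, and HTW Theorem~2.2.3 gives the symplectic tensor-product rule $U^{2k}_\mu\otimes U^{2k}_\nu\cong\bigoplus_{\lambda'}f^{\lambda'}_{\mu,\nu}U^{2k}_{\lambda'}$; composing these produces the stated coefficient $\sum_{\mu,\nu}e^\lambda_{\mu,\nu}f^{\lambda'}_{\mu,\nu}$ with no further reorganization. (Note that the second step is governed by the modification-rule formula for $Sp$ tensor products, which is precisely $f^{\lambda'}_{\mu,\nu}$, not literally the ordinary Littlewood--Richardson rule as you suggest; that is exactly the content of HTW~2.2.3.)

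Your ``main'' route through the Levi $\mathfrak{gl}(2k)\subset\mathfrak{sp}(4k)$ has a concrete gap. The Levi decomposes the standard module of $\mathfrak{sp}(4k)$ as $V\oplus V^*$, so $\restr{U^{4k}_\lambda}{\mathfrak{gl}(2k)}$ involves rational $\mathfrak{gl}(2k)$-modules $V^{2k}_{\sigma^+,\sigma^-}$ with both polynomial and antipolynomial parts, not just polynomial $V^{2k}_\sigma$: already for $\lambda=(1)$ your stated first-step formula yields only $V^{2k}_{(1)}$, of dimension $2k$ rather than $4k=\dim U^{4k}_{(1)}$. Secondly, the step $\mathfrak{gl}(2k)\downarrow\mathfrak{sp}(2k)$ along the tautological inclusion is a symmetric-pair (Littlewood) branching, not a ``signature-$(2,0,0)$'' diagonal embedding in the sense of Section~2: the standard $\mathfrak{gl}(2k)$-module restricts to a single copy of the standard symplectic module. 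Thirdly, even after correcting both steps one would still have to prove that the resulting expression equals $\sum_{\mu,\nu}e^\lambda_{\mu,\nu}f^{\lambda'}_{\mu,\nu}$, a nontrivial Littlewood--Richardson identity that you flag as ``the main obstacle'' but do not carry out. As written, the main route gives the wrong multiplicity: for $\lambda=\lambda'=(1)$ one finds $\sum_{\sigma,\delta,\epsilon}c^{(1)}_{\sigma,(2\delta)^T}\,c^{\sigma}_{(1),(2\epsilon)^T}=1$, whereas $\sum_{\mu,\nu}e^{(1)}_{\mu,\nu}f^{(1)}_{\mu,\nu}=2$, matching the expected $\restr{U^{4k}_{(1)}}{\mathfrak{sp}(2k)}\cong U^{2k}_{(1)}\oplus U^{2k}_{(1)}$. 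I would drop the $\mathfrak{gl}(2k)$ route entirely and promote your last paragraph to be the proof.
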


\begin{proof}
Combine Theorems 2.1.3 and 2.2.3 from \cite{HTW}.
\end{proof}

From properties of Littlewood-Richardson coefficients, it follows that if $U^{2k}_{\lambda'}$ appears in the above decomposition of $U^{4k}_{\lambda}$ with nonzero multiplicity, then $|\lambda'| \le |\lambda|$ and $ |\lambda|$ and $|\lambda'|$ have the same parity. 

\begin{proposition}\label{branchspn}
Let $\mathfrak{sp}(2k) \hookrightarrow \mathfrak{sp}(2nk)$ be an embedding of signature $(n,0,0)$, and fix a partition $\lambda$. Then for $k \gg 0$:
\begin{eqnarray*}
\restr{U^{2nk}_{\lambda}}{\mathfrak{sp}(2k)} \cong \bigoplus_{\beta_1, \beta_2, \dots, \beta_n, \lambda'} E^{\lambda}_{\beta_1, \beta_2, \dots, \beta_n} F^{\lambda'}_{\beta_1, \beta_2, \dots, \beta_n} U^{2k}_{\lambda'}
\end{eqnarray*}
where 
\begin{eqnarray*}
E^{\lambda}_{\beta_1, \beta_2, \dots, \beta_n} = \sum_{\alpha_1, \alpha_2, \dots, \alpha_{n-2}} e^{\lambda}_{\alpha_1, \beta_1} e^{\alpha_1}_{\beta_2, \alpha_2} \cdots e^{\alpha_{n-3}}_{\beta_{n-2}, \alpha_{n-2}}e^{\alpha_{n-2}}_{\beta_{n-1}, \beta_n}
\end{eqnarray*}
and 
\begin{eqnarray*}
F^{\lambda'}_{\beta_1, \beta_2, \dots, \beta_n} = \sum_{\alpha_1, \alpha_2, \dots, \alpha_{n-2}} f^{\alpha_1}_{\beta_1, \beta_2} f^{\alpha_2}_{\alpha_1, \beta_3} \cdots f^{\alpha_{n-2}}_{\alpha_{n-3}, \beta_{n-1}} f^{\lambda'}_{\alpha_{n-2}, \beta_n}.
\end{eqnarray*}
(Both $e^{\lambda}_{\mu, \nu} $ and $f^{\lambda'}_{\mu, \nu} $ are defined in the statement of Proposition~\ref{branchsp2}.)
\end{proposition}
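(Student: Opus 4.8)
The plan is to derive the branching law for $\mathfrak{sp}(2k) \hookrightarrow \mathfrak{sp}(2nk)$ from the $n = 2$ case (Proposition~\ref{branchsp2}) by the same ``telescoping'' argument used to pass from Proposition~\ref{prop:branchgl2} to Proposition~\ref{branchgln}. First I would observe that an embedding of signature $(n,0,0)$ factors as a chain of signature-$(2,0,0)$ embeddings followed by compatible diagonal pieces; more precisely, $\mathfrak{sp}(2nk)$ contains $\mathfrak{sp}(2k) \times \cdots \times \mathfrak{sp}(2k)$ ($n$ copies) block-diagonally, and one can realize the signature-$(n,0,0)$ embedding $\mathfrak{sp}(2k) \hookrightarrow \mathfrak{sp}(2nk)$ as the composite of $\mathfrak{sp}(2k) \hookrightarrow \mathfrak{sp}(2(n-1)k) \times \mathfrak{sp}(2k)$ (signature $(n-1,0,0)$ on the first factor, the identity on the second) with the obvious inclusion. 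This lets me induct on $n$.

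The inductive step is as follows. Restricting $U^{2nk}_\lambda$ first to $\mathfrak{sp}(2(n-1)k) \times \mathfrak{sp}(2k)$ uses the $n = 2$ law (Proposition~\ref{branchsp2}) applied with $2(n-1)k$ and $2k$ playing the roles of the two blocks: the restriction is $\bigoplus_{\mu,\nu} e^\lambda_{\mu,\nu}\, (U^{2(n-1)k}_\mu \boxtimes U^{2k}_\nu)$, valid for $k \gg 0$, where the external tensor product indexing is governed by the coefficients $e$ and $f$ of Proposition~\ref{branchsp2}. Wait --- to be careful, Proposition~\ref{branchsp2} as stated is the restriction to a single diagonal copy, not to the product of two blocks; the correct input is the underlying $\mathfrak{sp}(2(n-1)k) \times \mathfrak{sp}(2k) \hookrightarrow \mathfrak{sp}(2nk)$ branching rule from Theorem~2.1.3 of \cite{HTW} (of which Proposition~\ref{branchsp2} is the specialization obtained by further restricting one block diagonally). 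Using that rule one gets coefficients $f^{\mu}_{\beta_1,\dots,\beta_{n-1}}$-type data on the first factor by induction and $U^{2k}_{\beta_n}$ on the second; then restricting $U^{2(n-1)k}_\mu$ down to the single copy $\mathfrak{sp}(2k)$ by the induction hypothesis (signature $(n-1,0,0)$), and finally combining with the pairing coefficients, yields the stated formulas for $E^\lambda_{\beta_1,\dots,\beta_n}$ and $F^{\lambda'}_{\beta_1,\dots,\beta_n}$ after reindexing the intermediate partitions $\alpha_1,\dots,\alpha_{n-2}$. The recursive shape of $E$ and $F$ — each a product $e^{\lambda}_{\alpha_1,\beta_1} e^{\alpha_1}_{\beta_2,\alpha_2}\cdots$ and $f^{\alpha_1}_{\beta_1,\beta_2}\cdots f^{\lambda'}_{\alpha_{n-2},\beta_n}$ — is exactly what telescoping two-step branching produces, so the bookkeeping is dictated by the $\mathfrak{gl}$ analogue in Proposition~\ref{branchgln}.

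The main obstacle is purely organizational rather than conceptual: keeping the roles of the ``$e$/$E$'' (embedding-side, i.e. $U^{2nk}_\lambda$ restricted to a product of symplectic blocks) and ``$f$/$F$'' (re-fusion of several small symplectic modules into one) coefficients straight through the induction, and verifying that the stability hypothesis ``$k \gg 0$'' is preserved — one must choose $k$ large enough that every branching step in the chain (there are $n-1$ of them) is in its stable range simultaneously, which is fine since there are finitely many steps and finitely many relevant partitions (all bounded in size by $|\lambda|$, using the size estimates noted after Proposition~\ref{branchsp2}). Alternatively, and perhaps more cleanly, I would simply cite the $\mathfrak{sp}$ analogue of \cite[Proposition~2.4]{Hr} directly if Howe--Tan--Willenbring or Hartram provide the $n$-fold symplectic diagonal branching rule in closed form; the proof then reduces to matching their notation to ours. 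Either way, no genuinely new representation-theoretic input beyond Proposition~\ref{branchsp2} and the associativity of restriction is needed.
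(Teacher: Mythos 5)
Your proposal is correct and takes essentially the same approach as the paper, whose entire proof is the instruction to iterate Proposition~\ref{branchsp2} via Theorems~2.1.3 and~2.2.3 of \cite{HTW}. You also correctly flag the subtlety that Proposition~\ref{branchsp2} alone (which already fuses down to the diagonal) is not the directly iterable input --- one must keep the intermediate restriction to $\mathfrak{sp}(2(n-1)k)\times\mathfrak{sp}(2k)$ from HTW Theorem~2.1.3 separate from the diagonal fusion step of Theorem~2.2.3 to recover the telescoped products defining $E$ and $F$.
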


\begin{proof}
Iterate the branching law from Proposition \ref{branchsp2} using Theorem 2.1.3 and 2.2.3 from \cite{HTW}.  
\end{proof}

From properties of Littlewood-Richardson coefficients,  if $U^{2k}_{\lambda'}$ appears in the above decomposition of $U^{2nk}_{\lambda}$ with nonzero multiplicity, then $|\lambda'| \le |\lambda|$ and $|\lambda|$ and $|\lambda'|$ have the same parity.

We derive the branching laws for $\mathfrak{so}_k \hookrightarrow \mathfrak{so}_{nk}$ via similar calculations. 
Recall that $W_{\lambda}^k$ denotes the irreducible representation for $\mathfrak{so}(k)$ with highest weight $\lambda$. 

\begin{proposition}\label{branchso2}
Let $\mathfrak{so}_k \hookrightarrow \mathfrak{so}_{2k}$ be an embedding of signature $(2, 0, 0)$, and fix a partition $\lambda$. Then for $k \gg 0$:
\begin{eqnarray*}
\restr{W^{2k}_{\lambda}}{\mathfrak{so}_{k}} \cong \bigoplus_{\mu, \nu, \lambda'} g^{\lambda}_{\mu, \nu} f^{\lambda'}_{\mu, \nu} W^{k}_{\lambda'}
\end{eqnarray*}
where $f^{\lambda'}_{\mu, \nu}$ is defined as before and
\begin{eqnarray*}
g^{\lambda}_{\mu, \nu} = \sum_{\delta, \gamma} c^{\gamma}_{\mu, \nu} c^{\lambda}_{\gamma, (2\delta)}.
\end{eqnarray*}
\end{proposition}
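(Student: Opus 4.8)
The plan is to mirror the derivation of Proposition~\ref{branchsp2}, but replacing the symplectic ingredients with their orthogonal analogues from \cite{HTW}. Concretely, the branching law for $\mathfrak{so}_k \hookrightarrow \mathfrak{so}_{2k}$ of signature $(2,0,0)$ factors through $\mathfrak{so}_k \times \mathfrak{so}_k \hookrightarrow \mathfrak{so}_{2k}$ (the block-diagonal subalgebra with two equal $\mathfrak{so}_k$-blocks) followed by the diagonal embedding $\mathfrak{so}_k \hookrightarrow \mathfrak{so}_k \times \mathfrak{so}_k$. So I would first restrict $W^{2k}_\lambda$ to $\mathfrak{so}_k \times \mathfrak{so}_k$ and then restrict the result along the diagonal.

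For the first step I would invoke the stable $\mathfrak{so}_{2k} \downarrow \mathfrak{so}_k \times \mathfrak{so}_k$ branching rule (Theorem~2.1.2, or its two-factor version, of \cite{HTW}), which in the stable range expresses the multiplicity of $W^k_\mu \boxtimes W^k_\nu$ in $\restr{W^{2k}_\lambda}{\mathfrak{so}_k \times \mathfrak{so}_k}$ as $\sum_{\delta,\gamma} c^\gamma_{\mu,\nu}\, c^\lambda_{\gamma,(2\delta)}$ — the appearance of $(2\delta)$ (rows of even length) rather than $(2\delta)^T$ (columns of even length, as in the symplectic case of Proposition~\ref{branchsp2}) is exactly the orthogonal-versus-symplectic distinction. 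This is precisely the quantity $g^\lambda_{\mu,\nu}$ in the statement. For the second step I would use the diagonal restriction $\restr{(W^k_\mu \boxtimes W^k_\nu)}{\mathfrak{so}_k}$, whose decomposition in the stable range is governed by Littlewood--Richardson coefficients in the same way as the general linear case, giving multiplicity $f^{\lambda'}_{\mu,\nu} = \sum_{\alpha,\beta,\gamma'} c^{\lambda'}_{\alpha,\beta}\, c^\mu_{\alpha,\gamma'}\, c^\nu_{\beta,\gamma'}$ of $W^k_{\lambda'}$ (Theorem~2.2.2 of \cite{HTW}); note this is the same $f$ as in Propositions~\ref{branchsp2} and the $\mathfrak{sp}$ laws, since the diagonal tensor-product restriction does not see the bilinear form. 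Composing the two steps and summing over the intermediate $\mu,\nu$ yields the claimed formula with coefficient $\sum_{\mu,\nu} g^\lambda_{\mu,\nu} f^{\lambda'}_{\mu,\nu}$.

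As with the earlier propositions, I would remark that the stated equality only holds in the stable range $k \gg 0$ (relative to $|\lambda|$), which suffices for our applications since $k$ may be taken arbitrarily large; the precise bounds are in \cite{HTW}. I would also record the immediate consequence, parallel to the $\mathfrak{sp}$ case, that if $W^k_{\lambda'}$ appears with nonzero multiplicity then $|\lambda'| \le |\lambda|$ and $|\lambda|, |\lambda'|$ have the same parity — this follows because $c^\lambda_{\gamma,(2\delta)} \ne 0$ forces $|\gamma| = |\lambda| - 2|\delta|$ and the remaining Littlewood--Richardson coefficients preserve total size through the chain $|\lambda'| = |\alpha|+|\beta| \le |\mu|+|\nu| = |\gamma| \le |\lambda|$, with all size drops even.

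The main obstacle is bookkeeping rather than conceptual: one must be careful to cite the correct orthogonal branching theorems from \cite{HTW} and to verify that the stability hypotheses of the two steps are compatible (so that a single threshold $k \gg 0$ works for the composite), and to confirm that Howe--Tan--Willenbring's conventions for labelling $\mathfrak{so}$-representations by partitions (with the first-two-columns condition) are consistent with the $W^k_\lambda$ notation fixed in the Background section. Once the right theorems are lined up, the proof is a one-line ``combine Theorems~2.1.2 and 2.2.2 from \cite{HTW}'' in the same spirit as the proof of Proposition~\ref{branchsp2}.
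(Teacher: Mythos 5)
Your proposal is correct and takes essentially the same approach as the paper: the paper's proof is the one-line citation ``Follows from Theorems 2.1.2 and 2.2.2 from \cite{HTW},'' and your two-step factoring through $\mathfrak{so}_k \times \mathfrak{so}_k$ followed by the diagonal is precisely the content of those two theorems. Your added remarks on the $(2\delta)$ versus $(2\delta)^T$ distinction and the parity/size consequences are accurate and match what the paper records just after Proposition~\ref{branchson}.
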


\begin{proof}
Follows from Theorems 2.1.2 and 2.2.2 from \cite{HTW}.
\end{proof}

\begin{proposition}\label{branchson}
Let $\mathfrak{so}_k \hookrightarrow \mathfrak{so}_{nk}$ be an embedding of signature $(n, 0, 0)$, and fix a partition $\lambda$. Then for $k \gg 0$:
\begin{eqnarray*}
\restr{W^{nk}_{\lambda}}{\mathfrak{so}_{k}} \cong \bigoplus_{\beta_1, \beta_2, \dots, \beta_n, \lambda'} G^{\lambda}_{\beta_1, \beta_2, \dots, \beta_n} F^{\lambda'}_{\beta_1, \beta_2, \dots, \beta_n} W^{k}_{\lambda'}
\end{eqnarray*}
where $F^{\lambda'}_{\beta_1, \beta_2, \dots, \beta_n}$ is defined as before and 
\begin{eqnarray*}
G^{\lambda}_{\beta_1, \beta_2, \dots, \beta_n} = \sum_{\alpha_1, \alpha_2, \dots, \alpha_{n-2}} g^{\lambda}_{\alpha_1, \beta_1} g^{\alpha_1}_{\beta_2, \alpha_2} \cdots g^{\alpha_{n-3}}_{\beta_{n-2}, \alpha_{n-2}}g^{\alpha_{n-2}}_{\beta_{n-1}, \beta_n}.
\end{eqnarray*}
\end{proposition}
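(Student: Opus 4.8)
The plan is to derive Proposition~\ref{branchson} exactly as Proposition~\ref{branchspn} was derived from Proposition~\ref{branchsp2}: namely, by iterating the ``$n=2$'' branching law of Proposition~\ref{branchso2} along a chain of signature-$(2,0,0)$ embeddings whose composite is the given signature-$(n,0,0)$ embedding. First I would observe that an embedding $\mf{so}_k \hookrightarrow \mf{so}_{nk}$ of signature $(n,0,0)$ factors (for $k \gg 0$) through a tower
\[
\mf{so}_k \hookrightarrow \mf{so}_{2k} \hookrightarrow \mf{so}_{3k} \hookrightarrow \cdots \hookrightarrow \mf{so}_{nk},
\]
where the embedding $\mf{so}_{jk}\hookrightarrow \mf{so}_{(j+1)k}$ has signature $(2,0,0)$ with respect to the obvious block decomposition $\FF^{(j+1)k} = \FF^{jk}\oplus\FF^k$. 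Applying Proposition~\ref{branchso2} (in the form ``$\mf{so}_k \hookrightarrow \mf{so}_{jk}\oplus\mf{so}_k$'', valid for $k\gg 0$ by the stability remarks preceding the branching laws) to each stage, and the analogous $\mf{gl}$-type bookkeeping that already underlies the definition of $F^{\lambda'}_{\beta_1,\dots,\beta_n}$, gives the result.

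The key steps, in order, are: (1) set up the factorisation of the embedding and fix notation for the intermediate highest weights $\alpha_1,\dots,\alpha_{n-2}$ indexing the irreducible constituents at each intermediate $\mf{so}_{jk}$; (2) restrict $W^{nk}_\lambda$ from $\mf{so}_{nk}$ to $\mf{so}_{(n-1)k}\oplus\mf{so}_k$ via Proposition~\ref{branchso2}, producing a sum over a partition $\alpha_{n-2}$ (the $\mf{so}_{(n-1)k}$-part) and the $\mf{so}_k$-part, with coefficient $\sum g^{\ast}_{\alpha_{n-2},\beta_n} f^{\ast}_{\cdots}$; (3) iterate downward, at each stage $j$ restricting the $\mf{so}_{jk}$-constituent $W^{jk}_{\alpha_{j-1}}$ to $\mf{so}_{(j-1)k}\oplus \mf{so}_k$, again by Proposition~\ref{branchso2}; (4) collect the $g$-factors into the product $G^{\lambda}_{\beta_1,\dots,\beta_n} = \sum_{\alpha_1,\dots,\alpha_{n-2}} g^{\lambda}_{\alpha_1,\beta_1}g^{\alpha_1}_{\beta_2,\alpha_2}\cdots g^{\alpha_{n-2}}_{\beta_{n-1},\beta_n}$ and identify the collected $f$-factors with $F^{\lambda'}_{\beta_1,\dots,\beta_n}$, using that the $f$-coefficients are exactly the $\mf{gl}$-type Littlewood--Richardson data already assembled for $\mf{sp}$ in Proposition~\ref{branchspn}. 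Since the $f^{\lambda'}_{\mu,\nu}$ are the same rational combinations of Littlewood--Richardson coefficients appearing in the $\mf{sp}$ case, the identification of the $f$-part with $F^{\lambda'}_{\beta_1,\dots,\beta_n}$ is formally identical to the corresponding step in the proof of Proposition~\ref{branchspn}, so only the $g$-part needs genuinely new (and entirely routine) checking.

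I would also record the standard consequence of the combinatorial shape of the $F$- and $G$-coefficients: if $W^k_{\lambda'}$ occurs in $\restr{W^{nk}_\lambda}{\mf{so}_k}$ with nonzero multiplicity, then $|\lambda'|\le |\lambda|$ and $|\lambda|\equiv|\lambda'|\pmod 2$, parallel to the $\mf{sp}$ statement; this follows since each $g$- and $f$-factor can only decrease the size of a partition and the ``$(2\delta)$'' (resp. ``$(2\delta)^T$'') in the definition of $g$ (resp. $e$) changes the total size by an even amount.

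The only real obstacle is keeping the index bookkeeping straight through the iteration --- making sure the intermediate partitions $\alpha_j$ are threaded correctly so that the telescoping product of $g$'s matches the asserted formula for $G^\lambda_{\beta_1,\dots,\beta_n}$, and that the leftover Littlewood--Richardson factors really do reassemble into $F^{\lambda'}_{\beta_1,\dots,\beta_n}$ and not some reindexed variant; once the $n=2$ case and the matching $\mf{sp}$ computation are in hand, there is no conceptual difficulty. Hence the proof is simply: \emph{iterate Proposition~\ref{branchso2} along the tower $\mf{so}_k\hookrightarrow\mf{so}_{2k}\hookrightarrow\cdots\hookrightarrow\mf{so}_{nk}$, exactly as in the proof of Proposition~\ref{branchspn}.}
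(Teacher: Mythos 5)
Your high-level plan --- iterate the $n=2$ orthogonal branching exactly as Proposition~\ref{branchspn} iterates Proposition~\ref{branchsp2} --- is the paper's proof, and you correctly inferred the right base case, Proposition~\ref{branchso2} (the paper's printed proof of Proposition~\ref{branchson} cites \ref{branchsp2}, an evident cross-reference slip, since the HTW theorems it cites, 2.1.2 and 2.2.2, are the $\mf{so}$ ones).

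However, the factorisation you set up in step~(1) is wrong and would break the argument if taken literally. The embedding $\mf{so}_{jk}\hookrightarrow\mf{so}_{(j+1)k}$ arising from $\FF^{(j+1)k}=\FF^{jk}\oplus\FF^k$ is $A\mapsto\operatorname{diag}(A,0)$, which has signature $(1,0,k)$, not $(2,0,0)$; in fact for $j\geq 2$ there is no signature-$(2,0,0)$ embedding $\mf{so}_{jk}\hookrightarrow\mf{so}_{(j+1)k}$ at all, since $2\cdot jk>(j+1)k$. Composing the tower you describe yields $A\mapsto\operatorname{diag}(A,0,\dots,0)$, of signature $(1,0,(n-1)k)$, not the signature-$(n,0,0)$ embedding in the statement. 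What the paper iterates --- and what your steps (2)--(4) in fact describe, at odds with step (1) --- is the pair of HTW branching laws \emph{from which} Proposition~\ref{branchso2} is assembled, not that proposition itself: first restrict $W^{nk}_\lambda$ along $\mf{so}_{nk}\supset\mf{so}_{(n-1)k}\times\mf{so}_k\supset\cdots\supset\mf{so}_k^{\times n}$ via the stable branching $\mf{so}_{a+b}\downarrow\mf{so}_a\times\mf{so}_b$ (HTW Thm.~2.1.2), collecting the $g$-factors into $G^\lambda_{\beta_1,\dots,\beta_n}$, and then restrict $\mf{so}_k^{\times n}\downarrow\mf{so}_k$ along the diagonal via HTW Thm.~2.2.2, collecting the $f$-factors into $F^{\lambda'}_{\beta_1,\dots,\beta_n}$. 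The correct factorisation of the signature-$(n,0,0)$ embedding is as the composite of the diagonal $\mf{so}_k\hookrightarrow\mf{so}_k^{\times n}$ with the product inclusion $\mf{so}_k^{\times n}\hookrightarrow\mf{so}_{nk}$, and it is this two-phase structure the iteration tracks. (Your labeling in step~(2) also needs fixing: from the formula for $G^\lambda$, the intermediate weight produced by the first peeling step is $\alpha_1$, not $\alpha_{n-2}$.)
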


\begin{proof}
Iterate the branching law from Proposition \ref{branchsp2} using Theorems 2.1.2 and 2.2.2 from \cite{HTW}.  
\end{proof}

If $W^{k}_{\lambda'}$ appears in the above decomposition of $W^{nk}_{\lambda}$ for $n \ge 2$ with nonzero multiplicity, then $|\lambda'| \le |\lambda|$ and $|\lambda|$ and $|\lambda'|$ have the same parity.

Lastly, we will use the following result:

\begin{lemma}\label{lem:dag}
Let $\mf g = \mf{sl}, \mf{so}, \mf{sp}$.  Let $\boldsymbol\lambda$ be a dominant integral weight for $\mf g$.
  Then  $\Hom_{\mf g(n^k)} (\V{\lambda}{n^k}, \restr{\V{\lambda}{n^{k+1}}}{\mf g(n^k)}) \neq 0$ for $k\gg 0$.
\end{lemma}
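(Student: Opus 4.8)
The plan is to show that the ``stable'' representation $\V{\lambda}{n^{k+1}}$, when restricted to $\mf g(n^k)$, always contains a copy of $\V{\lambda}{n^k}$, by exhibiting an explicit nonzero summand in the relevant branching law. First I would treat the $\mf{gl}$ case (with $\mf g = \mf{sl}$) and the $\mf{sp}$, $\mf{so}$ cases separately, since the branching coefficients differ, but in each case the strategy is the same: choose the ``diagonal'' term in the multiplicity formula. For $\mf{sl}$, Lemma~\ref{lem:foobar} says the $\mf{gl}$ branching law of Proposition~\ref{branchgln} (or Proposition~\ref{prop:branchgl2} when $n=2$) applies, so the multiplicity of $\V{\lambda}{n^k} = V^{n^k}_{\lambda,\mu}$ inside $\restr{V^{n^{k+1}}_{\lambda,\mu}}{\mf{gl}(n^k)}$ is a sum of products $C^{(\lambda,\mu)}_{(\beta_i^+),(\beta_i^-)} D^{(\lambda,\mu)}_{(\beta_i^+),(\beta_i^-)}$ of nonnegative integers, so it suffices to find one choice of the $\beta_i^\pm$ making both factors nonzero. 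The natural candidate is $\beta_1^+ = \lambda$, $\beta_1^- = \mu$, and all other $\beta_i^\pm = \varnothing$; then $C^{(\lambda,\mu)}_{\cdots}$ reduces to a product of Littlewood--Richardson coefficients of the form $c^\nu_{\nu,\varnothing} = 1$ and $c^{(\lambda,\mu)}_{(\lambda,\mu),(\varnothing,\varnothing)}$, which is $1$ by a direct check against the definition in Proposition~\ref{prop:branchgl2} (taking $\gamma^+ = \lambda$, $\gamma^- = \mu$, $\delta = \varnothing$, etc.), and similarly $D^{(\lambda,\mu)}_{\cdots}$ reduces to $d^{(\lambda,\mu)}_{(\lambda,\mu),(\varnothing,\varnothing)} = 1$ (taking $\alpha_1 = \lambda$, $\beta_2 = \mu$ and the remaining partitions empty). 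Hence the multiplicity is $\ge 1$.

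For $\mf{sp}$ and $\mf{so}$ the argument is parallel using Propositions~\ref{branchspn} and~\ref{branchson}: the multiplicity of $\V{\lambda}{n^k}$ in $\restr{\V{\lambda}{n^{k+1}}}{\mf g(n^k)}$ is $\sum E^{\lambda}_{\beta_\bullet} F^{\lambda}_{\beta_\bullet}$ (resp.\ $\sum G^{\lambda}_{\beta_\bullet} F^{\lambda}_{\beta_\bullet}$), again a sum of products of nonnegative integers, and I would take $\beta_1 = \lambda$, $\beta_2 = \cdots = \beta_n = \varnothing$. With this choice $E^{\lambda}_{\lambda,\varnothing,\dots,\varnothing}$ collapses to $e^{\lambda}_{\lambda,\varnothing}$, which by the formula $e^\lambda_{\mu,\nu} = \sum_{\delta,\gamma} c^\gamma_{\mu,\nu} c^\lambda_{\gamma,(2\delta)^T}$ equals $c^\lambda_{\lambda,\varnothing} = 1$ (forcing $\nu = \varnothing$, $\gamma = \mu = \lambda$, $\delta = \varnothing$), and $F^{\lambda}_{\lambda,\varnothing,\dots,\varnothing}$ collapses to $f^{\lambda}_{\lambda,\varnothing}$, which by $f^{\lambda'}_{\mu,\nu} = \sum c^{\lambda'}_{\alpha,\beta} c^\mu_{\alpha,\gamma'} c^\nu_{\beta,\gamma'}$ equals $1$ (forcing $\beta = \nu = \varnothing$, $\gamma' = \varnothing$, $\alpha = \lambda$). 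The $\mf{so}$ case uses $g^\lambda_{\mu,\nu} = \sum c^\gamma_{\mu,\nu} c^\lambda_{\gamma,(2\delta)}$ in place of $e$, and the same empty choices give $g^\lambda_{\lambda,\varnothing} = 1$. So in every case there is a nonzero term, and $\Hom_{\mf g(n^k)}(\V{\lambda}{n^k}, \restr{\V{\lambda}{n^{k+1}}}{\mf g(n^k)}) \ne 0$ for $k \gg 0$, the lower bound on $k$ being exactly the stability range needed for the cited branching laws.

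The main obstacle is purely bookkeeping: verifying that the iterated sums defining $C, D$ (resp.\ $E, F, G$) genuinely collapse to a single nonzero term under the chosen empty partitions, which requires carefully tracking the chain of Littlewood--Richardson coefficients $c^{\alpha_1}_{\alpha_2,\beta_2} \cdots$ and using repeatedly that $c^\nu_{\nu,\varnothing} = c^\nu_{\varnothing,\nu} = 1$ while $c^\nu_{\alpha,\beta} = 0$ unless $|\alpha| + |\beta| = |\nu|$. There are no conceptual difficulties; one could alternatively bypass the combinatorics entirely by observing representation-theoretically that the highest weight vector of $\V{\lambda}{n^{k+1}}$, restricted along the diagonal embedding, generates a $\mf g(n^k)$-submodule with the same highest weight $\boldsymbol\lambda$ (the first diagonal block acts as $\mf g(n^k)$ in its standard representation, so the highest weight of the ambient module restricts to $\boldsymbol\lambda$ on the corresponding Cartan), hence a copy of $\V{\lambda}{n^k}$ appears. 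I would include the combinatorial verification as the primary argument since it is self-contained given the branching laws already established, and perhaps remark on the conceptual version.
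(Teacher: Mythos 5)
Your combinatorial argument is correct, but the paper goes a different way: its proof of Lemma~\ref{lem:dag} is precisely the one-line observation you relegate to a closing remark, namely that a highest weight vector $v \in \V{\lambda}{n^{k+1}}$ of weight $\boldsymbol\lambda$ remains a highest weight vector of weight $\boldsymbol\lambda$ after restriction to $\mf g(n^k)$ along the diagonal embedding. That argument avoids the branching machinery entirely and needs essentially nothing beyond the compatibility of Cartan subalgebras under the embedding; it is also cleaner about the ``$k \gg 0$'' hypothesis, since it does not invoke the stable range of \cite{HTW}. Your primary route — picking $\beta_1^+ = \lambda$, $\beta_1^- = \mu$ (resp.\ $\beta_1 = \lambda$), all other $\beta$'s empty, and tracking the collapse of the iterated Littlewood--Richardson sums to a single term equal to $1$ — does go through (there is a small indexing slip where you say $\beta_2 = \mu$ in the computation of $d^{(\lambda,\mu)}_{(\lambda,\mu),(\varnothing,\varnothing)}$; it should be $\beta_1 = \mu$, $\beta_2 = \varnothing$, but this does not affect the conclusion). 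The trade-off is that the combinatorial version yields the slightly stronger quantitative fact that each vertex of $Q$ carries at least one loop in a way that is visibly compatible with the formulas of Part~\ref{PART3}, whereas the paper's argument is shorter, self-contained, and makes the conceptual reason transparent.
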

\begin{proof}
Let $v \in \V{\lambda}{n^{k+1}}$ be a highest weight vector of weight $\boldsymbol{\lambda}$. Then $v$ remains a highest weight vector of weight $\boldsymbol{\lambda}$ when restricted to $\mf g(n^k)$.
\end{proof}


\section{Classification of finite type irreducible coherent local systems}\label{CLS}

Let $\mathfrak{g}_{\infty}$ be a locally simple, diagonal Lie algebra. Zhilinskii has classified the finite type coherent local systems for $\mathfrak{g}_{\infty}$ in \cite{Zh2}, which provides a description of ideals in $U(\mathfrak{g}_{\infty})$ of locally finite codimension. By \cite{Zh1}, this provides a complete description of the ideals of $U(\mathfrak{g}_{\infty})$ when $\mathfrak{g}_{\infty}$ is non-finitary.  In this section, we reproduce Zhilinskii's relevant results using our notation.

\begin{proposition}[\cite{Zh2}] \label{clsgl}
The irreducible coherent local systems of finite type for $\mathfrak{gl}(n^{\infty})$ are the following:
\begin{enumerate}
\item $\mathcal{A}^n_p$ for $p>0$, given by $C_k = \{V^{n^k}_{\lambda, (0)} \st |\lambda| = p\}$ for all but finitely many $k \in \mathbb{N}$
\item $\mathcal{B}^n_q$ for $q > 0$, given by $C_k = \{V^{n^k}_{(0), \mu} \st |\mu| = q\}$ for all but finitely many $k \in \mathbb{N}$
\item $\mathcal{C}^n_{p,q}$ for $p, q > 0$, given by $C_k = \{V^{n^k}_{\lambda, \mu} \st |\lambda| \le p, |\mu| \le q, |\lambda| - |\mu| = p - q \}$ for all but finitely many $k \in \mathbb{N}$
\end{enumerate}
\end{proposition}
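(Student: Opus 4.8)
The plan is to derive Proposition~\ref{clsgl} directly from the stable branching laws of Section~\ref{BRANCH}; this is essentially Zhilinskii's argument of \cite{Zh2} recast in our notation. Two things have to be shown: that $\mc A^n_p$, $\mc B^n_q$ and $\mc C^n_{p,q}$ are irreducible coherent local systems of finite type, and that no others occur. Finiteness of each $C_k$ is immediate, since only finitely many partitions of bounded size are involved. To see that $\mc A^n_p$ is stable under restriction I would use \eqref{branchgln0} (and \eqref{branchgl20} when $n=2$): the irreducible summands of $\restr{V^{n^{k+1}}_{\lambda,0}}{\mf{gl}(n^k)}$ are exactly the $V^{n^k}_{\lambda',0}$ with $|\lambda'|=|\lambda|$, while conversely $V^{n^k}_{\lambda',0}$ occurs in $\restr{V^{n^{k+1}}_{\lambda',0}}{\mf{gl}(n^k)}$ by the $\mf{gl}$-analogue of Lemma~\ref{lem:dag} (a highest weight vector restricts to one of the same weight); iterating handles $l>k+1$. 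The families $\mc B^n_q$ (pass to dual modules) and $\mc C^n_{p,q}$ are treated the same way, now using Proposition~\ref{prop:branchgl2}/Proposition~\ref{branchgln} together with the facts recorded after them that restriction preserves $|\lambda|-|\mu|$ and cannot increase $|\lambda|$ or $|\mu|$.

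The heart of the matter is a \emph{generation} statement, which will yield irreducibility. For $\mc A^n_p$: in \eqref{branchgln0}, choosing the auxiliary partition $\beta_1$ to be $\lambda$ with a corner box deleted, $\beta_2=(1)$, and $\beta_3=\dots=\beta_n=\emptyset$ (this uses $n\ge 2$), one finds that $V^{n^k}_{\lambda',0}$ occurs in $\restr{V^{n^{k+1}}_{\lambda,0}}{\mf{gl}(n^k)}$ whenever $\lambda$ and $\lambda'$ share a one-box-smaller partition; since such box-moves connect all partitions of $p$, any single module of $\mc A^n_p$ generates all of $\mc A^n_p$ under iterated restriction. Analogously, unwinding Proposition~\ref{prop:branchgl2}/Proposition~\ref{branchgln} one should extract that a box-move inside $\lambda$ (fixing $\mu$), a box-move inside $\mu$ (fixing $\lambda$), and the simultaneous removal of one box from $\lambda$ and one from $\mu$ all occur in the branching, so that any single $V^{n^k}_{\lambda,\mu}$ with $|\lambda|=p$, $|\mu|=q$ generates all of $\mc C^n_{p,q}$. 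Granting this, irreducibility is formal: if $\mc C^n_{p,q}=C'\cup C''$ with $C',C''$ coherent local systems, then among the finitely many ``top'' modules (those with $|\lambda|=p$) either one of them lies in $C'$ at arbitrarily large levels --- forcing $C'=\mc C^n_{p,q}$, since a c.l.s.\ is determined by its restriction to large $k$ and a top module generates everything --- or else beyond some level they all lie in $C''$, forcing $C''=\mc C^n_{p,q}$; the same dichotomy handles $\mc A^n_p$ and $\mc B^n_q$.

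For completeness, let $C$ be an irreducible c.l.s.\ of finite type. For $k\gg 0$ write $C_k=\{V^{n^k}_{\lambda,\mu}:(\lambda,\mu)\in S_k\}$ with $S_k$ a finite set of pairs of partitions; since $V^{n^k}_{\lambda,\mu}$ occurs in $\restr{V^{n^{k+1}}_{\lambda,\mu}}{\mf{gl}(n^k)}$, the $S_k$ are decreasing, hence eventually a fixed finite set $S$. Since restriction preserves $|\lambda|-|\mu|$, for each integer $d$ the modules of $C$ with $|\lambda|-|\mu|=d$ form a c.l.s., and $C$ is the union of finitely many of these; irreducibility forces $|\lambda|-|\mu|$ to be a single value $d$ on $S$, and after passing to the dual c.l.s.\ if necessary (which swaps $\lambda\leftrightarrow\mu$ and $\mc A\leftrightarrow\mc B$) we may assume $d\ge 0$. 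Put $p=\max\{|\lambda|:(\lambda,\mu)\in S\}$ and $q=p-d\ge 0$; as $|\mu|=|\lambda|-d\ge 0$, every pair in $S$ satisfies $|\lambda|\le p$, $|\mu|\le q$, $|\lambda|-|\mu|=p-q$. Choosing $(\lambda^{*},\mu^{*})\in S$ with $|\lambda^{*}|=p$ (so $|\mu^{*}|=q$), the generation statement shows that the iterated restrictions of $V^{n^k}_{\lambda^{*},\mu^{*}}$, all of which lie in $C$, already exhaust $\{V^{n^k}_{\lambda,\mu}:|\lambda|\le p,\,|\mu|\le q,\,|\lambda|-|\mu|=p-q\}$. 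Hence $S$ is exactly this set, so $C=\mc C^n_{p,q}$ when $p,q>0$; when $q=0$ the constraint forces $\mu=\emptyset$ and $C=\mc A^n_p$, dually $C=\mc B^n_q$ in the case reduced from $d<0$, and the degenerate case $p=q=0$ is the c.l.s.\ of the trivial module.

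The one genuinely substantive step is the generation/connectivity lemma. For each prescribed single box-move --- especially the simultaneous removal of a box from $\lambda$ and a box from $\mu$ in the case of $\mc C^n_{p,q}$ --- one must verify that the corresponding $V^{n^k}_{\lambda',\mu'}$ really appears with positive multiplicity, i.e.\ produce an explicit choice of all the auxiliary partitions $\beta_i,\alpha_j,\gamma,\delta,\dots$ in Proposition~\ref{prop:branchgl2}/Proposition~\ref{branchgln} making every relevant Littlewood--Richardson coefficient nonzero. The remainder is bookkeeping with $|\lambda|$ and $|\mu|$, so I expect this combinatorial verification --- made fiddly by the $\mu\ne 0$ terms in the branching law --- to be the main obstacle.
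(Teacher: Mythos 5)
The paper does not prove Proposition~\ref{clsgl} at all: it is quoted as a result of Zhilinskii from~\cite{Zh2} (a Russian preprint), and the authors use it as a black box. So there is no in-paper argument to compare against; your proposal has to be judged on its own.

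The overall architecture you lay out is the right one and would, if completed, give a correct proof. The three sub-claims --- stability under restriction (using the observation after \eqref{branchgln0} and Proposition~\ref{branchgln} that $|\lambda'|\le|\lambda|$, $|\mu'|\le|\mu|$, $|\lambda'|-|\mu'|=|\lambda|-|\mu|$, together with a $\mf{gl}$-version of Lemma~\ref{lem:dag}), irreducibility via a ``generation'' statement, and the classification argument bounding an arbitrary irreducible finite-type c.l.s.\ by a single value of $|\lambda|-|\mu|$ and a single $p$ --- are each correctly identified and the bookkeeping in the completeness step (the $S_k$ decrease and stabilize, irreducibility fixes $d$, duality lets you assume $d\ge 0$, and the degenerate cases peel off as $\mc A$, $\mc B$, and the trivial system) is sound. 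One small imprecision worth flagging: restricting a \emph{single} $V^{n^{k+1}}_{\lambda,0}$ does not produce \emph{all} $V^{n^k}_{\lambda',0}$ with $|\lambda'|=|\lambda|$ (e.g.\ for $\lambda=(p)$, $n=2$, only $\lambda'$ with at most two rows occur), so the phrase ``are exactly the $V^{n^k}_{\lambda',0}$ with $|\lambda'|=|\lambda|$'' should be read as ``are among the''; the c.l.s.\ property still holds because $C_k$ is the union over all $\lambda$ at level $k+1$, and the missing $\lambda'$ are recovered by the very connectivity you invoke later.

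The genuine gap is the one you name yourself: the generation/connectivity lemma, and in particular the assertion that for $\mc C^n_{p,q}$ the simultaneous removal of one box from $\lambda$ and one from $\mu$ occurs with positive multiplicity in the branching. Your $\mu=0$ argument (take $\beta_1=\lambda^-$, $\beta_2=(1)$, the rest empty) is complete and correct, but for $\mc C^n_{p,q}$ you write ``one should extract that $\ldots$'' without producing the required nonvanishing choice of $\gamma^+,\gamma^-,\delta,\alpha^\pm,\beta^\pm$ in $c^{(\lambda,\mu)}_{(\alpha^+,\alpha^-),(\beta^+,\beta^-)}$ and the matching $d^{(\lambda',\mu')}_{(\alpha^+,\alpha^-),(\beta^+,\beta^-)}$ (one would try $\delta=(1)$, $\gamma^\pm$ one box smaller, and then chase the $d$-coefficient). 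That verification is precisely the nontrivial content of the proposition: irreducibility and the ``$S$ is exactly this set'' conclusion both rest on it, so as written the proof is a correct reduction, not a complete proof. Since the paper simply cites~\cite{Zh2} rather than prove this, the honest options for your write-up are either to carry out the combinatorial check or to cite Zhilinskii as the authors do.
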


We can relate representations of $\mf{gl}(n^\infty)$ to those of $\mf{sl}(n^\infty)$ by Lemma \ref{lem:foobar}. Therefore, the coherent local systems for $\mf{sl}(n^\infty)$ and $\mf{gl}(n^\infty)$ will be the same.

\begin{corollary}\label{cor:glslcls}
The irreducible c.l.s's for $\mf{sl}(n^\infty)$ are the same as those for $\mf{gl}(n^\infty)$ given in Proposition~\ref{clsgl}.
\end{corollary}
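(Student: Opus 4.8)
The plan is to transport Zhilinskii's classification for $\mf{gl}(n^\infty)$ (Proposition~\ref{clsgl}) across the restriction functor to $\mf{sl}(n^\infty)$, using Lemma~\ref{lem:foobar} as the bridge. First I would fix the precise sense in which the c.l.s.\ of $\mf{sl}(n^\infty)$ and $\mf{gl}(n^\infty)$ are ``the same''. A c.l.s.\ for $\mf{gl}(n^\infty)$ is a restriction-stable family $\{C_k\}$ of finite-dimensional irreducible $\mf{gl}(n^k)$-modules; restricting each module in $C_k$ to $\mf{sl}(n^k)$ gives a family $\bar C_k$ of irreducible $\mf{sl}(n^k)$-modules, and Lemma~\ref{lem:foobar} says (for $k \gg 0$) that the branching multiplicities for $\mf{sl}$ along the diagonal embeddings agree with those for $\mf{gl}$, so $\{\bar C_k\}$ is again restriction-stable, i.e.\ a c.l.s.\ for $\mf{sl}(n^\infty)$. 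Conversely, every finite-dimensional irreducible $\mf{sl}(n^k)$-module lifts to a $\mf{gl}(n^k)$-module (choose any integral highest weight restricting to it), so every c.l.s.\ for $\mf{sl}(n^\infty)$ arises this way.

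The one genuine subtlety is that the map ``$\mf{gl}$-module $\mapsto$ restricted $\mf{sl}$-module'' is not injective: as recalled in Section~\ref{BACKGROUND}, $V^k_{\alpha}$ and $V^k_{\beta}$ restrict to isomorphic $\mf{sl}(k)$-modules exactly when $\alpha - \beta$ is a constant vector. So I would check that this many-to-one phenomenon does not collapse distinct $\mf{gl}(n^\infty)$-c.l.s.\ onto the same $\mf{sl}(n^\infty)$-c.l.s., and does not break irreducibility. For the families $\mc A^n_p$, $\mc B^n_q$, $\mc C^n_{p,q}$ the members of $C_k$ are indexed by pairs of partitions $(\lambda,\mu)$ with $|\lambda|-|\mu|$ fixed equal to $p-q$ (or $p$, or $-q$), and two such highest weights differ by a constant vector only if the constant is $0$ (the argument is exactly that of the proof of Lemma~\ref{lem:foobar}: a nonzero constant shift $c$ changes $|\lambda|-|\mu|$ by $n^k c \neq 0$). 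Hence within a single $C_k$ no two modules become identified, and the indexing set of $\bar C_k$ is in canonical bijection with that of $C_k$; distinct families therefore stay distinct, and a decomposition $\bar C = \bar C' \cup \bar C''$ would pull back to $C = C' \cup C''$, so irreducibility is preserved in both directions.

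With those points settled, the corollary follows: the assignment $C \mapsto \bar C$ is a bijection between c.l.s.\ for $\mf{gl}(n^\infty)$ and c.l.s.\ for $\mf{sl}(n^\infty)$ that preserves finite type, containment, and irreducibility, hence restricts to a bijection on irreducible finite-type c.l.s.; combined with Proposition~\ref{clsgl} this identifies the irreducible finite-type c.l.s.\ for $\mf{sl}(n^\infty)$ with the list $\mc A^n_p$, $\mc B^n_q$, $\mc C^n_{p,q}$. I expect the main obstacle to be purely bookkeeping rather than conceptual: being careful about ``for all but finitely many $k$'' and ``for $k \gg 0$'' so that restriction-stability and the branching-law comparison hold simultaneously for all the relevant indices, and making sure the constant-shift identification really is accounted for uniformly across the three families.
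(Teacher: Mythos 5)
Your proposal is correct and takes essentially the same route the paper does: the paper's proof is literally the single sentence ``This follows directly from Lemma~\ref{lem:foobar},'' and your argument is a careful unpacking of why that lemma suffices, including the point (left implicit in the paper) that restriction $\mf{gl} \to \mf{sl}$ identifies two highest weights only when they differ by a constant vector, which cannot happen within any of the families $\mc A^n_p$, $\mc B^n_q$, $\mc C^n_{p,q}$ because $|\lambda|-|\mu|$ is fixed and a nonzero constant shift moves it by $n^k c$.
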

\begin{proof}
This  follows directly from Lemma~\ref{lem:foobar}.
\end{proof}

\begin{proposition}[\cite{Zh2}] \label{clssp}
The irreducible coherent local systems of finite type for $\mathfrak{sp}(n^{\infty})$ are:
\begin{itemize}
\item $\mathcal{D}^n_p$ for $p>0$, given by $C_k = \{U^{n^k}_{\lambda} \st |\lambda| \le p, |\lambda| \equiv p \mod 2 \}$ for all but finitely many $k \in \mathbb{N}$.
\end{itemize}
The irreducible coherent local systems of finite type for $\mathfrak{so}(n^{\infty})$ are:
\begin{itemize}
\item $\mathcal{E}^n_p$ for $p>0$, given by $C_k = \{W^{n^k}_{\lambda} \st |\lambda| \le p, |\lambda| \equiv p \mod 2 \}$ for all but finitely many $k \in \mathbb{N}$.
\end{itemize}
\end{proposition}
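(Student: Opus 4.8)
\textbf{Proof plan for Proposition~\ref{clssp} (classification of finite type irreducible c.l.s.\ for $\mathfrak{sp}(n^\infty)$ and $\mathfrak{so}(n^\infty)$).}

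The plan is to follow the same strategy that must underlie Proposition~\ref{clsgl}: first exhibit each candidate family $\mathcal{D}^n_p$ (respectively $\mathcal{E}^n_p$) as a genuine coherent local system of finite type which is irreducible, and then show that every finite type irreducible c.l.s.\ is of this form. For the first part, fix $p>0$ and consider the collection $C_k=\{U^{n^k}_\lambda \st |\lambda|\le p,\ |\lambda|\equiv p \bmod 2\}$ (for $k$ large). That this is a \emph{local system} (stable under restriction) is exactly the content of the branching law in Proposition~\ref{branchspn}: if $U^{n^k}_{\lambda'}$ appears in $\restr{U^{n^{k+1}}_\lambda}{\mathfrak{sp}(n^k)}$ with nonzero multiplicity then $|\lambda'|\le|\lambda|\le p$ and $|\lambda'|\equiv|\lambda|\equiv p\bmod 2$, so $U^{n^k}_{\lambda'}\in C_k$; conversely every $U^{n^k}_{\lambda'}$ with $|\lambda'|\le p$, $|\lambda'|\equiv p\bmod 2$ does occur in the restriction of some $U^{n^{k+1}}_\lambda$ with $|\lambda|\le p$ (one takes $\lambda=\lambda'$ using Lemma~\ref{lem:dag}, or $\lambda$ with $|\lambda|=p$ when one wants the full set), which shows $C_k$ is precisely the set of restrictions. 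Finiteness is clear since there are finitely many partitions of size $\le p$. Irreducibility: one shows $C$ cannot be written as $C'\cup C''$ with neither contained in the other. The cleanest way is the criterion implicit in Lemma~\ref{Zh1prime} and the surrounding discussion --- a c.l.s.\ is irreducible iff it is ``generated'' by a single compatible tower; here the tower $(U^{n^k}_{(p,0,\dots)})_k$ (the one-row partition of size $p$) has restrictions that eventually hit every element of every $C_k$, so any irreducible component containing this tower is all of $C$, forcing $C$ itself to be irreducible.

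For the converse --- that these are the \emph{only} finite type irreducible c.l.s.\ --- the argument is: let $C$ be any finite type c.l.s.\ for $\mathfrak{sp}(n^\infty)$. By finiteness, $\sup_k\max\{|\lambda| \st U^{n^k}_\lambda\in C_k\}$ is finite (if it were infinite one could, using the branching law, produce arbitrarily large $C_k$, contradicting finite type --- this is the mechanism behind Lemma~\ref{lem:infinitetype}); call this supremum $p$, and observe the parity of $|\lambda|$ is forced to be constant across all $\lambda$ appearing at all levels, again by the parity statement following Proposition~\ref{branchspn} (restriction preserves parity, and a downward chain connects any two members of a fixed $C_k$ to a common ancestor). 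So $C\subseteq \mathcal{D}^n_p$. If moreover $C$ is irreducible, then since $\mathcal{D}^n_p=\bigcup$ of the c.l.s.\ generated by its maximal towers and each maximal tower already generates all of $\mathcal{D}^n_p$ (by the surjectivity of restriction established above), irreducibility forces $C=\mathcal{D}^n_p$. The $\mathfrak{so}$ case is identical word-for-word with $U$ replaced by $W$, $\mathfrak{sp}$ by $\mathfrak{so}$, and Proposition~\ref{branchspn} by Proposition~\ref{branchson}, whose parity and size bounds are stated in exactly the same form.

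The main obstacle I anticipate is the irreducibility half, both directions: verifying that each $\mathcal{D}^n_p$ (and $\mathcal{E}^n_p$) is irreducible, and that no smaller irreducible c.l.s.\ inside it exists. The size bound $|\lambda'|\le|\lambda|$ only gives $C\subseteq\mathcal{D}^n_p$ for \emph{some} $p$; upgrading $\subseteq$ to $=$ for irreducible $C$ requires knowing that restriction from level $k+1$ to level $k$ is surjective onto the full candidate set $C_k$ --- i.e.\ that every partition of size $\le p$ and the right parity actually appears in the restriction of some partition of size $\le p$ at the next level. This is a positivity statement about the coefficients $F^{\lambda'}_{\beta_1,\dots,\beta_n}$ (equivalently about $f^{\lambda'}_{\mu,\nu}$ and $e^{\lambda}_{\mu,\nu}$) in Proposition~\ref{branchspn}; concretely one checks that $f^{\lambda'}_{\lambda',\emptyset}\ne 0$ and that $e^\lambda_{\mu,\nu}$ is nonzero for a suitable choice (e.g.\ taking all but two of the $\beta_i$ empty reduces to Proposition~\ref{branchsp2}, and there $e^\lambda_{\lambda,\emptyset}\ne0$ since $c^\lambda_{\lambda,\emptyset}=1$ with $\delta=\emptyset$). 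Since this is precisely Zhilinskii's theorem being quoted, in the paper the honest move is to cite \cite{Zh2} for the full classification and use the branching laws only to re-derive the explicit description of the $C_k$; I would phrase the proof as ``this is \cite{Zh2}; we verify the stated description of $C_k$ using Proposition~\ref{branchspn}'' and confine the above to a remark.
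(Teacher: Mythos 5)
The paper gives \emph{no proof at all} for Proposition~\ref{clssp}: it is attributed entirely to Zhilinskii \cite{Zh2}, and the introduction to Section~\ref{CLS} says explicitly that the section ``reproduce[s] Zhilinskii's relevant results using our notation.'' Your closing recommendation --- ``cite \cite{Zh2} for the full classification \dots\ and confine the above to a remark'' --- is therefore exactly what the paper does, so the proposal correctly identifies the right move.

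One caution on the sketch you propose to confine to a remark: the assertion that ``a c.l.s.\ is irreducible iff it is generated by a single compatible tower'' is not the paper's definition (which is purely about non-redundant unions), and the step where you claim the parity of $|\lambda|$ is forced to be constant ``because a downward chain connects any two members of a fixed $C_k$ to a common ancestor'' is not justified by the branching laws alone --- a priori two members of $C_k$ need only be restrictions of members of $C_{k+1}$, not of a \emph{common} one, so without irreducibility the parity need not be constant, and with irreducibility one still needs an argument that mixing parities yields a genuine decomposition $C = C' \cup C''$. Similarly, ``restriction is surjective onto the full candidate set'' requires a positivity statement about $e^\lambda_{\mu,\nu}$ and $f^{\lambda'}_{\mu,\nu}$ that you gesture at but do not pin down. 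These are real gaps if one wanted to prove the result rather than cite it; since the paper cites it, they do not affect the comparison.
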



\section{The quiver of a coherent local system}\label{QUIVER}

In this short section, we show that a c.l.s. for $\mf{sl}(n^\infty)$, $\mf{so}(n^\infty)$, or $\mf{sp}(n^\infty)$ can be encoded in a quiver.  
To do this, we first note that the  branching laws from Section~\ref{BRANCH} give:

\begin{proposition}\label{prop:A}
Let $\mf g$ be one of $\mathfrak{sl}$, $\mathfrak{so}$, or $\mathfrak{sp}$.  
Let $C$ be an irreducible finite type c.l.s. for $\mathfrak{g}(n^\infty)$.
For all $k \gg 0$ there is a natural bijection $\alpha_k:  C_k \to C_{k+1}$, and this bijection can be chosen to be functorial in the sense that
\[ \Hom_{\mf g_k}(\V{\lambda}{n^k}, \restr{\V{\mu}{n^{k+1}}}{\mf g_k}) \cong \Hom_{\mf g_{k+1}}(\alpha_k(\V{\lambda}{n^k}), \restr{\alpha_{k+1}(\V{\mu}{n^{k+1}})}{\mf g_{k+1}})\]
for all $\V{\lambda}{n^k} \in C_k$ and $\V{\mu}{n^{k+1}} \in C_{k+1}$.
\end{proposition}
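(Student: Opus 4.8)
The plan is to fix, for each Lie type, one of the branching laws from Section~\ref{BRANCH} and read off the stable part of the c.l.s.\ directly. First I would handle $\mf g = \mf{sl}$ and $C = \sC^n_{p,q}$, which is the richest case; the cases $\sA^n_p$, $\sB^n_q$ and the $\mf{sp}$/$\mf{so}$ families are lighter and follow the same template. By Corollary~\ref{cor:glslcls} and Lemma~\ref{lem:foobar}, for $k\gg 0$ the set $C_k$ is exactly $\{ V^{n^k}_{\lambda',\mu'} : |\lambda'|\le p,\ |\mu'|\le q,\ |\lambda'|-|\mu'| = p-q\}$, and crucially this description is \emph{independent of $k$}: the indexing set of pairs of partitions $(\lambda',\mu')$ is a fixed finite set $\Lambda$ not depending on $k$ (this is where the inequalities $|\lambda'|\le|\lambda|$, $|\mu'|\le|\mu|$ recorded after Proposition~\ref{branchgln} do the work). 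So I would simply define $\alpha_k\colon C_k\to C_{k+1}$ to be the bijection that matches the module indexed by $(\lambda',\mu')\in\Lambda$ in $C_k$ with the module indexed by the \emph{same} $(\lambda',\mu')$ in $C_{k+1}$.

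Next I would verify functoriality. The point is that $\Hom_{\mf g(n^k)}(\V{\lambda'}{n^k}, \restr{\V{\mu'}{n^{k+1}}}{\mf g(n^k)})$ is, for $k\gg 0$, computed by a branching law (Proposition~\ref{branchgln} via Lemma~\ref{lem:foobar}) whose multiplicities $C^{(\mu')}_{\bullet}D^{(\lambda')}_{\bullet}$ are expressed purely in terms of Littlewood--Richardson coefficients of the partitions labelling $\V{\lambda'}{n^k}$ and $\V{\mu'}{n^{k+1}}$ — there is no $k$-dependence in these coefficients once $k$ is in the stable range. Hence the multiplicity of $\V{\lambda'}{n^k}$ in $\restr{\V{\mu'}{n^{k+1}}}{\mf g(n^k)}$ equals the multiplicity of $\V{\lambda'}{n^{k+1}}$ in $\restr{\V{\mu'}{n^{k+2}}}{\mf g(n^{k+1})}$, which is exactly the claimed isomorphism $\Hom_{\mf g_k}(\V{\lambda'}{n^k}, \restr{\V{\mu'}{n^{k+1}}}{\mf g_k}) \cong \Hom_{\mf g_{k+1}}(\alpha_k(\V{\lambda'}{n^k}), \restr{\alpha_{k+1}(\V{\mu'}{n^{k+1}})}{\mf g_{k+1}})$. (These Hom-spaces are finite-dimensional and the Lie algebras act semisimply on the relevant finite-dimensional modules, so "same multiplicity" genuinely gives an isomorphism of Hom-spaces, not merely equal dimensions; I would remark on this but not belabor it.) I would then note that Lemma~\ref{lem:dag} guarantees $\alpha_k$ is compatible with the "obvious" embedding $\V{\lambda'}{n^k}\hookrightarrow\V{\lambda'}{n^{k+1}}$, so the bijection is the natural one and not an artifact of the labelling.

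For the remaining c.l.s.\ families the argument is identical with a simpler branching law: for $\sA^n_p$ use \eqref{branchgln0} (only one partition, $|\lambda'|=|\lambda|$, so $\Lambda = \{\lambda' : |\lambda'| = p\}$); $\sB^n_q$ is dual; for $\sD^n_p$ and $\sE^n_p$ use Propositions~\ref{branchspn} and \ref{branchson}, where $\Lambda = \{\lambda' : |\lambda'|\le p,\ |\lambda'|\equiv p \bmod 2\}$ and the multiplicities $E^\lambda_\bullet F^{\lambda'}_\bullet$, $G^\lambda_\bullet F^{\lambda'}_\bullet$ are again $k$-independent LR expressions.

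The main obstacle — really the only substantive point — is pinning down \emph{why} $C_k$ stabilizes to a $k$-independent indexing set and why the branching multiplicities are $k$-independent. Both are consequences of working in the stable range of Howe--Tan--Willenbring's laws (so that the LR-coefficient formulas apply verbatim) together with the fact that Littlewood--Richardson coefficients $c^\gamma_{\alpha,\beta}$ depend only on the partitions $\alpha,\beta,\gamma$ and not on the ambient rank; I would state this explicitly and cite Section~\ref{BRANCH}. Everything else is bookkeeping: matching labels across $C_k$ and $C_{k+1}$, and observing that equal Hom-dimensions between semisimple modules upgrade to (non-canonical but sufficient) isomorphisms.
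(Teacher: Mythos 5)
Your proposal is correct and follows essentially the same route as the paper: define $\alpha_k$ as the obvious label-matching map and then invoke the fact that the stable branching multiplicities of Section~\ref{BRANCH} depend only on $n$ and the partitions, not on $k$. The paper argues this once for $\sC^n_{p,q}$ and declares the other families analogous, whereas you spell out all families and add the (true but not strictly necessary) remarks about upgrading equal dimensions to isomorphisms and about Lemma~\ref{lem:dag}; these are harmless elaborations, not a different method.
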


\begin{proof}
We provide a proof for $\mf g = \mf {sl}$ and note that the proof for $\mf g = \mf {sp}$ and $\mf g = \mf{ so}$ is analogous. 
It is enough to prove for a c.l.s. of type $\mc{C}_{p,q}$:  so let $C$ be the c.l.s. with $C_k = \{V^{n^k}_{\lambda, \mu} \st \abs{\lambda}\leq p, \abs{\mu} \leq q\}$ for all  $k \gg 0$.
We let $\alpha_k: C_k \to C_{k+1}$ be the obvious map, so $\alpha_k(V^{n^k}_{\lambda, \mu}) = V^{n^{k+1}}_{\lambda, \mu}$. Then $\alpha_k$ is clearly bijective. Note that the branching laws in section \ref{BRANCH} (Proposition \ref{branchgln}, \ref{branchspn}, and \ref{branchson}) depend on the choice of $n$ but do not depend on the choice of $k$. From this it follows that $\Hom_{\mf g_k}(V^{n^k}_{\lambda, \mu}, \restr{V^{n^{k+1}}_{\lambda', \mu'}}{\mf g_k}) \cong \Hom_{\mf g_{k+1}}(V^{n^{k+1}}_{\lambda, \mu}, \restr{V^{n^{k+2}}_{\lambda', \mu'}}{\mf g_{k+1}})$, which proves functoriality of $\alpha_k$. 
\end{proof}
For each irreducible finite type c.l.s. $C$ there is a natural set of partitions (or pairs of partitions) $\Lambda$ that is in bijection with $C_k$ for all $k \gg 0$. 
Going forward, we write $C = C(\Lambda)$, and for 
 the remainder of the section, we will  write $C_k = \{ \V{\lambda}{n^k} \}_{\lambda\in \Lambda}$. 

 Let $\mf g$ be one of $\mf{sl}$, $\mf{so}$, $\mf{sp}$, and let $C$ be a c.l.s. for $ \mf g$.  
We associate to $C$ a quiver $Q = Q(C)$, which we construct as follows.
The vertex set $Q_0$ is $ \Lambda$, and the number of arrows between vertices $\lambda$ and $\mu$ is $\dim_{\FF} \Hom_{\mf g_k}(\V{\lambda}{n^k}, \restr{\V{\mu}{n^{k+1}}}{\mf g_k})$. 
For  example, for $\mf{sl}(2^\infty)$ four of the quivers that occur are:
\beq\label{quivers}
\begin{array}{ccccccc}
\begin{tikzpicture}[bend angle=20, looseness=1]
\node (a) at (-1,0) {$(1)$};
 \draw[->]  (a) edge [in=55,out=120,loop,looseness=4] node[gap] {$\scriptstyle 2$}  (a);
\end{tikzpicture} & &
\begin{tikzpicture}[bend angle=20, looseness=1]
\node (a) at (-1,0) {$(2)$};
\node (b) at (1,0)  {$(1,1)$};
\draw[->,bend left] (b) to  (a);
\draw[->,bend left] (a) to  (b);
 \draw[->]  (a) edge [in=55,out=120,loop,looseness=4] node[gap] {$\scriptstyle 3$}  (a);
 \draw[->]  (b) edge [in=55,out=120,loop,looseness=4] node[gap] {$\scriptstyle 3$}  (b);
\end{tikzpicture} & &
 \begin{tikzpicture}[bend angle=20, looseness=1]
\node (a) at (-1,0) {$(3)$};
\node (b) at (1,0)  {$(2,1)$};
\node (c) at (3,0)  {$(1,1,1)$};
\draw[->,bend left] (b) to node[gap] {$\scriptstyle 2$} (a);
\draw[->,bend left] (a) to node[gap] {$\scriptstyle 2$} (b);
\draw[->,bend left] (b) to node[gap] {$\scriptstyle 2$} (c);
\draw[->,bend left] (c) to node[gap] {$\scriptstyle 2$} (b);
 \draw[->]  (a) edge [in=55,out=120,loop,looseness=4] node[gap] {$\scriptstyle 4$}  (a);
 \draw[->]  (b) edge [in=55,out=120,loop,looseness=4] node[gap] {$\scriptstyle 6$}  (b);
 \draw[->]  (c) edge [in=55,out=120,loop,looseness=4] node[gap] {$\scriptstyle 4$}  (c);
\end{tikzpicture}
&&
\begin{tikzpicture}[bend angle=20, looseness=1]
\node (a) at (-1,0) {$((1),(1))$};
\node (b) at (1,0)  {$((),())$};
\draw[->,bend right] (b) to node[gap] {$\scriptstyle 3$} (a);
 \draw[->]  (a) edge [in=55,out=120,loop,looseness=4] node[gap] {$\scriptstyle 4$}  (a);
 \draw[->]  (b) edge [in=55,out=120,loop,looseness=4]  (b);
\end{tikzpicture}
.
\end{array}
\eeq

The quivers are a way to encode the branching laws, but they also give another way to analyse locally simple representations of $\mf g_\infty = \mf g(n^\infty)$.  
Let $V = \lim V_k$ be such a representation, and let $I := \Ann_{U(\mf g_\infty)}(V)$. 
Let $Q := Q(C(I))$ be the quiver associated to the c.l.s. $C(I)$.  
In Section~\ref{QGR}, we will show that the category of left modules over $U(\mf g_\infty)/I$ is equivalent to a category of representations of $Q$.

 \part{Noncommutative geometry and category equivalences}\label{PART2}
 
 \section{Ultramatricial algebras and Bratteli diagrams}\label{ULTRA}

An {\em ultramatricial} algebra is a direct limit
\[ R = \varinjlim ( \xymatrix{ R_1 \ar[r]^{\phi_1} & R_2 \ar[r]^{\phi_2} & R_3 \ar[r]^{\phi_3} & \cdots}),\]
where the $R_i$ are finite dimensional semisimple algebras and the $\phi_i$ are (unital) algebra homomorphisms.  
We will see that ultramatricial algebras play a crucial role in our analysis of the representations of $U(\mf g(n^\infty))$.
We collect the results we will need on ultramatricial  algebras here.  

Let $R = \varinjlim_{\phi_i} R_i$ be an ultramatricial algebra.  
To $R$, we can associate a combinatorial object, its {\em Bratteli diagram}.  
The Bratteli diagram is an infinite graph which determines the $K$-theory and the Morita equivalence class of $R$.

To construct the Bratteli diagram of $R$, write each $R_i$ as $R_i^1 \oplus \dots \oplus R_i^{p_i}$, where the $R_i^j$ are simple.
Since the base field $\FF$ is algebraically closed, each $R_i^j$ is a matrix ring over $\FF$.
For each $i, j$, let $M_i^j$ be the natural $R_i^j$-module.  
For each $k \in \{1, \dots, p_{i+1}\}$, write 
\[\restr{M^k_{i+1}}{R_i^j} \cong \bigoplus_{j=1}^{p_i} (M_i^j)^{a_i^{jk}}.\]

The non-negative integers $a^{jk}$ are recorded in the {\em Bratteli diagram} for $R$.  This is a graph with a vertex $v_i^j$ for each $M_i^j$, and an edge of  multiplicity $a_i^{jk}$ between $v_i^j$ and $v_{i+1}^k$.
We typically write a Bratteli diagram as a horizontal strip with the vertices $v_i^j$ for fixed $i$ drawn one above the other, and a multiplicity label on each edge.

The  $a_i^{jk}$ also appear in the maps $\phi_i$.  
Let $k \in \{ 1, \dots, p_{i+1}\}$.  
Then $\phi_i$ determines a map from $R_i \to R_{i+1}^k$ which sends
\[ (A_1, \dots, A_{p_i}) \mapsto \operatorname{diag}(\underbrace{A_1, \dots, A_1}_{a_i^{1k}}, \underbrace{A_2, \dots,  A_2}_{a_i^{2k}}, \dots, 
\underbrace{A_{p_i}, \dots, A_{p_i}}_{a_i^{p_i k}}).\]
Note that given a Bratteli diagram for $R$, together with the degrees of the $R_i^j$ for some $i$, we may recover $R$ up to isomorphism.

Most Bratteli diagrams we will see will all look the same after a certain point.  More formally, suppose that for all $i \geq i_0$ we have $p_i = p_{i+1} = \dots$, and that $a_i^{jk} = a_{i+1}^{jk} = \dots$ for all $j, k \in \{1, \dots, p_{i_0}\}$.  
Such a diagram is called {\em stationary}.  
In a stationary Bratteli diagram we define $p= p_{i_0}$ and $a^{jk} = a_{i_0}^{jk}$.
If $R$ is  an ultramatricial algebra with a stationary Bratteli diagram, we say that $R$ itself is {\em stationary}. 

A stationary Bratteli diagram can be ``rolled up'' to give a quiver $Q$.  
This quiver has $p$ vertices, and $a^{jk}$ edges from vertex $j$ to vertex $k$.
If $R$ is a stationary ultramatricial algebra whose Bratteli diagram gives the quiver $Q$, we say that $Q$ is the {\em quiver of $R$}.

\begin{example}\label{exone}
Suppose that $R_i = M_{2^i}(\FF)$ and that the map $R_i \to R_{i+1}$ is $A \mapsto \begin{bmatrix} A & \\ & A\end{bmatrix}$.
The associated Bratteli diagram is
\[ \begin{array}{c}
\begin{tikzpicture}
\node(a) at (-1,0) {$\bullet$};
\node(b) at (0,0) {$\bullet$};
\node(c) at (1,0) {$\bullet$};
\node(dots) at  (1.5,0) {$\dots$};
\draw[-] (a) to  node[gap] {$\scriptstyle 2$} (b);
\draw[-] (b) to  node[gap] {$\scriptstyle 2$} (c);
\end{tikzpicture}
\end{array}
\]
and the quiver of $R = \varinjlim R_i$ is
\[ 
\begin{array}{c}
\begin{tikzpicture}[bend angle=20, looseness=1]
\node (a) at (-1,0) {$\bullet$};
 \draw[->]  (a) edge [in=55,out=120,loop,looseness=4] node[gap] {$\scriptstyle 2$}  (a);
\end{tikzpicture} .
\end{array}
\]
\end{example}

\begin{example}\label{extwo}
The Bratteli diagram
\[
\begin{array}{c}
\begin{tikzpicture}
\node(a1) at (-1,0) {$\bullet$};
\node(a2) at (-1,1) {$\bullet$};
\node(b1) at (0,0) {$\bullet$};
\node(b2) at (0,1) {$\bullet$};
\node(c1) at (1,0) {$\bullet$};
\node(c2) at (1,1) {$\bullet$};
\node(d1) at (1.5,0) {$\dots$};
\node(d2) at (1.5,1) {$\dots$};
\draw[-] (a1) to  node[gap] {$\scriptstyle 3$} (b1);
\draw[-] (a2) to  node[gap] {$\scriptstyle 3$} (b2);
\draw[-] (a1) to   (b2);
\draw[-] (a2) to   (b1);
\draw[-] (b1) to  node[gap] {$\scriptstyle 3$} (c1);
\draw[-] (b2) to  node[gap] {$\scriptstyle 3$} (c2);
\draw[-] (b1) to   (c2);
\draw[-] (b2) to   (c1);
\end{tikzpicture} 
\end{array}
\]
is associated  to the quiver 
\[ 
\begin{array}{c}
\begin{tikzpicture}[bend angle=20, looseness=1]
\node (a) at (-1,0) {$\bullet$};
\node (b) at (1,0)  {$\bullet$};
\draw[->,bend left] (b) to  (a);
\draw[->,bend left] (a) to (b);
 \draw[->]  (a) edge [in=55,out=120,loop,looseness=4] node[gap] {$\scriptstyle 3$}  (a);
 \draw[->]  (b) edge [in=55,out=120,loop,looseness=4] node[gap] {$\scriptstyle 3$}  (b);
\end{tikzpicture} .
\end{array}
\]
Note that this quiver and the previous one appear in \eqref{quivers}.
\end{example}

Since the algebra $R$ depends both on the Bratteli diagram and on degree data for the $R_i^j$, the isomorphism class of a stationary ultramatricial algebra is not determined by the associated quiver.   However, we do have the following result of Goodearl:
\begin{proposition}\label{prop:Morita}
{\rm (\cite[Proposition~7.4]{S})}
Let $R, S$ be stationary ultramatricial $\FF$-algebras with isomorphic associated quivers.  Then $R$ is Morita equivalent to $S$.
\end{proposition}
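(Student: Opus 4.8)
The plan is to reduce the assertion to a computation with Grothendieck groups and then lift the result. In outline: (i) if $R$ and $S$ have isomorphic associated quivers, then there is an isomorphism of \emph{preordered} abelian groups $(K_0(R), K_0(R)^+) \cong (K_0(S), K_0(S)^+)$, with no constraint on where the order units $[R]$, $[S]$ are sent; and (ii) for ultramatricial algebras any such isomorphism can be promoted to a Morita equivalence.

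For (i) I would compute $K_0$ directly from the Bratteli diagram. With $R = \varinjlim R_i$ and $R_i = R_i^1 \oplus \dots \oplus R_i^{p_i}$ as in Section~\ref{ULTRA}, we have $K_0(R) = \varinjlim K_0(R_i)$, where $K_0(R_i) \cong \ZZ^{p_i}$ with basis $\{[M_i^j]\}$, the positive cone $K_0(R_i)^+$ is $(\ZZ_{\geq 0})^{p_i}$, and the transition map $K_0(R_i) \to K_0(R_{i+1})$ is given by the Bratteli multiplicities, $[M_i^j] \mapsto \sum_k a_i^{jk} [M_{i+1}^k]$. If $R$ is stationary with quiver $Q$ on $p$ vertices and adjacency matrix $A$, then discarding the finitely many initial non-stationary stages --- which does not alter the direct limit --- presents $(K_0(R), K_0(R)^+)$ as the limit of the stationary system
\[ \ZZ^p \xrightarrow{\,A^{\mathsf T}\,} \ZZ^p \xrightarrow{\,A^{\mathsf T}\,} \ZZ^p \xrightarrow{\,A^{\mathsf T}\,} \cdots \]
with positive cone the union of the images of $(\ZZ_{\geq 0})^p$. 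The key observation is that the degrees $\deg R_i^j$ never enter this description: they serve only to locate the class $[R]$ inside $K_0(R)$, i.e.\ to specify the order unit. Hence $(K_0(R), K_0(R)^+)$ depends on the associated quiver alone, and an isomorphism $Q(R) \cong Q(S)$ yields an isomorphism $\theta \colon (K_0(R), K_0(R)^+) \xrightarrow{\ \sim\ } (K_0(S), K_0(S)^+)$ of preordered groups.

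For (ii), set $u = \theta([R]) \in K_0(S)^+$. Since $[R]$ is an order unit of $K_0(R)$ (every finitely generated projective is a direct summand of some $R^n$) and $\theta$ is an order isomorphism, $u$ is an order unit of $K_0(S)$; because $S$ is ultramatricial, $u = [P]$ for a finitely generated projective $S$-module $P$, and $P$ is a progenerator precisely because $[P]$ is an order unit. Now $T := \End_S(P)$ is Morita equivalent to $S$, is itself ultramatricial, and has $(K_0(T), K_0(T)^+, [T]) \cong (K_0(S), K_0(S)^+, u)$, which via $\theta^{-1}$ is isomorphic to $(K_0(R), K_0(R)^+, [R])$. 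Elliott's isomorphism theorem --- that an ultramatricial algebra is determined up to isomorphism by its pointed preordered $K_0$ --- then gives $T \cong R$, so $R$ is Morita equivalent to $S$.

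The genuinely substantive ingredients are the two standard structural facts about ultramatricial algebras used in step (ii): that finitely generated projectives are classified by $K_0^+$, so that every order unit is realised as the class of a progenerator; and Elliott's isomorphism theorem. (The former also shows that $\End_S(P)$ is again ultramatricial.) This is the one place where the ultramatricial hypothesis is essential --- for general $\FF$-algebras neither fact holds. Everything else, namely extracting the stationary direct system from a stationary Bratteli diagram and noticing that the degree data affects only the order unit, is routine bookkeeping, so I expect the main difficulty to lie in correctly isolating and citing these $K$-theoretic inputs rather than in any new argument.
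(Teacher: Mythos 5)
Your argument is correct and follows essentially the same route as the paper: compute $(K_0, K_0^+)$ from the stationary Bratteli diagram, observe that the degree data only positions the order unit, and invoke Elliott's classification of ultramatricial algebras via ordered $K_0$ to manufacture the Morita equivalence. The only (cosmetic) difference is in how you package Elliott's result: you realise $\theta([R])$ as the class of a progenerator $P$, form $T = \End_S(P)$, and apply the isomorphism theorem to get $T\cong R$, whereas the paper directly cites Elliott's statement that an order-isomorphism $\phi$ together with any $S$-module $M$ satisfying $\phi([R])=[M]$ already produces a Morita equivalence inducing $\phi$ on $K_0$. These are equivalent phrasings of the same $K$-theoretic input, so there is nothing substantively different here.
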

We give a few details of the proof of Proposition~\ref{prop:Morita}.
If $R$ and $S$ are ultramatricial algebras, then by results of Elliot  \cite{Elliot}, if  $\phi:   (K_0(R), K_0(R)_+) \to (K_0(S), K_0(S)_+)$ is an isomorphism of ordered groups, then any $M \in S \lMod$ with $\phi([R]) = [M]$ gives a  Morita equivalence between $R$ and $S$ that induces $\phi$ at the level of $K$-theory.  
Note that there may be several isoclasses of modules $M$ that induce the same action on $K_0$.
The precise statement is that  an isomorphism $\phi$ determines a Morita equivalence between the two algebras only up to  {\em locally inner} automorphism \cite[Proposition~3.1]{BGR}.
(An automorphism $\alpha$ of an ultramatricial algebra $R = \varinjlim R_k$ is {\em locally inner} if any finite subset of $R$ is contained in some $R_k$ on which $\alpha$ restricts to an inner automorphism.)

To use Elliot's result,  we must understand $K_0$ of $R$ and $S$.   
Write $R = \varinjlim R_i$;
then  
\beq\label{kstar}
 (K_0(R), K_0(R)_+)  = \varinjlim (K_0(R_i), K_0(R_i)_+) 
 \eeq
as ordered groups.
For simplicity, we suppose that $R$ is stationary, with associated quiver $Q$.  
Then for $i \gg 0$, we have $(K_0(R_i), K_0(R_i)_+) \cong (\ZZ^p, \ZZ^p_+)$, and the map
$K_0(R_i) \to K_0(R_{i+1})$ induced by $\phi_i$ can be seen to be right  multiplication by the matrix $A= (a^{jk})$.
Note that $A$ is the adjacency matrix of $Q$.

For example, suppose that the associated quiver of $R$ is
\[ 
\begin{array}{c}
\begin{tikzpicture}[bend angle=20, looseness=1]
\node (a) at (-1,0) {$\bullet$};
\node (b) at (1,0)  {$\bullet$};
\draw[->,bend right] (b) to node[gap] {$\scriptstyle 3$} (a);
 \draw[->]  (a) edge [in=55,out=120,loop,looseness=4] node[gap] {$\scriptstyle 4$}  (a);
 \draw[->]  (b) edge [in=55,out=120,loop,looseness=4]  (b);

\end{tikzpicture}
.
\end{array}
\]
The adjacency matrix is $\begin{bmatrix} 4 & 0 \\ 3 & 1 \end{bmatrix}$.
Thus the map $R_i \to R_{i+1}$ sends
\[ (M, N) \mapsto
 \Bigl( 
 \operatorname{diag}(M,M,M,M, N,N,N), N \Bigr).\]
We see that $R_{i+1} \otimes_{R_i} V_i^1 \cong (V_{i+1}^1)^4$ and
$R_{i+1} \otimes_{R_i} V_i^2 \cong (V_{i+1}^1)^{\oplus 3} \oplus V_{i+1}^2 $, and the map from $K_0(R_i) \to K_0(R_{i+1})$ is given by right multiplication by $A$. 

Now let $Q_R, Q_S$ be the quivers associated respectively to the stationary ultramatricial algebras $R = \varinjlim R_i$ and $S = \varinjlim S_i$.  
Suppose that $Q_R$ and $Q_S$ are isomorphic.
The isomorphism between the quivers determines a bijection of the vertices, and thus isomorphisms between $K_0(R_i)$ and $K_0(S_i)$ for all $i \gg 0$.
We further have that the diagrams 
\[ \xymatrix{ K_0(R_i) \ar[r]^{A} \ar[d] & K_0(R_{i+1}) \ar[d] \\
 K_0(S_i) \ar[r]_{A}  & K_0(S_{i+1})}
\]
commute, and there is therefore an isomorphism of ordered groups between $K_0(R)$ and $K_0(S)$. 
By Elliot's result, $R$ and $S$ are Morita equivalent.

We will also use Goodearl's characterization of simplicity of ultramatricial algebras via $K$-theory.  Recall that an {\em order-unit} in a partially ordered abelian group $(K, K_+)$ is an element $u \in K_+$ so that for any $x \in K$, there exists a positive integer $N$ with  $x \leq Nu$.
Goodearl's result is:
\begin{theorem}\label{thm:Gsimple}
{\rm ( \cite[Proposition~15.14]{Goodearl}) }
Let $R$ be  an ultramatricial algebra.
Then $R$ is a simple ring if and only if every nonzero element of $K_0(R)_+$ is an order-unit in $K_0(R)$.
\end{theorem}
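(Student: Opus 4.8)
The plan is to translate the ring-theoretic statement ``$R$ has a proper nonzero two-sided ideal'' into $K$-theory by way of trace ideals. Recall that an ultramatricial algebra $R=\varinjlim R_i$ is (von Neumann) regular, and that $(K_0(R),K_0(R)_+)=\varinjlim(K_0(R_i),(K_0(R_i))_+)$, so that every element of $K_0(R)_+$ is of the form $[P]$ for an honest finitely generated projective right $R$-module $P$ (a direct summand of some free module $R^m$), and moreover $[A]=[B]+[C]$ for finitely generated projectives is equivalent to $A\cong B\oplus C$ (the cancellation/refinement property of ultramatricial algebras). I would record these structural facts first, together with the standard trace-ideal facts: writing $\operatorname{tr}(P)=\sum_{f\in\Hom_R(P,R)}f(P)$, this is a two-sided ideal of $R$; it is monotone in the sense that $\operatorname{tr}(P')\subseteq\operatorname{tr}(P)$ when $P'$ is a direct summand of $P$; one has $\operatorname{tr}(eR)=ReR$ for an idempotent $e$; and $\operatorname{tr}(P)=R$ if and only if $R_R$ is a direct summand of $P^N$ for some $N\ge 1$.

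For the ``if'' direction I argue by contraposition. Suppose $R$ is not simple, and choose a proper nonzero two-sided ideal $J$ and a nonzero $a\in J$. Regularity gives an idempotent $e$ with $eR=aR$, so $0\neq e\in J$ and $0\neq ReR\subseteq J\subsetneq R$. Then $[eR]\in K_0(R)_+$ and $[eR]\neq 0$ by the module-classification property ($[eR]=0$ would force $eR\cong 0$ by cancellation). I claim $[eR]$ is not an order-unit: if $[R]\le N[eR]$ for some $N$, then $N[eR]=[R]+[Q]$ for some finitely generated projective $Q$, hence $(eR)^N\cong R\oplus Q$, so $R_R$ is a direct summand of $(eR)^N$ and therefore $R=\operatorname{tr}(R_R)\subseteq\operatorname{tr}((eR)^N)=ReR\subseteq J$, contradicting $J\subsetneq R$. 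Thus $[eR]$ is a nonzero element of $K_0(R)_+$ that is not an order-unit.

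For the ``only if'' direction I again argue by contraposition. Suppose $0\neq x\in K_0(R)_+$ is not an order-unit, and write $x=[P]$ with $P$ a nonzero finitely generated projective right $R$-module, say a direct summand of $R^m$. Set $J:=\operatorname{tr}(P)$. Since $P\neq 0$ is a summand of a free module, there is a nonzero homomorphism $P\to R$, so $J\neq 0$. If $J=R$, then $R_R$ is a direct summand of $P^N$ for some $N$, whence $[R]\le N[P]=Nx$; but then for an arbitrary $y\in K_0(R)$, choosing $k$ with $y\le k[R]$ (writing $y$ as a difference of classes of projective modules, each a summand of a suitable free module), we get $y\le (kN)x$, so $x$ would be an order-unit---a contradiction. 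Hence $J$ is a proper nonzero two-sided ideal, and $R$ is not simple.

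The main obstacle is not the logical skeleton above but the supporting structure theory one must invoke: that an ultramatricial algebra is regular and, crucially, that its $K_0$ faithfully records finitely generated projectives together with their direct-sum decompositions (equivalently, that cancellation holds and every positive class is realised by a module). With these in hand, the argument reduces to a short dictionary between trace ideals and comparisons of the form $[R]\le N[P]$. As an alternative packaging one could instead first establish the inclusion-preserving bijection between two-sided ideals of $R$ and order-ideals of $(K_0(R),K_0(R)_+)$---a standard feature of regular rings---which reduces the theorem to the elementary order-theoretic fact that a partially ordered abelian group with order-unit has no proper nonzero order-ideal precisely when every nonzero positive element is an order-unit.
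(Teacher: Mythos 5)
The paper does not give its own proof; the result is cited directly from Goodearl's book on von Neumann regular rings (Proposition 15.14), so there is no in-paper argument to compare against. Your proof is correct. The supporting facts you invoke---ultramatricial algebras are von Neumann regular; $K_0$ commutes with direct limits, so every positive class is realized by a finitely generated projective module and cancellation (equivalently, refinement of direct-sum decompositions) holds; the trace-ideal dictionary, including $\operatorname{tr}(eR)=ReR$ for an idempotent $e$ and the characterization of generators via $R$ being a summand of some $P^N$---are exactly the standard ingredients needed, and both contrapositive implications are carried out soundly (in particular, $e\neq 0$ because $a\in eR$, and $[eR]\neq 0$ then follows by cancellation). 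The alternative packaging you mention at the end, via the inclusion-preserving bijection between two-sided ideals of $R$ and order-ideals of $(K_0(R),K_0(R)_+)$, is the more reusable formulation and is essentially how Goodearl organizes this circle of ideas; your direct trace-ideal argument is the same mechanism specialized to the top and bottom of the ideal lattice, so the two routes buy the same theorem, with the bijection form giving extra information about the whole lattice for free.
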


We use this result to give a  sufficient condition for simplicity of a stationary ultramatricial algebra $R$. 
Recall that a quiver $Q$ is {\em strongly connected} if for each $i, j \in Q_0$, there is a path beginning at $i$ and ending at $j$.
We say that a strongly connected $Q$ is {\em primitive} if the greatest common divisor of the lengths of cycles in $Q$ is 1, although we note that the more usual terminology for this is that the adjacency matrix of $Q$ is primitive.  We say $Q$ is {\em symmetric} if the adjacency matrix of $Q$ is symmetric.  
Then we have:

\begin{proposition}\label{prop:simple}
Let $R = \varinjlim R_i$ be a stationary ultramatricial algebra, with associated quiver $Q$.
If $Q$ is strongly connected, primitive, and symmetric, then  $R$ is simple.
\end{proposition}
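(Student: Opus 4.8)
The plan is to verify the hypotheses of Goodearl's criterion, Theorem~\ref{thm:Gsimple}: it suffices to show that every nonzero element of $K_0(R)_+$ is an order-unit in $K_0(R)$. Following \eqref{kstar} and the computation after it, $K_0(R)$ is the direct limit of ordered groups
\[ (K_0(R), K_0(R)_+) \;\cong\; \varinjlim\bigl( (\ZZ^p, \ZZ^p_+) \xrightarrow{\,A\,} (\ZZ^p, \ZZ^p_+) \xrightarrow{\,A\,} (\ZZ^p, \ZZ^p_+) \xrightarrow{\,A\,} \cdots \bigr), \]
where $p$ is the number of vertices of $Q$, $A$ is the adjacency matrix of $Q$, and the connecting maps are right multiplication by $A$ on row vectors. (Since $Q$ is symmetric, $A = A^{T}$, so one need not track whether the $K_0$-functor produces right multiplication by $A$ or by $A^{T}$.) Concretely, an element of $K_0(R)$ is a class $[v]$ with $v \in \ZZ^p$; we have $[v] \le [w]$ precisely when $(w - v)A^m \ge 0$ entrywise for some $m \ge 0$, and $[v] \in K_0(R)_+$ precisely when $vA^m \ge 0$ for some $m$.

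Next I would bring in Perron--Frobenius theory. Since $Q$ is strongly connected and primitive (the greatest common divisor of its cycle lengths is $1$), its adjacency matrix $A$ is a primitive nonnegative integer matrix, so there is $\ell \ge 1$ for which every entry of $A^{\ell}$ is strictly positive. Two consequences I would record: first, if $v \in \ZZ^p$ has $v \ge 0$ and $v \ne 0$, then every entry of $vA^{\ell}$ is a positive integer, hence $\ge 1$; second, for such $v$ one has $vA^{s}\ne 0$ for all $s$, so $[v] = 0$ in $K_0(R)$ forces $v = 0$. In particular, a nonzero element $u$ of $K_0(R)_+$ is represented, at some finite stage of the direct limit, by a nonzero vector $v \in \ZZ^p$ with $v \ge 0$.

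Now fix a nonzero $u \in K_0(R)_+$ and choose such a representative $v$; replacing $v$ by $vA^{\ell}$, I may assume every entry of $v$ is $\ge 1$. Given an arbitrary $x \in K_0(R)$, write $x = [w]$ with $w \in \ZZ^p$ at the same stage (pushing forward along $A$ if needed), and put $N = \max_j |(wA^{\ell})_j|$. Then the $j$-th entry of $N\,vA^{\ell} - wA^{\ell}$ is at least $N (vA^{\ell})_j - |(wA^{\ell})_j| \ge N - N = 0$, so $N\,vA^{\ell} - wA^{\ell} \ge 0$ entrywise, which is exactly the statement $x \le N u$ in $K_0(R)$. Hence $u$ is an order-unit, and Theorem~\ref{thm:Gsimple} yields that $R$ is simple.

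The argument is Perron--Frobenius together with careful bookkeeping for the direct-limit ordering, so I do not expect a genuine obstacle. The one point that must be handled correctly is that a nonzero positive class really is represented by a \emph{nonzero nonnegative} integer vector at some finite stage — this is exactly what lets the strict positivity of $A^{\ell}$ be exploited, and it is where primitivity of $Q$ (rather than mere strong connectedness) enters.
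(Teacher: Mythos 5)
Your proof is correct, and it differs from the paper's in a meaningful way. The paper, like you, reduces to Goodearl's criterion (Theorem~\ref{thm:Gsimple}) and the identification $K_0(R) \cong \varinjlim_A \ZZ^p$, but then exploits symmetry of $A$ in an essential way: since $A$ is symmetric it is diagonalizable over $\RR$, and the paper expands a representative $u \in \ZZ^p$ in the eigenbasis $w_1,\dots,w_p$ (with $w_1$ the Perron eigenvector) and characterizes the positive cone by the sign of the $w_1$-component $\alpha_1$, giving the order-unit property from the inequality $N\alpha_1 - \beta_1 > 0$. Your proof sidesteps the spectral decomposition entirely and uses primitivity directly, via strict positivity of $A^{\ell}$, to manufacture a representative of $u$ with all entries $\ge 1$; the order-unit estimate is then purely combinatorial. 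This is shorter and more elementary, and — worth noting — your argument does not actually use the symmetry hypothesis except to avoid tracking $A$ versus $A^T$ in the $K_0$ maps, which one could do explicitly anyway. So your argument in fact proves the stronger statement that strong connectedness and primitivity alone suffice. What the paper's route buys is an explicit description of $K_0(R)_+$ in terms of the Perron component, which dovetails with the detailed eigenvector analysis of the Type~I matrices carried out in Part~\ref{PART3}; your route proves exactly what is needed here but does not produce that characterization.
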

In particular,  the proposition shows that the algebras associated to the first three quivers of \eqref{quivers} are simple.

\begin{proof}
Let $K = K_0(R)$.  
We will apply Theorem~\ref{thm:Gsimple}; so we claim that any nonzero element of $K_+$ is an order-unit.  
To do this, we first characterise $K_+$, using the  fact \eqref{kstar} that $K_+ = \varinjlim K_0(R_i)_+$.
For any $u \in K_0(R_i)$ we denote the image of $u$ under the natural map to $K$ by $(u)$.

Let $A$ be the adjacency matrix of $Q$; it is $p\times p$, where $p = \# Q_0$.  We have seen that
$K = \varinjlim K_0(R_i )$ may be written as the injective limit
\[ K = \varinjlim ( \xymatrix{ K_0(R_i) \ar[r]^A & K_0(R_{i+1}) \ar[r]^A & \dots } ).\]
Here we identify $K_0(R_i)$ with $\ZZ^p$, and $A$ acts by right  multiplication on $\ZZ^p$, as above.
We make this identification in such a way that $K_0(R_i)_+$ is identified with $\ZZ^p_+$, and thus 
 $K_+ = \varinjlim K_0(R_i)_+ = \varinjlim_A ( \ZZ^p_+)$.

Let $y \in K$ with $y \neq 0$,  and write $y = (u)$ for some $u \in K_0(R_i)$.  
By the Perron-Frobenius theorem, the maximum modulus eigenvalue $\lambda_1$ of $A$ is in $\RR_+$ and has multiplicity $1$; further, it has an eigenvector $w_1 \in K_0(R) \otimes \RR$ all of whose entries are positive.
Since $A$ is symmetric, it is diagonalizable with real eigenvalues.
Perron-Frobenius thus implies that any eigenvector of any other eigenvalue of $A$ has a negative  entry.

We can if needed replace $A$ by $A^2$ to obtain without loss of generality  that all eigenvalues of $A$ are nonnegative.  
Let the other eigenvalues of $A$ be $\lambda_2, \dots, \lambda_p$, with $\lambda_1 > {\lambda_2} \geq \dots \geq {\lambda_p}$, 
and let $\{w_1, \dots, w_p\}$ be  a basis of eigenvectors for $\RR^p = K_0(R_i)\otimes \RR$, where the eigenvalue of $w_k$ is $\lambda_k$.
Then we have $u = \sum_k \alpha_k w_k$, for some $\alpha_k \in \RR$.  
We claim that $y \in K_+$ if and only if $\alpha_1 > 0 $.

To prove the claim, first suppose that $\alpha_1> 0$.  
Then for $j \gg 0$,  $A^j u$ is dominated by $\alpha_1 (\lambda_1)^j w_1$, and  since $A^j u \in K_0(R_{j+k}) =  \ZZ^p$ we can choose $j \gg 0$ so that $A^j u \in \ZZ^p_+$.
Thus $ y  = (\phi^j u) \in K_+$.

Similarly, if  $\alpha_1 < 0$ then we also see  no $A^j u$ will be positive, and thus $y \not\in K_+$.

If $\alpha_1 = 0$ then let $k$ be minimal with $\alpha_k \neq 0$ (thus $\lambda_k$ is maximal).  
Again, for $j \gg 0$,  the class $ A^j u \in \ZZ^p$ will be dominated by $(\lambda_i)^j w_i'$, where $w_i'$ is a nonzero vector in the  $\lambda_i$-eigenspace of $A$.  
Since some of the components of $w'_i$ are negative,
this will force some components of $A^j u$ to be negative. 
As $K_+ = \varinjlim K_0(R_i)_+$ we see that $y \not \in K_+$.

Now let $y = (u) \in K_+$.  We will show that $y$ is an order-unit.  Let $z = (v) \in K$.
(Here we have $u, v \in K_0(R_i)$ for some $i$.)
Let the $\lambda_1$-eigenspace components of $u, v$ be respectively $\alpha w_1$ and $\beta w_1$ for some $\alpha, \beta \in \RR$.
By the characterisation of $K_+$, we have $\alpha >0$.
Choose an integer $N > \beta/\alpha$, so $N \alpha - \beta > 0$.
Again by the characterisation of $K_+$, we have $Ny-z \in K_+$.
Thus $Ny > z$ and $y$ is an order-unit as claimed.

By Theorem~\ref{thm:Gsimple}, $R$ is simple.
\end{proof}

We will apply Proposition~\ref{prop:simple} later in the paper.

\section{Coherent local systems, quivers, and path algebras}\label{QGR}
In this section, we   apply the machinery of Section~\ref{ULTRA} to prove Theorem~\ref{ithm:catequiv}.  
To do so, we use results of S. Paul Smith \cite{S} on module categories associated to path algebras of quivers.
We will follow Smith's notation.
Let $Q$ be a finite quiver; the path algebra $\FF Q$ is naturally graded by path length.
Our convention  is that the path $\xymatrix{\lambda \ar[r]^{p} & \mu \ar[r]^{q} & \nu}$ is written $qp$, and that the vertex idempotents are written $e_\lambda$.
The set of arrows from $\lambda$ to $\mu$ is thus $e_\mu Q_1 e_\lambda$.

Denote the category of $\ZZ$-graded left $\FF Q$-modules by $\Gr{\FF Q}$; homomorphisms in $\Gr{\FF Q}$ preserve degree.
Let $\Fdim{\FF Q}$ be the full subcategory of $\Gr{\FF Q}$ consisting of locally finite-dimensional modules:  i.e. graded modules that are sums of their finite-dimensional submodules.
Then $\Fdim{\FF Q}$ is a localising subcategory of $\Gr{\FF Q}$, and we may form the quotient category
\[ \Qgr{\FF Q} := \Gr{\FF Q}/\Fdim{\FF Q}.\]

The construction of the category $\Qgr{\FF Q}$ is a standard technique in noncommutative algebraic geometry.  
For example, if $R = \FF[x_0, \dots, x_n]$ then it is a theorem of Serre that $\Qgr{R} \simeq \QCoh(\PP^n)$, the category of quasicoherent sheaves on $\PP^n = \PP^n_\FF$.
Because of this, the category $\Qgr{\FF Q}$ may be thought of as a ``category of sheaves on a noncommutative space,'' although we caution the reader that in the setting of $\FF Q$, this noncommutative space does not actually exist!

Let $\pi^*:  \Gr{\FF Q} \to \Qgr{\FF Q}$ be the natural quotient functor, and let $\sO := \pi^*( \FF Q)$.
If we think of $\Qgr{\FF Q}$ as sheaves on a nonexistent space, then $\sO$ plays the role of the structure sheaf.
One of the main results of \cite{S} is:
\begin{theorem}\label{thm:Smith}
 {\rm (\cite[Theorem~1.1]{S})}
Let $S(Q) := \End_{\Qgr{\FF Q}}(\sO)^{op}$.
Then $S(Q)$ is  an ultramatricial algebra  and $\Hom_{\Qgr{\FF Q}}(\sO, \blank)$ gives a category equivalence  from $\Qgr{\FF Q}$ to $ S(Q)\lMod$, the category of left $S(Q)$-modules.
Further, if $Q$ has no sources or sinks then  $S(Q) \lMod \simeq \Gr{L(Q)}$, where $L(Q)$ is the Leavitt path algebra of $Q$.
\end{theorem}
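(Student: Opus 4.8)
\emph{Proof proposal.} The plan is to treat the first assertion as a Morita-type statement for the Grothendieck category $\Qgr{\FF Q}$. That $\Qgr{\FF Q}$ is a Grothendieck category is automatic, since it is the Gabriel quotient of $\Gr{\FF Q}$ by the localising subcategory $\Fdim{\FF Q}$. The heart of the matter is to show that $\sO = \pi^*(\FF Q)$ is a compact projective generator of $\Qgr{\FF Q}$; granting this, the Morita theorem for Grothendieck categories immediately gives that $\Hom_{\Qgr{\FF Q}}(\sO,\blank)$ is an equivalence $\Qgr{\FF Q}\to\End_{\Qgr{\FF Q}}(\sO)^{\op}\lMod = S(Q)\lMod$. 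Compactness of $\sO$ is inherited from finite generation of $\FF Q$ in $\Gr{\FF Q}$ together with finiteness of $Q$, and projectivity will follow once we know that the higher derived functors of the section functor vanish on $\sO$ --- after first reducing to the case where $Q$ has no sinks, since a sink $\lambda$ makes $\FF Q e_\lambda$ one-dimensional and hence zero in $\Qgr{\FF Q}$.

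I expect the generation statement to be the main obstacle, and it is where the arrows of $Q$ are used. Since the shifts $\{\FF Q(m)\}_{m\in\ZZ}$ generate $\Gr{\FF Q}$, it suffices that each $\pi^*(\FF Q(m))$ be a quotient of a finite direct sum of copies of $\sO$ in $\Qgr{\FF Q}$. For $m\geq 0$ this follows by induction on $m$: right multiplication by the arrows out of a vertex $\lambda$ assembles into a morphism $\bigoplus_{a\colon s(a)=\lambda}\FF Q e_{t(a)}\to(\FF Q e_\lambda)(1)$ whose cokernel is the one-dimensional (hence torsion) module $\FF e_\lambda$, so $\pi^*((\FF Q e_\lambda)(1))$ is a quotient of copies of $\sO$. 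For $m<0$ one uses instead that $(\FF Q)_{\geq n}\cong\bigoplus_{\ell(p)=n}(\FF Q e_{t(p)})(-n)$ is a projective graded module freely generated by the length-$n$ paths; this lets one build a graded map $(\FF Q)_{\geq n}\to\FF Q e_\mu(-1)$ hitting every sufficiently long path out of $\mu$, whose cokernel is finite-dimensional, so again $\pi^*(\FF Q e_\mu(-1))$ is a quotient of copies of $\sO$. Iterating handles all $m$.

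To see that $S(Q)$ is ultramatricial, I would compute $\End_{\Qgr{\FF Q}}(\sO)\cong\varinjlim_n M_n$, where $M_n := \Hom_{\Gr{\FF Q}}((\FF Q)_{\geq n},\FF Q)$ and the transition maps are restriction. Using $\Hom_{\Gr{\FF Q}}(\FF Q e_\nu,\FF Q(n))\cong e_\nu(\FF Q)_n$, one finds that $M_n$ has an $\FF$-basis of matrix units indexed by pairs $(p,q)$ of length-$n$ paths with $t(p)=t(q)$, so $M_n\cong\prod_{\nu\in Q_0}M_{d_n^\nu}(\FF)$ is finite-dimensional semisimple, where $d_n^\nu$ counts the length-$n$ paths ending at $\nu$. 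The transition map sends $(p,q)\mapsto\sum_{a\colon s(a)=t(p)}(ap,aq)$, which produces a stationary Bratteli diagram whose adjacency matrix is that of $Q$; hence $S(Q)=(\varinjlim_n M_n)^{\op}$ is a stationary ultramatricial algebra with quiver $Q$, fitting the framework of Section~\ref{ULTRA}.

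Finally, for the identification with $\Gr{L(Q)}$ when $Q$ has no sources or sinks, the plan is: under these hypotheses the Leavitt path algebra $L(Q)$, with its canonical $\ZZ$-grading, is strongly graded, so Dade's theorem yields $\Gr{L(Q)}\simeq L(Q)_0\lMod$; one then checks that $L(Q)_0$ is the stationary ultramatricial algebra with the same Bratteli diagram computed above, hence Morita equivalent to $S(Q)$ by Proposition~\ref{prop:Morita}, so $S(Q)\lMod\simeq L(Q)_0\lMod\simeq\Gr{L(Q)}$. The point that needs care here is verifying the strong-gradedness of $L(Q)$, which is precisely where the absence of sinks and sources is needed.
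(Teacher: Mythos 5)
The paper does not prove this result; it is quoted directly as \cite[Theorem~1.1]{S}, and the only ``proof content'' in the paper is the summary immediately following the statement, namely that $S(Q)\cong\varinjlim S_k$ with $S_k=\End_{\Gr{\FF Q}}(\FF Q_{\ge k})^{\op}\cong\End_{\FF\Lambda}(\FF Q_k)^{\op}$, that each $S_k$ is a product of matrix algebras indexed by $\# e_\lambda Q_k$, and that the transition maps give a stationary Bratteli diagram with quiver $Q$. Your computation of $M_n$ and of the transition $(p,q)\mapsto\sum_a(ap,aq)$ reproduces exactly this summary: since any graded map $(\FF Q)_{\geq n}\to\FF Q$ lands in $(\FF Q)_{\geq n}$, your $M_n$ is the paper's $\End_{\Gr}((\FF Q)_{\geq n})$, and your identification of the matrix units and blocks agrees with $\prod_\lambda M_{\#e_\lambda Q_k}(\FF)$. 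So the concrete part of your argument is consistent with what the paper records.

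That said, there are two places in your outline where more is needed before it counts as a proof. First, you assert projectivity of $\sO$ by deferring to vanishing of higher derived functors of the section functor, but you never argue that vanishing. There is a direct route: because each $(\FF Q)_{\geq n}$ is a finitely generated projective in $\Gr{\FF Q}$, one gets $\Hom_{\Qgr}(\sO,\pi^*M)\cong\varinjlim_n\Hom_{\Gr}((\FF Q)_{\geq n},M)$ (the torsion correction terms die in the colimit), and the right-hand side is manifestly exact in $M$; you should either supply this or an equivalent argument rather than gesture at $R^i\omega$. Second, your generation argument for the negative shifts is genuinely incomplete. You want to realize each $\pi^*(\FF Q e_\mu(-1))$ as a quotient of copies of $\sO$ by mapping $(\FF Q)_{\geq n}\cong\bigoplus_{\ell(p)=n}(\FF Q e_{t(p)})(-n)$ onto it, but for this you need, for every length-$(n-1)$ path starting at $\mu$, a length-$n$ path with the same target; if $\mu$ has no incoming arrow (a source), the case $n=1$ already fails, and the fix requires either a preliminary reduction removing sources as well as sinks, or a different bookkeeping (e.g.\ iterating the summand isomorphism $\sO_\nu\cong\bigoplus_{a:s(a)=\nu}\sO_{t(a)}(-1)$). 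Relatedly, the claim that the stationary Bratteli diagram has quiver exactly $Q$ is only literally true after deleting sources: if $\lambda$ is a source then $\#e_\lambda Q_k=0$ for $k\ge1$, so the $\lambda$-block is absent from $S_k$. This is harmless in the present paper --- by Lemma~\ref{lem:dag} every quiver that arises here has a loop at each vertex, hence no sources or sinks --- but it should be addressed if you intend the argument to cover all finite $Q$ as the statement asserts.

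For the final part, your route via strong grading of $L(Q)$ and Dade's theorem, followed by matching Bratteli diagrams and invoking Proposition~\ref{prop:Morita}, is a clean and standard way to obtain $S(Q)\lMod\simeq\Gr{L(Q)}$; note though that strong grading of $L(Q)$ only needs ``no sinks,'' and the ``no sources'' hypothesis in the statement is really there to make the Bratteli/quiver identification (and hence the Morita equivalence with $L(Q)_0$) come out on the nose, consistent with the issue flagged above.
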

(For information about Leavitt path algebras, see \cite{Goodearl-Leavitt}.)

Smith also shows \cite[Section~5]{S} that the Bratteli diagram of $S(Q)$ is stationary with associated quiver $Q$.
We give a summary of the construction.  Let the vertex set of $Q$ be $\Lambda$. Smith shows that $S(Q) \cong \varinjlim S_k$, where
\beq\label{trex}
 S_k = \End_{\Gr {\FF Q}}(\FF Q_{\geq k})^{op} \cong \End_{\FF \Lambda}(\FF Q_k)^{op}.
\eeq
Clearly  $S_k$ is semisimple,  with  a matrix ring of size $\# e_\lambda Q_k = \# \{ \mbox{ paths of length $k$ ending at $\lambda$ } \}$ associated to each $\lambda \in \Lambda$.
That is,
\[ S_k \cong \prod_{\lambda\in\Lambda} M_{\# e_\lambda Q_k}(\FF).
\]
Smith shows that the natural map from $S_k$ to $S_{k+1}$ is given by the adjacency matrix of $Q$.

We use Smith's result to show:
\begin{theorem}\label{thm:quivercat}
Let $\mf g_\infty = \mf{sl}(n^\infty)$, $\mf{sp}(n^\infty)$, or $\mf{so}(n^\infty)$.  
 Let $I \subseteq U(\mf g_\infty)$ be the nonzero annihilator of a locally simple module and let $Q = Q(C(I))$ be the associated quiver, as in Section~\ref{QUIVER}.  
Then $U(\mf g_\infty)/I$ is Morita equivalent to $S(Q)$.
In consequence, there is a fully faithful functor from $\Qgr{\FF Q} \to U(\mf g_\infty) \lMod$, which gives an equivalence from $\Qgr{\FF Q}$ to the subcategory of representations of $\mf g_\infty$ annihilated by $I$.
This category is also equivalent to $\Gr{L(Q)}$, where $L(Q)$ is the Leavitt path algebra of $Q$.
\end{theorem}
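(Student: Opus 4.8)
The plan is to identify the universal enveloping algebra quotient $U(\mf g_\infty)/I$ with an explicit ultramatricial algebra, compute its Bratteli diagram, and recognise that diagram as the stationary one attached to $Q = Q(C(I))$; the conclusion then follows from Smith's Theorem~\ref{thm:Smith} together with Goodearl's Morita result Proposition~\ref{prop:Morita}. Concretely, writing $\mf g_\infty = \varinjlim \mf g_k$ with $\mf g_k = \mf g(n^k)$, I would first show that $U(\mf g_\infty)/I = \varinjlim \bigl(U(\mf g_k)/I_k\bigr)$, where $I_k = I \cap U(\mf g_k)$. By the discussion in Section~\ref{BACKGROUND}, $I$ is integrable and $I_k = \bigcap_{z \in C(I)_k}\Ann_{U(\mf g_k)}(z)$ has finite codimension, so $U(\mf g_k)/I_k$ is a finite-dimensional semisimple algebra, namely $\prod_{\lambda \in \Lambda} \End_\FF(\V{\lambda}{n^k})$ (using $C(I)_k = \{\V{\lambda}{n^k}\}_{\lambda \in \Lambda}$ and $\FF$ algebraically closed of characteristic $0$). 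Hence $U(\mf g_\infty)/I$ is ultramatricial with $(k+1)$-st layer indexed by $\Lambda$.

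The key computation is that the transition maps in this direct limit are encoded by the adjacency matrix $A$ of $Q$. The map $U(\mf g_k)/I_k \to U(\mf g_{k+1})/I_{k+1}$ is the one induced by the inclusion $\mf g_k \hra \mf g_{k+1}$; on the level of simple modules / $K_0$, the multiplicity with which the simple $U(\mf g_k)/I_k$-module $\V{\lambda}{n^k}$ occurs in the restriction of $\V{\mu}{n^{k+1}}$ is exactly $\dim_\FF \Hom_{\mf g_k}(\V{\lambda}{n^k}, \restr{\V{\mu}{n^{k+1}}}{\mf g_k})$, which by definition (Section~\ref{QUIVER}) is the number $a^{\lambda\mu}$ of arrows in $Q$ from $\lambda$ to $\mu$. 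By Proposition~\ref{prop:A}, for $k \gg 0$ this multiplicity is independent of $k$, so the Bratteli diagram of $U(\mf g_\infty)/I$ is stationary (after deleting finitely many initial layers, which does not change the algebra up to isomorphism) with associated quiver $Q$. On the other hand, Smith's description \eqref{trex} shows $S(Q) = \varinjlim S_k$ with $S_k \cong \prod_{\lambda \in \Lambda} M_{\#e_\lambda Q_k}(\FF)$ and transition maps also given by $A$; so $S(Q)$ is a stationary ultramatricial algebra with the same associated quiver $Q$. By Proposition~\ref{prop:Morita}, $U(\mf g_\infty)/I$ and $S(Q)$ are Morita equivalent.

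Finally, I would assemble the functorial statement. Morita equivalence gives $U(\mf g_\infty)/I \lMod \simeq S(Q) \lMod$, and composing with Smith's equivalence $\Hom_{\Qgr{\FF Q}}(\sO, \blank): \Qgr{\FF Q} \xrightarrow{\sim} S(Q)\lMod$ from Theorem~\ref{thm:Smith} yields $\Qgr{\FF Q} \simeq U(\mf g_\infty)/I \lMod$. Inflation along the surjection $U(\mf g_\infty) \twoheadrightarrow U(\mf g_\infty)/I$ realises the latter as the full subcategory of $U(\mf g_\infty)\lMod$ of modules annihilated by $I$, which gives the promised fully faithful functor $\Qgr{\FF Q} \to U(\mf g_\infty)\lMod$ with the stated essential image. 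For the Leavitt path algebra statement, I would invoke the last sentence of Theorem~\ref{thm:Smith}: once one checks that $Q = Q(C(I))$ has no sources or sinks — which holds because each vertex $\lambda$ carries a loop, since $\Hom_{\mf g_k}(\V{\lambda}{n^k}, \restr{\V{\lambda}{n^{k+1}}}{\mf g_k}) \neq 0$ by Lemma~\ref{lem:dag} — we get $S(Q)\lMod \simeq \Gr{L(Q)}$ and hence $U(\mf g_\infty)/I \lMod \simeq \Gr{L(Q)}$.

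The main obstacle is the identification $U(\mf g_\infty)/I = \varinjlim U(\mf g_k)/I_k$ together with the precise claim that the transition maps are given by $A$: one must be careful that $I_k$ really equals $I \cap U(\mf g_k)$ and is the full intersection of the annihilators indexed by $C(I)_k$ (this is where integrability and the results of Penkov--Petukhov cited in Section~\ref{BACKGROUND} enter), and that the multiplicities governing the embedding $U(\mf g_k)/I_k \hra U(\mf g_{k+1})/I_{k+1}$ are exactly the branching multiplicities, stabilised via Proposition~\ref{prop:A}. Everything else is bookkeeping with the already-cited theorems of Smith, Goodearl, and Elliot.
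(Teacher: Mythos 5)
Your proposal follows essentially the same route as the paper's proof: identify $U(\mf g_\infty)/I$ as the direct limit of the semisimple algebras $U(\mf g_k)/I_k$, show via the branching multiplicities and Proposition~\ref{prop:A} that the resulting Bratteli diagram is stationary with associated quiver $Q$, match this with Smith's description \eqref{trex} of $S(Q)$, and invoke Proposition~\ref{prop:Morita}, Theorem~\ref{thm:Smith}, and Lemma~\ref{lem:dag} to finish. The argument is correct and the only cosmetic difference is that you define $I_k$ as $I \cap U(\mf g_k)$ and then identify it with $\bigcap_{\lambda\in\Lambda}\Ann_{U(\mf g_k)}(\V{\lambda}{n^k})$, whereas the paper starts from the latter definition; these agree by the Penkov--Petukhov results already cited in Section~\ref{BACKGROUND}.
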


\begin{proof}
 Let $C = C(I)$ and let $\Lambda$ be an index set for $C$, as above.
Fix $k_0$ so that $\# C_k = \# \Lambda $ for $k \geq k_0$.
For $k \geq k_0$ and $\lambda \in \Lambda$, let $I_k(\lambda) := \Ann_{\mf g_k}(V_\lambda^k) $. 
Let $I_k := \bigcap \{ I_k(\lambda) \st \lambda \in \Lambda\}$.
Then $U(\mf g_\infty)/I = \varinjlim U(\mf g_k)/I_k$.

Now, each $I_k(\lambda)$ is a maximal ideal of $U(\mf g_k)$.
It follows that
\beq \label{star}
U(\mf g_k)/I_k \cong \bigoplus_{\lambda \in \Lambda} U(\mf g_k)/I_k(\lambda).
\eeq
Each $U(\mf g_k)/I_k(\lambda)$ is a matrix ring of size $\dim_\FF V_\lambda^k$.

We need to understand the maps from  $U(\mf g_k)/I_k \to  U(\mf g_{k+1})/I_{k+1}$.
By \eqref{star}, it suffices to understand the maps $U(\mf g_k)/I_k(\lambda) \to  U(\mf g_{k+1})/I_{k+1}(\mu)$ for all $\lambda, \mu \in \Lambda$.
So let $x \in U(\mf g_k)$, and assume that $x$ annihilates all elements of $C_k$ except for $V_\lambda^k$.
The image of $x$ in $U(\mf g_{k+1})/I_{k+1}(\mu)$ is determined by the action of $x$ on $\restr{V^{k+1}_\mu}{\mf g_k}$.
Now we have
\[ \restr{V^{k+1}_\mu }{\mf g_k} \cong \bigoplus_{\nu \in \Lambda} (V_\nu^k)^{\# e_\mu Q_1 e_\nu}.\]
We see that $x$ acts trivially on the summands with $\nu \neq \lambda$, and diagonally on $(V_\lambda^k)^{\# e_\mu Q_1 e_\lambda}$.
If we view $U(\mf g_k)/I_k(\lambda)$ and $ U(\mf g_{k+1})/I_{k+1}(\mu)$ as matrix rings, the induced ring homomorphism sends
\[ x \mapsto \begin{bmatrix} x & & \\ & \ddots & \\ && x \end{bmatrix},\]
where there are $\# e_\mu Q_1 e_\lambda$ copies.
Note that the number of copies of $x$ depends only on $\lambda $ and $\mu$, and not on $k$.

The algebra $U(\mf g_\infty)/I = \varinjlim U(\mf g_k)/I_k$ is associated to a  Bratteli diagram given by ``unrolling'' $Q$, as described in Section~\ref{ULTRA}.
  By the comment after Theorem~\ref{thm:Smith}, the same Bratteli diagram is also associated to the algebra  $S(Q)$.
By Proposition~\ref{prop:Morita}, 
 $S(Q)$ and $U(\mf g_\infty)/I$ are Morita equivalent.
It follows from Theorem~\ref{thm:Smith} that
\[ \Qgr{\FF Q} \simeq  S(Q) \lMod \simeq U(\mf g_\infty)/I \lMod.\]

By Lemma~\ref{lem:dag}, there is a loop at each vertex of $Q$, so $Q$ has no sources or sinks.  
By Theorem~\ref{thm:Smith}, the categories above are also equivalent to $\Gr{L(Q)}$.
\end{proof}

All of Theorem~\ref{ithm:catequiv} except the final sentence follows from Theorem~\ref{thm:quivercat}.

To end the section, we give several remarks.

\begin{remark}\label{rem:gldim}
In the situation of Theorem~\ref{thm:quivercat}, $U(\mf{g}_\infty)/I$ is almost never semisimple.  
Fix  a nonzero ideal $I$ that annihilates a locally simple module, and define $Q = Q(C(I))$ and $S(Q)$ as in the theorem.
Note that $S(Q) = \varinjlim R_i$ is a stationary ultramatricial algebra.
As in \eqref{kstar}, we have $K_0(S(Q)) \cong \varinjlim K_0(R_i)$, where there exists $p$ so that each $K_0(R_i) \cong \ZZ^p$.  
The maps in the direct limit are given by right multiplication by the adjacency matrix of $Q$.  
For $S(Q)$ to be semisimple, we would have to have $K_0(S(Q)) \cong \ZZ^n$ for some $n$.
This only happens if the adjacency matrix of $Q$ is the identity.
\footnote{We thank Ken Goodearl for this argument.}

We will see in Part~\ref{PART3} that if $n \geq 2$, the adjacency matrix of $Q$ is not the identity for any nontrivial representation of $\mf{sl}(n^\infty)$, $\mf{sp}(n^\infty)$, or $\mf{so}(n^\infty)$.
\end{remark}

\begin{remark}\label{rem:two}
Let 
\beq\label{A} \xymatrix{ \Theta:  U(\mf g_\infty)/I \lMod \ar[r]^(0.6){\sim} & \Qgr{\FF Q}} \eeq
be the category equivalence of Theorem~\ref{thm:quivercat}.
The construction gives $\Theta$ as  a composition of equivalences
\beq \label{B}
\xymatrix{ \Theta:  U(\mf g_\infty)/I \lMod \ar[r]^(0.6){\Phi} & S(Q) \lMod \ar[r]^{\Psi} &\Qgr{\FF Q},} \eeq
and we briefly discuss $\Phi$ and $\Psi$.
First, $\Psi$ is canonical, the quasi-inverse of $\Hom_{\Qgr{\FF Q}}(\sO, \blank)$.
On the other hand, $\Phi$ is induced by identifying the quivers of  $U(\mf g_\infty)/I$ and $S(Q)$, as in Proposition~\ref{prop:Morita}, and thus inducing an isomorphism of ordered groups between $K_0(U(\mf g_\infty)/I)$ and $K_0(S(Q))$.
As discussed in Section~\ref{ULTRA}, this isomorphism determines a Morita equivalence between the two algebras only up to  locally inner automorphism. 
Thus choosing an equivalence $\Theta$ in \eqref{A} involves a choice of $\Phi$ in \eqref{B}. 
\end{remark}

\begin{remark}\label{rem:three}
We note that the conclusions of Theorem \ref{thm:quivercat} will hold whenever $\mathfrak{g}_{\infty}$ is a locally finite Lie algebra and $I \subseteq U(\mathfrak{g}_{\infty})$ is a locally primitive ideal whose associated coherent local system $C(I)$ is of finite type and stabilizes: i.e.  there is a bijection from $C_k$ to $C_{{k+1}}$, for all $k \gg 0$, which is functorial in the sense of Proposition \ref{prop:A}. When $\mathfrak{g}_{\infty}$ is diagonal and non-finitary, it follows from Section \ref{BACKGROUND} that any prime ideal $I \subseteq U(\mathfrak{g}_{\infty})$ will satisfy these conditions. When $\mathfrak{g}_{\infty}$ is finitary, not every prime ideal will have an associated c.l.s. of finite type. In particular, our results hold for the three simple finitary Lie algebras $\mathfrak{sl}(\infty)$, $\mathfrak{sp}(\infty)$, and $\mathfrak{o}(\infty)$ only when we consider ideals $I \subseteq U(\mathfrak{g}_{\infty})$ which satisfy the conditions above. 
\end{remark}

 \section{Locally simple modules,  point data sequences, and point representations of $Q$}\label{POINTS}
 
 In this section, let $\mf g_\infty = \mf{sl}(n^\infty)$, $\mf{sp}(n^\infty)$, or $\mf{so}(n^\infty)$.  
We write $\mf g_\infty = \varinjlim \mf g_k$, where $\mf g_k = \mf{sl}(n^k)$, $\mf{sp}(n^k)$, or $\mf{so}(n^k)$, respectively.

Let $X = \varinjlim_{x_k} X_k$ be a locally simple representation of $\mf g_\infty$;
we further assume that $\Ann_{U(\mf g_\infty)}(X) \neq 0$.
When writing $X$ in this way, we will assume that each $X_k$ is a (finite-dimensional) simple representation of $\mf g_k$ and that $x_k \in \Hom_{\mf g_k}(X_k, X_{k+1} )$. 
 We also assume that the $x_k$ are nonzero.
To $X$ is associated both discrete data, namely the isomorphism class of each $X_k$, and continuous data, the maps $x_k$.  
In this section, we explore how the isomorphism class of $X$ depends on this data, and prove a more general version of Proposition~\ref{iprop:pointseq}.

We first establish notation for the discrete invariant of $X$.  If  $I = \Ann_{U(\mf g_\infty)}(X)$ is the nonzero annihilator of a locally simple $X$, then $I$ is the ideal associated to a c.l.s. $C = C(\Lambda)$:  thus for $k \gg 0$, there is $\lambda_k \in \Lambda$ with $X_k \cong \V{\lambda_k}{k}$.  We refer to the sequence $(\lambda_\bullet)$ as the {\em $\Lambda$-sequence} of $X$.

We begin with two trivial but useful observations.

\begin{lemma}\label{isomorphic}
Suppose $V = \varinjlim_{v_k} V_k$ and $W = \varinjlim_{w_k} W_k$ are $\mf g_\infty$-modules. Then $V \cong W$ if and only if there exist subsequences $i_1 < i_2 < i_3 < \dots$ and $j_1 < j_2 < j_3 < \dots$, with $i_1 \leq j_1 \leq i_2 \leq j_2 \dots$, and  $\phi_\ell\in\Hom_{\mf g_{i_\ell}}( V_{i_\ell}, W_{j_\ell})$ and  $\phi_\ell' \in \Hom_{\mf g_{j_\ell}}(W_{j_\ell} , V_{i_{\ell+1}})$ for all $\ell$ such that the following diagram commutes:

\begin{eqnarray*}
\xymatrixcolsep{5pc}
\xymatrix{
V_{i_1} \ar[d]_{\phi_1}  \ar[r]^{f_1} &V_{i_2} \ar[d]_{\phi_2} \ar[r]^{f_2} &V_{i_3} \ar[d]_{\phi_3} \ar[r]^{f_3} &\dots\\
W_{j_i} \ar[r]_{g_1}  \ar[ru]^{\phi_1'} &W_{j_2} \ar[ru]^{\phi_2'}  \ar[r]_{g_2} &W_{j_3} \ar[r]_{g_3} \ar[ru]^{\phi_3'} &\dots .} 
\end{eqnarray*}
(Here the horizontal maps are induced from the $v_k$ and $w_k$.)
\qed
\end{lemma}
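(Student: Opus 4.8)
The plan is to prove the two implications separately. The ``if'' direction is a formal fact about directed colimits; the ``only if'' direction is a back-and-forth interleaving argument whose only delicate point is the standard one about factoring maps through finite stages.

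\emph{The ``if'' direction.} Suppose we are given the subsequences and the maps $\phi_\ell,\phi_\ell'$ making the displayed ladder commute, so that $\phi_\ell'\phi_\ell=f_\ell$ and $\phi_{\ell+1}\phi_\ell'=g_\ell$ for all $\ell$. I would first observe that these two relations force the squares $g_\ell\phi_\ell=\phi_{\ell+1}f_\ell$ to commute, so that $(\phi_\ell)$ is a morphism of directed systems $(V_{i_\ell},f_\ell)\to(W_{j_\ell},g_\ell)$ and $(\phi_\ell')$ is a morphism in the other direction. Since the $i_\ell$ and $j_\ell$ are unbounded, $(V_{i_\ell})$ is cofinal in $(V_k)$ and $(W_{j_\ell})$ is cofinal in $(W_k)$, so the colimits of these subsystems are canonically $V$ and $W$; hence the $\phi_\ell$ induce $\Phi\colon V\to W$ and the $\phi_\ell'$ induce $\Phi'\colon W\to V$. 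The relation $\phi_\ell'\phi_\ell=f_\ell$ says that $\Phi'\Phi$ agrees with the transition map on each $V_{i_\ell}$, hence $\Phi'\Phi=\id_V$; symmetrically $\Phi\Phi'=\id_W$, and $V\cong W$.

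\emph{The ``only if'' direction.} Fix an isomorphism $\psi\colon V\to W$ with inverse $\psi^{-1}$, and write $\iota_k\colon V_k\to V$, $\kappa_k\colon W_k\to W$ for the structure maps. The essential input is that each $V_k$ and each $W_k$, being finite-dimensional --- hence finitely presented, as $U(\mf g_k)$ is Noetherian --- is a compact object, so that maps out of it into a filtered colimit factor through a finite stage and two such maps that become equal in the colimit already agree at a finite stage. I would construct $i_1<j_1<i_2<j_2<\cdots$ and maps $\phi_\ell\in\Hom_{\mf g_{i_\ell}}(V_{i_\ell},W_{j_\ell})$, $\phi_\ell'\in\Hom_{\mf g_{j_\ell}}(W_{j_\ell},V_{i_{\ell+1}})$ recursively, maintaining $\kappa_{j_\ell}\phi_\ell=\psi\iota_{i_\ell}$ and $\iota_{i_{\ell+1}}\phi_\ell'=\psi^{-1}\kappa_{j_\ell}$. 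Start with $i_1=1$ and pick $j_1$ and $\phi_1$ with $\kappa_{j_1}\phi_1=\psi\iota_{i_1}$. Given $\phi_\ell$: pick $i'>j_\ell$ and $\eta\colon W_{j_\ell}\to V_{i'}$ with $\iota_{i'}\eta=\psi^{-1}\kappa_{j_\ell}$; then $\iota_{i'}\eta\phi_\ell=\psi^{-1}\psi\iota_{i_\ell}=\iota_{i_\ell}$, which also equals $\iota_{i'}$ composed with the transition $V_{i_\ell}\to V_{i'}$, so by compactness of $V_{i_\ell}$ the two maps $V_{i_\ell}\to V_{i'}$ agree after a further transition $V_{i'}\to V_{i_{\ell+1}}$; define $\phi_\ell'$ to be $\eta$ followed by that transition, so that $\phi_\ell'\phi_\ell=f_\ell$ while $\iota_{i_{\ell+1}}\phi_\ell'=\iota_{i'}\eta=\psi^{-1}\kappa_{j_\ell}$ is unchanged. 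Then repeat the symmetric step on the $W$-side to produce $j_{\ell+1}$ and $\phi_{\ell+1}$ with $\kappa_{j_{\ell+1}}\phi_{\ell+1}=\psi\iota_{i_{\ell+1}}$ and $\phi_{\ell+1}\phi_\ell'=g_\ell$.

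The main obstacle is exactly this colimit bookkeeping: one cannot in a single step factor a map through a finite stage \emph{and} force a composite to equal a transition map on the nose, so each enlargement that repairs one triangle must be checked not to disturb the relations already established. Organizing the construction as a back-and-forth is what makes this manageable, since at stage $\ell$ only the most recently created triangle needs correcting, and the correction --- post-composition with a transition --- is absorbed by the structure map to the colimit. The remaining points --- cofinality, the induced maps on colimits, and $\mf g_{i_\ell}$-equivariance of the factored maps (automatic once $i_\ell\le j_\ell$) --- are routine.
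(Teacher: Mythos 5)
Your proof is correct. Note that the paper offers no argument for this lemma at all --- it is stated with a \qed as one of ``two trivial but useful observations'' --- so your write-up simply supplies the standard justification: cofinality plus the triangle identities for the ``if'' direction, and for the ``only if'' direction the back-and-forth construction using that each $V_k$, $W_k$ is finite-dimensional (hence finitely presented over the Noetherian ring $U(\mf g_k)$, so $\Hom$ out of it commutes with the filtered colimit), which is exactly the right tool for both the factorization through a finite stage and the ``equal in the colimit implies equal after a further transition'' step. The only cosmetic point is to record explicitly that $j_1\geq i_1$ (and in general $i_\ell\leq j_\ell\leq i_{\ell+1}$), which your construction does ensure, so that each $\phi_\ell$, $\phi'_\ell$ is equivariant over the smaller of the two Lie algebras as the statement requires.
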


The $\Lambda$-sequence of a locally simple $U(\mf g_\infty)$-module $X$ is not an isomorphism invariant, but we have:
\begin{corollary}\label{samemodules}
Let $V \cong W$ be locally simple $\mf g_\infty$-modules, and let $(\lambda_\bullet)$, $(\mu_\bullet)$ be their respective $\Lambda$-sequences. 
Then $\lambda_k = \mu_k$ for $k \gg 0$.
\end{corollary}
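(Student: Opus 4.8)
The plan is to apply Lemma~\ref{isomorphic} to the isomorphism $V\cong W$ and then track the $\Lambda$-sequences through the resulting commutative ladder. Write $V=\varinjlim_{v_k}V_k$ and $W=\varinjlim_{w_k}W_k$ with each $V_k$, $W_k$ simple, and let $(\lambda_\bullet)$, $(\mu_\bullet)$ be the corresponding $\Lambda$-sequences, so that $V_k\cong\V{\lambda_k}{k}$ and $W_k\cong\V{\mu_k}{k}$ for $k\gg0$. By Lemma~\ref{isomorphic} there are interlacing subsequences $i_1\le j_1\le i_2\le j_2\le\cdots$ together with nonzero homomorphisms $\phi_\ell\in\Hom_{\mf g_{i_\ell}}(V_{i_\ell},W_{j_\ell})$ and $\phi_\ell'\in\Hom_{\mf g_{j_\ell}}(W_{j_\ell},V_{i_{\ell+1}})$ making the ladder commute. (Here we use that all the relevant $V_k$, $W_k$ are nonzero, so the maps in Lemma~\ref{isomorphic} can be taken nonzero.)

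The key step is to observe that the existence of a nonzero $\mf g_{i_\ell}$-homomorphism from the simple module $V_{i_\ell}$ to $W_{j_\ell}$ means $V_{i_\ell}$ occurs as a summand of $\restr{W_{j_\ell}}{\mf g_{i_\ell}}$, and hence, iterating the branching laws of Section~\ref{BRANCH}, $\lambda_{i_\ell}$ appears in the branching of $\mu_{j_\ell}$. Since $V$ and $W$ are locally simple with nonzero annihilator, for $k\gg0$ the branchings take place entirely within the c.l.s.\ $C(\Lambda)$, where by Proposition~\ref{prop:A} the bijection $\alpha_k$ between successive levels is the ``obvious'' identity-on-partitions map. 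Therefore, once $i_\ell,j_\ell$ are large enough, the branching of $\mu_{j_\ell}$ down to $\mf g_{i_\ell}$ contains $\lambda_{j_\ell}$ (the image under the stabilised bijection) with nonzero multiplicity, and $\lambda_{i_\ell}$ equals the stabilised image of $\lambda_{j_\ell}$ — but since $X_k\cong\V{\lambda_k}{k}$ is determined uniquely and the stabilisation map is the identity on $\Lambda$, we get $\lambda_{i_\ell}=\mu_{j_\ell}$. Running the same argument with $\phi_\ell'$ gives $\mu_{j_\ell}=\lambda_{i_{\ell+1}}$. Hence along the two interlaced subsequences the $\Lambda$-labels agree.

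To finish, I would upgrade this from agreement along subsequences to agreement for all large $k$. The point is that a locally simple module with nonzero annihilator has a $\Lambda$-sequence that is eventually ``constant'' in the sense dictated by the stabilised bijections $\alpha_k$: since $X_k$ must restrict (with nonzero Hom) into $X_{k+1}$ and both lie in $C_k$, $C_{k+1}$ for $k\gg0$, the label $\lambda_{k+1}$ is forced to be $\alpha_k(\lambda_k)$, i.e.\ the same partition. The same holds for $(\mu_\bullet)$. Thus each of $(\lambda_\bullet)$ and $(\mu_\bullet)$ is, for $k\gg0$, determined by a single element of $\Lambda$, and the subsequence agreement from the previous paragraph then forces these two elements to coincide; hence $\lambda_k=\mu_k$ for $k\gg0$.

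The main obstacle I anticipate is the bookkeeping needed to pass cleanly between the different indexings: the homomorphisms $\phi_\ell$ in Lemma~\ref{isomorphic} relate levels $i_\ell$ and $j_\ell$ that differ by an unspecified amount, so one must compose branching laws over a variable number of steps and keep track of which multiplicities are guaranteed nonzero. This is where the functoriality statement in Proposition~\ref{prop:A} and Lemma~\ref{lem:dag} (existence of the ``diagonal'' Hom coming from highest weight vectors) do the real work, ensuring that the relevant composite Hom-spaces are nonzero and that the labels propagate correctly; the rest is routine.
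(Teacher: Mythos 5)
Your outline correctly invokes Lemma~\ref{isomorphic} as the starting point, but the crucial step is wrong: the existence of a nonzero $\mf g_{i_\ell}$-homomorphism $V_{i_\ell}\to\restr{W_{j_\ell}}{\mf g_{i_\ell}}$ tells you only that $\lambda_{i_\ell}$ \emph{appears with nonzero multiplicity} in the branching of $\mu_{j_\ell}$; it does not force $\lambda_{i_\ell}=\mu_{j_\ell}$. Concretely, in the c.l.s.\ $\mathcal{A}^2_2$ for $\mathfrak{sl}(2^\infty)$ one has $\restr{V^{2k}_{(2)}}{\mathfrak{gl}(k)}\cong 3V^k_{(2)}\oplus V^k_{(1,1)}$, so $V^k_{(1,1)}$ embeds into $\restr{V^{2k}_{(2)}}{\mathfrak{gl}(k)}$ even though $(1,1)\ne(2)$. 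The same example demolishes the claim in your third paragraph that the $\Lambda$-sequence of a locally simple module is eventually ``constant'': a locally simple module may perfectly well hop between $(2)$ and $(1,1)$ forever, and indeed the whole point of Section~\ref{POINTS} (Definition~\ref{def:point}, Proposition~\ref{sequences}, Remark~\ref{rem:geomwild}) is that locally simple modules over a fixed annihilator correspond to essentially arbitrary point data sequences in $\Qgr{\FF Q}$, whose vertex labels need not stabilise. Lemma~\ref{lem:dag} guarantees a loop at every vertex (so a constant sequence is \emph{possible}), but says nothing about uniqueness of the nonzero Hom space, and the adjacency matrices in Part~\ref{PART3} are manifestly not identity matrices.

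The paper's argument avoids combinatorics entirely. Given $\ell\ge j_1$, pick $n$ with $j_n\ge\ell$; the transition map $f\colon V_{i_1}\to V_{i_{n+1}}$ factors both as $f''f'$ through $V_\ell$ and as $\phi'\phi$ through $W_\ell$. All modules involved are finite-dimensional and simple and the transition maps are nonzero, hence injective, so $f\ne0$; then a short computation shows $V_\ell\cong f''(V_\ell)=\mf g_\ell\cdot f''f'(V_{i_1})=\mf g_\ell\cdot\phi'\phi(V_{i_1})=\phi'(W_\ell)\cong W_\ell$, giving $\lambda_\ell=\mu_\ell$. To salvage your approach along branching-law lines you would need to use the two-sided factorisation (both $\phi_\ell$ and $\phi'_\ell$) simultaneously and argue via injectivity of composition, which is essentially what the paper's Segr\'e-embedding argument in Proposition~\ref{sequences} does at the next level of detail. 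As written, your proof has a genuine gap: the forward implication from ``nonzero Hom'' to ``equal partition label'' simply does not hold.
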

\begin{proof}
%
Let $i_1\leq  j_1 $ be as in Lemma~\ref{isomorphic}, and let $\ell \geq j_1$.  We will show that $V_\ell \cong W_\ell$.  Let $n$ be such that $j_n \geq \ell$.  From the commutative diagram in Lemma~\ref{isomorphic}, there is a commutative diagram
\[
 \xymatrixcolsep{3pc}
\xymatrix{
V_{i_1} \ar[d]_{\phi_1}  \ar[rd]^{\phi} \ar[rrr]^{f} &&&V_{i_{n+1}} \\
W_{j_i}  \ar[r] & W_{\ell} \ar[rru]^{\phi'} \ar[r] &  W_{j_n} \ar[ru]_{\phi_n'} &},
\]
where the horizontal maps are induced from the $v_k$ and $w_k$, and all maps are homomorphisms over the appropriate $\mf{g}_k$.
By assumption, $f$ factors as the composition
\[ \xymatrix{V_{i_1} \ar[r]^{f'} \ar@/_/[rr]_{f} & V_{\ell} \ar[r]^{f''} & V_{i_{n+1}}}.\]
Since $f'' f' = f = \phi' \phi$, there are $\mf g_{\ell}$-maps
\[ V_{\ell} \cong f''(V_{\ell}) = f''(\mf g_{\ell} \cdot f'(V_{i_1})) = \mf g_{\ell} \cdot f''f'(V_{i_1}) = \mf g_{\ell} \cdot \phi'\phi(V_{i_1}) = \phi'(W_{\ell}) \cong W_{\ell},\]
as needed.  
\end{proof}

Let $X = \varinjlim_{x_k} X_k$ be a locally simple $\mf g(n^\infty)$-module, and for each $k$ let  $H_k = \Hom_{\mf g_k}(X_k, X_{k+1})$. 
By Schur's lemma, each $H_k$ is a finite-dimensional vector space.
Thus to $X$ we associate a sequence of points $( x_k \in H_k \ssm \{0\})$.
The sequence $(\lambda_\bullet, x_\bullet)$, where $(\lambda_\bullet)$ is the $\Lambda$-sequence of $X$, is the {\em point data sequence} of $X$.

We define an equivalence relation $\sim$ on point data sequences. 
 Given nonzero $x_k \in H_k$, let $[x_k]$ denote the associated point in $\PP(H_k)$.   
We say that 
\beq\label{twiddle}
\mbox{ $(\lambda_\bullet, x_\bullet) \sim (\mu_\bullet, y_\bullet)$ if  $\lambda_k = \mu_k$ and $[x_k] = [y_k]$  for all $k \gg 0$.}
\eeq
We next show this equivalence relation characterizes isomorphism of locally simple modules.
This result was proved independently by Cody Holdaway \cite{Ho}.

\begin{proposition} \label{sequences}
Let $X$ and $Y$ be  locally simple $\mf g_\infty$-modules with associated point data sequences $(\lambda_\bullet, x_\bullet)$ and $(\mu_\bullet, y_\bullet)$, respectively.
Then $X \cong Y$ $ \iff$  
$(\lambda_\bullet, x_\bullet) \sim (\mu_\bullet, y_\bullet)$.
\end{proposition}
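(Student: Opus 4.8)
The plan is to prove both directions using Lemma~\ref{isomorphic} together with Corollary~\ref{samemodules}, which already handles the discrete ($\Lambda$-sequence) invariant. The substantive content is therefore the continuous data: once we know $\lambda_k = \mu_k$ for $k \gg 0$, we must show that $X \cong Y$ is equivalent to $[x_k] = [y_k]$ for $k \gg 0$.

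For the easy direction ($\sim \implies \cong$): suppose $\lambda_k = \mu_k$ and $[x_k] = [y_k]$ for all $k \geq k_0$. Then for each such $k$, since $X_k \cong \V{\lambda_k}{k} \cong Y_k$ and $x_k, y_k$ lie on the same line in $H_k = \Hom_{\mf g_k}(X_k, X_{k+1})$, we may rescale: fix isomorphisms $\psi_k : X_k \to Y_k$ for $k \geq k_0$ and observe that, because $[x_k] = [y_k]$, the square with $x_k$, $y_k$, $\psi_k$, $\psi_{k+1}$ can be made to commute after adjusting $\psi_{k+1}$ by a nonzero scalar. Doing this inductively starting from $\psi_{k_0}$ produces a compatible system of isomorphisms $X_k \xrightarrow{\sim} Y_k$ for $k \geq k_0$, hence an isomorphism of the direct limits $X \cong Y$. (Here one uses Schur's lemma to know that any two maps on the same line differ by a scalar and that $\End_{\mf g_k}(X_{k+1})$ contains only scalars on the relevant isotypic component.)

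For the hard direction ($\cong \implies \sim$): Corollary~\ref{samemodules} gives $\lambda_k = \mu_k$ for $k \gg 0$, so it remains to show $[x_k] = [y_k]$ for $k \gg 0$. Apply Lemma~\ref{isomorphic} to get the interleaving subsequences $(i_\ell)$, $(j_\ell)$ and the maps $\phi_\ell, \phi'_\ell$; as in the proof of Corollary~\ref{samemodules}, one can in fact arrange $\mf g_k$-isomorphisms $\theta_k : X_k \to Y_k$ and $\theta'_k : Y_k \to X_{k+1}$ (for $k \gg 0$) making the infinite ladder between $(X_k)$ and $(Y_k)$ commute. Commutativity of each triangle says $y_k \circ \theta_k = \theta_{k+1}^{-1}$-type relations — more precisely $\theta'_k \circ \theta_k = x_k$ (up to the identifications) and $\theta_{k+1} \circ \theta'_k = y_k$. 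Composing, $y_k = \theta_{k+1} \circ \theta'_k$ and $x_k = \theta'_k \circ \theta_k$, and since each $\theta_k, \theta'_k$ is an isomorphism between simple modules, Schur's lemma forces these to be scalar multiples of each other once the source and target are identified via the $\theta$'s; hence $[x_k] = [y_k]$ in $\PP(H_k)$ for all large $k$. The main obstacle is bookkeeping: keeping track of the identifications $H_k \cong H_k$ induced by the various $\theta_k$ and checking that the interleaving (rather than strictly commuting squares) of Lemma~\ref{isomorphic} still pins down the \emph{projective} point $[x_k]$ rather than merely the module $X_k$; this is exactly the place where one must argue, as in Corollary~\ref{samemodules}, that the zigzag can be straightened to genuine level-by-level isomorphisms for $k \gg 0$.
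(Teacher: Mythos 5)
The easy direction of your proof matches the paper's, and is fine. The hard direction has a genuine gap: you assert that the interleaving zigzag produced by Lemma~\ref{isomorphic} ``can be straightened to genuine level-by-level isomorphisms for $k \gg 0$'' and call this a bookkeeping matter, but this straightening is precisely the nontrivial mathematical content of the proposition, and you neither prove it nor cite a tool that proves it. Moreover, your appeal to Schur's lemma is misleading at the crucial spot. The maps $\theta'_k \colon Y_k \to X_{k+1}$ are $\mf g_k$-homomorphisms into $\restr{X_{k+1}}{\mf g_k}$, which is \emph{not} a simple $\mf g_k$-module; the relevant Hom-space $H_k = \Hom_{\mf g_k}(X_k, X_{k+1})$ typically has dimension $> 1$, so Schur's lemma does not force two such maps to be proportional. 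If $\dim H_k = 1$ for all $k$ the proposition is trivial; the content lies exactly in the case of higher multiplicity, and your argument does not engage with it.

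The paper resolves this by a different mechanism: it observes that the composition map
\[
\iota \colon \PP\bigl(\Hom_{\mf g_i}(X_i, \restr{X_j}{\mf g_i})\bigr) \times \PP\bigl(\Hom_{\mf g_j}(X_j, \restr{X_k}{\mf g_j})\bigr) \longrightarrow \PP\bigl(\Hom_{\mf g_i}(X_i, \restr{X_k}{\mf g_i})\bigr)
\]
is, in suitable coordinates, the Segr\'e embedding $\PP^{a-1} \times \PP^{b-1} \hookrightarrow \PP^{ab-1}$, hence \emph{injective}. It is this injectivity that lets one deduce, from $\phi_1' \phi_1 = x_{i_2-1}\cdots x_{i_1}$, that $[\phi_1'] = [x_{i_2-1}\cdots x_{j_1}]$ and $[\phi_1] = [x_{j_1-1}\cdots x_{i_1}]$; iterating gives $[x_k] = [y_k]$ for all large $k$. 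Your sketch would become a proof if you established this injectivity of composition (or an equivalent statement) and used it to carry out the straightening; as written, the step you label ``bookkeeping'' is the one place where the argument actually needs an idea, and it is missing. Also note that Corollary~\ref{samemodules}, which you cite as the model for the straightening, only yields the existence of module isomorphisms $X_k \cong Y_k$; it does not produce a commuting level-by-level ladder, which is a strictly stronger conclusion.
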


\begin{proof}
By Corollary~\ref{samemodules}, we can assume without loss of generality that $X_k \cong Y_k$ (so $\lambda_k = \mu_k$) for all $k$.

$(\Longrightarrow)$.  By assumption, there are $a_k \in \FF^*$ so that $a_k x_k = y_k$ for each $k \geq k_0$.  
Consider the diagram:
\begin{eqnarray*}
\xymatrixcolsep{5pc}
\xymatrix{
X_{1} \ar[d]_{1}  \ar[r]^{x_1} &X_{2} \ar[d]_(0.6){a_1} \ar[r]^{x_2} &X_{3} \ar[d]_(0.6){a_1a_2} \ar[r]^{x_3} &X_4 \ar[r] \ar[d]_(0.6){a_1a_2a_3}&\dots\\
X_{1} \ar[r]_{y_1}  \ar[ru]^{x_1} &X_{2} \ar[ru]^{x_2/a_1}  \ar[r]_{y_2} &X_{3} \ar[ru]^{x_3/{a_1a_2}} \ar[r]_{y_3} &X_4 \ar[r] &\dots}
\end{eqnarray*}
\noindent where the maps $X_i \rightarrow X_i$ are given by multiplying by the scalar indicated.
This diagram trivially commutes, and by  Lemma~\ref{isomorphic} we have $X \cong Y$.

Before we prove the other direction,  consider the map 
\[\iota:  \PP(\Hom_{\mf g_i}(X_i, \restr{X_j}{\mf g_i})) \times \PP (\Hom_{\mf g_j}(X_j, \restr{X_k}{\mf g_j})) \to \PP (\Hom_{\mf g_i}(X_i, \restr{X_k}{\mf g_i}))\]
induced by composition of functions.
If the multiplicity of $X_j$ in $\restr{X_k}{\mf g_j} $ is $b$ and the multiplicity of $X_i$ in 
$\restr{X_j}{\mf g_i} $ is $a$, then the multiplicity  of $X_i$ in 
$\restr{X_k}{\mf g_i}  $ is at least $ab$.
Let $\phi = (\phi_1, \dots, \phi_a) \in \Hom_{\mf g_i}(X_i, \restr{X_j}{\mf g_i})$ and
$\psi = (\psi_1, \dots, \psi_b) \in \Hom_{\mf g_j}(X_j, \restr{X_k}{\mf g_j})$, where $\psi_s, \phi_r \in \FF$.
Then 
\[ \psi \circ \phi = (\psi_1 \phi_1, \dots, \psi_1 \phi_a, \psi_2 \phi_1, \dots, \psi_2 \phi_a, \dots, \psi_b\phi_1, \dots, \psi_b \phi_a),\]
in an appropriate choice of coordinates.
Thus $\iota$ may be given in coordinates as the Segr\'{e} embedding $\PP^{a-1} \times \PP^{b-1} \to \PP^{ab-1} \subseteq \PP (\Hom_{\mf g_i}(X_i, \restr{X_k}{\mf g_i}))$ and, in particular,  $\iota$ is injective.

$(\Longleftarrow)$.  Assume that $X \cong Y$.  
By Corollary~\ref{samemodules}, $X_k \cong Y_k$ for $k \gg 0$.
By Lemma~\ref{isomorphic} there is a commutative diagram
 \begin{eqnarray*}
\xymatrixcolsep{5pc}
\xymatrix{
X_{i_1} \ar[d]_{\phi_1}  \ar[r] &X_{i_2} \ar[d]_{\phi_2} \ar[r] \ar[d]_{\phi_3} \ar[r] &\dots\\
X_{j_i} \ar[r]  \ar[ru]^{\phi_1'} &X_{j_2} \ar[ru]^{\phi_2'}  \ar[r] \ar[r] \ar[ru]^{\phi_3'} &\dots},
\end{eqnarray*}
where the top maps are compositions of the relevant $x_k$'s, and the bottom maps are compositions of the relevant $y_k$'s.
Thus $\phi_1' \phi_1 = x_{i_2-1} \cdots  x_{i_1}$, and injectivity of $\iota$ implies that $\phi_1' = x_{i_2-1}\dots x_{j_1}$ up to scalar.  
Likewise, since $\phi_2 \phi_1' = y_{j_2-1} \cdots  y_{j_1}$, we have that $\phi_1' = y_{i_2-1}\dots y_{j_1}$ up to scalar.
Injectivity of $\iota$ and induction gives that
 the natural map
\[ \PP(H_{i_1}) \times \PP(H_{i_1+1}) \times \dots \times \PP(H_{i_2 -1}) \to \PP(\Hom_{\mf g_{i_1}}(X_{i_1}, \restr{X_{i_2}}{\mf g_{i_1}}))\]
is injective, and so  $[x_k] = [y_k]$ for $k \in \{ j_1 , \dots, i_2-1\}$.
We can repeat this argument to obtain that $[x_k] = [y_k]$ for all $k \geq j_1$.  
\end{proof}

Proposition~\ref{iprop:pointseq} is the special case of Proposition~\ref{sequences} where $\mf{g} = \mf{sl}$ and $\lambda_k = (1) $ for all $k$.

Our goal for the rest of the  section is  to obtain Proposition~\ref{sequences} from Theorem~\ref{thm:quivercat} and complete the proof of Theorem~\ref{ithm:catequiv}.  
We fix a c.l.s $C = C(\Lambda)$ for $\mf g_\infty$ and let $I = I(C)$ and $Q= Q(\Lambda)$.
By Theorem~\ref{thm:quivercat} the categories $U(\mf g_\infty)/I \lMod$ and $\Qgr{\FF Q}$ are equivalent, as in \eqref{A}.
As discussed 
in Remark~\ref{rem:two},
 there are many choices of the category equivalence $\Theta$ from \eqref{A}, 
and the image of a particular locally simple $U(\mf g_\infty)/I$-module of course depends on that choice.
However, regardless of the choice of $\Theta$, the image of a locally simple $U(\mf{g_\infty})/I$-module is a special kind of representation of $Q$.
\begin{defn} \label{def:point}
 A {\em point representation} of $Q$ is a cyclic graded $\FF Q$-module $M$ with $\dim_\FF M_k = 1$ for $k \gg 0$.

If $M$ is a point representation then for $k \gg 0 $ we have $M_k \cong \FF e_{\lambda_k}$ for some $\lambda_k \in \Lambda$.
We refer to $(\lambda_\bullet)$ as the {\em $\Lambda$-sequence of $M$}.

We also abuse notation and refer to the equivalence class $\pi^* M$ in $\Qgr{\FF Q}$ as a point representation of $Q$.
\end{defn}

We first discuss the equivalence $\Psi$ from \eqref{B}.
Define $S_k = \End_{\FF \Lambda} (\FF Q_k)^{op}$ as in \eqref{trex}, so $\FF Q_k$ gives a Morita equivalence between $S_k$ and $\FF \Lambda$.
For each $k \in \NN$ there is a natural identification of $\Lambda$ with the set of simple left $S_k$-modules:  let
\[ U_\lambda^k = \Hom_{\FF \Lambda}(\FF Q_k, \FF e_\lambda).\]
Since there is a loop at each vertex of $Q$ by Lemma~\ref{lem:dag},
each $U_\lambda^k$ is nonzero and we further have
\[ \FF e_\lambda \cong \FF Q_k \otimes_{S_k} U_\lambda^k\]
for all $\lambda, k$.

\begin{lemma}\label{lem:theta}
 For all $\lambda, \mu \in \Lambda$ there is an isomorphism 
\[ \theta_k:  \Hom_{\FF \Lambda}(\FF Q_1 \otimes_{\FF \Lambda} \FF e_\lambda, \FF e_\mu) \to\Hom_{S_k}(U_\lambda^k, U_\mu^{k+1})\]
given by
\[
\theta_k(t)(\alpha)(ap) = t(a \otimes \alpha(p))
\]
for all $t \in \Hom_{\FF \Lambda}(\FF Q_1 \otimes_{\FF \Lambda} \FF e_\lambda, \FF e_\mu)$,  $\alpha \in U_\lambda^k$, $a \in Q_1$, $p \in Q_k$.
\end{lemma}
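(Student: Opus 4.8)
The plan is to verify directly that the proposed formula defines a well-defined $S_k$-module map, that the assignment $t \mapsto \theta_k(t)$ is linear, and that it is bijective; naturality of all the identifications then does the rest. First I would check that $\theta_k(t)(\alpha)$ genuinely lands in $U_\mu^{k+1} = \Hom_{\FF\Lambda}(\FF Q_{k+1}, \FF e_\mu)$: an element of $\FF Q_{k+1}$ is an $\FF$-linear combination of paths of the form $ap$ with $a \in Q_1$, $p \in Q_k$ (composable), so the formula $\theta_k(t)(\alpha)(ap) = t(a \otimes \alpha(p))$ extends linearly, and one must confirm that it respects the $\FF\Lambda$-action, i.e. that it is a map of $\FF\Lambda$-modules $\FF Q_{k+1} \to \FF e_\mu$; this is immediate because $t$ and $\alpha$ are. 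Next I would check $S_k$-linearity of $\theta_k(t)$ as a map $U_\lambda^k \to U_\mu^{k+1}$. Here one uses the explicit description of $S_k \cong \End_{\FF\Lambda}(\FF Q_k)^{\op}$ acting on $U_\lambda^k = \Hom_{\FF\Lambda}(\FF Q_k, \FF e_\lambda)$ by precomposition, and the compatible action on $U_\mu^{k+1}$ via the inclusion of $S_k$ into $S_{k+1}$ (which, as recalled after Theorem~\ref{thm:Smith}, is encoded by the adjacency matrix — concretely, an endomorphism $s$ of $\FF Q_k$ induces the endomorphism $\mathrm{id}\otimes s$ of $\FF Q_1 \otimes_{\FF\Lambda} \FF Q_k \cong \FF Q_{k+1}$). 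Chasing the formula $\theta_k(t)(\alpha \cdot s)(ap) = t(a \otimes \alpha(s \cdot p)) = \bigl(\theta_k(t)(\alpha)\bigr)(a \cdot s(p)) = \bigl(\theta_k(t)(\alpha) \cdot s\bigr)(ap)$ is exactly the required identity.

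For bijectivity, the cleanest route is to produce an explicit inverse rather than count dimensions. Given $u \in \Hom_{S_k}(U_\lambda^k, U_\mu^{k+1})$, one recovers $t \in \Hom_{\FF\Lambda}(\FF Q_1 \otimes_{\FF\Lambda}\FF e_\lambda, \FF e_\mu)$ by evaluating $u$ on a suitable ``universal'' element of $U_\lambda^k$: since $\FF e_\lambda \cong \FF Q_k \otimes_{S_k} U_\lambda^k$, there is a canonical pairing, and the loop at $\lambda$ (Lemma~\ref{lem:dag}) guarantees $U_\lambda^k \neq 0$ and that $\FF Q_k$ is a genuine Morita bimodule, so no information is lost. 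I would define the candidate inverse by $t(a \otimes v) := u(\alpha)(ap)$ for any $\alpha, p$ with $\alpha(p) = v$, and check independence of the choice using $S_k$-linearity of $u$ together with the surjectivity of the evaluation $\FF Q_k \otimes U_\lambda^k \to \FF e_\lambda$. Then $\theta_k$ and this map are mutually inverse by construction, and $\FF$-linearity of $\theta_k$ is visible from the formula.

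The main obstacle is purely bookkeeping: keeping straight the several Morita equivalences and the precise $S_k$- versus $S_{k+1}$-module structures on $U_\mu^{k+1}$, and in particular verifying that the $S_k$-action used in the statement is the one obtained by restricting along $S_k \hookrightarrow S_{k+1}$ via $\mathrm{id}\otimes(-)$ on $\FF Q_1 \otimes_{\FF\Lambda} \FF Q_k$. Once the identification $\FF Q_{k+1} \cong \FF Q_1 \otimes_{\FF\Lambda} \FF Q_k$ of $(\FF\Lambda, S_k)$-bimodules is fixed, every step above is a short diagram chase; there is no conceptual difficulty, only the risk of an index or side-of-action error. I would therefore state that identification explicitly at the start and carry it through consistently.
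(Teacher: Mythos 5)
Your approach is correct, but it is genuinely different from the paper's. The paper first establishes \emph{abstractly}, by a chain of $\Hom$--$\otimes$ isomorphisms using the Morita bimodule properties of $\FF Q_k$ and $\FF Q_{k+1}$, that the source and target of $\theta_k$ have the same finite dimension; it then shows $\theta_k$ is injective by a short direct argument (if $t\neq 0$, pick $a\in Q_1$ with $t(a\otimes e_\lambda)\neq 0$; then $\theta_k(t)(\alpha)(ap)\neq 0$ for some $p$ precisely when $\alpha\neq 0$, and $U^k_\lambda\neq 0$ by Lemma~\ref{lem:dag}); bijectivity then follows by counting. You instead verify well-definedness and $S_k$-linearity of $\theta_k(t)$ and construct an explicit inverse $t(a\otimes v):=u(\alpha)(ap)$ whenever $\alpha(p)=v$. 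Both routes rest on the same Morita ingredient, but yours is more constructive while the paper's is shorter because it only needs one direction of the map plus a dimension count. One small imprecision in your plan: to show your candidate inverse is well-defined you invoke only \emph{surjectivity} of the evaluation $\FF Q_k\otimes_{S_k} U^k_\lambda\to\FF e_\lambda$, but surjectivity only guarantees a representative $\alpha(p)=v$ exists. Independence of the choice additionally requires \emph{injectivity} of that evaluation map (equivalently, that it is the Morita isomorphism $\FF Q_k\otimes_{S_k}U^k_\lambda\cong \FF e_\lambda$), so that $\alpha(p)=\alpha'(p')$ forces $p\otimes\alpha=p'\otimes\alpha'$ in the tensor product and you can then apply the $S_k$-linearity of $u$ to the elementary relations $qs\otimes\beta - q\otimes s\beta$. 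Once you invoke the full isomorphism rather than mere surjectivity, the argument goes through.
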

\begin{proof}
 We have
\begin{align*}
 \Hom_{\FF \Lambda}(\FF Q_1 \otimes_{\FF \Lambda} \FF e_\lambda,\FF e_\mu) 
& \cong \Hom_{\FF \Lambda} (\FF Q_1 \otimes_{\FF \Lambda} \FF Q_k \otimes_{S_k} U^k_\lambda,\FF Q_{k+1} \otimes_{S_{k+1}} U^{k+1}_\mu) 
\\
& \cong \Hom_{S_{k+1}}(S_{k+1} \otimes_{S_k} U^k_\lambda, U^{k+1}_\mu) 
\\
& \cong \Hom_{S_k}(U^k_\lambda, U^{k+1}_\mu).
\end{align*}
Thus both have the same (finite) dimension.

We claim that $ \theta_k$ is injective.  
So let $t \in \Hom_{\FF \Lambda}(\FF Q_1 \otimes_{\FF \Lambda} \FF e_\lambda, \FF e_\mu)$ be nonzero, and choose $a \in Q_1$ so that $t(a \otimes e_\lambda) \neq 0$.
Then for any $\alpha \in U^k_\lambda$ and $p \in Q_k$ we have that $t(a \otimes \alpha(p)) = 0$ if and only if $\alpha(p) = 0$, since $a \otimes \alpha(p)$ is a scalar multiple of $a \otimes e_\lambda$.
Thus $\theta_k(t)(\alpha) = 0$ if and only if $\alpha =0$ and $\theta_k(t) $ is injective.
As $U^k_\lambda \neq 0$, this says that $\theta_k(t)\neq 0$, as we needed.

By dimension count $\theta_k$ is an isomorphism.
\end{proof}

It is straightforward that $\Hom_{\FF \Lambda}(\FF Q_1 \otimes_{\FF \Lambda} \FF e_\lambda, \FF e_\mu)$ may be identified with $(e_\mu \FF Q_1 e_\lambda)^*$, using $e_\lambda$ and $e_\mu$ as basis elements, and we will sometimes refer to the pairing
\[ \ang{\ ,\ }:  \Hom_{S_k} (U^k_\lambda, U^{k+1}_\mu) \times e_\mu \FF Q_1 e_\lambda \to \FF \]
induced by $\theta_k$.

We use the maps $\theta_k$ to define a point module associated to any locally simple $S(Q)$-module.
Let $U = \varinjlim_{u_k} U^k_{\lambda_k}$ be such a module, where we take $k \geq k_0$.
We define a graded $\FF Q$-representation $M(\lambda_\bullet, u_\bullet)$ by:
\[ M (\lambda_\bullet, u_\bullet)= \bigoplus_{k \geq k_0} M_k, \quad \mbox{where $M_k = \FF e_{\lambda_k}$}.\]
The $\FF Q$-action is given by:
\[ a \cdot e_{\lambda_k} = \theta^{-1}_k(u_k)(a \otimes e_{\lambda_k})\]
for any $a \in Q_1$.  In terms of the pairing above, we write
\beq \label{boff} 
a \cdot e_{\lambda_k} = \ang{u_k, a}  e_{\lambda_{k+1}}.
\eeq

We next show that this construction gives us the functor $\Psi$.
\begin{proposition}\label{prop:impsi}
 Let $U = \varinjlim_{u_k} U^k_{\lambda_k}$ be a locally simple $S(Q)$-module.
Then $\Psi(U) \cong \pi^* M(\lambda_\bullet, u_\bullet)$.
\end{proposition}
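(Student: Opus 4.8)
The plan is to unwind the definition of $\Psi$ and compute directly. Recall from \eqref{B} that $\Psi$ is the quasi-inverse of the equivalence $\Gamma := \Hom_{\Qgr{\FF Q}}(\sO,\blank)\colon \Qgr{\FF Q}\to S(Q)\lMod$ of Theorem~\ref{thm:Smith}. So it is enough to produce an isomorphism of $S(Q)$-modules $\Gamma(\pi^* M(\lambda_\bullet,u_\bullet))\cong U$; applying $\Psi$ then gives $\pi^* M(\lambda_\bullet,u_\bullet)\cong \Psi\Gamma(\pi^* M(\lambda_\bullet,u_\bullet))\cong \Psi(U)$. As a preliminary step I would check that $M:=M(\lambda_\bullet,u_\bullet)$ really is a point representation in the sense of Definition~\ref{def:point}: it is graded with $\dim_\FF M_k=1$ for $k\ge k_0$ by construction; the formula \eqref{boff} is well defined because $\ang{u_k,a}=0$ unless $a\in e_{\lambda_{k+1}}\FF Q_1 e_{\lambda_k}$ (equivalently, $\theta_k^{-1}(u_k)$ lies in $(e_{\lambda_{k+1}}\FF Q_1 e_{\lambda_k})^*$); and $M$ is cyclic, generated by $e_{\lambda_{k_0}}$, since each $u_k\neq 0$ forces some arrow $\lambda_k\to\lambda_{k+1}$ to act nontrivially on $e_{\lambda_k}$. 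In particular $M$ has no nonzero finite-dimensional graded submodule, as any nonzero graded submodule contains a tail $\bigoplus_{\ell\ge k}\FF e_{\lambda_\ell}$.

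Next I would compute $\Gamma(\pi^* M)$. Since $\FF Q$ is locally finite dimensional and $M$ has no nonzero finite-dimensional submodule, the standard description of morphisms in the quotient category (the same one Smith uses to obtain \eqref{trex}) gives $\Gamma(\pi^* M)\cong \varinjlim_k \Hom_{\Gr{\FF Q}}(\FF Q_{\ge k}, M)$, with transition maps given by restriction along the inclusions $\FF Q_{\ge k+1}\hookrightarrow \FF Q_{\ge k}$. Using the isomorphism of graded left $\FF Q$-modules $\FF Q_{\ge k}\cong \FF Q\otimes_{\FF\Lambda}\FF Q_k$ (a path of length $\ge k$ cuts uniquely into its initial segment of length $k$ and the remainder) together with the extension-of-scalars adjunction along $\FF\Lambda\hookrightarrow\FF Q$, one gets an $S_k$-equivariant identification
\[
\Hom_{\Gr{\FF Q}}(\FF Q_{\ge k}, M)\;\cong\;\Hom_{\FF\Lambda}(\FF Q_k, M_k)\;=\;\Hom_{\FF\Lambda}(\FF Q_k, \FF e_{\lambda_k})\;=\;U^k_{\lambda_k}.
\]
Assembling these over $k$ identifies $\Gamma(\pi^* M)$ with a direct limit of the $U^k_{\lambda_k}$ in the category of $S(Q)=\varinjlim S_k$-modules.

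The heart of the argument is to check that, under the above identifications, the transition map $U^k_{\lambda_k}\to U^{k+1}_{\lambda_{k+1}}$ is exactly $u_k$. I would verify this by an explicit computation: a map $\alpha\in U^k_{\lambda_k}$ corresponds to the unique $\FF Q$-linear $\widetilde\alpha\colon \FF Q_{\ge k}\to M$ with $\widetilde\alpha|_{\FF Q_k}=\alpha$, and its degree-$(k+1)$ component sends $ap\mapsto a\cdot\alpha(p)$ for $a\in Q_1$, $p\in Q_k$, which by \eqref{boff} equals $\ang{u_k,a}$ times the coefficient of $\alpha(p)$, in the basis vector $e_{\lambda_{k+1}}$. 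Comparing with the formula $\theta_k(t)(\alpha)(ap)=t(a\otimes\alpha(p))$ of Lemma~\ref{lem:theta} applied to $t=\theta_k^{-1}(u_k)$, and recalling that $\ang{u_k,a}=\theta_k^{-1}(u_k)(a\otimes e_{\lambda_k})$, shows the transition map is $\theta_k(\theta_k^{-1}(u_k))=u_k$. Hence $\Gamma(\pi^* M(\lambda_\bullet,u_\bullet))\cong \varinjlim_{u_k}U^k_{\lambda_k}=U$ as $S(Q)$-modules, which finishes the proof.

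I expect the main obstacle to be bookkeeping rather than any deep point: one must make sure that all three identifications used (the quotient-category $\Hom$ as a direct limit; $\FF Q_{\ge k}\cong \FF Q\otimes_{\FF\Lambda}\FF Q_k$ together with adjunction; and Lemma~\ref{lem:theta}) are $S_k$-equivariant and compatible with the inclusions $\FF Q_{\ge k+1}\hookrightarrow\FF Q_{\ge k}$ and the structure maps $S_k\to S_{k+1}$, so that the resulting isomorphism is genuinely one of $S(Q)$-modules and not merely of graded vector spaces. Keeping track of the $\mathrm{op}$ in $S_k=\End_{\FF\Lambda}(\FF Q_k)^{\mathrm{op}}$ and of which side acts on each $\Hom$-space is the delicate part.
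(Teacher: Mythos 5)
Your proof is correct and follows essentially the same route as the paper's: both unwind $\Psi^{-1}=\Hom_{\Qgr{\FF Q}}(\sO,\blank)\cong\varinjlim_k\Hom_{\FF\Lambda}(\FF Q_k,(\blank)_k)$, identify each $\Hom_{\FF\Lambda}(\FF Q_k,M_k)$ with $U^k_{\lambda_k}$, and then trace through Lemma~\ref{lem:theta} and \eqref{boff} to see that the transition maps are exactly the $u_k$. Your preliminary checks (that $M(\lambda_\bullet,u_\bullet)$ is cyclic and has no nonzero finite-dimensional graded submodule) are left implicit in the paper but are a sound addition.
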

 \begin{proof}
  The equivalence $\Psi$ was defined in terms of its quasi-inverse
\beq\label{defpsiinv} \Psi^{-1} = \Hom_{\Qgr{\FF Q}}(\sO, \blank)
 = \varinjlim_k \Hom_{\Gr{\FF Q}}(\FF Q_{\geq k}, \blank) \cong \varinjlim_k \Hom_{\FF \Lambda}(\FF Q_k, (\blank)_k).
\eeq

Let $N$ be a point representation of $\FF Q$ with associated $\Lambda$-sequence $(\lambda_\bullet)$.
We write the $Q$-action on $N$ as maps
$t:  \FF Q_{\ell} \otimes N_k \to N_{k+\ell}$.
Note also that  we can identify $\Hom_{\FF \Lambda}(\FF Q_k, N_k)$ with $U^k_{\lambda_k}$.

To compute $\Psi^{-1}(N)$, we must understand the induced maps from $U^k_{\lambda_k} \to U^{k+1}_{\lambda_{k+1}}$ in \eqref{defpsiinv}.
Following the isomorphisms, we see each such map is given as the composition
\[ \xymatrix{
 U_k \ar[r]^(0.25){\eta} & \Hom_{\Gr{\FF Q}}(\FF Q_{\geq k}, N_{\geq k}) \ar[r]^{\textsf{restr}} &  \Hom_{\FF \Lambda}(\FF Q_{k+1}, N_{ k+1}),}
\]
where here $\eta(\alpha)$ sends a path $qp$ (where $p \in Q_k$) to $t(q \otimes \alpha(p))$.
That is, the induced map from $U^k_{\lambda_k} \to U^{k+1}_{\lambda_{k+1}}$ is given by applying $\theta_k$ to the $Q$-action.
From \eqref{boff},  $\Psi^{-1}(\pi^* M(\lambda_\bullet, u_\bullet)) \cong \varinjlim_{u_k} U^k_{\lambda_k} = U$ as needed.
 \end{proof}

Proposition~\ref{prop:impsi} shows that $\Psi$ is defined canonically; in contrast, we have seen that $\Phi$ depends on a choice.
Fix an equivalence
\[ \xymatrix{\Phi:  U(\mf g_\infty)/I \lMod \ar[r]^(0.6){\sim} &  S(Q) \lMod}\]
as in \eqref{B}.
Recall that $U(\mf g_\infty)/I = \varinjlim U(\mf g_k)/I_k$.  
For any $\Lambda$-sequence $(\lambda_\bullet)$, the equivalence $\Phi$ induces isomorphisms
\beq \label{dreich}
\xymatrix{ \Hom_{U(\mf g_k)/I_k}(\V{\lambda_k}{k}, \V{\lambda_{k+1}}{k+1}) \ar[r]^(0.55){\sim} & \Hom_{S_k}(U^k_{\lambda_k}, U^{k+1}_{\lambda_{k+1}})}
\eeq
for $k \gg 0$.
Let us denote the isomorphisms in \eqref{dreich} by $\phi$ (or by $\phi_k$ if we want to keep track of the subscripts).
Thus
\beq \label{dreichdreich}
\Phi(\varinjlim_{v_k} \V{\lambda_k}{k}) = \varinjlim_{\phi_k(v_k)} U^k_{\lambda_k}.
\eeq

Given this notation, we have:
\begin{corollary} \label{cor:impsi}
 Choose $\Phi$ as in \eqref{B} and define $\Theta$ as in \eqref{A}.
Then
\[
 \Theta(\varinjlim_{v_k} \V{\lambda_k}{k}) \cong \pi^* M(\lambda_\bullet, \phi(v_\bullet))
\]
and in particular is a point representation.
\end{corollary}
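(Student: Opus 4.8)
The plan is to prove this purely formally, by composing the two explicit descriptions of $\Phi$ and $\Psi$ that have already been pinned down, so that no genuinely new computation is needed. Recall $\Theta = \Psi \circ \Phi$ as in \eqref{B}; I would simply track what each of the two equivalences does to $\varinjlim_{v_k}\V{\lambda_k}{k}$.

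First I would unwind $\Phi$. By its very construction, encapsulated in \eqref{dreichdreich}, we have $\Phi(\varinjlim_{v_k}\V{\lambda_k}{k}) = \varinjlim_{\phi_k(v_k)} U^k_{\lambda_k}$, where $\phi_k$ is the linear isomorphism of $\Hom$-spaces from \eqref{dreich}; this is a locally simple $S(Q)$-module with $\Lambda$-sequence $(\lambda_\bullet)$ and transition maps $\phi_k(v_k)$. Next I would apply Proposition~\ref{prop:impsi} to this module, taking $u_k = \phi_k(v_k)$: that proposition gives $\Psi(\varinjlim_{\phi_k(v_k)} U^k_{\lambda_k}) \cong \pi^* M(\lambda_\bullet, \phi(v_\bullet))$. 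Composing the two steps yields $\Theta(\varinjlim_{v_k}\V{\lambda_k}{k}) = \Psi\bigl(\Phi(\varinjlim_{v_k}\V{\lambda_k}{k})\bigr) \cong \pi^* M(\lambda_\bullet, \phi(v_\bullet))$, which is the stated isomorphism.

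Finally I would check that $M(\lambda_\bullet, \phi(v_\bullet))$ really is a point representation in the sense of Definition~\ref{def:point}. For $k \geq k_0$ we have $M_k = \FF e_{\lambda_k}$, so $\dim_\FF M_k = 1$ for $k \gg 0$; and $M$ is cyclic, generated by $e_{\lambda_{k_0}}$, which follows by induction on $k$: each $v_k$ is nonzero (our standing convention on locally simple modules) and $\phi_k$ is a vector-space isomorphism, so $\phi_k(v_k) \neq 0$, hence by \eqref{boff} there is an arrow $a$ with $a \cdot e_{\lambda_k} = \ang{\phi_k(v_k), a}\, e_{\lambda_{k+1}}$ a nonzero scalar multiple of $e_{\lambda_{k+1}}$. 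The only thing that needs mild care is the bookkeeping among the three index conventions ($v_k$, $\phi_k(v_k)$, and the pairing $\ang{\ ,\ }$ used in \eqref{boff}) and choosing the threshold $k_0$ large enough that \eqref{dreich} and the hypotheses of Proposition~\ref{prop:impsi} apply simultaneously; there is no substantive obstacle, since the content has already been done in Proposition~\ref{prop:impsi} and in the construction of $\Phi$.
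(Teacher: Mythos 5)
Your proof is correct and matches the paper's approach: the paper's own proof is the one-line "Combine \eqref{dreichdreich} and Proposition~\ref{prop:impsi}," which is exactly the composition $\Theta = \Psi \circ \Phi$ you unwind. Your additional verification that $M(\lambda_\bullet, \phi(v_\bullet))$ is indeed a point representation is sound and slightly more explicit than the paper, but is the same argument in substance.
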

\begin{proof}
 Combine \eqref{dreichdreich} and Proposition~\ref{prop:impsi}.
\end{proof}
Crollary~\ref{cor:impsi} completes the proof of Theorem~\ref{ithm:catequiv}.

We now give the promised alternate proof of Proposition~\ref{sequences}.
In fact, we prove:
\begin{theorem}\label{thm:123}
 Choose $\Phi$ as in \eqref{B} and define $\Theta$ as in \eqref{A}.
Let $V = \varinjlim_{v_k} V_k$ and $W = \varinjlim_{w_k}W_k$ be locally simple $U(\mf g_\infty)/I$-modules, with associated $\Lambda$-sequences $(\lambda_\bullet)$ and $(\mu_\bullet)$, respectively.
The following are equivalent:
\begin{enumerate} \item $V \cong W$
\item  $\pi^* M(\lambda_\bullet, \phi(v_\bullet)) \cong \pi^* M(\mu_\bullet, \phi(w_\bullet))$ in $\Qgr{\FF Q}$.
 \item $(\lambda_\bullet,v_\bullet) \sim (\mu_\bullet, w_\bullet)$ in the sense of \eqref{twiddle}.
\end{enumerate}
\end{theorem}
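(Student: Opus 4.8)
The strategy is to establish the cycle $(1)\Rightarrow(2)\Rightarrow(3)\Rightarrow(1)$, since $(1)\Leftrightarrow(2)$ is immediate from the equivalence $\Theta$ of Theorem~\ref{thm:quivercat} together with Corollary~\ref{cor:impsi} (which identifies $\Theta(V)$ with $\pi^*M(\lambda_\bullet,\phi(v_\bullet))$), so really the content is in relating condition $(3)$ to the others. First I would note that $(1)\Leftrightarrow(2)$ holds because $\Theta$ is an equivalence of categories, so $V\cong W$ if and only if $\Theta(V)\cong\Theta(W)$, and by Corollary~\ref{cor:impsi} these are $\pi^*M(\lambda_\bullet,\phi(v_\bullet))$ and $\pi^*M(\mu_\bullet,\phi(w_\bullet))$ respectively. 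Thus it remains to show $(2)\Leftrightarrow(3)$: two point representations $\pi^*M(\lambda_\bullet,a_\bullet)$ and $\pi^*M(\mu_\bullet,b_\bullet)$ of $Q$ are isomorphic in $\Qgr{\FF Q}$ if and only if $\lambda_k=\mu_k$ and $[a_k]=[b_k]$ in the relevant projective space for all $k\gg 0$. Here the $a_k,b_k$ run over $\Hom_{S_k}(U^k_{\lambda_k},U^{k+1}_{\lambda_{k+1}})$, which by Lemma~\ref{lem:theta} is dual to $e_{\mu}\FF Q_1 e_\lambda$, i.e. exactly the arrow space from $\lambda_k$ to $\lambda_{k+1}$; one should check that under $\phi$ the equivalence relation $\sim$ from \eqref{twiddle} on point data sequences of $\mf g_\infty$-modules matches the one on point representations (this is a bookkeeping point, since $\phi$ is a linear isomorphism on each $\Hom$-space and hence descends to a bijection on projectivizations).

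For the forward direction $(2)\Rightarrow(3)$: an isomorphism in $\Qgr{\FF Q}$ between $\pi^*M$ and $\pi^*N$ lifts, after passing to large enough truncations $M_{\geq k}$ and $N_{\geq k}$, to an honest graded $\FF Q$-module isomorphism of tails (since the two modules are finitely generated and the kernel/cokernel of a representing morphism is finite-dimensional, hence supported in bounded degrees). A graded isomorphism in each degree $k$ forces $\dim M_k=\dim N_k=1$ and $M_k\cong N_k$ as $\FF\Lambda$-modules, giving $\lambda_k=\mu_k$; and compatibility with the $Q_1$-action, read off via the scalars $\ang{a_k,a}$ of \eqref{boff}, forces the arrow-action on $M_k$ and $N_k$ to agree up to the chosen rescaling constants $c_k\in\FF^*$ witnessing $M_k\xrightarrow{\sim}N_k$, i.e. $\ang{b_k,\cdot}=(c_{k+1}/c_k)\ang{a_k,\cdot}$, hence $[a_k]=[b_k]$. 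This is the argument analogous to the $(\Longleftarrow)$ direction in the direct proof of Proposition~\ref{sequences}, but now it is cleaner because everything is on the quiver side with one-dimensional graded pieces, so no Segr\'e-embedding injectivity input is needed. For $(3)\Rightarrow(2)$: given $\lambda_k=\mu_k$ and scalars $c_k$ with $c_k a_k = $ (a representative of) $b_k$ for $k\gg 0$, one builds an explicit graded $\FF Q$-module isomorphism $M_{\geq k_1}\to N_{\geq k_1}$ sending $e_{\lambda_k}\mapsto (\prod \text{suitable scalars})\,e_{\lambda_k}$, checks it is $\FF Q$-linear using \eqref{boff}, and applies $\pi^*$; this is the analogue of the telescoping-diagram argument in the $(\Longrightarrow)$ direction of Proposition~\ref{sequences}.

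The main obstacle, and the step needing the most care, is the lifting in $(2)\Rightarrow(3)$: one must argue that an isomorphism in the quotient category $\Qgr{\FF Q}$ of two \emph{point} representations is represented by a genuine graded-module isomorphism of sufficiently high truncations. The point representations are not finitely presented in an obvious uniform way, but they are cyclic and their graded pieces are eventually one-dimensional, so the $\Hom$ and $\Ext$ computations in $\Gr{\FF Q}$ reduce to bounded-degree data; concretely, a morphism $\pi^*M\to\pi^*N$ in $\Qgr{\FF Q}$ is a morphism $M_{\geq k}\to N$ in $\Gr{\FF Q}$ for some $k$ modulo maps factoring through $\Fdim{\FF Q}$, and since an isomorphism in the quotient has an inverse one assembles honest maps $M_{\geq k}\to N_{\geq k}$ and $N_{\geq k}\to M_{\geq k}$ whose composites are the identity up to finite-dimensional error, hence the identity in high enough degree. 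Handling this carefully — and confirming that the resulting graded isomorphism of tails is what translates into the pointwise projective-equality statement — is where the real work lies; the rest is diagram-chasing parallel to the proof of Proposition~\ref{sequences} and routine linear algebra via Lemma~\ref{lem:theta} and \eqref{boff}. Finally, I would remark that since Proposition~\ref{sequences} is recovered as the equivalence $(1)\Leftrightarrow(3)$, Theorem~\ref{thm:123} indeed gives the promised alternate proof.
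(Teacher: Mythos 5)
Your proof is correct and follows essentially the same route as the paper: reduce $(1)\Leftrightarrow(2)$ to Corollary~\ref{cor:impsi} and Theorem~\ref{thm:quivercat}, then verify $(2)\Leftrightarrow(3)$ by identifying isomorphism of point representations in $\Qgr{\FF Q}$ with rescaling of the one-dimensional graded pieces via \eqref{boff}. The only real difference is that you spell out the lifting of a $\Qgr$-isomorphism to a genuine graded isomorphism of tails (using that point representations have no nonzero locally finite-dimensional submodules), a step the paper's proof passes over silently; this is a harmless and in fact clarifying addition.
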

\begin{proof}
 By Corollary~\ref{cor:impsi} we have
$\Theta(V) \cong \pi^* M(\lambda_\bullet, \phi(v_\bullet))$ and $ \Theta(W) \cong \pi^* M(\mu_\bullet, \phi(w_\bullet))$.
Thus $(1)$ $\iff$ $(2)$ follows from Theorem~\ref{thm:quivercat}.

For $(2)$ $\iff$ $(3)$, let $x_\bullet = \phi(v_\bullet)$ and $y_\bullet = \phi(w_\bullet)$.
If $\lambda_k \neq \mu_k$ for arbitrarily large $k$, then clearly $\pi^* M(\lambda_\bullet, x_\bullet) \not\cong \pi^* M(\mu_\bullet, y_\bullet)$.
So assume that $\lambda_k = \mu_k$ for $k \geq k_0$ and let $\{m_k = e_{\lambda_k}\}$ be the fixed basis for $M(\lambda_\bullet, x_\bullet)$.
By \eqref{boff}, 
$\pi^* M(\lambda_\bullet, x_\bullet) \cong \pi^* M(\lambda_\bullet, y_\bullet)$ if and only if there are $m'_k \in M_k$ with $a \cdot m_k' = \ang{y_k, a} m_{k+1}'$ for all $k \gg 0$ and for all $a \in e_{\lambda_{k+1}} Q_1 e_{\lambda_k}$.
Since the $M_k$ are all one-dimensional, this happens if and only if $(x_\bullet) \sim (y_\bullet)$ in the sense of \eqref{twiddle}.
\end{proof}

\begin{remark}\label{rem:geomwild}
We note that for the $n$-loop one-vertex quiver
\[ 
\begin{array}{c}
\begin{tikzpicture}[bend angle=20, looseness=1]
\node (a) at (-1,0) {$\bullet$};
 \draw[->]  (a) edge [in=55,out=120,loop,looseness=4] node[gap] {$\scriptstyle n$}  (a);
\end{tikzpicture}
\end{array}
\]
 the category of point representations in $\Qgr{\FF Q}$ is {\em wild} in the sense of having subsets parameterised by varieties of arbitrarily large dimension.   In this setting, by Theorem~\ref{thm:123} we know that point representations of $\FF(Q)$ are parameterised by sequences of points in $\PP^{n-1}$, up to the equivalence relation $\sim$ defined in \eqref{twiddle}.  Let $n \geq 2$, and fix $d \in \ZZ_{\geq 1}$.  For any $x  = (x_1, \dots, x_d) \in (\PP^{n-1})^{\times d}$, define  a point representation $M(x)$ corresponding to the point sequence
\[ \underline{x}  = (x_1, \dots, x_d, x_1, \dots, x_d, x_1, \dots).\]
It is clear that if $x \neq y$ then $\underline{x} \not \sim \underline{y}$ and so the $M(x)$ form a set of (distinct) isomorphism classes of point representations of $\FF(Q)$ that is parameterised by $(\PP^{n-1})^{\times d}$.
It follows that the category of natural representations of $\mf{sl}(n^\infty)$ is wild in this geometric sense.

A similar argument, with more notational complexity, shows that $\Qgr{\FF Q}$ is wild in the sense above if $Q$ is any quiver with at least two oriented cycles based at a single vertex.  It follows from Theorem~\ref{thm:123} and the results in Part~\ref{PART3} that the category of locally finite representations of $\mf{sp}(n^\infty)$ or of $\mf{so}(n^\infty)$ is wild for any $n \geq 2$.
\end{remark}

\part{Combinatorics of quivers}\label{PART3}

Below we compute the adjacency matrices associated to each irreducible coherent local system. Fix a positive integer $n$. For $\mf sl(n^{\infty})$, we say that the the quivers and their adjacency matrices which are associated to the coherent local systems $\mathcal{A}^n_p$ and $\mathcal{B}^n_q$ are of ``Type I.''  The quivers and the adjacency matrices associated to $\mathcal{C}^n_{p,q}$ are of ``Type II''. 

\section{Type I quivers and characters of the symmetric group $S_d$}\label{type1}

In order to compute the adjacency matrices, and hence the quivers, for the Type I case, we only need to do the calculations for $\mathcal{A}$ since the matrices for $\mathcal{B}$ will be the same. We use  the  branching law for $\mathfrak{gl}(k) \hookrightarrow \mathfrak{gl}(nk)$ (which follows from Proposition \ref{branchgln}) to define coefficients $a^n_{\lambda, \mu}$:
\begin{eqnarray}\label{adef}
\restr{V^{nk}_{\lambda, 0}}{\mathfrak{gl}(k)} \cong \bigoplus_{\beta_1, \beta_2, \dots, \beta_n, \mu}  c^{\lambda}_{\beta_1, \beta_2, \dots, \beta_n}c^{\mu}_{\beta_1, \beta_2, \dots, \beta_n} V^k_{\mu, 0} = \bigoplus_{\mu} a^n_{\lambda, \mu} V^k_{\mu, 0} .
\end{eqnarray}
In particular, if $V^k_{\mu, 0}$ appears in the decomposition for $V^{nk}_{\lambda, 0}$, then $|\lambda| = |\mu|$. 

We define a matrix $A^n_d$ for the coherent local system $\mathcal{A}^n_{d}$ in the following way: For each $d$, we order the partitions of $d$ in reverse lexicographical order (as in \cite{McD}) so that the partition $(d)$ comes first and the partition $(1,1, \dots, 1)$ comes last. The entries  $A^n_d$ are given by $(a^n_{\lambda, \mu})$ where $\lambda$ and $\mu$ run over all partitions of $d$ in the order given. 

For example, suppose that $n = 2$. Then the entries of the matrices $A^2_d$ are given by the branching law \eqref{branchgl20}:
\begin{eqnarray*}
\restr{V^{2k}_{\lambda, 0}}{\mathfrak{gl}(k)} \cong \bigoplus_{\beta_1, \beta_2, \mu}  c^{\lambda}_{\beta_1, \beta_2}c^{\mu}_{\beta_1, \beta_2} V^k_{\mu, 0} . 
\end{eqnarray*}
It is easy to see that $A^2_1 = (2)$. We can also calculate that $A^2_2 = \left( \begin{array}{cc} 3 & 1 \\ 1 & 3 \end{array} \right)$, $A^2_3 = \left( \begin{array}{ccc} 4 & 2 & 0\\ 2 & 6 & 2 \\ 0 & 2 & 4 \end{array} \right)$, and 
\begin{eqnarray*}
A^2_4 = \left( \begin{array}{ccccc} 5 & 3 & 1 & 0 & 0 \\ 3 & 9 & 3 & 4 & 0 \\ 1 & 3 & 6 & 3 & 1 \\ 0 & 4 & 3 & 9 & 3 \\ 0 & 0 & 1 & 3 & 5 \end{array} \right) .
\end{eqnarray*}

From the branching law, we already have some basic information about $Q$.  In particular, $Q$ satisfies the hypotheses of Proposition~\ref{prop:simple}:

\begin{proposition}\label{prop:simple2}
Let $n \in \ZZ_{\geq 1}$ and $d \in \ZZ_{\geq 0}$.
Let $Q$ be a quiver of type $\mc A^n_d$ or $\mc B^n_d$.
Then $Q$ is strongly connected, primitive, and symmetric.  
As a result, if $C$ is a Type I c.l.s. for $\mf{sl}(n^\infty)$, then $U(\mf{sl}(n^\infty))/I(C) $ is simple.
\end{proposition}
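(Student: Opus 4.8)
The plan is to verify the three combinatorial properties of $Q$ — strong connectedness, primitivity, and symmetry — directly from the branching law \eqref{adef}, and then invoke Proposition~\ref{prop:simple} to conclude simplicity of $U(\mf{sl}(n^\infty))/I(C)$. By the remark at the start of Section~\ref{type1} the matrices for $\mc A^n_d$ and $\mc B^n_d$ coincide, so it suffices to treat $Q$ of type $\mc A^n_d$, whose adjacency matrix is $A^n_d = (a^n_{\lambda,\mu})$ with $\lambda,\mu$ ranging over partitions of $d$.

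\textbf{Symmetry.} This is immediate from the definition in \eqref{adef}: the coefficient $a^n_{\lambda,\mu} = \sum_{\beta_1,\dots,\beta_n} c^\lambda_{\beta_1,\dots,\beta_n} c^\mu_{\beta_1,\dots,\beta_n}$ is manifestly symmetric in $\lambda$ and $\mu$, since it is a sum of products of generalized Littlewood--Richardson coefficients indexed symmetrically. Hence $A^n_d$ is a symmetric matrix and $Q$ is a symmetric quiver.

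\textbf{Strong connectedness and primitivity.} Here the key observation is that there is a loop at every vertex: by Lemma~\ref{lem:dag} we have $\Hom_{\mf g(n^k)}(V^{n^k}_{\mu,0}, \restr{V^{n^{k+1}}_{\mu,0}}{\mf g(n^k)}) \neq 0$, so $a^n_{\mu,\mu} \geq 1$ for every partition $\mu$ of $d$. The existence of a loop at one vertex already forces the gcd of cycle lengths to be $1$, so primitivity will follow once strong connectedness is established. For strong connectedness, by symmetry it is enough to show that for every $\mu \vdash d$ there is a path in $Q$ from the distinguished vertex $(d)$ to $\mu$; combined with the reverse paths (which exist by symmetry of the adjacency matrix) this gives strong connectedness. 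I would prove this by showing $a^n_{(d),\mu} \geq 1$ directly, i.e.\ that $V^k_{\mu,0}$ appears in $\restr{V^{nk}_{(d),0}}{\mf{gl}(k)}$ for every $\mu \vdash d$: choosing $\beta_1 = \mu$ and $\beta_2 = \cdots = \beta_n = \emptyset$ (using $n \geq 2$), one gets $c^{(d)}_{\beta_1,\dots,\beta_n} = c^{(d)}_{\mu} \cdot (\text{a product of } 1\text{'s})$, and $c^{(d)}_\mu$ equals $1$ precisely when a partition fits inside the single row $(d)$ — but in fact one must be slightly more careful since $c^{(d)}_\mu \ne 0$ only for $\mu = (d)$. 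The correct argument uses instead that every $\mu \vdash d$ can be built up from $(d)$ by repeatedly splitting off boxes: concretely one shows $Q$ restricted to partitions of $d$ is connected by exhibiting, for adjacent partitions in dominance order, a nonzero coefficient $a^n_{\lambda,\mu}$ via a suitable choice of the $\beta_i$ (e.g.\ $\beta_1 = \lambda \cap \mu$ and the remaining boxes distributed among $\beta_2,\dots,\beta_n$, with $|\mu| = |\lambda|$ guaranteed by the law).

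\textbf{Conclusion.} Once $Q$ is shown to be strongly connected, primitive, and symmetric, Proposition~\ref{prop:simple} applies to the stationary ultramatricial algebra $S(Q)$, giving that $S(Q)$ is simple. By Theorem~\ref{thm:quivercat}, $U(\mf{sl}(n^\infty))/I(C)$ is Morita equivalent to $S(Q)$, and simplicity is a Morita invariant, so $U(\mf{sl}(n^\infty))/I(C)$ is simple.

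\textbf{Main obstacle.} The delicate point is the strong connectedness claim: I expect the cleanest route is to prove that the subgraph of $Q$ on partitions of $d$ is connected by an explicit combinatorial argument with Littlewood--Richardson coefficients — choosing the intermediate partitions $\beta_i$ so that one factor is a ``horizontal/vertical strip'' coefficient equal to $1$ — rather than any abstract argument. This requires only the (standard, cited) properties of LR coefficients already invoked in Section~\ref{BRANCH}, but getting the combinatorial bookkeeping right for general $n \geq 2$ and general $d$ is where the real work lies; symmetry and the loop-at-every-vertex property are essentially free.
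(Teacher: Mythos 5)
Your handling of symmetry, primitivity (loop at every vertex by Lemma~\ref{lem:dag}, hence gcd of cycle lengths is $1$), and the passage to simplicity via Proposition~\ref{prop:simple} and Theorem~\ref{thm:quivercat} all match the paper exactly.

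The one place you diverge is strong connectedness, and there the paper's argument is different and much shorter: it simply observes that strong connectedness \emph{follows from irreducibility of the c.l.s.\ $\mc A^n_d$}. Concretely, since $A^n_d$ is symmetric, a failure of strong connectedness would split the vertex set into two (or more) components $\Lambda_1, \Lambda_2$ closed under restriction, and the corresponding collections $\{\V{\lambda}{n^k}\}_{\lambda\in\Lambda_1}$, $\{\V{\lambda}{n^k}\}_{\lambda\in\Lambda_2}$ would exhibit $\mc A^n_d$ as a union of incomparable sub-c.l.s.'s, contradicting Zhilinskii's classification (Proposition~\ref{clsgl}) which says $\mc A^n_d$ is irreducible. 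This buys a clean structural proof at the cost of importing a classification result. Your approach instead aims for a self-contained combinatorial proof via Littlewood--Richardson coefficients, which is a reasonable alternative, but you leave it as a sketch and your own sketch wobbles: your first attempt ($\beta_1 = \mu$, the rest empty) fails as you note, and your replacement (``$\beta_1 = \lambda \cap \mu$ and the remaining boxes distributed among $\beta_2,\dots,\beta_n$'') is only stated for ``adjacent partitions in dominance order'' without a precise claim or a proof that the relevant auxiliary graph is connected. The cleanest version of your route, for $n \geq 2$, is: take $\beta_1 = \lambda \cap \mu \vdash d-1$, $\beta_2 = (1)$, $\beta_3 = \dots = \beta_n = \emptyset$ whenever $\mu$ is obtained from $\lambda$ by moving a single box; then Pieri's rule gives $c^\lambda_{\beta_1,\dots,\beta_n} = c^\mu_{\beta_1,\dots,\beta_n} = 1$, so $a^n_{\lambda,\mu} \geq 1$; and the graph on partitions of $d$ with edges given by single-box moves is connected, which yields strong connectedness. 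That would close the gap, but as written your proof does not actually establish strong connectedness, whereas the paper does.

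One small further point: your argument (and indeed the proposition's hypothesis) needs $n \geq 2$; for $n = 1$ the adjacency matrix $A^1_d$ is the identity and the quiver is not strongly connected when $d \geq 2$. The paper's standing convention elsewhere is $n \geq 2$, and your explicit invocation of $n \geq 2$ is the right call.
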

\begin{proof}
It suffices to give a proof for $\mc A^n_d$.
The entries $a^n_{\lambda\mu}$ of the  adjacency matrix $A^n_d$ of $Q$ are non-negative by \eqref{adef}
The matrix $A^n_d$ is symmetric, since $a^n_{\lambda, \mu} = a^n_{\mu, \lambda}$ because these entries are defined in terms of Littlewood-Richardson coefficients.
That $Q$ is strongly connected follows from the irreducibility of the c.l.s. $\mc A^n_d$, and $Q$ is primitive since by Lemma~\ref{lem:dag} $Q$ contains loops.  
Thus Proposition~\ref{prop:simple} applies, and $S(Q)$ is simple.
Simplicity of  $U(\mf{sl}(n^\infty))/I(C)$ follows by  Theorem~\ref{thm:quivercat}.
\end{proof}

From this result, if $C$ is a Type I irreducible c.l.s. for $U(\mathfrak{sl}(n^{\infty}))$, that is, $C = \mathcal{A}^n_p$ or $C = \mathcal{B}^n_p$ for some $p \ge 0$, the ideal $I(C)$ is maximal. Therefore, $U(\mathfrak{sl}(n^{\infty}))$ has infinitely many maximal integrable ideals which arise from the Type I irreducible coherent local systems. This is in contrast to the finitary Lie algebra $\mathfrak{sl}(\infty)$, since in \cite{PP} it is shown that $U(\mathfrak{sl}(\infty))$ has a unique maximal integrable ideal.

The proof of Proposition~\ref{prop:simple} applies the Perron-Frobenius theorem to $A^n_d$.  
This gives some information about the eigenvalues and eigenvectors of Type I matrices:  each Type I matrix has  a unique positive eigenvector (i.e. an eigenvector all of whose entries are positive).  The associated eigenvalue is positive and of maximal modulus.  All other eigenvectors contain  a negative entry.  In the  next few results, we will improve this by completely describing the eigenvalues and eigenvectors of Type I matrices. 

First, we need a combinatorial result.

\begin{lemma}\label{subsets}
Let $d$ be a positive integer.  Then 
\[\sum\limits_{\substack{k_1, k_2, \dots k_n \\ k_1 + k_2 + \cdots + k_n = d}} \binom{d}{k_1} \binom{d - {k_1}}{k_2} \dots \binom{d-(k_1 + k_2 + \dots k_{n-2})}{k_{n-1}} = n^d. \]
\end{lemma}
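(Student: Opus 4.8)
The identity is the multinomial-style expansion of $n^d=(\underbrace{1+1+\cdots+1}_{n})^d$, so the plan is to recognise the left-hand side as a sum of multinomial coefficients. First I would rewrite the summand: for a composition $(k_1,\dots,k_n)$ of $d$ into $n$ non-negative parts, the telescoping product
\[
\binom{d}{k_1}\binom{d-k_1}{k_2}\cdots\binom{d-(k_1+\cdots+k_{n-2})}{k_{n-1}}
=\frac{d!}{k_1!\,k_2!\cdots k_{n-1}!\,(d-k_1-\cdots-k_{n-1})!}
=\binom{d}{k_1,k_2,\dots,k_n},
\]
using that $k_n=d-k_1-\cdots-k_{n-1}$ forces the final factor $\binom{d-(k_1+\cdots+k_{n-1})}{k_n}=1$ to be the "missing" term of the product. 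Then the left-hand side is exactly $\sum_{k_1+\cdots+k_n=d}\binom{d}{k_1,\dots,k_n}$, which by the multinomial theorem applied to $(1+1+\cdots+1)^d$ equals $n^d$.

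Alternatively — and this is perhaps cleaner to write up — I would give a direct bijective/counting argument: $n^d$ counts the functions from a $d$-element set $S$ to an $n$-element set $\{1,\dots,n\}$, equivalently the ordered set partitions $(S_1,\dots,S_n)$ of $S$ into $n$ (possibly empty) blocks, where $S_i$ is the preimage of $i$. Grouping these by the sequence of block sizes $(k_1,\dots,k_n)$ with $\sum k_i=d$, the number of such partitions with prescribed sizes is obtained by first choosing the $k_1$ elements of $S_1$ from $S$, then the $k_2$ elements of $S_2$ from the remaining $d-k_1$, and so on; this is precisely $\binom{d}{k_1}\binom{d-k_1}{k_2}\cdots\binom{d-(k_1+\cdots+k_{n-2})}{k_{n-1}}$ (the last block $S_n$ being determined). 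Summing over all size sequences gives the identity.

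I do not anticipate a genuine obstacle here: the only mild point of care is bookkeeping at the end of the telescoping product — making sure the stated product, which visibly stops at $\binom{d-(k_1+\cdots+k_{n-2})}{k_{n-1}}$, really does equal the full multinomial coefficient because the suppressed last factor is $1$. A one-line induction on $n$ (or on the partial sums) handles this cleanly if a fully formal argument is wanted. I would present the counting proof as the main argument since it makes the appearance of $n^d$ transparent and needs no computation.
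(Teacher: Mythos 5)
Your proposal is correct, and the counting argument you say you would present as the main one is essentially identical to the paper's proof (distribute $d$ elements into $n$ labelled blocks, count either element-by-element to get $n^d$ or block-size-by-block-size to get the nested product). The multinomial-theorem rewriting you sketch first is a valid alternative but is not the route the paper takes.
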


\begin{proof}
We give a combinatorial proof. Suppose we want to count the number of ways to distribute $d$ elements into $n$ subsets. We could proceed by going through the list of elements and assigning a subset to each--there will be $n$ choices for each element, hence we obtain the right hand side, $n^d$. On the other hand, we could first determine the size of each subset, say subset $i$ will have size $k_i$, and then choose $k_1$ elements to be in the first subset, $k_2$ elements to be in the second subset, etc. This yields the left hand side of the above equation and completes the proof.  
\end{proof}

The next result uses representation theory to compute the largest eigenvalue of the Type I matrix $A^n_d$.
If $\lambda$ is a partition of $d$, let $H_\lambda$ denote the corresponding irreducible representation of the symmetric group $S_d$.

\begin{lemma}\label{lemma:repthry}
$A^n_d$ has eigenvalue $n^d$ with a positive eigenvector. 
\end{lemma}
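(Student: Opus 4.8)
The plan is to identify the eigenvector of $A^n_d$ with eigenvalue $n^d$ explicitly in terms of symmetric group representation theory. Recall that the entry $a^n_{\lambda,\mu}$ is defined by $a^n_{\lambda,\mu} = \sum_{\beta_1,\dots,\beta_n} c^{\lambda}_{\beta_1,\dots,\beta_n} c^{\mu}_{\beta_1,\dots,\beta_n}$, where the sum is over $n$-tuples of partitions and the $c$'s are generalized Littlewood--Richardson coefficients. The generalized Littlewood--Richardson coefficient $c^{\lambda}_{\beta_1,\dots,\beta_n}$ is the multiplicity of $H_\lambda$ in $\mathrm{Ind}_{S_{|\beta_1|}\times\cdots\times S_{|\beta_n|}}^{S_d}(H_{\beta_1}\boxtimes\cdots\boxtimes H_{\beta_n})$ when $|\beta_1|+\cdots+|\beta_n|=d$, and is zero otherwise. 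So my first step would be to recognise $A^n_d$ as a Gram-type matrix: if we set, for each composition $(k_1,\dots,k_n)$ of $d$, the induction product $M_{k_1,\dots,k_n} = \mathrm{Ind}_{S_{k_1}\times\cdots\times S_{k_n}}^{S_d}(\text{trivial}\boxtimes\cdots)$ — no wait, more precisely I want to express $\sum_\mu a^n_{\lambda,\mu} \cdot (\dim H_\mu)$ and show it equals $n^d \dim H_\lambda$.

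The cleanest route: I claim the vector $v = (\dim H_\lambda)_{\lambda \vdash d}$ is the desired positive eigenvector. To check $(A^n_d v)_\lambda = n^d v_\lambda$, I would compute
\[
\sum_{\mu \vdash d} a^n_{\lambda,\mu} \dim H_\mu = \sum_{\mu\vdash d}\ \sum_{\beta_1,\dots,\beta_n} c^{\lambda}_{\beta_1,\dots,\beta_n} c^{\mu}_{\beta_1,\dots,\beta_n}\dim H_\mu.
\]
Interchange the sums and use that $\sum_{\mu\vdash d} c^{\mu}_{\beta_1,\dots,\beta_n}\dim H_\mu = \dim \mathrm{Ind}_{S_{|\beta_1|}\times\cdots\times S_{|\beta_n|}}^{S_d}(H_{\beta_1}\boxtimes\cdots\boxtimes H_{\beta_n}) = \frac{d!}{|\beta_1|!\cdots|\beta_n|!}\dim H_{\beta_1}\cdots\dim H_{\beta_n}$, valid when $|\beta_1|+\cdots+|\beta_n|=d$. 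This reduces the right-hand side to $\sum_{\beta_1,\dots,\beta_n} c^\lambda_{\beta_1,\dots,\beta_n}\binom{d}{|\beta_1|,\dots,|\beta_n|}\dim H_{\beta_1}\cdots\dim H_{\beta_n}$. Now I would recognise this inner sum, for fixed sizes $k_i=|\beta_i|$, as $\binom{d}{k_1,\dots,k_n}$ times the multiplicity of $H_\lambda$ in $\bigoplus_{\beta_i\vdash k_i}(\dim H_{\beta_1}\cdots\dim H_{\beta_n})\,\mathrm{Ind}(H_{\beta_1}\boxtimes\cdots)$; but $\bigoplus_{\beta\vdash k}(\dim H_\beta) H_\beta$ is the regular representation $\FF S_k$, so this bundle is exactly $\mathrm{Ind}_{S_{k_1}\times\cdots\times S_{k_n}}^{S_d}(\FF S_{k_1}\boxtimes\cdots\boxtimes \FF S_{k_n}) = \mathrm{Ind}_{S_{k_1}\times\cdots\times S_{k_n}}^{S_d}\FF[S_{k_1}\times\cdots\times S_{k_n}] \cong \FF S_d^{\,\oplus\,1}$ — i.e. one copy of the regular representation of $S_d$, in which $H_\lambda$ occurs with multiplicity $\dim H_\lambda$. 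Therefore the contribution from sizes $(k_1,\dots,k_n)$ is $\binom{d}{k_1,\dots,k_n}\dim H_\lambda$, and summing over all compositions $(k_1,\dots,k_n)$ of $d$ gives $\dim H_\lambda \cdot \sum_{k_1+\cdots+k_n=d}\binom{d}{k_1,\dots,k_n} = n^d \dim H_\lambda$ by the multinomial theorem (equivalently Lemma~\ref{subsets}).

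Finally, I would remark that $v=(\dim H_\lambda)$ is a positive vector, so this is the Perron--Frobenius eigenvector already produced in the proof of Proposition~\ref{prop:simple}, and hence $n^d$ is its eigenvalue; this pins down that $n^d$ is the eigenvalue of maximal modulus as well, which is presumably wanted for the subsequent results. The main obstacle I anticipate is purely bookkeeping: carefully tracking which tuples $(\beta_1,\dots,\beta_n)$ contribute (only those with $\sum|\beta_i|=d$, since $|\lambda|=|\mu|=d$ forces this) and assembling the "bundle over all $\beta_i$ of given size, weighted by $\dim H_{\beta_i}$" into the regular representation of the product group and then inducing — i.e. making the identification $\bigoplus_{\beta\vdash k}(\dim H_\beta)H_\beta \cong \FF S_k$ and its behaviour under $\mathrm{Ind}$ fully rigorous. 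None of this is deep, but it needs to be stated cleanly so the multinomial collapse at the end is transparent.
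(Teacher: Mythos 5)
Your argument is correct, but it follows a genuinely different route from the paper's. The paper works on the $\mathfrak{gl}$ side of Schur--Weyl duality: it computes the restriction of $(V^{nk})^{\otimes d}$ from $\mathfrak{gl}(nk)$ to $\mathfrak{gl}(k)$ in two ways --- once via the branching coefficients $a^n_{\lambda,\mu}$ acting on the Schur--Weyl decomposition $(V^{nk})^{\otimes d}\cong\bigoplus_\lambda H_\lambda\otimes V^{nk}_\lambda$, and once via $V^{nk}\cong (V^k)^{\oplus n}$ so that $(V^{nk})^{\otimes d}\cong n^d (V^k)^{\otimes d}$ (using Lemma~\ref{subsets}) --- and then compares coefficients of $V^k_\mu$. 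You instead work entirely on the symmetric-group side, starting from $a^n_{\lambda,\mu}=\sum c^\lambda_{\beta_1,\dots,\beta_n}c^\mu_{\beta_1,\dots,\beta_n}$, using $\sum_\mu c^\mu_{\beta_\bullet}\dim H_\mu=\binom{d}{|\beta_1|,\dots,|\beta_n|}\prod\dim H_{\beta_i}$, and then collapsing the weighted sum $\bigoplus_{\beta\vdash k}(\dim H_\beta)H_\beta\cong \FF S_k$ into the regular representation, whose induction to $S_d$ is again regular; this yields the multinomial sum directly. Both give the eigenvector $(\dim H_\lambda)_{\lambda\vdash d}$ with eigenvalue $n^d$. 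Your approach has the advantage that it sits inside the induction--restriction reinterpretation of $A^n_d$ that the paper only develops afterwards (in equation~\eqref{billy} and the proof of Theorem~\ref{thm:character}); in fact your calculation is essentially the $\sigma=1$ specialisation of the character argument there, so it integrates smoothly with the full eigenvector classification. The paper's proof, conversely, stays closer to the branching-law definitions of Section~\ref{BRANCH} and requires no discussion of generalised Littlewood--Richardson coefficients or regular representations. One small imprecision in your write-up: the proof of Proposition~\ref{prop:simple} invokes Perron--Frobenius abstractly and does not actually exhibit an eigenvector, so it would be more accurate to say your $v$ \emph{is} the Perron--Frobenius eigenvector (by positivity) rather than that it was already produced there.
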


\begin{proof}
Suppose $V^{nk}$ is the standard representation for $\mathfrak{gl}(nk)$ and recall that Schur-Weyl duality gives the decomposition:
\begin{eqnarray*}
(V^{nk})^{\otimes d} \cong \bigoplus_{|\lambda| = d} H_{\lambda} \otimes V^{nk}_{\lambda} .
\end{eqnarray*}
Let  $m_{\lambda} = \dim H_{\lambda}$ and let $v_d = (m_{\lambda})_{|\lambda| = d}$. Then $v_d$ is a positive vector whose entries consist of the coefficients $m_{\lambda}$ for $|\lambda| = d$, written in reverse lexicographical order as before. We will show that $A^n_d v_d = n^d v_d$. We will do this by calculating the restriction of $(V^{nk})^{\otimes d}$ to $\mathfrak{gl}(k)$ in two different ways. 

First, we have:  
\begin{eqnarray*}
\restr{(V^{nk})^{\otimes d}}{\mf{gl}(k)} \cong \bigoplus_{|\lambda| = d} m_{\lambda} \restr{V^{nk}_{\lambda} }{\mf{gl}(k)} \cong \bigoplus_{|\lambda| = d} m_{\lambda} (\bigoplus_{|\mu| = d} a^n_{\lambda, \mu} V^k_{\mu}).
\end{eqnarray*}
Therefore, the coefficient of $V^k_{\mu}$ in this decomposition is $\sum_{|\lambda| = d} a^n_{\mu, \lambda} m_{\lambda}$. On the other hand, since $V^{nk} \cong \underbrace{V^k \oplus V^k \oplus \cdots \oplus V^k}_{n}$ as a $\mathfrak{gl}(k)$ module, we have:
\begin{multline*}
\restr{(V^{nk})^{\otimes d}}{\mf{gl}(k)} \cong (\underbrace{V^k \oplus V^k \oplus \cdots \oplus V^k}_{n})^{\otimes d} \\
 \cong \sum\limits_{\substack{k_1, k_2, \dots k_n \\ k_1 + k_2 + \cdots + k_n = d}} \binom{d}{k_1} \binom{d - {k_1}}{k_2} \dots \binom{d-(k_1 + k_2 + \dots k_{n-2})}{k_{n-1}} (V^k)^{\otimes k_1} \otimes (V^k)^{\otimes{k_2}} \cdots \otimes (V^k)^{\otimes{k_n}} \\
=  n^d (V^k)^{\otimes d} .
\end{multline*}
(The last equality follows from Lemma \ref{subsets}.) Thus we obtain:
\begin{eqnarray*}
\restr{(V^{nk})^{\otimes d} }{\mf{gl}(k)} \cong n^d (V^k)^{\otimes d} \cong  \bigoplus_{|\mu| = d} n^d m_{\mu} V^{k}_{\mu} .
\end{eqnarray*}

Comparing the coefficients of $V^{k}_{\mu}$, we see that $\sum_{|\lambda| = d} a^n_{\mu, \lambda} m_{\lambda} = n^d m_{\mu}$. The left hand side is the $\mu$-entry of $A^n_d v_d$, and so $v_d$ is a positive eigenvector for the eigenvalue $n^d$. 
\end{proof}

For example, the matrix $A^2_2 = \left( \begin{array}{cc} 3 & 1 \\ 1 & 3 \end{array} \right)$ has eigenvalue $2^2 = 4$ with eigenvector $(1,1)$, which corresponds to the well-known decomposition $(V^{2k})^{\otimes 2} \cong Sym^2(V^{2k}) \oplus \bigwedge^2(V^{2k}) = V^{2k}_{(2)} \oplus V^{2k}_{(1,1)}$. The matrix $A^2_3 = \left( \begin{array}{ccc} 4 & 2 & 0\\ 2 & 6 & 2 \\ 0 & 2 & 4 \end{array} \right)$ has eigenvalue $2^3 = 8$ with eigenvector $(1,2,1)$, which results from the decomposition $(V^{2k})^{\otimes 3} \cong Sym^3(V^{2k}) \oplus (V^{2k}_{(2,1,0)})^{\oplus 2} \oplus \bigwedge^3(V^{2k}) =  V^{2k}_{(3,0,0)} \oplus V^{2k}_{(2,1,0)} \oplus V^{2k}_{(2,1,0)} \oplus V^{2k}_{(1,1,1)}$.

There is another interpretation of the Type I matrices $A_d^n$. 
 If we restrict the representation $H_{\lambda}$ of $S_n$ to a module over the subgroup $S_l \times S_{d-l}$ for some $0 \le l \le d$ (where $S_l \times S_{d-l} \subseteq S_d$ in the canonical way), we obtain:
\begin{eqnarray*}
Res^{S_d}_{S_l \times S_{d-l}}(H_{\lambda}) \cong \bigoplus_{\beta_1, \beta_2} c^{\lambda}_{\beta_1, \beta_2} H_{\beta_1} \otimes H_{\beta_2}
\end{eqnarray*}
where the sum is over all partitions $\beta_1$  of $l$ and  $\beta_2$ of $d-l$. We can iterate this formula, and see that if we restrict $H_{\lambda}$ to a module over the subgroup $S_{l_1} \times S_{l_2} \times \dots \times S_{l_n}$, where $l_1 + l_2 + \dots l_n = d$ we obtain:
\begin{eqnarray*}
Res^{S_d}_{S_{l_1} \times S_{l_2} \times \dots \times S_{l_n}}(H_{\lambda}) \cong \bigoplus_{\beta_1, \dots, \beta_n} c^{\lambda}_{\beta_1, \beta_2, \dots, \beta_n} (H_{\beta_1} \otimes H_{\beta_2} \otimes \dots \otimes H_{\beta_n})
\end{eqnarray*}
where each $\beta_i$ is a partition of $\l_i$. Then we can induce back up to a module over $S_d$ by using the following formula for modules over $S_d$:
\begin{eqnarray*}
Ind^{S_d}_{S_{l} \times S_{d-l}}(H_{\beta_1} \otimes H_{\beta_2}) \cong \bigoplus_{\mu} c^{\mu}_{\beta_1, \beta_2} H_{\mu}.
\end{eqnarray*}

Iterating this formula, we see how to induce a module over $S_{l_1} \times S_{l_2} \times \dots \times S_{l_n}$ back to a module over $S_d$:
\begin{eqnarray*}
Ind^{S_d}_{S_{l_1} \times S_{l_2} \times \dots \times S_{l_n}}(H_{\beta_1} \otimes H_{\beta_2} \otimes \dots \otimes H_{\beta_n}) \cong \bigoplus_{\mu} c^{\mu}_{\beta_1, \beta_2, \dots, \beta_n} H_{\mu}.
\end{eqnarray*}

Let $\mathcal{F}_{l_1, \dots, l_n}$ denote the functor that applies restriction then induction, i.e. 
\[\mathcal{F}_{l_1, \dots, l_n}(H_{\lambda}) = Ind^{S_d}_{S_{l_1} \times S_{l_2} \times \dots \times S_{l_n}}(Res^{S_d}_{S_{l_1} \times S_{l_2} \times \dots \times S_{l_n}}(H_{\lambda})) .\]
If we sum over all functors $\mathcal{F}_{l_1, \dots, l_n}$ and apply \eqref{adef}, we obtain familiar coefficients:
\begin{eqnarray}\label{billy}
\sum_{l_1, \dots, l_n} \mathcal{F}_{l_1, \dots, l_n}(H_\lambda) = 
 \bigoplus_{\beta_1, \beta_2, \dots, \beta_n, \mu}  c^{\lambda}_{\beta_1, \beta_2, \dots, \beta_n}c^{\mu}_{\beta_1, \beta_2, \dots, \beta_n} H_{\mu} =
\bigoplus_{\mu} a^n_{\lambda, \mu} H_{\mu}.
\end{eqnarray}

We will use \eqref{billy} to show that the eigenvectors of the matrix $A^n_d$ are given by the character table of $S_d$. For example, for $A^2_2 = \left( \begin{array}{cc} 3 & 1 \\ 1 & 3 \end{array} \right)$, we can choose a basis of eigenvectors to be $(1,1)$ and $(1,-1)$, which are exactly the column vectors in the character table for $S_2$.

We first need a definition:  
Given $\sigma \in S_d$ and nonnegative integers $l_1, \dots, l_n$ with $\sum l_i = d$, an {\em $(\l_1, \dots, \l_n)$-realization} of $\sigma $ is an ordered choice of $n$ disjoint subsets $X_i \subseteq \{1, 2, \dots, d\}$ with $|X_i| = l_i$ such that each disjoint cycle of $\sigma$ is contained in $S_{X_i}$ for some $1 \le i \le n$.
For example, there are two different $(2,2)$-realizations of $\sigma = (12)(34)$: $\sigma \in S_{\{1,2\}} \times S_{\{3,4\}}$ and $\sigma \in S_{\{3,4\}} \times S_{\{1,2\}}$. Note that a realization contains more information than fixing a subgroup of $S_d$ since the order of the $S_{X_i}$ matters. 

We can count the total number of realizations of any 
$\sigma$ 
for all possible $(l_1, l_2, \dots, l_n)$ with $l_1 + l_2 + \cdots + l_n = d$. Suppose $\sigma$ is the product of $p$ disjoint cycles. Then any realization is obtained by assigning an element of $\{1, 2, \dots, n\}$ to each cycle. Therefore, there are $n^p$ realizations of $\sigma \in S_d$.

For example, if $\sigma = (12)(34) \in S_4$ and $n = 2$, we may assign the elements of each disjoint cycle to either $X_1$ or $X_2$. Thus there are $2^2 = 4$ realizations of 
$\sigma $ 
for all possible $l_1$ and $l_2$: $\sigma \in S_{\emptyset} \times S_{\{1,2,3,4\}}$, $\sigma \in S_{\{1,2\}} \times S_{\{3,4\}}$, $\sigma \in S_{\{3,4\}} \times S_{\{1,2\}}$, and $\sigma \in S_{\{1,2,3,4\}} \times S_{\emptyset}$. If $\sigma = (12)(34) \in S_4$ and $n = 3$, then we have 3 choices for each disjoint cycle, so there will be $3^2 = 9$ different $(l_1,l_2,l_3)$-realizations of $\sigma $ for all possible $l_1, l_2, l_3$.

\begin{lemma}\label{coset}
Let $G= S_{l_1} \times S_{l_2} \times \dots \times S_{l_n} \subseteq S_d$ and fix $\sigma \in S_d$. Then the number of cosets of $G$ fixed by $\sigma$ is the number 
of $(l_1, \dots l_n)$-realizations of $\sigma$.
\end{lemma}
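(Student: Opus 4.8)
The idea is to identify the coset space $S_d/G$ with the set of ordered set partitions of $\{1,\dots,d\}$ of type $(l_1,\dots,l_n)$, and then observe that the cosets fixed by $\sigma$ are exactly those all of whose blocks are $\sigma$-stable; an ordered partition into $\sigma$-stable blocks of the right sizes is precisely an $(l_1,\dots,l_n)$-realization of $\sigma$.

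First I would set up the identification. Put $d_0 = 0$, $d_i = l_1+\cdots+l_i$, and $Y_i = \{d_{i-1}+1,\dots,d_i\}$, so that $(Y_1,\dots,Y_n)$ is the ``standard'' ordered partition of $\{1,\dots,d\}$ into blocks of sizes $l_1,\dots,l_n$ and $G = S_{Y_1}\times\cdots\times S_{Y_n}$ is its stabilizer under the natural action of $S_d$ on ordered partitions. Hence the orbit map $\tau\mapsto(\tau Y_1,\dots,\tau Y_n)$ descends to a bijection between the left coset space $S_d/G$ and the set $\mathcal{P}$ of ordered partitions $(X_1,\dots,X_n)$ of $\{1,\dots,d\}$ with $|X_i| = l_i$; under this bijection the left-translation action of $S_d$ on $S_d/G$ corresponds to the action $\sigma\cdot(X_1,\dots,X_n) = (\sigma X_1,\dots,\sigma X_n)$.

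Next, the coset corresponding to $(X_1,\dots,X_n)\in\mathcal{P}$ is fixed by $\sigma$ precisely when $\sigma X_i = X_i$ for every $i$. Since $\sigma$ is a bijection, $\sigma X_i = X_i$ holds if and only if $X_i$ is a union of $\langle\sigma\rangle$-orbits, i.e. if and only if the support of every disjoint cycle of $\sigma$ is either contained in $X_i$ or disjoint from it. Because the $X_i$ are pairwise disjoint and cover $\{1,\dots,d\}$ (automatic, as $\sum_i l_i = d$), this holds for all $i$ simultaneously exactly when the support of each disjoint cycle of $\sigma$ lies inside a single $X_i$ --- equivalently, when each disjoint cycle of $\sigma$ lies in $S_{X_i}$ for some $i$. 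By definition this says $(X_1,\dots,X_n)$ is an $(l_1,\dots,l_n)$-realization of $\sigma$. Composing with the bijection $S_d/G\cong\mathcal{P}$ then matches the $\sigma$-fixed cosets of $G$ with the $(l_1,\dots,l_n)$-realizations of $\sigma$, as desired.

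This argument is elementary and I do not expect a real obstacle; the only points requiring any care are verifying that the bijection $S_d/G\cong\mathcal{P}$ intertwines the two $S_d$-actions and reconciling the paper's phrasing of a realization (disjoint subsets $X_i$ with $|X_i|=l_i$ such that each cycle of $\sigma$ sits in some $S_{X_i}$) with the ``$\sigma$-stable ordered partition'' description used above.
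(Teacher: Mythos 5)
Your proof is correct and is essentially the paper's argument, just packaged more cleanly: where the paper works directly with coset representatives $sG$ and the conjugate subgroups $sGs^{-1}$ (and must separately check that the assignment is well-defined, injective, and surjective), you first invoke the standard $S_d$-equivariant bijection $S_d/G \cong \mathcal{P}$ sending $\tau G \mapsto (\tau Y_1,\dots,\tau Y_n)$, after which the well-definedness and injectivity are automatic and the $\sigma$-fixed cosets are visibly the $\sigma$-stable ordered partitions, i.e.\ the realizations. Both proofs rest on exactly the same underlying identification; yours simply factors out the bookkeeping into a one-line appeal to the orbit--stabilizer description of $S_d/G$.
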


\begin{proof}
We identify $G$ with $S_{\{1, 2, \dots, l_1\}} \times S_{\{l_1 + 1, \dots, l_1 + l_2\}} \times \cdots \times S_{\{(l_1 + l_2 + \dots + l_{n-1}) + 1, \dots, d\}}$. First suppose $sG$ is a coset of $G$ fixed by $\sigma$, that is, $\sigma sG= sG$. 
Then $\sigma \in sGs^{-1}$, and so $s$ induces a permutation on the subscripts of $S_{\{1, 2, \dots, l_1\}} \times S_{\{l_1 + 1, \dots, l_1 + l_2\}} \times \cdots \times S_{\{(l_1 + l_2 + \dots + l_{n-1}) + 1, \dots, d\}}$ and gives 
an $(l_1, \dots l_n)$-realization of $\sigma$.
This realization is independent of coset representative: if $sG = tG$, then 
$sGs^{-1} = tGt^{-1}$ and so $s, t$ give the same $(l_1, \dots, l_n)$-realization of $\sigma$.
Conversely, given some realization $\sigma \in S_{X_1} \times S_{X_2} \times \dots \times S_{X_n}$, we can find some element $s \in S_d$ which permutes the subscripts of $S_{\{1, 2, \dots, l_1\}} \times S_{\{l_1 + 1, \dots, l_1 + l_2\}} \times S_{\{(l_1 + l_2 + \dots + l_{n-1}) + 1, \dots, d\}}$ to obtain $S_{X_1} \times S_{X_2} \times \dots \times S_{X_n}$. Then $\sigma \in sGs^{-1}$ and so  $\sigma sG = sG$. If two permutations $s, t \in S_d$ give the same realization, then $st^{-1}G(ts^{-1}) = st^{-1}S_{\{1, 2, \dots, l_1\}}(ts^{-1}) \times st^{-1}S_{\{l_1 + 1, \dots, l_1 + l_2\}}(ts^{-1}) \times \cdots \times st^{-1}S_{\{(l_1 + l_2 + \dots + l_{n-1}) + 1, \dots, d\}}(ts^{-1}) = S_{\{1, 2, \dots, l_1\}} \times S_{\{l_1 + 1, \dots, l_1 + l_2\}} \times \cdots \times S_{\{(l_1 + l_2 + \dots + l_{n-1}) + 1, \dots, d\}}$, which implies that $st^{-1} \in G$ since the only elements which enact the trivial permutation on subscripts (taking order into account) are contained in $G$ itself. Therefore $sG = tG$. This shows that there is a one-to-one correspondence between realizations and cosets fixed by $\sigma$. 
\end{proof}

Recall that  $H_{\lambda}$ is the irreducible finite dimensional representation of $S_d$ determined by the partition $\lambda$ of $d$. 
Let $\chi_{\lambda}(\sigma)$ denote the associated character of $S_d$. 
We now prove the main result of this section.

\begin{theorem}\label{thm:character}
Suppose $\sigma = \sigma_1 \dots \sigma_p \in S_d$ is a product of $p$ disjoint cycles, and let $\chi_{\lambda}$ denote the character of the irreducible representation $H_{\lambda}$. Then the vector $(\chi_{\lambda}(\sigma))_{|\lambda| = d}$ is an eigenvector for $\mathcal{A}^n_d$ with eigenvalue $n^p$.
\end{theorem}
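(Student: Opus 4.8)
The strategy is to compute the action of $A^n_d$ on the vector $v = (\chi_\lambda(\sigma))_{|\lambda|=d}$ by interpreting both sides representation-theoretically via \eqref{billy}, exactly as the eigenvalue $n^d$ was computed in Lemma~\ref{lemma:repthry} but now evaluating characters at a general $\sigma$ rather than at the identity. Concretely, I would fix $\sigma$ a product of $p$ disjoint cycles and apply the character function $\chi_{(\blank)}(\sigma)$ to the identity \eqref{billy}: since characters are additive over direct sums, the $\mu$-entry of $A^n_d v$ equals $\sum_{l_1,\dots,l_n} \chi\bigl(\mathcal F_{l_1,\dots,l_n}(H_\mu)\bigr)(\sigma)$, where I write $\chi(M)(\sigma)$ for the character of an $S_d$-module $M$ evaluated at $\sigma$. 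So it suffices to show
\[
\sum_{l_1 + \dots + l_n = d} \chi\bigl(\mathcal F_{l_1,\dots,l_n}(H_\mu)\bigr)(\sigma) = n^p\, \chi_\mu(\sigma).
\]

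The key computation is the character of a single $\mathcal F_{l_1,\dots,l_n}(H_\mu) = \mathrm{Ind}^{S_d}_{G}\mathrm{Res}^{S_d}_{G}(H_\mu)$, where $G = S_{l_1}\times\dots\times S_{l_n}$. Here I would use two standard facts. First, $\mathrm{Res}^{S_d}_G$ is right adjoint (and also left adjoint, as $S_d$ and $G$ are finite) so $\mathrm{Ind}^{S_d}_G\mathrm{Res}^{S_d}_G(H_\mu) \cong \FF[S_d/G]\otimes_\FF H_\mu$ as $S_d$-modules, by the projection formula / Frobenius reciprocity in its module form. Hence the character of $\mathcal F_{l_1,\dots,l_n}(H_\mu)$ at $\sigma$ is $\chi_{\FF[S_d/G]}(\sigma)\cdot\chi_\mu(\sigma)$, where $\chi_{\FF[S_d/G]}(\sigma)$ is the permutation character, i.e. the number of cosets of $G$ in $S_d$ fixed by $\sigma$. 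Second, by Lemma~\ref{coset} that number is the number of $(l_1,\dots,l_n)$-realizations of $\sigma$. Summing over all compositions $(l_1,\dots,l_n)$ of $d$ and using the count established in the paragraph preceding Lemma~\ref{coset} — that the total number of realizations of $\sigma$ over all $(l_1,\dots,l_n)$ is $n^p$ since each of the $p$ disjoint cycles is independently assigned to one of the $n$ slots — gives $\sum_{l_1,\dots,l_n}\chi_{\FF[S_d/G]}(\sigma) = n^p$. Factoring out the common $\chi_\mu(\sigma)$ yields the displayed identity, hence $A^n_d v = n^p v$.

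The main obstacle — really the only nonroutine point — is establishing cleanly that $\mathrm{Ind}\,\mathrm{Res}$ of $H_\mu$ has character equal to (permutation character of $S_d/G$) times $\chi_\mu$; everything else is either already in the excerpt (the realization count, Lemma~\ref{coset}, the identity \eqref{billy}) or is bookkeeping. One must be slightly careful that $v$ is genuinely an eigenvector and not just formally annihilated: since the $\chi_\mu$ range over all partitions of $d$ and the character table of $S_d$ is invertible, distinct conjugacy classes $\sigma$ give linearly independent eigenvectors, so in fact this produces a full eigenbasis of $A^n_d$ indexed by conjugacy classes of $S_d$, recovering Lemma~\ref{lemma:repthry} as the case $\sigma = e$ (where $p = d$ and $\chi_\mu(e) = m_\mu$). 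I would end by remarking that this is precisely the statement that the columns of the character table of $S_d$ diagonalize $A^n_d$, with the column for the class of cycle-type having $p$ parts carrying eigenvalue $n^p$.
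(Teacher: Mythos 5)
Your proposal is correct and follows essentially the same strategy as the paper: evaluate the character at $\sigma$ on both sides of \eqref{billy}, reduce the character of each $\mathcal{F}_{l_1,\dots,l_n}(H_\mu)$ to (number of fixed cosets of $S_{l_1}\times\cdots\times S_{l_n}$)$\cdot\chi_\mu(\sigma)$, and then apply Lemma~\ref{coset} together with the count of $n^p$ total realizations. The only cosmetic difference is that you package the middle step via the projection formula $\operatorname{Ind}\operatorname{Res}(H_\mu)\cong\FF[S_d/G]\otimes H_\mu$, while the paper applies the induced-character formula directly and uses that $\chi_\mu$ is a class function; these are the same computation.
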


\begin{proof}
Fix $\sigma= \sigma_1 \dots \sigma_p \in S_d$ such that $\sigma$ is the product of $p$ disjoint cycles. We compute the character of $\sigma$ on $M = \bigoplus_{l_1, \dots, l_n} \mathcal{F}_{l_1, \dots, l_n}(H_\lambda)$ in two different ways. First, from \eqref{billy} we see that the character of $\sigma$ on $M$  is $\sum_{|\mu| = d} a^n_{\lambda, \mu} \chi_{\mu}(\sigma)$. 

We now compute the character in another way, using the formula for characters of induced modules. 
Let $G= S_{l_1} \times S_{l_2} \times \dots \times S_{l_n}$. Using the formula for induced characters, the character of $\sigma$ on  $\mathcal{F}_{l_1, \dots, l_n}(H_{\lambda})$ is:
\begin{eqnarray*}
\chi^{l_1, \dots, l_n}_{Ind}(\sigma) = \sum_{\sigma C = C} \chi_{\lambda}(s\sigma s^{-1}) = (\mbox{number of cosets of $G$ fixed by $\sigma$}) \chi_{\lambda}(\sigma) ,
\end{eqnarray*}
where the sum is over cosets $C = sG$. By Lemma \ref{coset}, the number of cosets of $G$ fixed by $\sigma$ is the number of 
$(l_1, \dots, l_n)$-realizations of $\sigma$.  (In particular, if no such realizations exist, the character of $\sigma$ on the induced module is zero.) Therefore, the character of $\sigma$ on $\sum_{l_1, \dots, l_n} \mathcal{F}_{l_1, \dots, l_n}(H_\lambda)$ is:
\begin{eqnarray*}
\sum_{l_1, \dots, l_n} \chi^{l_1, \dots, l_n}_{Ind}(\sigma) = (\sum_{l_1, \dots, l_n} \mbox{number of $(l_1, \dots, l_n)$-realizations of $\sigma$} ) \chi_{\lambda}(\sigma).
\end{eqnarray*}
Write this as  $N  (\chi_{\lambda}(\sigma))$.

We can count the number $N$ in another way. Any realization of $\sigma$ is obtained by distributing the $p$ disjoint cycles of $\sigma$ into $n$ subsets. Therefore, by Lemma \ref{subsets}, we conclude that $N = n^p$. 
\end{proof}

\begin{remark}\label{rem:one}
Fix $k, n \in \ZZ_{\geq 1}$ and let $V$ be the natural representation of $\mf{gl}(nk)$.
The module
\[ V^{\otimes d} \cong \bigoplus_{|\lambda|=d} H_\lambda \otimes V^{nk}_\lambda \cong  \bigoplus_{|\lambda|=d} \chi_\lambda(1) V^{nk}_\lambda\]
is, of course, an important representation of $\mf{gl}(nk)$.
If $\sigma \in S_d$, we may consider 
\beq \label{virtual}
\sum_{|\lambda| = d} \chi_\lambda(\sigma)\left[  V^{nk}_\lambda \right]
\eeq
as a  class in $K_0(\mf{gl}(nk))$; this class is non-effective if $\sigma \neq 1$.  For example, if $\sigma = (12) \in S_2$, then
\[\eqref{virtual}  = \left[ V^{nk}_{(2)} \right] - \left[V^{nk}_{(1,1)}\right].\]

By Theorem~\ref{thm:character}, \eqref{virtual} is an eigenvector of the map induced by restriction to $K_0(\mf{gl}(k))$, using the diagonal embedding of signature $(n,0,0)$.   
It would be interesting to see if the ``virtual representation'' \eqref{virtual} has other pleasing representation-theoretic properties.

\end{remark}

We now have a complete description of all eigenvalues and eigenvectors of $A^n_d$: the eigenvalues are $\{n^{p(\sigma)} \st  [\sigma] \in Cl(S_d)\}$, where $\sigma$ consists of  $p(\sigma)$ disjoint cycles and $Cl(S_d)$ denotes the set of conjugacy classes in $S_d$. Since there are $P(d)$ conjugacy classes in $S_d$ and $A^n_d$ is a $P(d) \times P(d)$ matrix, we have constructed a full set of eigenvalues and eigenvectors.

For example, consider $A^2_2 = \left( \begin{array}{cc} 3 & 1 \\ 1 & 3 \end{array} \right)$, that is, $n = d = 2$. We know there are two elements (and two conjugacy classes) of $S_2$ given by the identity, $id = (1)(2)$, and the transposition $(12)$. Therefore the eigenvalues of $A^2_2$ will be $2^2$ with eigenvector $(\chi_{\lambda}(id))_{|\lambda| = 2} = (1,1)$ and $2^1 = 2$ with eigenvector $(\chi_{\lambda}(12))_{|\lambda| = 2} = (1,-1)$.

As another example, consider the matrix $A^2_3 = \left( \begin{array}{ccc} 4 & 2 & 0\\ 2 & 6 & 2 \\ 0 & 2 & 4 \end{array} \right)$ where $n = 2$ and $d = 3$. There are 3 conjugacy classes in $S_3$ with representatives given by the identity, $1$, $\sigma_1= (12)(3)$, and $\sigma_2 = (123)$. Therefore, the eigenvalues for $A^2_3$ will be $2^3 = 8$ with eigenvector $(\chi_{\lambda}(1))_{|\lambda| = 3} = (1,2,1)$, $2^2 = 4$ with eigenvector $(\chi_{\lambda}((12)(3))_{|\lambda| = 3} = (1,0,-1)$, and $2^1 = 2$ with eigenvector $(\chi_{\lambda}(123))_{|\lambda| = 3} = (1,-1,1)$.

Note that Theorem \ref{thm:character} provides a new proof of Lemma \ref{lemma:repthry}: there is only one $\sigma \in S_d$ which is the product of $d$ disjoint cycles, that is, $\sigma = 1$. Then the eigenvalue $n^d$ will occur with multiplicity 1 with positive eigenvector $(\chi_{\lambda}(1))_{|\lambda| = d} = (\dim H_{\lambda})_{|\lambda| = d} = (m_{\lambda})_{|\lambda| = d}$. Using the orthogonality relations in the character table of $S_d$, it is also clear that all other eigenvectors will have at least one nonpositive entry, and that the matrix $A^n_d$ is diagonalizable. 
This agrees with the remarks after Proposition~\ref{prop:simple2}.

Since the eigenvalues and eigenvectors of $A^n_d$ are now determined, we obtain a nice closed form for $A^n_d$. List the partitions of $d$ in reverse lexicographical order, as before. There is a bijection between partitions of $d$ and conjugacy classes of $S_d$: a partition $\lambda = (\lambda_1, \lambda_2, \dots, \lambda_p, 0, \dots, 0)$ is associated to the conjugacy class of $\sigma_{\lambda}= \sigma_1\sigma_2 \cdots \sigma_p$, where the $\sigma_i$ are disjoint cycles and $\sigma_i$ is a cycle of length $\lambda_i$. There is also a bijection between partitions of $d$ and irreducible representations of $S_d$, given by $\lambda \mapsto H_{\lambda}$. We define a $P(d) \times P(d)$ matrix $\X$ in the following way: the entry belonging to row $\lambda$ and column $\mu$ will be $\chi_{\lambda}(\sigma_{\mu})$, the character of $\sigma_{\mu}$ on the irreducible representation $H_{\lambda}$. The column of $\X$ associated to $\sigma_{\mu}$ will be $(\chi_{\lambda}(\sigma_{\mu}))_{|\lambda| = d}$, which is an eigenvector of $A^n_d$. Therefore, we have the following:

\begin{corollary}
Fix $d$ and let $\X$ be the $P(d) \times P(d)$ matrix described above, which is the character table of $S_d$. Then $A^n_d = \X D \X^{-1}$, where $D$ is a diagonal matrix whose diagonal entries are powers of $n$.
\end{corollary}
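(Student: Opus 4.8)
The plan is to deduce the corollary directly from Theorem~\ref{thm:character}, the only extra ingredient being the (standard) invertibility of the character table. First I would pin down the indexing conventions so that the three matrices $A^n_d$, $\X$, and $D$ live on the same index set. Both the rows and columns of $A^n_d$ are indexed by the partitions $\lambda$ of $d$ in reverse lexicographical order. Using the two bijections recalled just before the statement — partitions $\mu$ of $d$ with conjugacy classes $[\sigma_\mu]$, where $\sigma_\mu$ is a product of $\ell(\mu)$ disjoint cycles of lengths $\mu_1,\dots,\mu_{\ell(\mu)}$, and partitions $\lambda$ with irreducibles $H_\lambda$ — the $(\lambda,\mu)$ entry of $\X$ is $\chi_\lambda(\sigma_\mu)$. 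In particular the $\mu$-th column of $\X$ is precisely the vector $(\chi_\lambda(\sigma_\mu))_{|\lambda|=d}$.

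Second, I would invoke Theorem~\ref{thm:character}: for each partition $\mu$ of $d$, the $\mu$-th column of $\X$ is an eigenvector of $A^n_d$ with eigenvalue $n^{\ell(\mu)}$. Assembling these $P(d)$ identities columnwise yields the single matrix equation $A^n_d \X = \X D$, where $D$ is the diagonal matrix whose $(\mu,\mu)$ entry is $n^{\ell(\mu)}$. In particular every diagonal entry of $D$ is a power of $n$, as required by the statement.

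Third, I would observe that $\X$ is invertible: this is the classical fact that the character table of a finite group is nonsingular, which follows from the orthogonality relations for the irreducible characters of $S_d$ (the columns are pairwise orthogonal with respect to the appropriate inner product, hence linearly independent). This is also consistent with the remark following Theorem~\ref{thm:character} that $A^n_d$ is diagonalizable with a full set of $P(d)$ eigenvectors. Multiplying $A^n_d \X = \X D$ on the right by $\X^{-1}$ gives $A^n_d = \X D \X^{-1}$, completing the proof.

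Since the genuine computational content — matching the eigenvectors of $A^n_d$ with the columns of the character table — is already contained in Theorem~\ref{thm:character}, there is essentially no remaining obstacle. The only point requiring care is bookkeeping: one must use the two bijections (partitions $\leftrightarrow$ conjugacy classes and partitions $\leftrightarrow$ irreducibles) consistently so that the diagonal of $D$ is indexed compatibly with the columns of $\X$, and in particular so that the entry attached to the class of $\sigma_\mu$ is $n^{\ell(\mu)}$ (the number of cycles) rather than $n^{|\mu|}$.
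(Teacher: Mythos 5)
Your proof is correct and follows essentially the same route as the paper: the corollary is presented there as an immediate consequence of Theorem~\ref{thm:character} together with the observation that the columns of the character table form a complete set of eigenvectors (hence $\X$ is invertible by orthogonality), and you reproduce this argument, including the correct care that the diagonal entry attached to the class of $\sigma_\mu$ is $n^{\ell(\mu)}$ where $\ell(\mu)$ counts the disjoint cycles (including fixed points), not $n^{|\mu|}$.
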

The entries of $D$ are given explictly by the eigenvalues in Theorem~\ref{thm:character}.

For example, for $d=2$ we have $\X = \begin{pmatrix} 1 & 1 \\ 1 & -1\end{pmatrix}$.
This gives that 
\[ A^n_2=  \X \begin{pmatrix} n^2 & \\ & n \end{pmatrix} \X^{-1} = 
\begin{pmatrix} \frac{n^2+n}{2} & \frac{n^2-n}{2} \\ \frac{n^2-n}{2} & \frac{n^2+n}{2} \end{pmatrix},\]
and we obtain the quivers in \eqref{iquiver}.

\section{Type II quivers}\label{TYPEII}

 In this section, we compute the eigenvalues of the Type II matrices for $\mathfrak{sl}(n^\infty)$ and the eigenvector with maximal eigenvalue, although our results  are not as complete as those for Type I matrices. 
Let $C^n_{p,q}$ denote the matrix for the coherent local system $\mathcal{C}^n_{p,q}$ of $\mathfrak{gl}(n^{\infty})$, where the entry belonging to row $(\lambda, \mu)$ and column $(\lambda', \mu')$ is the multiplicity of $V^{n^k}_{\lambda, \mu}$ in $V^{n^{k+1}}_{\lambda', \mu'}$ for $k \gg 0$.

We begin with an example: we have  $C^2_{1,1} = \left( \begin{array}{cc} 4 & 0 \\ 3 & 1 \end{array} \right)$. 
This follows from the following calculations. 
When we restrict the adjoint representation of $\mathfrak{gl}(2k)$ to $\mathfrak{gl}(k)$, we obtain:
\[
V^{2k}_{(1),(1)} \cong 4V^{k}_{(1),(1)} \oplus 3V^{k}_{(0),(0)}.\]
Further, the trivial representation of $\mathfrak{gl}(2k)$ remains trivial when we restrict to $\mathfrak{gl}(k)$, and we obtain that 
\[ \quad 
V^{2k}_{(0),(0)} \cong V^{k}_{(0),(0)}.
\]
 To translate this from $\mathfrak{gl}(k)$ to $\mathfrak{sl}(k)$, we note that the weight ${(1),(1)} = (1, 0, \dots, 0, -1)$ becomes $(2, 1, 1, \dots, 1, 0)$ over $\mathfrak{sl}(k)$ and ${(0),(0)} = (0,0, \dots, 0)$ remains the same. 

As another example,
\begin{eqnarray*}
C^{2}_{2,2} = \left( \begin{array}{cccccc} 9 & 3 & 3 & 1 & 0 & 0\\ 
3 & 9 & 1 & 3 & 0 & 0\\
3 & 1 & 9 & 3 & 0 & 0\\
1 & 3 & 3 & 9 & 0 & 0\\
12 & 12 & 12 & 12 & 4 & 0\\
6 & 3 & 3 & 6 & 3 & 1\end{array} \right) .
\end{eqnarray*}
This matrix is no longer symmetric as in the Type I case. Each column gives the decomposition of $V^{2k}_{\lambda, \mu}$ into $\mathfrak{gl}_k$ modules where the weights are ordered as follows: $\{(2), (2)\}$, $\{(2), (1,1)\}$, $\{(1,1), (2)\}$, $\{(1,1), (1,1)\}$, $\{(1), (1)\}$, $\{(0), (0)\}$. For example, the entries in the first column follow from the decomposition:
\begin{eqnarray*}
V^{2k}_{(2),(2)} \cong 9V^{k}_{(2),(2)} \oplus 3V^{k}_{(2),(1,1)} \oplus 3V^k_{(1,1),(2)} \oplus V^k_{(1,1),(1,1)} \oplus 12V^k_{(1),(1)} \oplus 6V^k_{(0),(0)}.
\end{eqnarray*}

Also note that $C^2_{1,1}$ is contained in the lower right corner of $C^2_{2,2}$. This will always be the case: namely, $C^n_{p,q}$ will always contain $C^n_{p-1,q-1}$ in its lower right corner, and the entries of $C^n_{p,q}$ in the rows above $C^n_{p-1,q-1}$ will all be zero. We also note that for Type II matrices, the length of the $\mathfrak{sl}$ weights which will appear are unbounded.

It is easy to see from the branching laws in Section \ref{BRANCH} that the Type II matrices are all block lower triangular.  Write $C^n_{p,q}$ as a matrix of blocks as follows:

\begin{eqnarray}\label{blocks}
C^n_{p,q} = \left( \begin{array}{ccccc} (C^n_{p,q})^{p,q} &  0  & 0 & \ldots & 0\\
B_{2,1}  & (C^n_{p,q})^{p-1,q-1} & 0  & \ldots & 0\\
B_{3,1}  & B_{3,2} & (C^n_{p,q})^{p-2,q-2}  & \ldots & 0\\
\vdots & \vdots & \vdots & \ddots & \vdots\\
B_{l,1} & B_{l,2} & B_{l,3} &\ldots & (C^n_{p,q})^{p-l+1,q-l+1}
 \end{array} \right) .
\end{eqnarray}
where $l = \min\{p, q\}$. The diagonal block $(C^n_{p,q})^{p',q'}$  gives branching multiplicities for pairs of partitions $(\lambda, \mu)$ and $(\lambda', \mu')$ with $|\lambda| = |\lambda'| = p' \le p$ and $|\mu| = |\mu'| = q' \le q$, where $p' - q' = p - q$. For example, $C^2_{2,2}$ has three diagonal blocks: $(C^2_{2,2})^{0,0} = (1)$, $(C^2_{2,2})^{1,1} = (4)$, and

\begin{eqnarray*}
(C^2_{2,2})^{2,2} = 
\left( \begin{array}{cccc} 9 & 3 & 3 & 1 \\ 
3 & 9 & 1 & 3\\
3 & 1 & 9 & 3\\
1 & 3 & 3 & 9\end{array} \right) .
\end{eqnarray*}

The next proposition calculates the diagonal blocks of $C^n_{p,q}$.

\begin{proposition}
We have $(C^n_{p,q})^{p',q'} = A^n_{p'} \otimes A^n_{q'}$, where $\otimes$ denotes the Kronecker product of matrices. 
\end{proposition}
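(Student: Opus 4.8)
The plan is to compute the entries of the diagonal block $(C^n_{p,q})^{p',q'}$ directly from the iterated branching law of Proposition~\ref{branchgln}, and to observe that in the degree range relevant to a diagonal block every Littlewood--Richardson sum there decouples into a product of a ``$\lambda$-factor'' and a ``$\mu$-factor.'' Concretely, the entry of $(C^n_{p,q})^{p',q'}$ in row $(\lambda',\mu')$ and column $(\lambda,\mu)$, where $|\lambda|=|\lambda'|=p'$ and $|\mu|=|\mu'|=q'$, is the multiplicity of $V^k_{\lambda',\mu'}$ in $\restr{V^{nk}_{\lambda,\mu}}{\mathfrak{gl}(k)}$ for $k\gg 0$, which by Proposition~\ref{branchgln} equals
\[ \sum_{\beta_1^+,\dots,\beta_n^+,\ \beta_1^-,\dots,\beta_n^-} C^{(\lambda,\mu)}_{(\beta_1^+,\dots,\beta_n^+),(\beta_1^-,\dots,\beta_n^-)}\, D^{(\lambda',\mu')}_{(\beta_1^+,\dots,\beta_n^+),(\beta_1^-,\dots,\beta_n^-)}. \]
The first step is to restrict this sum: by the degree inequalities recorded immediately after Proposition~\ref{branchgln}, a summand is nonzero only if $\sum_i |\beta_i^+| = |\lambda| = p'$ and $\sum_i |\beta_i^-| = |\mu| = q'$, so only such $n$-tuples contribute.

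The second, and main, step is the degree bookkeeping inside $C^{(\lambda,\mu)}$ and $D^{(\lambda',\mu')}$ under this restriction. Each building block $c^{(\nu^+,\nu^-)}_{(\kappa^+,\kappa^-),(\kappa'^+,\kappa'^-)}$ in the definition of $C^{(\lambda,\mu)}$ carries a ``linking'' partition $\delta$ shared between its $+$ and $-$ parts, with $|\nu^+| = |\kappa^+| + |\kappa'^+| + |\delta|$; telescoping these relations along the iterated product one finds $\sum_i|\beta_i^+| = |\lambda| - \sum(\text{all }|\delta|)$, so that $\sum_i|\beta_i^+| = |\lambda|$ forces every $\delta$ to be empty. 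When $\delta = \emptyset$ the block collapses — using that $c^{\nu}_{\kappa,\emptyset}$ is $1$ if $\nu=\kappa$ and $0$ otherwise — to the product $c^{\nu^+}_{\kappa^+,\kappa'^+}\, c^{\nu^-}_{\kappa^-,\kappa'^-}$, and the now-independent sums over the internal $+$- and $-$-partitions give
\[ C^{(\lambda,\mu)}_{(\beta_\bullet^+),(\beta_\bullet^-)} = c^{\lambda}_{\beta_1^+,\dots,\beta_n^+}\; c^{\mu}_{\beta_1^-,\dots,\beta_n^-} \]
in the generalized Littlewood--Richardson coefficients of \eqref{adef}. The identical argument applies to $D^{(\lambda',\mu')}$, where the linking partitions are the $\gamma_1,\gamma_2$ of the $d$-coefficient and the telescoped relation reads $|\lambda'| + \sum(\text{all }|\gamma|) = \sum_i|\beta_i^+|$; it yields $D^{(\lambda',\mu')}_{(\beta_\bullet^+),(\beta_\bullet^-)} = c^{\lambda'}_{\beta_1^+,\dots,\beta_n^+}\, c^{\mu'}_{\beta_1^-,\dots,\beta_n^-}$.

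Substituting these and separating the $+$- and $-$-sums gives
\[ (C^n_{p,q})^{p',q'}_{(\lambda',\mu'),(\lambda,\mu)} = \Bigl( \sum_{\beta_1^+,\dots,\beta_n^+} c^{\lambda}_{\beta_\bullet^+} c^{\lambda'}_{\beta_\bullet^+} \Bigr) \Bigl( \sum_{\beta_1^-,\dots,\beta_n^-} c^{\mu}_{\beta_\bullet^-} c^{\mu'}_{\beta_\bullet^-} \Bigr) = a^n_{\lambda,\lambda'}\, a^n_{\mu,\mu'}, \]
which is exactly the $\bigl((\lambda,\mu),(\lambda',\mu')\bigr)$-entry of the Kronecker product $A^n_{p'}\otimes A^n_{q'}$; the last thing to check is that the chosen reverse-lexicographic ordering of pairs matches the block ordering of a Kronecker product (the symmetry $a^n_{\lambda,\lambda'}=a^n_{\lambda',\lambda}$ established in Proposition~\ref{prop:simple2} makes the row/column convention immaterial), which can be verified against the displayed block $(C^2_{2,2})^{2,2}$. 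I expect the only genuine work to be the telescoping/degree count of the second paragraph — tracking the sizes of all the internal partitions through the iterated sums to conclude the linking partitions vanish. A shorter but less elementary alternative would be representation-theoretic: restriction is multiplicative on tensor products, so $\restr{V^{nk}_{\lambda,0}}{\mathfrak{gl}(k)}\otimes\restr{V^{nk}_{0,\mu}}{\mathfrak{gl}(k)}$ is governed by $A^n_{p'}\otimes A^n_{q'}$, and the classical stable decomposition $V^{N}_{\lambda,0}\otimes V^N_{0,\mu}\cong V^N_{\lambda,\mu}\oplus(\text{terms }V^N_{\sigma,\tau}\text{ with }|\sigma|<p')$ identifies, after passing to the associated graded for the filtration by $|\lambda|$, the diagonal block of $C^n_{p,q}$ with this tensor-product map.
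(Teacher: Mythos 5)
Your proof is correct and follows essentially the same route as the paper's: both use Proposition~\ref{branchgln}, force $\sum_i|\beta_i^\pm| = |\lambda|, |\mu|$ via the degree inequalities, observe that this kills the linking partitions ($\delta$ in the $c$-coefficients, $\gamma_1, \gamma_2$ in the $d$-coefficients) so that each composite coefficient factors into a product of generalized Littlewood--Richardson coefficients, and then separate the $+$- and $-$-sums to get $a^n_{\lambda,\lambda'}a^n_{\mu,\mu'}$. The paper carries out the degree bookkeeping a bit more explicitly factor-by-factor rather than via the one-line telescoping identity, but the content is identical.
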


\begin{proof}
Recall the following branching rule for $\mathfrak{gl}(k) \hookrightarrow \mathfrak{gl}(nk)$ given by Proposition \ref{branchgln}:
\begin{eqnarray*}
\restr{V^{nk}_{\lambda, \mu}}{\mathfrak{gl}(k)}\cong \bigoplus\limits_{\substack{\beta_1^+, \beta_2^+, \dots, \beta_n^+ \\ \beta_1^-, \beta_2^-, \dots, \beta_n^- \\ \lambda', \mu'}} C^{(\lambda, \mu)}_{(\beta_1^+, \beta_2^+, \dots, \beta_n^+), (\beta_1^-, \beta_2^-, \dots, \beta_n^-)}D^{(\lambda', \mu')}_{(\beta_1^+, \beta_2^+, \dots, \beta_n^+), (\beta_1^-, \beta_2^-, \dots, \beta_n^-)} V^k_{\lambda', \mu'}, 
\end{eqnarray*}
where 
$C^{(\lambda, \mu)}_{(\beta_1^+, \beta_2^+, \dots, \beta_n^+), (\beta_1^-, \beta_2^-, \dots, \beta_n^-)} $
and
$D^{(\lambda', \mu')}_{(\beta_1^+, \beta_2^+, \dots, \beta_n^+), (\beta_1^-, \beta_2^-, \dots, \beta_n^-)} $
are defined in that result.

Recall that if $C^{(\lambda, \mu)}_{(\beta_1^+, \beta_2^+, \dots, \beta_n^+), (\beta_1^-, \beta_2^-, \dots, \beta_n^-)} \ne 0$ and $D^{(\lambda', \mu')}_{(\beta_1^+, \beta_2^+, \dots, \beta_n^+), (\beta_1^-, \beta_2^-, \dots, \beta_n^-)} \ne 0$ we have:
\begin{eqnarray*}
|\lambda'| \le |\beta_1^+| + |\beta_2^+| + \dots + |\beta_n^+| \le |\lambda| \quad \mbox{ and } \\
|\mu'| \le |\beta_1^-| + |\beta_2^-| + \dots + |\beta_n^-| \le |\mu| .
\end{eqnarray*}

We wish to compute the multiplicity of $V^k_{(\lambda', \mu')}$ in $V^{nk}_{(\lambda, \mu)}$ where $|\lambda| = |\lambda'| = p'$ and $|\mu| = |\mu'| =q'$. 

Therefore, $|\lambda'| = |\beta_1^+| + |\beta_2^+| + \dots + |\beta_n^+| = |\lambda|$ and $|\mu'| = |\beta_1^-| + |\beta_2^-| + \dots + |\beta_n^-| = |\mu|$. Recall:
\begin{multline*}
C^{(\lambda, \mu)}_{(\beta_1^+, \beta_2^+, \dots, \beta_n^+), (\beta_1^-, \beta_2^-, \dots, \beta_n^-)} = \\
\sum\limits_{\substack{\alpha_1^+, \alpha_2^+, \dots, \alpha_{n-2}^+ \\ \alpha_1^-, \alpha_2^-, \dots, \alpha_{n-2}^-}} c^{(\lambda, \mu)}_{(\alpha_1^+, \alpha_1^-), (\beta_1^+, \beta_1^-)}c^{(\alpha_1^+, \alpha_1^-)}_{(\alpha_2^+, \alpha_2^-), (\beta_2^+, \beta_2^-)}\cdots c^{(\alpha_{n-3}^+, \alpha_{n-3}^-)}_{(\alpha_{n-2}^+, \alpha_{n-2}^-), (\beta_{n-2}^+, \beta_{n-2}^-)}c^{(\alpha_{n-2}^+, \alpha_{n-2}^-)}_{(\beta_{n-1}^+, \beta_{n-1}^-), (\beta_{n}^+, \beta_{n}^-)}
\end{multline*}

Since $|\lambda| = |\beta_1^+| + |\beta_2^+| + \dots + |\beta_n^+|$ we have that 
$|\lambda| = |\alpha^+_1| + |\beta^+_1|$ and
 \[ |\alpha_1^+| = |\alpha^+_2| + |\beta^+_2|, \dots, |\alpha_{n-3}^+| = |\alpha^+_{n-2}| + |\beta^+_{n-2}|, |\alpha_{n-2}^+| = |\beta^+_{n-1}| + |\beta^+_{n}|.\]
 Similarly, since $|\mu| = |\beta_1^-| + |\beta_2^-| + \dots + |\beta_n^-|$, we have that
  \[|\mu| = |\alpha^-_1| + |\beta^-_1|, \dots, |\alpha_{n-2}^-| = |\beta^-_{n-1}| + |\beta^-_{n}|.\] 

Recall the definition of $c^{(\lambda, \mu)}_{(\alpha^+, \alpha^-), (\beta^+, \beta^-)}$:
\begin{eqnarray*}
c^{(\lambda, \mu)}_{(\alpha^+, \alpha^-), (\beta^+, \beta^-)} = \sum_{\gamma^+, \gamma^-, \delta} c^{\lambda}_{\gamma^+, \delta}c^{\mu}_{\gamma^-, \delta}c^{\gamma^+}_{\alpha^+, \beta^+}c^{\gamma^-}_{\alpha^-, \beta^-} .
\end{eqnarray*}
When $|\lambda| = |\alpha^+| + |\beta^+|$ and $|\mu| = |\alpha^-| + |\beta^-|$, it must be that $\delta = 0$, $\gamma^+ = \lambda$ and $\gamma^- = \mu$. In this case $c^{(\lambda, \mu)}_{(\alpha^+, \alpha^-), (\beta^+, \beta^-)} = c^{\lambda}_{\alpha^+, \beta^+} c^{\mu}_{\alpha^-, \beta^-}$. 

Therefore, $C^{(\lambda, \mu)}_{(\beta_1^+, \beta_2^+, \dots, \beta_n^+), (\beta_1^-, \beta_2^-, \dots, \beta_n^-)}$ reduces to:
\begin{eqnarray*}
\sum\limits_{\substack{\alpha_1^+, \alpha_2^+, \dots, \alpha_{n-2}^+ \\ \alpha_1^-, \alpha_2^-, \dots, \alpha_{n-2}^-}} c^{\lambda}_{\alpha_1^+, \beta_1^+} c^{\mu}_{\alpha_1^-, \beta_1^-} c^{\alpha_1^+}_{\alpha_2^+, \beta_2^+} c^{\alpha^-_1}_{\alpha_2^-, \beta_2^-} \cdots c^{\alpha_{n-2}^+}_{\beta_{n-1}^+, \beta_n^+} c^{\alpha^-_{n-2}}_{\beta_{n-1}^-, \beta_n^-} = c^{\lambda}_{\beta^+_1, \beta^+_2, \dots, \beta^+_n} c^{\mu}_{\beta^-_1, \beta^-_2, \dots, \beta^-_n}.
\end{eqnarray*}
Similarly, recall that:
\begin{eqnarray*}
d^{(\lambda', \mu')}_{(\alpha^+, \alpha^-), (\beta^+, \beta^-)} = \sum\limits_{\substack{\alpha_1, \alpha_2, \beta_1, \beta_2 \\ \gamma_1, \gamma_2}} c^{\alpha^+}_{\alpha_1, \gamma_1}c^{\beta^-}_{\gamma_1, \beta_2}c^{\alpha^-}_{\beta_1, \gamma_2}c^{\beta^+}_{\gamma_2, \alpha_2}c^{\lambda'}_{\alpha_1, \alpha_2}c^{\mu'}_{\beta_1, \beta_2} .
\end{eqnarray*}

When $|\lambda'| = |\alpha^+| + |\beta^+|$ and $|\mu'| = |\alpha^-| + |\beta^-|$, $d^{(\lambda', \mu')}_{(\alpha^+, \alpha^-), (\beta^+, \beta^-)} = c^{\lambda'}_{\alpha^+, \beta^+} c^{\mu'}_{\alpha^-, \beta^-}$. Therefore, since $|\lambda'| = |\beta_1^+| + |\beta_2^+| + \dots + |\beta_n^+|$ and $|\mu'| = |\beta_1^-| + |\beta_2^-| + \dots + |\beta_n^-|$, the coefficient $D^{(\lambda', \mu')}_{(\beta_1^+, \beta_2^+, \dots, \beta_n^+), (\beta_1^-, \beta_2^-, \dots, \beta_n^-)}$ reduces to:
\begin{eqnarray*}
\sum\limits_{\substack{\alpha_1^+, \alpha_2^+, \dots, \alpha_{n-2}^+ \\ \alpha_1^-, \alpha_2^-, \dots, \alpha_{n-2}^-}} c^{\alpha_1^+}_{\beta_1^+,\beta_2^+} c^{\alpha_1^-}_{ \beta_1^-, \beta_2^-}
c^{\alpha_2^+}_{\alpha_1^+,\beta_3^+} c^{\alpha_2^-}_{\alpha_1^-, \beta_3^-} \cdots 
c^{\lambda'}_{\alpha_{n-2}^+, \beta_{n}^+}
c^{\mu'}_{\alpha_{n-2}^-, \beta_{n}^-} 
= c^{\lambda'}_{\beta_n^+, \beta_{n-1}^+, \dots, \beta_1^+}c^{\mu'}_{\beta_n^-, \beta_{n-1}^-, \dots, \beta_1^-} .
\end{eqnarray*}

Note that the order of the subscripts in the generalized Littlewood Richardson coefficients does not matter; that is, $c^{\lambda'}_{\beta_n^+, \beta_{n-1}^+, \dots, \beta_1^+} = c^{\lambda'}_{\beta^+_1, \beta^+_2, \dots, \beta^+_n}$ and $c^{\mu'}_{\beta_n^-, \beta_{n-1}^-, \dots, \beta_1^-}
= c^{\mu'}_{\beta^-_1, \beta^-_2, \dots, \beta^-_n}$. This follows from the interpretation of these coefficients in terms of induction and restriction functors in Section \ref{type1}.

Therefore the multiplicity of $V^k_{(\lambda', \mu')}$ in $V^{nk}_{(\lambda, \mu)}$ is:

\begin{eqnarray*}
\sum\limits_{\substack{\beta_1^+, \beta_2^+, \dots, \beta_n^+ \\ \beta_1^-, \beta_2^-, \dots, \beta_n^-}} c^{\lambda}_{\beta^+_1, \beta^+_2, \dots, \beta^+_n} c^{\mu}_{\beta^-_1, \beta^-_2, \dots, \beta^-_n} c^{\lambda'}_{\beta^+_1, \beta^+_2, \dots, \beta^+_n} c^{\mu'}_{\beta^-_1, \beta^-_2, \dots, \beta^-_n} 
\\ = (\sum_{\beta_1^+, \beta_2^+, \dots, \beta_n^+} c^{\lambda}_{\beta^+_1, \beta^+_2, \dots, \beta^+_n} c^{\lambda'}_{\beta^+_1, \beta^+_2, \dots, \beta^+_n}) (\sum_{\beta_1^-, \beta_2^-, \dots, \beta_n^-} c^{\mu}_{\beta^-_1, \beta^-_2, \dots, \beta^-_n} c^{\mu'}_{\beta^-_1, \beta^-_2, \dots, \beta^-_n}) \\ 
= a^n_{\lambda, \lambda'} a^n_{\mu, \mu'} .
\end{eqnarray*}

This is the entry in $(C^n_{p,q})^{p',q'}$ belonging to row $(\lambda, \mu)$ and column $(\lambda', \mu')$, but we also see this is the corresponding entry in the product $A^n_{p'} \otimes A^n_{q'}$.
\end{proof}

This allows us to compute the eigenvalues of $C^n_{p,q}$ using the results of Section \ref{type1}. We use the following well-known results about eigenvalues of matrices.

\begin{lemma}\label{blockeigenvalues}
Suppose $C$ is an $d \times d$ block lower triangular matrix with diagonal blocks $C_1, \dots, C_l$. Then the eigenvalues of $C$ are the combined eigenvalues from all of the blocks $C_1, \dots, C_l$. 
\end{lemma}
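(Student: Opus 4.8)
The plan is to reduce this to the classical fact that the determinant of a block triangular matrix is the product of the determinants of its diagonal blocks, applied to the characteristic polynomial.

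First I would observe that if $C$ is block lower triangular with diagonal blocks $C_1,\dots,C_l$ of sizes $d_1,\dots,d_l$ (so $d_1+\dots+d_l=d$), then for any scalar $x$ the matrix $xI-C$ is again block lower triangular with the same block structure and diagonal blocks $xI_{d_i}-C_i$. Hence it suffices to show $\det(xI-C)=\prod_{i=1}^{l}\det(xI_{d_i}-C_i)$: the left-hand side is the characteristic polynomial $p_C(x)$ and the right-hand side is $\prod_i p_{C_i}(x)$, and since the eigenvalues of a matrix are exactly the roots of its characteristic polynomial (counted with algebraic multiplicity), the conclusion follows at once.

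Next I would prove the block-triangular determinant identity. By induction on $l$ it is enough to handle the case $l=2$, i.e.\ a matrix $\left(\begin{smallmatrix} A & 0\\ B & D\end{smallmatrix}\right)$ with $A$ of size $m\times m$ and $D$ of size $r\times r$. The cleanest argument uses the permutation expansion $\det M=\sum_{\sigma\in S_{m+r}}\operatorname{sgn}(\sigma)\prod_{k}M_{k,\sigma(k)}$: any $\sigma$ contributing a nonzero term must satisfy $\sigma(k)\le m$ whenever $k\le m$, because the entries $M_{ij}$ with $i\le m<j$ all vanish; hence such a $\sigma$ restricts to a permutation of $\{1,\dots,m\}$ and a permutation of $\{m+1,\dots,m+r\}$, and grouping the surviving terms accordingly yields $\det A\cdot\det D$. (Alternatively one could use Laplace expansion along the first $m$ rows, or row-reduce when $A$ is invertible and then invoke a density/continuity argument, but the permutation argument avoids case analysis.)

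Since this is a standard result of linear algebra, I do not anticipate a genuine obstacle; the only point needing a little care is keeping the support-of-a-permutation argument honest for several blocks, which the reduction to $l=2$ together with the induction handles cleanly. In the text I would likely just record the statement with this short justification, or simply cite it.
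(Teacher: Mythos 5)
Your proof is correct and standard; the paper itself gives no proof at all, instead introducing this lemma (together with the Kronecker-product eigenvalue lemma that follows it) as a "well-known result about eigenvalues of matrices." Your argument via the characteristic polynomial and the block-triangular determinant identity, reduced by induction to the two-block case and settled by the permutation expansion, is exactly the justification one would cite, so there is nothing to compare against and nothing to fix.
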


\begin{lemma}\label{kronecker}
Suppose $A$ and $B$ are square matrices. Suppose $\lambda$ is an eigenvalue of $A$ with eigenvector $x$ and $\mu$ is an eigenvalue of $B$ with eigenvector $y$. Then $\lambda\mu$ is an eigenvalue of the Kronecker product $A \otimes B$ with eigenvector $x \otimes y$.
\end{lemma}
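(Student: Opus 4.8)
The plan is to use the \emph{mixed-product property} of the Kronecker product, namely that $(A \otimes B)(C \otimes D) = (AC) \otimes (BD)$ whenever the indicated matrix products are defined. This identity is standard; I would either cite it or verify it in one line by writing the $((i,k),(j,l))$-entry of $A \otimes B$ as $a_{ij}b_{kl}$, the $((j,l),(m,p))$-entry of $C \otimes D$ as $c_{jm}d_{lp}$, and observing that summing $a_{ij}b_{kl}c_{jm}d_{lp}$ over $j$ and $l$ factors as $\bigl(\sum_j a_{ij}c_{jm}\bigr)\bigl(\sum_l b_{kl}d_{lp}\bigr)$, which is the $((i,k),(m,p))$-entry of $(AC)\otimes(BD)$.

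Granting this, the lemma is immediate. Regard the eigenvectors $x$ and $y$ as column matrices of the appropriate sizes, and apply the mixed-product property with $C = x$, $D = y$:
\[
(A \otimes B)(x \otimes y) = (Ax) \otimes (By) = (\lambda x) \otimes (\mu y) = \lambda\mu\,(x \otimes y),
\]
where the last step uses bilinearity of $\otimes$ to pull out the scalars. Since $x \neq 0$ and $y \neq 0$, some coordinate $x_i$ and some coordinate $y_j$ are nonzero, so the entry $x_i y_j$ of $x \otimes y$ is nonzero and hence $x \otimes y \neq 0$. Thus $x \otimes y$ is a genuine eigenvector of $A \otimes B$ with eigenvalue $\lambda\mu$.

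I do not anticipate any real obstacle: the only content beyond elementary linear algebra is the mixed-product property, and even that can be dispatched by the coordinate computation above. The one thing to be slightly careful about is the indexing convention for the Kronecker product, so that the ``$x \otimes y$'' appearing here matches the tensor-product ordering used when $C^n_{p,q}$ and $A^n_{p'} \otimes A^n_{q'}$ are written as matrices in the preceding proposition; since both use the same convention, this causes no trouble.
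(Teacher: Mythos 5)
Your proof is correct and is the standard argument for this well-known fact: the mixed-product identity reduces everything to a one-line computation, and you rightly check that $x \otimes y \neq 0$. The paper does not actually supply a proof here (it cites the lemma as well-known), so there is nothing to compare against; your argument fills the gap cleanly.
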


\begin{theorem}\label{thm:typeIIevals}
The eigenvalues of $C^n_{p,q}$ are $\{1, n, \dots, n^{p+q}\}$, and $n^{p+q}$ has multiplicity 1.
\end{theorem}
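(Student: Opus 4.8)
The plan is to combine the block triangular structure from \eqref{blocks}, the Kronecker product identification $(C^n_{p,q})^{p',q'} = A^n_{p'} \otimes A^n_{q'}$, and the complete description of the spectrum of the Type I matrices $A^n_d$ from Theorem~\ref{thm:character} (the eigenvalues of $A^n_d$ are $\{n^{p(\sigma)} \st [\sigma]\in Cl(S_d)\}$). By Lemma~\ref{blockeigenvalues}, the eigenvalues of $C^n_{p,q}$ are the union (with multiplicity) of the eigenvalues of its diagonal blocks $(C^n_{p,q})^{p',q'} = A^n_{p'}\otimes A^n_{q'}$, where $(p',q')$ ranges over pairs with $0\le p'\le p$, $0\le q'\le q$, and $p'-q' = p-q$. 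By Lemma~\ref{kronecker}, the eigenvalues of $A^n_{p'}\otimes A^n_{q'}$ are products $\lambda\mu$ where $\lambda$ runs over eigenvalues of $A^n_{p'}$ and $\mu$ over eigenvalues of $A^n_{q'}$; since each such $\lambda$ is a power $n^a$ with $0 \le a \le p'$ (indeed $a = p(\sigma)$ for some $\sigma\in S_{p'}$, so $1\le a \le p'$ when $p'\ge 1$, and $a=0$ when $p'=0$), and similarly each $\mu = n^b$ with $0 \le b \le q'$, every eigenvalue of $C^n_{p,q}$ is a power of $n$.

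First I would show every power $n^j$ for $0\le j\le p+q$ actually occurs. For the maximal value, take the diagonal block with $p'=p$, $q'=q$: by Lemma~\ref{lemma:repthry} (or Theorem~\ref{thm:character} with $\sigma = 1$), $A^n_p$ has eigenvalue $n^p$ and $A^n_q$ has eigenvalue $n^q$, so $A^n_p\otimes A^n_q$ has eigenvalue $n^{p+q}$. For intermediate powers, it is cleanest to note that each $A^n_d$ with $d\ge 1$ has both $n^d$ (from the identity of $S_d$) and $n^1 = n$ (from a $d$-cycle, which is a single cycle, so $p(\sigma)=1$) as eigenvalues, and $A^n_0 = (1)$ has eigenvalue $n^0=1$; so given $0 \le j \le p+q$ one can pick a block $(p',q')$ and eigenvalues of $A^n_{p'}$, $A^n_{q'}$ whose product is $n^j$ — for instance using $n^{p'}\cdot n^{q'} = n^{p'+q'}$ as $(p',q')$ varies, supplemented by the eigenvalue $n$ in each factor to hit values not of the form $p'+q'$ along the diagonal. (One must check the arithmetic: the set of attainable exponents from block $(p',q')$ contains $\{p'+q'\}$ and, when $p',q'\ge 1$, also $p'+1$, $q'+1$, and $2$; letting $(p',q')$ range over all admissible pairs with $p'-q'=p-q$ covers $\{0,1,\dots,p+q\}$. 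This is a short finite case-check.)

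For the multiplicity~1 claim for $n^{p+q}$, I would argue that $n^{p+q}$ can only arise from the block $(p,q)$ and, within that block, only from the pair of maximal eigenvalues. Concretely: for any admissible block $(p',q')$ with $(p',q')\ne(p,q)$ we have $p'<p$ and $q'<q$ (since $p'-q'=p-q$), so every eigenvalue $n^a\cdot n^b = n^{a+b}$ of $A^n_{p'}\otimes A^n_{q'}$ has $a+b \le p'+q' < p+q$; thus no contribution from non-maximal blocks. Within the block $(p,q)$, an eigenvalue $n^a n^b = n^{p+q}$ forces $a=p$, $b=q$, and by the Perron--Frobenius analysis recalled after Proposition~\ref{prop:simple2} (or directly from Lemma~\ref{lemma:repthry} together with the fact that $A^n_d$ is symmetric hence diagonalizable, with $n^d$ occurring with multiplicity~$1$), the eigenvalue $n^p$ of $A^n_p$ has multiplicity~$1$ and $n^q$ of $A^n_q$ has multiplicity~$1$. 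Hence $n^{p+q}$ has multiplicity $1\cdot 1 = 1$ in $A^n_p\otimes A^n_q$, and multiplicity~$1$ overall in $C^n_{p,q}$.

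The main obstacle is the bookkeeping in the surjectivity step — verifying that as $(p',q')$ ranges over all admissible diagonal blocks and we take products of the available eigenvalues $\{n, \dots, n^{p'}\}$ of $A^n_{p'}$ with $\{n,\dots,n^{q'}\}$ of $A^n_{q'}$ (plus the $n^0$ contributions when a factor is trivial), we obtain exactly every exponent in $\{0,1,\dots,p+q\}$ and nothing larger. The ``nothing larger'' direction is immediate from the inequalities above; the ``every exponent'' direction needs a clean argument, and I would handle the edge cases $p=0$ or $q=0$ (where $C^n_{p,q}$ is Type I, already covered by Section~\ref{type1}) separately. Everything else is a direct assembly of the quoted lemmas.
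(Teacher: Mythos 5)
Your overall structure — block lower triangularity plus the Kronecker identification $(C^n_{p,q})^{p',q'} = A^n_{p'}\otimes A^n_{q'}$ plus the Type I spectrum — is exactly the paper's argument for the containment direction and for the multiplicity-1 claim, and those parts of your proposal are correct.

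The gap is the ``surjectivity'' step you flag as a bookkeeping obstacle: it cannot be completed, because the assertion is false as you have interpreted it. Take $C^2_{1,1} = \bigl(\begin{smallmatrix} 4 & 0 \\ 3 & 1 \end{smallmatrix}\bigr)$: this $2\times 2$ matrix has eigenvalues $\{4,1\}$, so the eigenvalue $n^1 = 2$ does not occur. Similarly $C^2_{2,1}$ has eigenvalues $\{8,4,2\}$ and misses $n^0=1$. The underlying issue is that your inventory of exponents available from a block $(p',q')$ is too optimistic when $p'$ or $q'$ is small: e.g.\ $A^n_1 = (n)$ has the single eigenvalue $n$, so $A^n_1\otimes A^n_1$ contributes only $n^2$, not $\{n,n^2\}$; and a block of the form $(p',0)$ contributes $\{n,\dots,n^{p'}\}$ but never $n^0$ when $p'\ge 1$. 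No matter how you range over admissible $(p',q')$ with $p'-q'=p-q$, you do not in general realize every exponent in $\{0,\dots,p+q\}$. The theorem statement should be read as a containment — every eigenvalue of $C^n_{p,q}$ lies in $\{1,n,\dots,n^{p+q}\}$ — together with the assertion that $n^{p+q}$ has multiplicity $1$; the paper's own proof establishes exactly this and makes no surjectivity claim. Dropping the surjectivity paragraph (and its attendant ``short finite case-check'') leaves a correct proof that matches the paper's.
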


\begin{proof}
Since $C^n_{p,q}$ is block lower triangular, by Lemma \ref{blockeigenvalues} its eigenvalues will be the set of eigenvalues which appear in all of its diagonal blocks, i.e. $\{ \mbox{eigenvalues of } (C^n_{p,q})^{p', q'} \st p' \le p, q' \le q, p-q = p'-q'\}$. Since $(C^n_{p,q})^{p', q'} = A^n_{p'} \otimes A^n_{q'}$, its set of eigenvalues will be $\{ \lambda_i \mu_j \}$, where $\lambda_1, \lambda_2, \dots, \lambda_M$ are the eigenvalues of $A^n_{p'}$ and $\mu_1, \mu_2, \dots, \mu_N$ are the eigenvalues of $A^n_{q'}$. From section \ref{type1}, we know that the eigenvalues of $A^n_{d}$ (with multiplicity) are $\{n^{p(\sigma)} \st [\sigma] \in Cl(S_d)\}$, and $n^d$ is the unique maximal eigenvalue of $A^n_d$. Thus, the eigenvalues of $C^n_{p,q}$ are all powers of $n$, and $n^{p+q}$ is the maximal eigenvalue of $C^n_{p,q}$. It is also clear that the multiplicity of $n^{p+q}$ is $1$.
\end{proof}

While we have a complete description of the eigenvalues for the Type II matrices, we do not have a classification of the eigenvectors. We conjecture that each Type II matrix is diagonalizable.
We do have the following result, which we state without proof:
\begin{proposition}\label{posgln}
The eigenvalue $n^{p+q}$ of $C^n_{p,q}$ has an eigenvector consisting of positive entries, and it is the only positive eigenvector of $C^n_{p,q}$.
\end{proposition}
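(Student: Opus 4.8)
The plan is to establish the two assertions by separate arguments: existence of a positive $n^{p+q}$-eigenvector by a representation-theoretic computation modelled on the proof of Lemma~\ref{lemma:repthry}, with tensor powers of the natural module replaced by mixed tensors; and uniqueness by combining the block-triangular shape \eqref{blocks} with the Perron--Frobenius theorem. Throughout, recall $n\geq 2$.

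\textbf{Existence.} Fix $k\gg 0$, let $V=V^{nk}$ be the natural $\mathfrak{gl}(nk)$-module, and set $T=V^{\otimes p}\otimes (V^*)^{\otimes q}$. For $nk$ large the center of $\mathfrak{gl}(nk)$ forces the isotypic components of $T$ to be among the $V^{nk}_{\lambda,\mu}$ with $|\lambda|-|\mu|=p-q$, $|\lambda|\le p$, $|\mu|\le q$ --- exactly the index set of $\mathcal{C}^n_{p,q}$; write $T\cong\bigoplus_{(\lambda,\mu)}M_{\lambda,\mu}\,V^{nk}_{\lambda,\mu}$. Each $M_{\lambda,\mu}\ge 1$: if $|\lambda|=p-r$, $|\mu|=q-r$, the $r$-fold contraction is a $\mathfrak{gl}(nk)$-equivariant surjection $T\twoheadrightarrow V^{\otimes(p-r)}\otimes(V^*)^{\otimes(q-r)}$ whose target contains $V^{nk}_{\lambda,\mu}$ with positive multiplicity (ordinary Schur--Weyl in each factor, together with $V^{nk}_{\lambda,0}\otimes V^{nk}_{0,\mu}\supseteq V^{nk}_{\lambda,\mu}$ in the stable range), and complete reducibility pulls this back to $T$. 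Now restrict $T$ to $\mathfrak{gl}(k)$ along the signature $(n,0,0)$ embedding: since $\restr{V}{\mathfrak{gl}(k)}\cong (V^k)^{\oplus n}$ and $\restr{V^*}{\mathfrak{gl}(k)}\cong((V^k)^*)^{\oplus n}$, we get $\restr{T}{\mathfrak{gl}(k)}\cong n^{p+q}\bigl((V^k)^{\otimes p}\otimes((V^k)^*)^{\otimes q}\bigr)\cong\bigoplus_{(\lambda',\mu')}n^{p+q}M_{\lambda',\mu'}\,V^k_{\lambda',\mu'}$. Restricting instead term by term and comparing the multiplicity of $V^k_{\lambda',\mu'}$ on both sides gives $\sum_{(\lambda,\mu)}(C^n_{p,q})_{(\lambda',\mu'),(\lambda,\mu)}\,M_{\lambda,\mu}=n^{p+q}M_{\lambda',\mu'}$; that is, the strictly positive vector $M=(M_{\lambda,\mu})$ satisfies $C^n_{p,q}M=n^{p+q}M$.

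\textbf{Uniqueness.} Here I would use the block decomposition \eqref{blocks}, whose diagonal blocks are the Kronecker products $A^n_{p-m+1}\otimes A^n_{q-m+1}$. Thus the top block $D:=A^n_p\otimes A^n_q$ has spectral radius $n^{p+q}$ (Lemma~\ref{kronecker} and Lemma~\ref{lemma:repthry}) and is primitive --- a Kronecker product of primitive matrices is primitive, and each $A^n_d$ is primitive by Proposition~\ref{prop:simple2}. Suppose $w>0$ satisfies $C^n_{p,q}w=\mu w$, and write $w=(w_1,w_2,\dots)$ blockwise as in \eqref{blocks}. Lower-triangularity of $C^n_{p,q}$ forces $Dw_1=\mu w_1$; since $w_1>0$ and $D$ is primitive, Perron--Frobenius forces $\mu=\rho(D)=n^{p+q}$. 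As $n^{p+q}$ is a simple eigenvalue of $C^n_{p,q}$ (Theorem~\ref{thm:typeIIevals}), $w$ is then a scalar multiple of the eigenvector $M$ produced above; since $M$ is strictly positive, the positive eigenvectors of $C^n_{p,q}$ are exactly the positive scalar multiples of $M$.

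I expect the main obstacle to be pinning down the first step cleanly: that the mixed tensor module $V^{\otimes p}\otimes(V^*)^{\otimes q}$ has a stable decomposition supported \emph{exactly} on the index set of $\mathcal{C}^n_{p,q}$ with \emph{all} multiplicities strictly positive. Everything afterwards is routine --- compatibility of Kronecker products with eigenvalues, a single application of Perron--Frobenius to the one primitive block $D$, and the already-established fact that $n^{p+q}$ is a simple eigenvalue of $C^n_{p,q}$. (An alternative route to existence, avoiding mixed tensors, would build the eigenvector from \eqref{blocks} by back-substitution starting from $v_1=(\dim H_\lambda\cdot\dim H_\mu)_{|\lambda|=p,\,|\mu|=q}$; but one would then have to show separately that every sub-diagonal block row of \eqref{blocks} is nonzero --- equivalently, that every vertex of the Type~II quiver outside the top block $(p,q)$ has an arrow into a strictly larger block --- which is itself a positivity statement about branching multiplicities and seems no easier.)
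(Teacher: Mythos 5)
The paper states Proposition~\ref{posgln} without proof (``We do have the following result, which we state without proof''), so there is no author's argument to compare against; your proposal supplies one, and I believe it is correct.

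Your existence argument is a clean generalisation of the paper's proof of Lemma~\ref{lemma:repthry}: you replace $V^{\otimes d}$ by the mixed tensor $T=V^{\otimes p}\otimes(V^*)^{\otimes q}$, observe that the stable decomposition of $T$ over $\mathfrak{gl}(nk)$ is supported precisely on the index set of $\mathcal{C}^n_{p,q}$ (the central-character constraint $|\lambda|-|\mu|=p-q$ together with $|\lambda|\le p$, $|\mu|\le q$), and verify strict positivity of every multiplicity $M_{\lambda,\mu}$ by exhibiting $V^{nk}_{\lambda,\mu}$ inside the image of an $r$-fold contraction, which is a direct summand of $T$ by complete reducibility. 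The double-counting of $\restr{T}{\mathfrak{gl}(k)}$ then gives $C^n_{p,q}M=n^{p+q}M$ exactly as in Lemma~\ref{lemma:repthry}. The uniqueness argument is also sound: since the top diagonal block of \eqref{blocks} is $A^n_p\otimes A^n_q$, which is primitive (a Kronecker product of primitive nonnegative matrices is primitive --- once $A^k>0$ one has $A^{k'}>0$ for all $k'\ge k$, so take the larger of the two exponents) with spectral radius $n^{p+q}$, Perron--Frobenius applied to that single block pins down the eigenvalue of any positive eigenvector of the block-lower-triangular $C^n_{p,q}$, and simplicity of $n^{p+q}$ from Theorem~\ref{thm:typeIIevals} finishes it. Two small remarks: you should say explicitly that the eigenvalue multiplicity in Theorem~\ref{thm:typeIIevals} is algebraic, hence bounds the geometric multiplicity, so the eigenspace is one-dimensional; and your ``existence'' construction actually renders the appeal to Theorem~\ref{thm:typeIIevals} optional in the uniqueness step, since you could instead invoke Perron--Frobenius uniqueness for $D$ to get $w_1$ determined up to scalar and then propagate down the blocks by the invertibility of $\bigl(n^{p+q}I-(C^n_{p,q})^{p',q'}\bigr)$ for $p'<p$, but using the already-proved Theorem~\ref{thm:typeIIevals} is simpler. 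Your closing remark about the back-substitution alternative and its attendant positivity obligation for the off-diagonal blocks is apt; it is precisely the difficulty that the mixed-tensor argument sidesteps.
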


While each $C^n_{p,q}$ will have a unique positive eigenvector, it is also apparent that there will be other eigenvectors with non-negative entries, and therefore not all of the associated ideals will be maximal. This is also clear from Proposition \ref{clsgl}: The irreducible c.l.s. $\mathcal{C}^n_{p,q}$ can be partitioned into sets, $\mathcal{C}^n_m$, indexed by the integers, where $\mathcal{C}^n_m = \{ \mathcal{C}^n_{p,q} \st p - q = m\}$. Each $\mathcal{C}^n_m$ gives a proper ascending chain of irreducible c.l.s. with a unique minimal element. For example, $\mathcal{C}^n_0$ will consist of the proper ascending chain $\mathcal{C}^n_{0,0} \subseteq \mathcal{C}^n_{1,1} \subseteq \mathcal{C}^n_{2,2} \subseteq \dots$. Taking annihilators $I(C)$ for $C \in \mathcal{C}^n_m$, we obtain a proper descending chain of prime ideals. This shows that only the annihilator of the minimal c.l.s. in $\mathcal{C}^n_m$ can be maximal. For $m \ge 0$, the minimal c.l.s. in $\mathcal{C}^n_m$ is $\mathcal{C}^n_{m, 0} = \mathcal{A}^n_m$. For $m < 0$, the minimal c.l.s. in $\mathcal{C}^n_m$ is $\mathcal{C}^n_{0, m} = \mathcal{B}^n_m$. In both cases, we know the associated ideal is maximal by our analysis of Type I matrices in the previous section. Therefore, we see that the prime ideal $I(C)$ associated to a Type II c.l.s. $C$ will be maximal only when the c.l.s. $C$ is minimal, which occurs only when the c.l.s. $C$ is of Type I.

\section{Quivers for $\mathfrak{sp}(n^{\infty})$ and $\mathfrak{so}(n^{\infty})$}\label{SPQUIVERS}

The matrices for the irreducible coherent local systems of $\mathfrak{sp}(n^{\infty})$ and $\mathfrak{so}(n^{\infty})$ exhibit behavior similar to the Type II matrices of $\mathfrak{sl}(n^\infty)$. Let $D^n_p$ denote the matrix for the coherent local system $\mathcal{D}^n_p$ of $\mathfrak{sp}(n^{\infty})$ (which is only defined if $n$ is even).
Let $E^n_p$ denote the matrix for the coherent local system $\mathcal{E}^n_p$ of $\mathfrak{so}(n^{\infty})$. 
For example, $D^{2}_2 = \left( \begin{array}{ccc} 
3 & 1 & 0 \\
1 & 3 & 0\\
1 & 2 & 1\end{array} \right)$ and $D^{2}_3 = \left( \begin{array}{cccc} 
4 & 2 & 0 & 0\\
2 & 6 & 2 & 0\\
0 & 2 & 4 & 0 \\
2 & 6 & 4 & 2\end{array} \right)$.
The matrices for $\mathfrak{so}(n^{\infty})$ are similar: $E^{2}_2 = \left( \begin{array}{ccc} 
3 & 1 & 0 \\
1 & 3 & 0\\
2 & 1 & 1\end{array} \right)$ and $E^{2}_3 = \left( \begin{array}{cccc} 
4 & 2 & 0 & 0\\
2 & 6 & 2 & 0\\
0 & 2 & 4 & 0 \\
4 & 6 & 2 & 2\end{array} \right)$.
From the examples above, the diagonal blocks of $D^n_p$ and $E^n_p$ seem to be Type I matrices.  We will see this is in fact true.

Note that by the branching laws in Section \ref{BRANCH}, both $D^n_p$ and $E^n_p$ are block lower triangular.  Suppose $X \in \{D, E\}$. We can write $X^n_p$ as a matrix of blocks as follows:

\begin{eqnarray}\label{blockssp}
X^n_p = \left( \begin{array}{ccccc} (X^n_p)^p &  0  & 0 & \ldots & 0\\
X_{2,1}  & (X^n_p)^{p-2} & 0  & \ldots & 0\\
X_{3,1}  & X_{3,2} & (X^n_p)^{p-4}  & \ldots & 0\\
\vdots & \vdots & \vdots & \ddots & \vdots\\
X_{l,1} & X_{l,2} & X_{l,3} &\ldots & (X^n_p)^{p-2l}
 \end{array} \right)
\end{eqnarray}
where $l = \lfloor \frac{p}{2} \rfloor $. Each diagonal block $(X^n_p)^{p'}$ is a square matrix which gives the branching multiplicities for partitions $\lambda$ and $\lambda'$ with $|\lambda| = |\lambda'| = p'$, where $p' \le p$ and $p' \equiv p \mod 2$. We call such a block $(D^n_p)^{p'}$ for $\mathfrak{sp}(n^{\infty})$ (respectively, $(E^n_p)^{p'}$ for $\mathfrak{so}(n^{\infty})$).

\begin{lemma}\label{blockspn}
The diagonal block $(E^n_p)^{p'} = A^n_{p'}$.
This is further equal to $(D^n_p)^{p'} $ if $n$ is even. 

\end{lemma}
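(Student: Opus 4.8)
The plan is to show that the diagonal block $(X^n_p)^{p'}$ (for $X \in \{D, E\}$) records exactly the branching multiplicities of $\restr{\V{\lambda}{n^k}}{\mf g(k)}$ for partitions $\lambda, \lambda'$ of the \emph{same} size $p'$, and that in this ``top-size'' regime the extra combinatorial data coming from the symplectic/orthogonal branching laws (the factors $e^\lambda_{\mu,\nu}$, $g^\lambda_{\mu,\nu}$ involving $c^\lambda_{\gamma, (2\delta)^T}$ or $c^\lambda_{\gamma, (2\delta)}$) degenerates, leaving precisely the $\mf{gl}$-type generalized Littlewood--Richardson coefficients that define $A^n_{p'}$. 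Concretely, I would work from Proposition~\ref{branchspn} (resp.\ Proposition~\ref{branchson}), which expresses the multiplicity of $U^{2k}_{\lambda'}$ in $\restr{U^{2nk}_\lambda}{\mf{sp}(2k)}$ as $\sum E^\lambda_{\beta_1,\dots,\beta_n} F^{\lambda'}_{\beta_1,\dots,\beta_n}$ (using $G$ in the orthogonal case), and then impose $|\lambda| = |\lambda'| = p'$.

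The first step is the size bookkeeping. Recall from the remarks after Propositions~\ref{branchsp2} and \ref{branchspn} that whenever $E^\lambda_{\beta_1,\dots,\beta_n} \ne 0$ and $F^{\lambda'}_{\beta_1,\dots,\beta_n} \ne 0$ one has $|\lambda'| \le |\beta_1| + \dots + |\beta_n| \le |\lambda|$ (this follows by iterating the single-step inequality $|\lambda'| \le |\mu| + |\nu| \le |\lambda|$ coming from the definitions of $e^\lambda_{\mu,\nu}$ and $f^{\lambda'}_{\mu,\nu}$, since $e^\lambda_{\mu,\nu} = \sum c^\gamma_{\mu,\nu} c^\lambda_{\gamma,(2\delta)^T}$ forces $|\gamma| = |\mu| + |\nu| \le |\lambda|$). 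Hence on the diagonal block, where $|\lambda| = |\lambda'| = p'$, the only surviving terms have $|\beta_1| + \dots + |\beta_n| = p'$, which forces $\delta = \emptyset$ at every stage (so the factor $c^\lambda_{\gamma, (2\delta)^T}$ collapses to $c^\lambda_{\gamma, \emptyset} = \delta_{\lambda, \gamma}$) and also forces the analogous degeneration in $f^{\lambda'}$ (where $c^\mu_{\alpha,\gamma'}$ and $c^\nu_{\beta,\gamma'}$ force $\gamma' = \emptyset$). This is the same mechanism already used in the Type~II proof of Proposition~\ref{posgln}'s preceding proposition, where $c^{(\lambda,\mu)}_{(\alpha^+,\alpha^-),(\beta^+,\beta^-)}$ was shown to reduce to $c^\lambda_{\alpha^+,\beta^+} c^\mu_{\alpha^-,\beta^-}$ when $|\lambda| = |\alpha^+| + |\beta^+|$; I would cite that computation as a model.

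The second step is to identify what is left. After the degeneration, $E^\lambda_{\beta_1,\dots,\beta_n}$ reduces to the generalized Littlewood--Richardson coefficient $c^\lambda_{\beta_1,\dots,\beta_n}$ (the iterated $e$'s become iterated $c$'s with the $\delta$'s trivial), and similarly $F^{\lambda'}_{\beta_1,\dots,\beta_n}$ reduces to $c^{\lambda'}_{\beta_n,\dots,\beta_1} = c^{\lambda'}_{\beta_1,\dots,\beta_n}$ (using symmetry of the generalized coefficient in its lower indices, as noted in Section~\ref{type1}). Thus the $(\lambda,\lambda')$-entry of the diagonal block is $\sum_{\beta_1,\dots,\beta_n} c^\lambda_{\beta_1,\dots,\beta_n} c^{\lambda'}_{\beta_1,\dots,\beta_n} = a^n_{\lambda,\lambda'}$ by \eqref{adef}, which is exactly the $(\lambda,\lambda')$-entry of $A^n_{p'}$. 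For the orthogonal case the argument is identical, with $g^\lambda_{\mu,\nu} = \sum c^\gamma_{\mu,\nu} c^\lambda_{\gamma,(2\delta)}$ in place of $e$; the only difference, $(2\delta)$ versus $(2\delta)^T$, is immaterial once $\delta = \emptyset$. Since for $n$ even $U^{2k}_{\lambda'}$ in $\restr{U^{2nk}_\lambda}{\mf{sp}(2k)}$ and $W^{2k}_{\lambda'}$ in $\restr{W^{2nk}_\lambda}{\mf{so}(2k)}$ then both yield the same block $A^n_{p'}$, we get $(D^n_p)^{p'} = (E^n_p)^{p'} = A^n_{p'}$.

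The main obstacle I anticipate is being careful with the iterated branching laws of Propositions~\ref{branchspn} and \ref{branchson}: one must check that the degeneration $\delta = \emptyset$ (and $\gamma' = \emptyset$) propagates correctly through \emph{all} $n-1$ stages of the iteration, not just the first, and that the intermediate partitions $\alpha_i$ are themselves forced to have sizes summing correctly so that each intermediate $e$ (resp.\ $g$, $f$) also degenerates. This is a bounded but slightly fiddly induction on $n$; I would set it up as a lemma stating that under the constraint $|\beta_1| + \dots + |\beta_n| = |\lambda|$, every factor in the definitions of $E$ and $F$ is forced into its ``LR-only'' form, and then the identification with $a^n_{\lambda,\lambda'}$ is immediate. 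Everything else is routine manipulation of Littlewood--Richardson coefficients.
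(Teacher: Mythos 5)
Your proposal is correct and follows essentially the same route as the paper: impose $|\lambda|=|\lambda'|=p'$, observe that the size constraints force $\delta=\emptyset$ (and $\gamma'=\emptyset$) at every stage so that $e^\lambda_{\mu,\nu}$ and $f^{\lambda'}_{\mu,\nu}$ (resp.\ $g^\lambda_{\mu,\nu}$) collapse to ordinary Littlewood--Richardson coefficients, and then identify the resulting sum with $a^n_{\lambda,\lambda'}$. The ``fiddly induction'' you anticipate is exactly what the paper carries out by writing down the chain of size equalities $|\lambda|=|\alpha_1|+|\beta_1|$, $|\alpha_1|=|\alpha_2|+|\beta_2|$, etc., so your concern is well-placed but handled the same way.
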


\begin{proof}
We provide a proof for $(D^n_p)^{p'}$ and note that the proof for $(E^n_p)^{p'}$ is exactly the same. Fix partitions $\lambda$ and $\lambda'$ with $|\lambda| = |\lambda'| = p'$. The entry of $(D^n_p)^{p'}$ corresponding to row $\lambda'$ and column $\lambda$ will be the multiplicity of $U^{n^k}_{\lambda'}$ in $U^{n^{k+1}}_{\lambda}$ for $k \gg 0$. Recall the following branching rule (Proposition \ref{branchspn}):

\begin{eqnarray*}
\restr{U^{2nk}_{\lambda}}{\mathfrak{sp}(2k)} \cong \bigoplus_{\beta_1, \beta_2, \dots, \beta_n, \lambda'} E^{\lambda}_{\beta_1, \beta_2, \dots, \beta_n} F^{\lambda'}_{\beta_1, \beta_2, \dots, \beta_n} U^{2k}_{\lambda'}
\end{eqnarray*}
where:
\begin{align*} E^{\lambda}_{\beta_1, \beta_2, \dots, \beta_n} &  = \sum_{\alpha_1, \alpha_2, \dots, \alpha_{n-2}} e^{\lambda}_{\alpha_1, \beta_1} e^{\alpha_1}_{\beta_2, \alpha_2} \cdots e^{\alpha_{n-3}}_{\beta_{n-2}, \alpha_{n-2}}e^{\alpha_{n-2}}_{\beta_{n-1}, \beta_n}  \quad \mbox{ and } \\
F^{\lambda'}_{\beta_1, \beta_2, \dots, \beta_n} & = \sum_{\alpha_1, \alpha_2, \dots, \alpha_{n-2}} f^{\alpha_1}_{\beta_1, \beta_2} f^{\alpha_2}_{\alpha_1, \beta_3} \cdots f^{\alpha_{n-2}}_{\alpha_{n-3}, \beta_{n-1}} f^{\lambda'}_{\alpha_{n-2}, \beta_n} .
\end{align*}

Since $|\lambda'| = |\beta_1| + |\beta_2| + \dots + |\beta_n| = |\lambda|$, we must have that $|\lambda| = |\alpha_1| + |\beta_1|$, $|\alpha_1|= |\alpha_2| + |\beta_2|$, $\dots$, $|\alpha_{n-2}| = |\beta_{n-1}| + |\beta_n|$ in $E^{\lambda}_{\beta_1, \beta_2, \dots, \beta_n}$ and $|\alpha_1| = |\beta_1| + |\beta_2|$, $|\alpha_2 | = |\alpha_1| + |\beta_3|$, $\dots$, $|\lambda'| = |\alpha_n-2| + |\beta_n|$ in $F^{\lambda'}_{\beta_1, \beta_2, \dots, \beta_n}$. 

Recall the following definitions:
\[ e^{\lambda}_{\mu, \nu} = \sum_{\delta, \gamma} c^{\gamma}_{\mu, \nu} c^{\lambda}_{\gamma, (2\delta)^{T}}, \quad 
 f^{\lambda'}_{\mu, \nu} = \sum_{\alpha, \beta, \gamma'} c^{\lambda'}_{\alpha, \beta} c^{\mu}_{\alpha, \gamma'} c^{\nu}_{\beta, \gamma'} .\]

If $|\lambda'| = |\mu| + |\nu| = |\lambda'|$, then $e^{\lambda}_{\mu, \nu} = c^{\lambda}_{\mu, \nu}$ and $f^{\lambda'}_{\mu, \nu} = c^{\lambda'}_{\mu, \nu}$. Therefore, the entry of $(D^n_p)^{p'}$ in row $\lambda'$ and column $\lambda$ is:
\begin{multline*}E^{\lambda}_{\beta_1, \beta_2, \dots, \beta_n}F^{\lambda'}_{\beta_1, \beta_2, \dots, \beta_n} = \\
(\sum_{\alpha_1, \alpha_2, \dots, \alpha_{n-2}} c^{\lambda}_{\alpha_1, \beta_1} c^{\alpha_1}_{\beta_2, \alpha_2} \cdots c^{\alpha_{n-3}}_{\beta_{n-2}, \alpha_{n-2}}c^{\alpha_{n-2}}_{\beta_{n-1}, \beta_n})
(\sum_{\alpha_1, \alpha_2, \dots, \alpha_{n-2}} c^{\alpha_1}_{\beta_1, \beta_2} c^{\alpha_2}_{\alpha_1, \beta_3} \cdots c^{\alpha_{n-2}}_{\alpha_{n-3}, \beta_{n-1}} c^{\lambda'}_{\alpha_{n-2}, \beta_n}) = \\
(c^{\lambda}_{\beta_1, \beta_2, \dots, \beta_n}) (c^{\lambda'}_{\beta_n, \beta_{n-1}, \dots, \beta_1}) = (c^{\lambda}_{\beta_1, \beta_2, \dots, \beta_n}) (c^{\lambda'}_{\beta_1, \beta_2, \dots, \beta_n}) = a^n_{\lambda, \lambda'} .
\end{multline*}

This proves that $(D^n_p)^{p'} = A^n_{p'}$.
\end{proof}

As with Type II matrices, we have:

\begin{theorem}\label{thm:DEevals}
The eigenvalues for $E^n_p$ and $D^n_p$ ($n$ even) are $\{1, n, \dots, n^p\}$.
The eigenvalue $n^p$ has multiplicity 1.
\end{theorem}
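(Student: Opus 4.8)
The plan is to follow the proof of Theorem~\ref{thm:typeIIevals} for the Type~II matrices almost verbatim, now using the block decomposition \eqref{blockssp} of $D^n_p$ and $E^n_p$.

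Write $X \in \{D, E\}$ (with $n$ even in the case $X = D$). As observed just before Lemma~\ref{blockspn}, the branching laws of Propositions~\ref{branchspn} and~\ref{branchson} make $X^n_p$ block lower triangular, with diagonal blocks $(X^n_p)^{p'}$ indexed by the integers $p' \le p$ satisfying $p' \equiv p \bmod 2$. By Lemma~\ref{blockspn} each such diagonal block is the Type~I matrix $A^n_{p'}$, so by Lemma~\ref{blockeigenvalues} the eigenvalues of $X^n_p$, counted with multiplicity, are obtained by collecting together the eigenvalues of all the blocks $A^n_{p'}$.

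Next I would plug in the spectral description of the Type~I matrices from Section~\ref{type1}: by Theorem~\ref{thm:character}, the eigenvalues of $A^n_{p'}$, with multiplicity, are exactly the numbers $n^{p(\sigma)}$ as $[\sigma]$ ranges over the conjugacy classes $Cl(S_{p'})$. In particular they are powers of $n$, each at most $n^{p'} \le n^p$, and $n^{p'}$ occurs with multiplicity $1$, coming from $\sigma = 1$ (this is also Lemma~\ref{lemma:repthry}). Hence every eigenvalue of $X^n_p$ is a power of $n$ that is at most $n^p$. Finally, any block $A^n_{p'}$ with $p' < p$ and $p' \equiv p \bmod 2$ has $p' \le p - 2$, so (since $n \ge 2$) all of its eigenvalues are at most $n^{p-2} < n^p$; thus the value $n^p$ arises only from the block $A^n_p$, in which it has multiplicity $1$. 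Therefore $n^p$ is the maximal eigenvalue of $X^n_p$ and its multiplicity is $1$.

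There is no genuine obstacle here: the theorem is a direct consequence of Lemma~\ref{blockspn}, Lemma~\ref{blockeigenvalues}, and the complete analysis of the eigenvalues of $A^n_d$ carried out in Section~\ref{type1}. The only step requiring a word of care is ruling out a contribution of $n^p$ from a strictly smaller diagonal block, which is immediate from $p' \le p - 2$ together with $n \ge 2$; and if one wishes the eigenvalue set to be literally $\{1, n, \dots, n^p\}$, one should note the (harmless) parity point that $1 = n^0$ appears as an eigenvalue precisely when $p$ is even, via the block $A^n_0 = (1)$ — exactly as in the Type~II statement.
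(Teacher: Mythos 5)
Your proof is correct and follows the same route as the paper's (which simply cites Lemmas~\ref{blockeigenvalues} and~\ref{blockspn}): block lower triangularity, identification of the diagonal blocks with Type~I matrices, and the spectral description of $A^n_{p'}$ from Section~\ref{type1}. The extra remarks about why $n^p$ cannot arise from a block with $p' \le p-2$ and the parity caveat about $1 = n^0$ are helpful clarifications of the same argument rather than a different approach.
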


\begin{proof}
This follows from Lemmas \ref{blockeigenvalues} and \ref{blockspn}.
\end{proof}

 We now have a description of the eigenvalues of the matrices associated to irreducible coherent local systems for $\mathfrak{sp}(n^{\infty})$ and $\mathfrak{so}(n^{\infty})$, but again we do not have a classification of the eigenvectors. Similarly to the situation for Type II matrices, we do know that the following is true:

\begin{proposition}\label{posspn}
Suppose $X \in \{D,E\}$. The eigenvalue $n^p$ of $X^n_p$ has an eigenvector consisting of positive entries, and it is the only positive eigenvector of $X^n_p$.
\end{proposition}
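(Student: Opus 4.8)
The plan is to exploit the block lower-triangular shape \eqref{blockssp} of $X^n_p$ together with Lemma~\ref{blockspn}, which identifies its diagonal blocks with the Type~I matrices $A^n_p, A^n_{p-2},\dots$, whose spectra and Perron data are completely understood from Section~\ref{type1}. I will prove the statement for a \emph{right} eigenvector (matching the worked example $D^2_2$, whose eigenvalue-$4$ eigenvector is $(1,1,1)$); note that a left eigenvector for $n^p$ generally has zero entries, so this is the intended convention. The case $p=1$ is immediate ($X^n_1$ is $1\times1$), so I assume $p\ge2$.

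\emph{Existence of a positive eigenvector.} I write a candidate eigenvector in block form $v=(v_1,\dots,v_{l+1})$ matching \eqref{blockssp}, and impose $X^n_p v = n^p v$. The first block equation is $A^n_p v_1 = n^p v_1$; since $n^p$ is the Perron eigenvalue of the irreducible primitive matrix $A^n_p$ (Lemma~\ref{lemma:repthry}, Theorem~\ref{thm:character}, Proposition~\ref{prop:simple2}) it is simple with a strictly positive eigenvector $w_0$, so I set $v_1=w_0$. For $2\le i\le l+1$ the $i$-th block equation reads $(n^pI-A^n_{p-2(i-1)})\,v_i=\sum_{j<i}X_{i,j}\,v_j$. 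By Theorem~\ref{thm:character} the spectral radius of $A^n_{p-2(i-1)}$ is $n^{p-2(i-1)}<n^p$, so $n^pI-A^n_{p-2(i-1)}$ is invertible with inverse $n^{-p}\sum_{m\ge0}(A^n_{p-2(i-1)}/n^p)^m$; as $A^n_{p-2(i-1)}$ is primitive and strongly connected, some term of this Neumann series is strictly positive, hence the inverse is entrywise strictly positive. The off-diagonal blocks $X_{i,j}$ are non-negative (they record branching multiplicities), so inductively $v_i=(n^pI-A^n_{p-2(i-1)})^{-1}\big(\sum_{j<i}X_{i,j}v_j\big)$ is well defined and $\ge0$, and is \emph{strictly} positive as soon as $\sum_{j<i}X_{i,j}v_j\neq0$, i.e.\ as soon as some block $X_{i,j}$ with $j<i$ is nonzero. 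Granting this, $v>0$ is a positive eigenvector for $n^p$.

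\emph{The positivity input (the key step).} I must show that for each $q$ with $0\le q<p$ and $q\equiv p\pmod 2$ some $\V{\nu}{n^k}$ with $|\nu|=q$ occurs in $\restr{\V{\lambda}{n^{k+1}}}{\mf g_k}$ for some $\lambda$ with $|\lambda|>q$ (and $k\gg0$); this is exactly the assertion that the block row of $X^n_p$ indexed by size $q$ has a nonzero off-diagonal block, and it is here that irreducibility enters. The c.l.s.\ $C=\mathcal D^n_p$ (resp.\ $\mathcal E^n_p$) is irreducible by Proposition~\ref{clssp}, so by Lemma~\ref{Zh1prime} $I(C)$ is the annihilator of a locally simple module $X=\varinjlim X_k$ with $X_k\cong\V{\kappa}{n^k}$ for $k\gg0$; since restriction never increases $|\,\cdot\,|$ while $C$ contains partitions of size $p$, necessarily $|\kappa|=p$, and every member of $C_k$ is an irreducible summand of $\restr{\V{\kappa}{n^l}}{\mf g_k}$ for $l\gg0$. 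Applying this to $\nu=(q)\in C$ and decomposing the restriction one level at a time produces a chain $\nu=\mu_k,\mu_{k+1},\dots,\mu_l=\kappa$ with $\V{\mu_m}{n^m}$ occurring in $\restr{\V{\mu_{m+1}}{n^{m+1}}}{\mf g_m}$ and $|\mu_m|$ non-decreasing from $q$ to $p$; taking the largest $m_0$ with $|\mu_{m_0}|=q$ gives $|\mu_{m_0+1}|>q$, and by the functoriality of the stabilising bijections (Proposition~\ref{prop:A}) the corresponding entry of $X^n_p$ is nonzero, as needed.

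\emph{Uniqueness, and the main difficulty.} If $w>0$ satisfies $X^n_p w=\mu w$ for a scalar $\mu$, its first block solves $A^n_p w_1=\mu w_1$ with $w_1>0$, so Perron--Frobenius for the irreducible matrix $A^n_p$ forces $\mu=n^p$ and $w_1\in\RR_{>0}\,w_0$; the remaining block equations then determine $w_2,\dots,w_{l+1}$ uniquely from $w_1$ exactly as above, so $w$ is a positive scalar multiple of $v$. Hence $v$ is, up to scalar, the only positive eigenvector of $X^n_p$, and it belongs to $n^p$. The main obstacle is the positivity input of the previous paragraph: all the linear algebra is routine non-negative-matrix bookkeeping, but ruling out a block row of $X^n_p$ that vanishes off the diagonal genuinely requires irreducibility of the coherent local system — equivalently, that the size-$p$ block is the unique terminal strongly connected component of the associated quiver and is reachable from every vertex — together with the translation back to branching laws furnished by the locally simple module of Lemma~\ref{Zh1prime}.
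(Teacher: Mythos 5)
The paper states this proposition, like its $\mathfrak{gl}$-analogue Proposition~\ref{posgln}, explicitly without proof, so there is no argument of the authors' to compare with; the question is only whether yours is correct, and it is. The plan --- block lower-triangular form \eqref{blockssp}, identification of the diagonal blocks with Type~I matrices $A^n_{p'}$ (Lemma~\ref{blockspn}), Perron--Frobenius on the top block, and inductive propagation of positivity down the block rows using the strictly positive Neumann-series inverse of $n^p I - A^n_{p'}$ for $p' < p$ --- is the natural one and is carried out correctly. Your preliminary observation that the proposition must concern a \emph{right} (column) eigenvector is also apt and worth making explicit, since the last diagonal block of $X^n_p$, being $1\times 1$ with entry $1$ or $n$, forces the last coordinate of any left $n^p$-eigenvector to vanish; this matches the paper's usage $A^n_d v_d = n^d v_d$ in Lemma~\ref{lemma:repthry}.

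On the ``positivity input'': invoking Lemma~\ref{Zh1prime} to produce a locally simple module realising $C$ and then tracing a chain of restrictions does close the gap, but it is heavier than needed and obscures the fact that the required nonvanishing can be read directly off the branching formulas of Section~\ref{BRANCH}. With $\beta_1 = \lambda'$ a partition of size $q$ and $\beta_2 = \cdots = \beta_n = (0)$, the $\delta = (1)$ term inside $e^\lambda_{(0),\lambda'} = \sum_{\delta}c^\lambda_{\lambda',(2\delta)^T}$ contributes $c^{\lambda}_{\lambda',(1,1)}$, which equals $1$ for $\lambda = (\lambda'_1,\dots,\lambda'_r,1,1)$ of size $q+2$, while the corresponding $F$-factor (with every $\alpha_i = \lambda'$ and $\gamma' = (0)$) is $1$; the computation for $\mathcal E^n_p$ is identical with $(2\delta)$ replacing $(2\delta)^T$. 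Thus the block $X_{i,i-1}$ immediately below each diagonal block is already nonzero, which is all your induction needs, and no appeal to irreducibility of the c.l.s.\ is required. This is only a simplification, not a correction: your version is sound.
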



As in the Type II case, while there is only one positive eigenvector, there is not a unique eigenvector consisting of nonnegative entries, and therefore we know that the associated ideals will not be maximal. More precisely, suppose $\mathcal{X} = \mathcal{D}$ or $\mathcal{E}$. From Proposition \ref{clssp}, the irreducible c.l.s. $\mathcal{X}^n_p$ can be partitioned into two disjoint sets $\mathcal{X}^n_0$, consisting of $\mathcal{X}^n_p$ for $p$ even, and $\mathcal{X}^n_1$, consisting of $\mathcal{X}^n_p$ for $p$ odd. In each case we have a proper ascending chain of irreducible c.l.s. with unique minimal element, and therefore we have a proper descending chain of prime ideals with unique maximal element. The minimal c.l.s. of $\mathcal{X}^n_0$ is the c.l.s. consisting only of trivial modules. Its associated ideal is the augmentation ideal, which is clearly maximal. The minimal c.l.s. of $\mathcal{X}^n_1$ is the c.l.s. obtained from any natural module, and its associated ideal will be maximal by Proposition \ref{1to1}. Therefore, $U(\mathfrak{sp}(n^{\infty}))$ and $U(\mathfrak{so}(n^{\infty}))$ each will have two maximal ideals. This is in contrast to the finitary case: from \cite{PP}, we know that $U(\mathfrak{sp}(\infty))$ has only one integrable maximal ideal, the augmentation ideal.

\section{Appendix: Examples}\label{APPENDIX}

In this appendix we provide examples of the adjacency matrices of the quivers corresponding to the irreducible coherent local systems for $\mathfrak{sl}(n^{\infty})$, $\mathfrak{sp}(n^{\infty})$, and $\mathfrak{so}(n^{\infty})$ when $n = 2$.

\begin{table}[H] 
\caption{Type I matrices for $\mathfrak{sl}(2^{\infty})$}
\label{t1}
\begin{minipage}[b]{0.45\linewidth}\centering
$$A^2_0 = (1)$$

$$A^2_1 = (2)$$

$$A^2_2 = \left( \begin{array}{cc} 3 & 1 \\ 1 & 3 \end{array} \right)$$

$$A^2_3 = \left( \begin{array}{ccc} 4 & 2 & 0\\ 2 & 6 & 2 \\ 0 & 2 & 4 \end{array} \right)$$

\end{minipage}
\hspace{0.5cm}
\begin{minipage}[b]{0.45\linewidth}
\centering
$$A^2_4 = \left( \begin{array}{ccccc} 5 & 3 & 1 & 0 & 0 \\ 3 & 9 & 3 & 4 & 0 \\ 1 & 3 & 6 & 3 & 1 \\ 0 & 4 & 3 & 9 & 3 \\ 0 & 0 & 1 & 3 & 5 \end{array} \right)$$

$$A^2_5 = \left( \begin{array}{ccccccc} 
6 & 4 & 2 & 0 & 0 & 0 & 0 \\ 
4 & 12 & 6 & 6 & 2 & 0 & 0 \\ 
2 & 6 & 12 & 6 & 6 & 2 & 0 \\ 
 0 & 6 & 6 & 14 & 6 & 6 & 0 \\
0 & 2 & 6 & 6 & 12 & 6 & 2 \\ 
0 & 0 & 2 & 6 & 6 & 12 & 4 \\ 
0 & 0 & 0 & 0 & 2 & 4 & 6 \end{array} \right)$$

\end{minipage}
\end{table}

In  Tables~\ref{t2} and \ref{t3} the blocks, as defined in Sections~\ref{TYPEII} and \ref{SPQUIVERS}, are outlined.

\begin{table}[H]
\caption{Type II matrices for $\mathfrak{sl}(2^{\infty})$}
\label{t2}
\begin{minipage}[b]{0.45\linewidth}\centering
$$C^2_{0,0} = (1)$$

$$C^2_{1,1} = \left( \begin{array}{c|c} 4 & 0 \\\hline 3 & 1 \end{array} \right)$$

$$C^{2}_{2,2} = \left( \begin{array}{cccc|c|c} 9 & 3 & 3 & 1 & 0 & 0\\ 
3 & 9 & 1 & 3 & 0 & 0\\
3 & 1 & 9 & 3 & 0 & 0\\
1 & 3 & 3 & 9 & 0 & 0\\
\hline 12 & 12 & 12 & 12 & 4 & 0\\
\hline 6 & 3 & 3 & 6 & 3 & 1\end{array} \right)$$

$$C^2_{1,0} = (2)$$

\end{minipage}
\hspace{0.5cm}
\begin{minipage}[b]{0.45\linewidth}
\centering

$$C^2_{2,1} = \left( \begin{array}{cc|c} 6 & 2 & 0 \\ 2 & 6 & 0 \\ \hline 6 & 6 & 2 \end{array} \right)$$

$$C^2_{3,1} = \left( \begin{array}{ccc|cc}  8 & 4 & 0 & 0 & 0 \\ 4 & 12 & 4 & 0 & 0 \\ 0 & 4 & 8 & 0 & 0 \\ \hline 9 & 12 & 3 & 3 & 1 \\ 3 & 12 & 9 & 1 & 3 \end{array} \right)$$

$$C^2_{4,1} = \left( \begin{array}{ccccc|ccc} 10 & 6 & 2 & 0 & 0 & 0 & 0 & 0 \\ 6 & 18 & 6 & 8 & 0 & 0 & 0 & 0 \\ 2 & 6 & 12 & 6 & 2 & 0 & 0 & 0 \\ 0 & 8 & 6 & 18 & 6 & 0 & 0 & 0 \\ 0 & 0 & 2 & 6 & 10 & 0 & 0 & 0 \\  \hline 12 & 18 & 6 & 6 & 0 & 4 & 2 & 0 \\ 6 & 24 & 18 & 24 & 6 & 2 & 6 & 2 \\ 0 & 6 & 6 & 18 & 12 & 0 & 2 & 4 \end{array} \right)$$
\end{minipage}
\end{table}

\begin{table}[H]
\caption{}
\label{t3}
\begin{minipage}[b]{0.45\linewidth}\centering

Matrices for $\mathfrak{sp}(2^{\infty})$

$$D_0^2 = (1)$$

$$D_1^2 = (2)$$

$$D^{2}_2 = \left( \begin{array}{cc|c} 
3 &1 & 0 \\
1 & 3 & 0 \\
\hline 1 & 2  & 1 \\
\end{array} \right)$$ 

$$D^{2}_3 = \left( \begin{array}{ccc|c} 
4 & 2 & 0 & 0\\
2 & 6 & 2 & 0\\
0 & 2 & 4 & 0 \\
\hline 2 & 6 & 4 & 2
\end{array} \right)$$

$$D^{2}_4 = \left( \begin{array}{ccccc|cc|c}
 5 & 3 & 1 & 0 & 0 & 0 & 0 & 0  \\
 3 & 9 & 3 & 4 & 0 & 0 & 0 & 0 \\
 1 & 3 & 6 & 3 & 1 & 0 & 0 & 0 \\
 0 & 4 & 3 & 9 & 3 & 0 & 0 & 0  \\
 0 & 0 & 1 & 3 & 5 & 0 & 0 & 0  \\
\hline  3 & 10 & 5 & 9 & 2 & 3 &1 & 0 \\
 1 & 6 & 7 & 11 & 6 & 1 & 3 & 0 \\
\hline  1 & 2 & 4 & 3 & 3 & 1 & 2  & 1 \\ 
\end{array} \right)$$
\end{minipage}
\hspace{0.5cm}
\begin{minipage}[b]{0.45\linewidth}
\centering

Matrices for $\mathfrak{so}(2^{\infty})$

$$E_0^2 = (1)$$

$$E_1^2 = (2)$$

$$E^{2}_2 = \left( \begin{array}{cc|c} 
3 & 1 & 0 \\
1 & 3 & 0\\
\hline 2 & 1 & 1\end{array} \right)$$ 

$$E^{2}_3 = \left( \begin{array}{ccc|c} 
4 & 2 & 0 & 0\\
2 & 6 & 2 & 0\\
0 & 2 & 4 & 0 \\
\hline 4 & 6 & 2 & 2\end{array} \right)$$

$$E^{2}_4 = \left( \begin{array}{ccccc|cc|c}
 5 & 3 & 1 & 0 & 0 & 0 & 0 & 0  \\
 3 & 9 & 3 & 4 & 0 & 0 & 0 & 0 \\
 1 & 3 & 6 & 3 & 1 & 0 & 0 & 0 \\
 0 & 4 & 3 & 9 & 3 & 0 & 0 & 0  \\
 0 & 0 & 1 & 3 & 5 & 0 & 0 & 0  \\
\hline  6 & 11 & 7 & 6 & 1 & 3 &1 & 0 \\
 2 & 9 & 5 & 10 & 3 & 1 & 3 & 0 \\
\hline  3 & 3 & 4 & 2 & 1 & 1 & 2  & 1 \\ 
\end{array} \right)$$
\end{minipage}
\end{table}

\end{document}